\documentclass[11pt]{article}

\setlength{\textwidth}{5.5in}
\setlength{\textheight}{8.5in}
\setlength{\topmargin}{-0.25in}
\setlength{\oddsidemargin}{0in}
\setlength{\evensidemargin}{0.25in}

\usepackage[colorlinks]{hyperref}
\usepackage{amsthm,amsmath,epic,amsfonts,latexsym,amssymb}
\usepackage[usenames,dvipsnames]{color}





\numberwithin{equation}{section}
\theoremstyle{plain} 
\newtheorem{thm}{Theorem}[section]
\newtheorem{prop}[thm]{Proposition}
\newtheorem{lem}[thm]{Lemma} 
\newtheorem{rem}{Remark}[section]
\newtheorem{cor}[thm]{Corollary}

\newtheorem{assum}{Assumption}



\newcommand{\Z}{{\mathbb{Z}}}
\newcommand{\Zd}{{\mathbb{Z}^d}}
\newcommand{\ZD}{{\mathbb{Z}^D}}
\newcommand{\NN}{{\mathbb{N}}} 
\newcommand{\R}{{\mathbb{R}}}
\newcommand{\PP}{{\mathbb{P}}}
\newcommand{\Rd}{{\mathbb{R}^d}}

\newcommand{\EE}{{\mathbb{E}}} 
\newcommand{\E}{\mathcal{E}}

\newcommand{\vep}{\varepsilon}
\newcommand{\To}{\Rightarrow}

\newcommand{\N}{\mathcal{N}}

\newcommand{\A}{\mathcal{A}} 
 
\newcommand{\cB}{\mathcal{B}}
\newcommand{\F}{\mathcal{F}}
\newcommand{\cD}{\mathcal{D}}
\newcommand{\cE}{\mathcal{E}}
\newcommand{\cF}{\mathcal{F}}
\newcommand{\cG}{\mathcal{G}}
\newcommand{\cH}{\mathcal{H}}

\newcommand{\cL}{\mathcal{L}}
\newcommand{\LD}{\mathcal{L}_D}

\newcommand{\cN}{\mathcal{N}}
\newcommand{\cP}{\mathcal{P}}

\newcommand{\cQ}{\mathcal{Q}}
 
\newcommand{\V}{\mathcal{V}}
\newcommand{\cV}{\mathcal{V}}

\newcommand{\sgn}{{\operatorname{sgn}}}


 \newcommand{\se}{\sqrt\vep}

\newcommand{\tref}[1]{{\text{\scriptsize\ref{#1}}}}

\newcommand{\email}[1]{{\it  E-mail address:}\  \textsf{#1}\\*\protect}
\newcommand{\address}[1]{\small \textsc{#1}\\*\protect}     

\newcommand{\note}[1]{\par\noindent\mbox{}\hspace{-0.5in}\textcolor{BrickRed}{%
     $\xrightarrow[\quad\quad]{}\,\,\,\,${\sf\bf\small #1}}}

\renewcommand\note[1]{}
\def\mn{\par\medskip\noindent}
\def\ep{\varepsilon}
\def\beq{\begin{equation}}
\def\eeq{\end{equation}}
\def\square{\vcenter{\vbox{\hrule height .4pt
  \hbox{\vrule width .4pt height 5pt \kern 5pt
        \vrule width .4pt} \hrule height .4pt}}}
\def\qed{\hfill$\square$}

\begin{document}

\title{Voter Model Perturbations and Reaction Diffusion Equations}
\author{J. Theodore Cox\thanks{Supported in part by NSF Grant DMS-0505439
    Part of the research was done while the author was
    visiting The University of British Columbia}, Syracuse U. \\
    Richard Durrett\thanks{Partially supported by NSF Grant DMS-0704996 from the probability program}, Duke U. \\ 
and Edwin A. Perkins\thanks{Supported in part by an NSERC Discovery Grant},  U. British Columbia}
\date{\today}
\maketitle

\begin{abstract}We consider particle systems that are perturbations of the
voter model and show that when space and time are rescaled
the system converges to a solution of a reaction diffusion
equation in dimensions $d \ge 3$. Combining this result with properties 
of the PDE, some methods arising from a low density super-Brownian limit theorem, and a block construction, we give general, and often asymptotically sharp, conditions for the existence
of non-trivial stationary distributions, and for extinction of one type. As applications, we describe the phase diagrams of  
three systems when the parameters are close to the voter model: 
(i) a stochastic spatial Lotka-Volterra model of Neuhauser and Pacala,
(ii) a model of the evolution of cooperation of
Ohtsuki, Hauert, Lieberman, and Nowak, and (iii) a continuous time version of the 
non-linear voter model of Molofsky, Durrett, Dushoff, Griffeath, and Levin.
The first application confirms a conjecture of Cox and Perkins \cite{CP-2} and the second
confirms a conjecture of Ohtsuki et al \cite{Oetal} in the context of certain infinite graphs.
An important feature of our general results is that they do not require the process to be 
attractive.
\end{abstract}

\vfill
{\footnotesize
\noindent \\
AMS 2000 {\it subject classifications}. Primary 60K35. 
Secondary 35K57, 60J68, 60F17, 92D15, 92D40. 
 \\
{\it Keywords and phrases}. Interacting particle systems, voter model, reaction diffusion equation, evolutionary game theory, Lotka-Volterra model.
 \\}

 \clearpage

\section{Introduction and Statement of
  Results}\label{sec:intro}

We first describe a class of particle systems, called voter model perturbations in \cite{CP-1}.  The state
space will be $\{0,1\}^{\Z^d}$, or after rescaling $\{0,1\}^{\vep\Z^d}$, where throughout this work
we assume $d\ge 3$. The voter model part of the process will 
depend on a symmetric (i.e, $p(x)=p(-x)$), irreducible probability kernel $p:\Zd\to[0,1]$ with $p(0)=0$, covariance matrix $\sigma^2 I$, and
exponentially bounded tails so that for some $\kappa\in(0,1]$,
\beq
p(x) \le C e^{-\kappa |x|}.
\label{expbd1}
\eeq
Here and in what follows $|x| = \sup_i |x_i|$.

For $1\ge\vep>0$,  $x\in\Z^d$ and $\xi\in\{0,1\}^{\Z^d}$ define rescalings of $p$ and $\xi$ by $p_\vep(\vep x)=p(x)$, and $\xi_\vep(\vep x)=\xi(x)$, so that $\xi_\vep\in\{0,1\}^{\vep\Z^d}$.
Also define rescaled {\it local densities} $f_i^\vep$ by
\begin{equation}\label{locdens}
  f^\vep_i(\vep x,\xi_\vep) = \sum_{y\in\vep\Z^d} p_\vep(y-\vep x) 1\{\xi_\vep(y)=i\}, \qquad i=0,1.
\end{equation}
We will write $f_i(x,\xi)$ if $\vep=1$.  
For $x,\xi$ as above, introduce the voter flip rates and rapid voter flip rates given by
\begin{equation}\label{voterrates}
  c^v(x,\xi)=[(1-\xi(x))f_1(x,\xi)+\xi(x)f_0(x,\xi)],\ c_\vep^v(\vep x,\xi_\vep)=\vep^{-2}c^v(x,\xi). 
\end{equation}
The processes of interest, $\xi_t\in\{0,1\}^{\Z^d}$, are spin-flip systems with rates
\beq\label{origrates}
c_\vep^o(x,\xi)=c^v(x,\xi)+\vep^2c^*_\vep(x,\xi)\ge 0,
\eeq
where $c^*_\vep(x,\xi)$ will be a translation invariant, signed perturbation of the form
\[c^*_\vep(x,\xi)=(1-\xi(x))h_1^\vep(x,\xi) + \xi(x)h_0^\vep(x,\xi).\]  Therefore the rescaled processes, $\xi_{\vep^{-2}t}(\vep x)\in\{0,1\}^{\vep\Z^d}$, we will study have rates
\begin{align} \label{xirates}
 c_\vep(\vep x,\xi_\vep)=c_\vep^v(\vep x,\xi_\vep)+c^*_\vep(x,\xi)\ge 0.
\end{align}
We assume there is a law $q$ of $(Y^1,\dots,Y^{N_0})\in\Z^{dN_0}$ and $g^\vep_i$ on $\{0,1\}^{N_0}$, $i=0,1$, and $\vep_1\in(0,\infty], \vep_0\in(0,1]$ so that 
\begin{equation}\label{nonnegg}g_i^\vep\ge 0,
\end{equation}
and
\begin{align}\label{hgrepn}h_i^\vep(x,\xi) = -\vep_1^{-2}f_i(x,\xi)+&E_Y(g_i^\vep(\xi(x+ Y^1), \ldots \xi(x+ Y^{N_0}))), i=0,1\\ 
\nonumber&\hbox{ for all } \xi\in\{0,1\}^{\Z^d}, x\in\Z^d, \vep\in(0,\vep_0].
\end{align}
Here $E_Y$ is
expectation with respect to $q$ and in
practice the first term in the above allows us to take
$g_i^\vep\ge 0$. It is important to have $g_i^\vep$
non-negative as we will treat it as a rate in the
construction of a dual process in
Section~\ref{sec:construction}. On the other hand, in the
particular examples motivating the general theory $h^\vep_i$
will often be negative (see, e.g., \eqref{LVh} below of the
Lotka-Volterra models).

We also suppose that 
(decrease $\kappa>0$ if necessary) \beq
\label{expbd2}
P( Y^*\ge x ) \le C e^{-\kappa x}\hbox{ for }x>0 
\eeq
where $Y^* = \max\{ |Y^1|, \ldots |Y^{N_0}| \}$,
and 
 there are limiting maps $g_i:\{0,1\}^{N_0}\to \R_+$ such that 
\beq\label{gcvgce}
\lim_{\vep\downarrow 0} \Vert g_i^\vep-g_i\Vert_\infty =0,\ i=0,1.
\eeq
The condition \eqref{hgrepn} with $\vep_0<\vep_1$ (without loss of generality) easily implies the non-negativity in \eqref{xirates}.

We now show that the conditions \eqref{hgrepn}-\eqref{gcvgce} hold for general finite range convergent translation invariant perturbations without any non-negativity constraint on the $g_i^\vep$.

\begin{prop}\label{finiterangeh} Assume there are distinct points $y_1,\dots,y_{N_0}\in\Z^d$ and \hfil\break
$\hat g_i^\vep,\hat g_i:\{0,1\}^{N_0}\to\R$ such that
\begin{align} \label{finrangerep}&h_i^\vep(x,\xi)=\hat g_i^\vep(\xi(x+y_1),\dots,\xi(x+y_{N_0})),\ x\in\Z^d, \xi\in\{0,1\}^{N_0},\\
\nonumber&\{x:p(x)>0\}\subset\{y_1,\dots,y_{N_0}\},\quad\lim_{\vep\downarrow 0}\Vert\hat g_i^\vep-\hat g_i\Vert_\infty=0\ i=0,1.
\end{align}
Then \eqref{hgrepn}-\eqref{gcvgce} hold for appropriate non-negative $g_i^\vep$, $g_i$ satisfying $\Vert g_i^\vep-g_i\Vert_\infty=\Vert \hat g_i^\vep-\hat g_i\Vert_\infty$, and $Y^i=y_i$.
\end{prop}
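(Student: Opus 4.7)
The plan is to take $q$ as the unit point mass at $(y_1,\dots,y_{N_0})$, so that $Y^i=y_i$ almost surely and $E_Y(\cdot)$ becomes evaluation at $(y_1,\dots,y_{N_0})$. I would then set
\[g_i^\vep(\eta) := \hat g_i^\vep(\eta) + \vep_1^{-2} f_i(\eta),\qquad g_i(\eta) := \hat g_i(\eta)+\vep_1^{-2}f_i(\eta),\qquad \eta\in\{0,1\}^{N_0},\]
where $f_i(\eta):=\sum_{j=1}^{N_0}p(y_j)\mathbf{1}\{\eta_j=i\}$ and $\vep_1>0$ is to be chosen below. From the distinctness of the $y_j$'s and $\supp(p)\subset\{y_j\}$ in \eqref{finrangerep}, we have $f_i(x,\xi)=f_i(\xi(x+y_1),\dots,\xi(x+y_{N_0}))$, so with this choice \eqref{hgrepn} is an algebraic tautology. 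Since $Y^*=\max_j|y_j|$ is a deterministic constant, \eqref{expbd2} is automatic, and since the correction $\vep_1^{-2}f_i$ is $\vep$-independent we have $g_i^\vep-g_i=\hat g_i^\vep-\hat g_i$, so the norm identity $\Vert g_i^\vep-g_i\Vert_\infty=\Vert\hat g_i^\vep-\hat g_i\Vert_\infty$ and \eqref{gcvgce} are immediate.

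The only substantive step is choosing $\vep_1$ so that $g_i^\vep\ge 0$ for all $\vep\le\vep_0$; then $g_i\ge 0$ follows on letting $\vep\downarrow 0$. Here the key input is the standing non-negativity of the flip rates \eqref{origrates}: specializing $c^o_\vep(x,\xi)\ge 0$ to $\xi(x)=1-i$ gives $f_i(x,\xi)+\vep^2 h_i^\vep(x,\xi)\ge 0$, and substituting \eqref{finrangerep} yields
\[\hat g_i^\vep(\eta)\ge -\vep^{-2}f_i(\eta)\quad\text{for every $\eta\in\{0,1\}^{N_0}$ realizable as $(\xi(x+y_j))_j$ with $\xi(x)=1-i$.}\]
In the generic case $0\notin\{y_1,\dots,y_{N_0}\}$ every $\eta$ is realizable, so this holds universally. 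For $\eta$ with $f_i(\eta)=0$ it gives $\hat g_i^\vep(\eta)\ge 0$, hence $g_i^\vep(\eta)\ge 0$. For $\eta$ with $f_i(\eta)>0$ we have $f_i(\eta)\ge p_{\min}:=\min_{z\in\supp(p)}p(z)>0$ (a positive minimum over the finite set $\supp(p)$); coupled with the uniform bound $K:=\sup_{\vep\le\vep_0}(\Vert\hat g_i^\vep\Vert_\infty\vee\Vert\hat g_i\Vert_\infty)<\infty$ supplied by \eqref{finrangerep}, choosing $\vep_1^2\le p_{\min}/K$ makes $g_i^\vep(\eta)\ge -K+\vep_1^{-2}p_{\min}\ge 0$. (One may shrink $\vep_0$ if needed to maintain $\vep_0<\vep_1$, in line with the convention noted just after \eqref{gcvgce}.)

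The main (though modest) obstacle I expect is the boundary case $0\in\{y_j\}$, say $y_1=0$, in which the rate constraint only forces $\hat g_i^\vep(\eta)\ge -\vep^{-2}f_i(\eta)$ on the slice $\{\eta_1=1-i\}$, leaving the sign of $\hat g_i^\vep$ unconstrained on $\{\eta_1=i\}$. I would handle this by noting that on $\{\eta_1=i\}$ the function $h_i^\vep(x,\xi)$ enters \eqref{origrates} only through a factor ($1-\xi(x)$ if $i=1$, $\xi(x)$ if $i=0$) that vanishes there, so its value plays no dynamical role. One may therefore redefine $\hat g_i^\vep$ and $\hat g_i$ on that slice by their canonical extensions from $\{\eta_1=1-i\}$, namely $\hat g_i^\vep(1-i,\eta_2,\dots,\eta_{N_0})$ and $\hat g_i(1-i,\eta_2,\dots,\eta_{N_0})$ respectively; this preserves the hypothesis \eqref{finrangerep} and the process, is a legitimate choice since $\hat g_i^\vep,\hat g_i$ are existentially quantified in the statement, and extends the rate constraint to every $\eta$, so the previous paragraph's argument applies unchanged.
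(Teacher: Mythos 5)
Your proof is correct and follows essentially the same route as the paper's: take $q$ to be the point mass at $(y_1,\dots,y_{N_0})$, add the voter correction $\vep_1^{-2}f_i$ to $\hat g_i^\vep$ so that \eqref{hgrepn} is a tautology and the norm identity is immediate, and deduce non-negativity from $c^o_\vep\ge 0$ by separating configurations according to whether $f_i$ vanishes or is bounded below by $\min\{p(y_j):p(y_j)>0\}$. The only cosmetic difference is the treatment of the slice $\{\eta_1=i\}$ when $0\in\{y_j\}$: the paper normalizes by forcing $\hat g_i^\vep$ to vanish there, while you instead extend it from the complementary slice; both are legitimate because (as you note) that slice is dynamically invisible, and because $p(0)=0$ makes $f_i$ independent of $\eta_1$ — a fact your argument uses tacitly and would be worth flagging.
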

\noindent The elementary proof is given in
Section~\ref{ssec:prelimred}. In terms of our original rates
\eqref{origrates} this shows that our class of models
include spin-flip systems $\xi_t\in\{0,1\}^{\Z^d}$, $t\ge
0$, with rates
\beq \label{finranrates}c^o_\vep(x,\xi)=c^v(x,\xi)+\vep^2 c^*(x,\xi)+\vep^2o(\vep)\ge 0,
\eeq
where $p$ (governing $c^v$) is now finite range, $c^*(x,\xi)=h(\xi(x),\xi(x+y_1),\dots,\xi(x+\xi_{N_0}))$ is a finite range, translation invariant perturbation and $o(\vep)$ means this term goes to zero with $\vep$ uniformly in $(x,\xi)$.  

On the other hand, the 
random $Y^i$'s will also allow certain natural infinite range interactions. The formulation in terms
of random locations will also simplify some of the arithmetic to come.

Let $\xi^\vep_t$, $t\ge 0$ be the unique
$\{0,1\}^{\vep\Z^d}$-valued Feller process with translation
invariant flip rates given by $c_\vep(x,\xi)$ in
\eqref{xirates} and initial state $\xi_0^\vep\in\{0,1\}^{\vep\Z^d}$.   
More formally (see Theorem B.3 in \cite{Lig99} and Section 2
of \cite{Dur95}) the generator of $\xi^\vep$ is 
\begin{align}\label{gendesc}&\hbox{the closure
of  }
\Omega_\vep g(\xi)=\sum_{x\in\vep Z^d}c_\vep(x,\xi)(g(\xi^x)-g(\xi)),\\
\nonumber&\hbox{ on the space of $g:\vep\Z^d\to\R$, depending on finitely
many coordinates.}
\end{align}
Here $\xi^x$ is $\xi$ with the
coordinate at $x$ flipped to $1-\xi(x)$.  The condition (B4) of Theorem B.3 in \cite{Lig99} is trivial
to derive from \eqref{hgrepn}.  

{\bf We stress that conditions \eqref{xirates}-\eqref{gcvgce} are in force throughout this work, and call such a process $\xi^\vep$ a voter model perturbation.}

\medskip
Given a process taking values in $\{0,1\}^{\Zd}$,  or more
generally in $\{0,1\}^{\vep\Zd}$, we say that {\it
  coexistence holds} if there is a stationary distribution
$\nu$ with
\begin{equation}\label{nucoexist}
\nu\left(\sum_x\xi(x)=\sum_x1-\xi(x)=\infty\right)=1.
\end{equation}
For voter model perturbations it is easy to see this is equivalent to both types being present $\nu$-a.s.--see Lemma~\ref{nuequiv} at the beginning of Section~\ref{sec:exist}.

We say {\it the $i$'s take over} if for all $L$,
\beq
\label{takeover}
P(\xi_t(x)=i \hbox{ for all $x\in [-L,L]^d$ }\hbox{ for } t \hbox{ large enough}) =1
\eeq
whenever the initial configuration has infinitely many sites in state $i$. 

Our main results, Theorems~\ref{thm:exist} and \ref{thm:nonexist} in Section~\ref{gensys} below, give (often sharp) conditions under which coexistence holds or one type takes over, respectively, in a voter model perturbation for small enough $\vep$. Of course these results then hold immediately for our originally unscaled processes, again for small enough $\vep$.

\subsection{Hydrodynamic limit}\label{ssec:hydro}
As $d\ge 3$, we see from Theorem V.1.8 of \cite{Lig} the
voter model with flip rates $c^v(x,\xi)=c_1^v(x,\xi)$ has a
one-parameter family of translation invariant extremal
invariant distributions $\{\PP_u:u\in[0,1]\}$ on
$\{0,1\}^{\Z^d}$ such that $\EE_u(\xi(x))=u$.  We write
$\langle g\rangle_u$ for $\EE_u(g(\xi))$. \eqref{hgrepn} and \eqref{gcvgce}  imply
\begin{align}\label{hcvgce}
&\lim_{\vep\downarrow 0}\Vert h^\vep_i-h_i\Vert_\infty=0\hbox{ where }\\
\nonumber &h_i(x,\xi)=-\vep_1^{-2}f_i(x,\xi)+E(g_i(\xi(x+ Y^1),\dots, \xi(x+Y^{N_0}))).
\end{align}
Define
\begin{equation}
\label{fdef}
f(u)=\langle (1-\xi(0))h_1(0,\xi) - \xi(0)h_0(0,\xi)\rangle_u.
\end{equation}
Then $f$ is a polynomial of degree at most $N_0+1$ (see \eqref{prodform} and Section~\ref{ssec:CPcomp} below).
The non-negativity condition \eqref{xirates}, the fact that 
\beq\label{meanvoterrts} \langle c_\vep^v(0,\xi)\rangle_0=\langle c_\vep^v(0,\xi)\rangle_1=0,
\eeq
 and the convergence \eqref{hcvgce} show that
\begin{equation}
\label{fbnd}
f(0)\ge0,\quad f(1)\le 0.
\end{equation}

Our first goal is to show that under suitable assumptions on the initial conditions,
as $\ep\to 0$ the particle systems converges
to the PDE
\begin{equation}\label{rdpde}
  {\partial u\over \partial t}={\sigma^2\over 2}\Delta
  u+f(u),\quad u(0,\cdot)=v(\cdot), 
\end{equation} 
The remark after Proposition 2.1 in \cite{AW78} implies that for any continuous
$v:R^d \to [0,1]$ the equation has a unique solution $u$, which necessarily takes values in $[0,1]$.  


For a continuous $v$ as above we 
will say that a family of probability measures
$\{\lambda_\vep\}$ 
of laws on $\{0,1\}^{\vep\Zd}$ has
local density $v$ if the following holds:\hfil\break

\noindent There is an
$r\in(0,1)$ such that if 
$ a_\vep = \lceil \vep^{r-1}\rceil \vep$, $Q_\vep =
[0,a_\vep)^d\cap\vep\Zd$, $|Q_\vep|=\hbox{card}\,(Q_\vep)$, and 
\begin{equation}\label{densdef0}
D(x,\xi) = \dfrac{1}{|Q_\vep|} \sum_{y\in Q_\vep}\xi(x+y)
\text{ for }x\in a_\vep\Zd, \xi\in \{0,1\}^{\vep Z^d}\,,
\end{equation}
then for all $R,\delta>0$,
\begin{equation}\label{densdef}
\lim_{\vep\to 0}\sup_{\substack{x\in a_\vep
  \Zd\\|x|\le R}}\lambda_\vep(|D(x,\xi)-v(x)|>\delta) = 0\,.
\end{equation}

The family of Bernoulli product measures $\bar\lambda_\vep$
given by
\begin{equation}\label{ICconv}
\bar\lambda_\vep(\xi(w_i)=1, i=1,\dots,n)
=\prod_{i=1}^n v(w_i) \text{ for all }n\in \NN \text{ and }
w_i\in\vep\Zd\ .
\end{equation}
certainly satisfies \eqref{densdef} for all  $r\in(0,1)$. 

\begin{thm}\label{conv}
  Assume $v:\R^d\to[0,1]$ is continuous, and the collection of
  initial conditions
  $\{\xi^\vep_0\}$ have laws $\{\lambda_\vep\}$  with
  local density $v$.  
  Let $x^k\in \R^d$ and $x_\vep^k\in\vep\Z^d$, $\,k=1,\dots
  K$ satisfy
  \begin{equation}\label{xcond}
    x^k_\vep\to x^k\hbox{ and
    }\vep^{-1}|x^k_\vep-x^{k'}_\vep|\to\infty\hbox{ as
    }\vep\to0\hbox{ for any }k\neq k'. 
  \end{equation}
  If $u$ is the solution of \eqref{rdpde}, then for any
  $\eta\in \{0,1\}^{\{1,\dots,L\}\times \{1,\dots K\}}$, $y_1,\dots,y_L\in\Z^d$
  and $T>0$, 
  \begin{multline}
    \lim_{\vep\to 0}P(\xi^\vep_T(x_\vep^k+\vep
    y_i)=\eta_{i,k},\ \ i=1,\dots,L,\, k=1,\dots K)\\ 
    =\prod_{k=1}^K\langle 1\{\xi(y_i)=\eta_{i,k},
    i=1,\dots,L\}\rangle_{u(T,x^k)}.
  \end{multline}
  In particular, if $x_\vep\in \vep\Z^d$ satisfies
 $x_\vep\to x$ as $\vep\to0$, then
  \begin{equation}\label{hydro}
    \lim_{\vep\to0} P(\xi_T^\vep(x_\vep)=1)=u(T,x)\quad\hbox{for all }T>0, x\in\R^d.
  \end{equation}
\end{thm}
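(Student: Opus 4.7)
The plan is to exploit the branching-coalescing dual $\zeta^\vep_s$ of the voter perturbation --- constructed in Section~\ref{sec:construction} using the non-negativity~\eqref{nonnegg} of $g_i^\vep$ --- together with an ``effective voter equilibrium'' argument. In the dual, particles perform rate-$\vep^{-2}$ random walks with kernel $p_\vep$ and coalesce on contact (dual to the fast voter flips), while at rate $O(1)$ a particle branches, producing offspring at translates by $\vep Y^i$ (dual to the perturbation). Duality expresses the joint probability on the left-hand side of the theorem as an expectation of an explicit combinatorial functional of $\xi^\vep_0$ evaluated along the random dual trajectories at time $T$.

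First I would prove the mean convergence~\eqref{hydro}. Setting $u^\vep(t,x)=P(\xi^\vep_t(x)=1)$ and applying the generator~\eqref{gendesc} to $\xi\mapsto\xi(x)$ yields
\[
\partial_t u^\vep(t,x)=\vep^{-2}\!\!\!\sum_{y\in\vep\Z^d}\!p_\vep(y-x)\bigl[u^\vep(t,y)-u^\vep(t,x)\bigr]+\EE\bigl[(1-\xi^\vep_t(x))h_1^\vep(x,\xi^\vep_t)-\xi^\vep_t(x)h_0^\vep(x,\xi^\vep_t)\bigr].
\]
The first term converges to $(\sigma^2/2)\Delta u^\vep(t,x)$ by the CLT for $p$ under~\eqref{expbd1}; the second would equal $f(u^\vep(t,x))+o(1)$ if the local law of $\xi^\vep_t$ at the sites $\{x,x+Y^i\}$ were close to $\PP_{u^\vep(t,x)}$ (via~\eqref{fdef} and~\eqref{hcvgce}). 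Compactness of $\{u^\vep\}$, together with uniqueness of bounded solutions of~\eqref{rdpde} and the initial-density assumption~\eqref{densdef}, would then identify the limit as $u$.

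The main obstacle is the local equilibrium step. I would handle it by running the dual from $x$ backward for a short time $\tau_\vep$ satisfying $\vep^2\ll\tau_\vep\ll 1$. During the window $[T-\tau_\vep,T]$ the probability of any branching tends to $0$, so the dual is, with high probability, a system of pure coalescing random walks of bounded size. In the rescaling these walks travel only $O(\sqrt{\tau_\vep})\to 0$ on macroscopic scale but $O(\sqrt{\tau_\vep}/\vep)\to\infty$ on the $\vep$-lattice, and in $d\ge 3$ the standard Green's-function transience estimates quantify the asymptotic coalescent structure. The resulting end-positions at time $T-\tau_\vep$ are macroscopically close to $x$ but far apart on the $\vep$-lattice, so by the mean convergence already established at time $T-\tau_\vep$ the joint law of $\xi^\vep_{T-\tau_\vep}$ at those sites is asymptotically i.i.d.\ Bernoulli$(u(T,x))$ --- matching the dual-coalescent construction of $\PP_{u(T,x)}$.

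Finally, for the full joint statement the clusters $\{x_\vep^k+\vep y_i:i\le L\}_{k\le K}$ are separated by $\vep^{-1}|x_\vep^k-x_\vep^{k'}|\to\infty$ on the lattice by~\eqref{xcond}. Running the dual backward over $[0,T]$, a Green's-function estimate in $d\ge 3$ combined with an induction on the (a.s.\ finite) number of branching events shows that with probability tending to $1$, no dual particle originating in cluster $k$ ever interacts (either by coalescence or by joining a common branching event) with one originating in a distinct cluster $k'$. The joint probability therefore factors asymptotically over $k$, and each factor reduces, via the local equilibrium analysis of the previous paragraph, to $\langle 1\{\xi(y_i)=\eta_{i,k},i=1,\dots,L\}\rangle_{u(T,x^k)}$.
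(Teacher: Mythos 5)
Your strategy of running the dual only a short window $\tau_\vep$ backward from time $T$ and then invoking ``the mean convergence already established at time $T-\tau_\vep$'' to conclude that the end-positions are asymptotically i.i.d.\ Bernoulli$(u(T,x))$ contains a genuine logical gap. Mean convergence $P(\xi^\vep_{T-\tau_\vep}(z)=1)\to u(T,x)$ gives only the one-site marginals; it does not by itself yield the \emph{joint} law of $\xi^\vep_{T-\tau_\vep}$ at the $O(1)$ separated end-positions of the backward dual. That joint factorization at macroscopically coincident but lattice-separated sites is precisely the statement of the theorem you are trying to prove, evaluated at the earlier time $T-\tau_\vep$. Since $\tau_\vep\to 0$, there is no base case, and an induction would have $\vep$-dependent step size with error accumulation over $O(T/\tau_\vep)\to\infty$ steps, none of which is controlled. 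So your Step~1 (mean convergence) requires local equilibrium, local equilibrium requires joint convergence at $T-\tau_\vep$, and the latter is the theorem at a shifted time --- a circularity as stated.

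The paper breaks this circularity by running the dual all the way back to time $0$, where the local density hypothesis~\eqref{densdef} on $\xi^\vep_0$ is actually available, and then applies the short ``spreading'' window at the \emph{beginning}: the dual particles are allowed to wander for time $t_\vep=\vep^\beta$ near $t=0$, after which Lemma~\ref{lem:rev3} exploits the local density assumption to replace the inputs $\xi^\vep_0(\tilde X^j_T)$ by independent Bernoulli$(v(\tilde X^j_S))$ random variables. This is the only place the initial-condition hypothesis is used and it is what makes the input randomization rigorous. The rest of the paper's proof couples $(X,\zeta)$ to a branching random walk $(\hat X,\hat\zeta)$ (Lemmas~\ref{lem:tildeG}--\ref{lem:compeq}) and then to branching Brownian motion (Lemma~\ref{poscoupl}), deferring the PDE identification to Lemma~\ref{lem:ident} via a generator/weak-solution argument similar in spirit to the one you write down. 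Your factorization over clusters $k$ via a non-collision estimate is the right idea and matches the paper's Lemma~\ref{lem:Vprob}. To repair the proposal, drop the ``mean first, then local equilibrium'' organization and instead run the full dual computation process over $[0,T]$, applying the local density assumption only at time $0$ via a Lemma~\ref{lem:rev3}-type statement; the local equilibrium at interior times is then a \emph{consequence} of the computation, not an input to it.
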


De Masi, Ferrari and Lebowitz \cite{DFL86}, Durrett and Neuhauser \cite{DN94} 
and Durrett \cite{Dur02} have proved similar results for particle systems
with rapid stirring. The local equilibrium for rapid stirring is a Bernoulli product measure,
but in our setting it is the voter equilibrium.  As a result there is now dependence
between nearby sites on the microscopic scale. However, there is asymptotic
independence between sites with infinite separation on the
microscopic scale.  

It is easy to carry out a variance
calculation to improve Theorem~\ref{conv} to the following $L^2$-convergence
theorem (see the end of Section~\ref{sec:hydroproofs} for the
proof). If $\delta>0$ and $x\in\R^d$, let
$I_\delta(x)$ be the unique semi-open cube
$\prod_{i=1}^d [k_i\delta,(k_i+1)\delta)$, $k_i\in\Z$, which contains $x$.

\begin{thm}\label{thm:strongconv} Assume the hypotheses of Theorem~\ref{conv}.\\
Assume $\delta(\vep)\in\vep\NN$ decreases to zero so
that $\delta(\vep)/\vep \to\infty$ as $\vep\downarrow 0$.  If 
$$
{\tilde u}^\vep(t,x)=\sum_{y\in
I_{\delta(\vep)}(x)}\xi_t^\vep(y)(\vep/\delta(\vep))^d,
$$
then as $\vep\to0$, ${\tilde u}^\vep(t,x)\rightarrow u(t,x)$ in
$L^2$ uniformly for $x$ in compacts, for all $t>0$.
\end{thm}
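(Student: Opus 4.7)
My plan is to estimate the first and second moments of $\tilde u^\vep(t,x)$ directly, using Theorem~\ref{conv}. Since $\tilde u^\vep$ and $u$ both take values in $[0,1]$, for fixed $t>0$ and any compact $K\subset\R^d$ it suffices to prove
\[
\sup_{x\in K}\EE\bigl[(\tilde u^\vep(t,x)-u(t,x))^2\bigr]\longrightarrow 0
\]
as $\vep\downarrow 0$. Expanding the square and using continuity of $u(t,\cdot)$, this reduces to verifying (a) $\EE[\tilde u^\vep(t,x)]\to u(t,x)$ and (b) $\EE[\tilde u^\vep(t,x)^2]\to u(t,x)^2$, both uniformly in $x\in K$.

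For (a), since $\delta(\vep)\in\vep\NN$ the cube $I_{\delta(\vep)}(x)$ contains exactly $(\delta(\vep)/\vep)^d$ points of $\vep\Z^d$, so
\[
\EE[\tilde u^\vep(t,x)]=\Bigl(\frac{\vep}{\delta(\vep)}\Bigr)^d\sum_{y\in I_{\delta(\vep)}(x)\cap\vep\Z^d}P(\xi_t^\vep(y)=1).
\]
Every such $y$ satisfies $|y-x|\le\delta(\vep)\to 0$ in the sup-norm, so by \eqref{hydro} combined with continuity of $u(t,\cdot)$ we obtain $P(\xi_t^\vep(y)=1)=u(t,x)+o(1)$ uniformly in $y\in I_{\delta(\vep)}(x)$ and in $x\in K$; averaging gives (a).

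For (b),
\[
\EE[\tilde u^\vep(t,x)^2]=\Bigl(\frac{\vep}{\delta(\vep)}\Bigr)^{2d}\sum_{y,y'\in I_{\delta(\vep)}(x)\cap\vep\Z^d}P(\xi_t^\vep(y)=\xi_t^\vep(y')=1).
\]
Choose $M(\vep)\uparrow\infty$ with $(M(\vep)\vep/\delta(\vep))^d\to 0$, possible since $\delta(\vep)/\vep\to\infty$. Split the sum according to whether $|y-y'|\le M(\vep)\vep$ or not. Bounding each probability by $1$, the contribution of the close pairs is at most $(2M(\vep)+1)^d(\vep/\delta(\vep))^d\to 0$. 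For the remaining pairs, $y,y'\to x$ while $|y-y'|/\vep\ge M(\vep)\to\infty$, so the $K=2$, $L=1$ case of Theorem~\ref{conv} (with the common limit point $x^1=x^2=x$) gives $P(\xi_t^\vep(y)=\xi_t^\vep(y')=1)=u(t,x)^2+o(1)$, which delivers (b).

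The one genuinely nontrivial step is promoting the pointwise limits of Theorem~\ref{conv} to uniformity in $y,y'\in I_{\delta(\vep)}(x)$ and in $x\in K$. I would handle this by a subsequence-and-contradiction argument: any putative bad sequence $(\vep_n,x_n,y_n,y_n')$ admits, by compactness of $K$, a sub-sequence along which $x_n\to x_\infty\in K$; then automatically $y_n,y_n'\to x_\infty$ since $|y_n-x_n|\vee|y_n'-x_n|\le\delta(\vep_n)\to 0$, and in step (b) moreover $|y_n-y_n'|/\vep_n\to\infty$. Theorem~\ref{conv} together with continuity of $u(t,\cdot)$ then yields the required limit along the sub-sequence, contradicting the badness. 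No further analytical input is needed beyond this routine uniformization.
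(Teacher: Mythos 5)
Your proof is correct and follows essentially the same route as the paper's: expand the $L^2$ error as a double sum over the cube $I_{\delta(\vep)}(x)$, discard the diagonal/near-diagonal contribution using $\delta(\vep)/\vep\to\infty$, and apply Theorem~\ref{conv} (the $K=2$, $L=1$ case with a common limit point) to the well-separated pairs. The only cosmetic difference is that you first split into a first-moment claim and a second-moment claim, whereas the paper writes $E[(\tilde u^\vep-u)^2]$ directly as $(\vep/\delta)^{2d}\sum_{x_1,x_2}E[(\xi^\vep_t(x_1)-u(t,x))(\xi^\vep_t(x_2)-u(t,x))]$ and estimates it in one stroke; your subsequence-and-compactness argument for uniformity is the natural unpacking of the paper's terse assertion that Theorem~\ref{conv} delivers the limit uniformly.
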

\noindent A low density version of this theorem, in which the limit is random (super-Brownian motion with drift), was proved in \cite{CP-1} and is discussed in Section~\ref{ssec:CPcomp}.

To apply Theorem~\ref{conv} to the voter perturbation we will have to evaluate $f(u)$.  This is in principle straightforward thanks to the duality between the voter model and coalescing random walk which we now recall.  Let $\{\hat B^x:x\in\Z^d\}$ denote a rate $1$ coalescing random walk system on $\Z^d$ with step distribution $p$ and $\hat B^x_0=x$.  For $A,B\subset \Z^d$, let $\hat\xi^A_t=\{\hat B^x_t:x\in A\}$, $\tau(A)=\inf\{t:\hat |\xi^A_t|=1\}$ and $\tau(A,B)$ be the first time $\hat\xi^A_t\cap\hat\xi_t^B\neq\emptyset$ (it is $\infty$ if either $A$ or $B$ is empty).  The duality between $\hat B$ and the voter model (see (V.1.7) and Theorem~V.1.8 in \cite{Lig}) implies for finite $A,B\subset\Z^d$,
\begin{align}
\nonumber\langle&\prod_{y\in A}\xi(y)\prod_{z\in B}(1-\xi(z))\rangle_u\\
\label{prodform}&=\sum_{j=0}^{|A|}\sum_{k=0}^{|B|}u^j(1-u)^kP(|\hat \xi^A_\infty|=j,|\hat\xi_\infty^B|=k,\tau(A,B)=\infty).
\end{align}
The $k=0$ term is non-zero only if $B=\emptyset$ in which case the above probability is $P(|\hat\xi^A_\infty|=j)$, and similarly for the $j=0$ term. 
It follows from \eqref{prodform} and the form of the perturbation in \eqref{hgrepn} that $f(u)$ is a polynomial of degree at most $N_0+1$ with coefficients given by certain coalescing probabilities of $\hat B$ (see \eqref{fformula} below).

\subsection{PDE results}
\label{ss:PDE}

As in Durrett and Neuhauser \cite{DN94}, Theorem~\ref{conv}, in combination with results
for the PDE and a block construction, lead to theorems about the particle system.  Durrett \cite{Dur09} surveys results that have been proved by this method in the last 15 years.
To carry out
this program we will also need some low density methods taken from the superprocess
limit theorems of Cox and Perkins \cite{CP-1,CP-2}.

To prepare for the discussion of the examples, we will state the PDE results 
on which their analysis will be based. The reaction function $f:\R\to\R$ is a 
continuously differentiable function (as already noted, in our context it will be a polynomial).  Assume now, as will be the case in the examples, that $f(0)=f(1)=0$.
We let $u(t,x)$ denote the unique solution of \eqref{rdpde} with continuous initial data $v:\R^d\to[0,1]$. 

We start with a modification of a result of Aronson and Weinberger 
\cite{AW78}.

\begin{prop} \label{prop:pde1}
Suppose $f(0)=f(\alpha)=0$, $f'(0)>0$, $f'(\alpha)<0$ and $f(u)>0$ for $u \in (0,\alpha)$ with $0<\alpha\le 1$.  
There is a $w>0$ so that if the initial condition $v$ is not identically $0$, then
$$
\liminf_{t\to\infty} \inf_{|x| \le 2wt} u(t,x) \ge \alpha.
$$
\end{prop}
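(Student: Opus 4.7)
The plan is to deduce the statement from the classical hair-trigger spreading theorem of Aronson and Weinberger for Fisher--KPP type nonlinearities on $[0,1]$. The strategy has three ingredients: obtain strict positivity of $u$ at some positive time, build a compactly supported subsolution that is trapped in $[0,\alpha]$, and invoke the result from \cite{AW78} after a rescaling by $\alpha$.

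First I would note that since $f$ is $C^1$ with $f(0)=0$ and $u$ takes values in $[0,1]$, there is a constant $M$ with $f(u)\ge -Mu$ on $[0,1]$, so $e^{Mt}u$ is a supersolution of the heat equation. Hence $e^{Mt}u(t,x)\ge (e^{t(\sigma^2/2)\Delta}v)(x)$, and strict positivity of the heat kernel gives $u(t,x)>0$ on $(0,\infty)\times\R^d$ whenever $v\not\equiv 0$. In particular, for any $t_0>0$ and $R>0$, the number $\delta:=\inf_{|x|\le R}u(t_0,x)$ is strictly positive, so I can choose a continuous $v_0:\R^d\to[0,\min(\delta,\alpha)]$, not identically zero, supported in the closed ball of radius $R$, and satisfying $v_0\le u(t_0,\cdot)$.

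Now let $\underline u$ be the solution of \eqref{rdpde} with initial data $v_0$. Since $v_0\le\alpha$ and both $0$ and $\alpha$ are stationary solutions (as $f(0)=f(\alpha)=0$), the comparison principle gives $0\le\underline u(t,x)\le\alpha$ everywhere; thus $\underline u$ lives in $[0,\alpha]$, where $f$ is a bona fide KPP-type reaction ($f\ge0$ with zeros only at the endpoints, $f'(0)>0$ and $f'(\alpha)<0$). Comparing initial data also gives $\underline u(t,x)\le u(t_0+t,x)$. The rescaled function $\hat u:=\underline u/\alpha$ solves $\partial_t\hat u=(\sigma^2/2)\Delta\hat u+\hat f(\hat u)$ with $\hat f(s):=f(\alpha s)/\alpha$ satisfying the standard Fisher--KPP hypotheses on $[0,1]$. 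The hair-trigger spreading theorem of \cite{AW78} then yields a critical speed $c^*>0$ such that for every $c\in(0,c^*)$, $\liminf_{t\to\infty}\inf_{|x|\le ct}\hat u(t,x)\ge 1$. Setting $w:=c/2$ for any such $c$ and noting that the fixed shift $t_0$ is asymptotically negligible, $\liminf_{t\to\infty}\inf_{|x|\le 2wt}u(t,x)\ge\alpha$, as required.

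The heavy lifting is all packaged inside the Aronson--Weinberger spreading theorem, and the remainder is a routine comparison argument. The one step that deserves a little care is the initial positivity: one must check that $v\not\equiv 0$ really does force $u(t,\cdot)>0$ for every $t>0$, which the supersolution-of-heat-equation argument above handles cleanly. Also crucial (and the reason we recover \emph{exactly} $\alpha$, rather than some smaller value, as the asymptotic lower bound) is the fact that $\alpha$ is a stationary upper equilibrium of the original PDE, which is what traps $\underline u$ inside $[0,\alpha]$ and lets us apply the KPP spreading result after rescaling.
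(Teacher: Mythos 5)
Your proof is correct, and it takes a genuinely different route from the paper's. The paper derives Proposition~\ref{prop:pde1} as a corollary of Proposition~\ref{prop:pde4} (with $0$ and $1$ reversed and restricted to $(0,\alpha)$), so it ultimately rests on the bistable traveling-wave machinery of Propositions~\ref{prop:pde2}--\ref{prop:pde3} (Fife--McLeod), with AW78 Theorem~3.1 used only for the initial ``hair-trigger'' step of getting $u\ge\alpha/2$ on a fixed ball. You instead bypass the traveling-wave apparatus entirely: you establish strict positivity at positive times by comparing with a supersolution of the heat equation, trap a subsolution between the steady states $0$ and $\alpha$ (using $f(\alpha)=0$) via the parabolic comparison principle, rescale by $\alpha$, and then invoke the monostable spreading theorem of Aronson--Weinberger directly on $[0,1]$. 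Both arguments pass through AW78, but use different parts of it. Your version is more self-contained for this particular proposition; the paper's has the advantage of reusing Proposition~\ref{prop:pde4}, which is needed anyway for Assumption~\ref{a2}, and of producing an exponential rate of approach to $\alpha$ rather than just a $\liminf$. One small terminological quibble: the rescaled $\hat f$ is merely \emph{monostable} (positive on $(0,1)$, nondegenerate at the endpoints), not necessarily of KPP/Fisher type (which would additionally require $\hat f(s)\le \hat f'(0)s$); fortunately the AW78 spreading theorem covers the general monostable case, so your argument goes through, and you are right that $\hat f'(1)=f'(\alpha)<0$ is the nondegeneracy condition needed to recover exactly $1$ (i.e.\ $\alpha$) as the limit rather than some smaller value.
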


We also will need an exponential rate of convergence in this case under a stronger condition on the initial conditon.  We formulate it
for $f<0$ on $(0,1)$. The brief proofs of Propositions~\ref{prop:pde1} and \ref{prop:pde4} are given at the beginning of Section~\ref{lockill}.

\begin{prop}\label{prop:pde4} Assume $f<0$ on $(0,1)$ and $f'(0)<0$.  There is a $w>0$, and if $\delta>0$ there
are positive constants $L_\delta$, $c=c_\delta$, and $C=C_\delta$ so that if $L\ge L_\delta$ and
 $v(x)\le 1-\delta$ for $|x|\le L$, then
$$u(t,x)\le Ce^{-ct}\quad\hbox{ for }|x|\le L+2wt.$$
\end{prop}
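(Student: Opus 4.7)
The plan is to convert the desired exponential decay into a spreading statement via the transformation $\tilde u = 1-u$, invoke Proposition~\ref{prop:pde1}, and extract the exponential rate from the linearization of $f$ near $0$.

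\emph{Transformation and spreading.} By the comparison principle, it suffices to treat the worst-case initial data $v_0(x) = (1-\delta)\one_{|x|\le L} + \one_{|x|>L}$, with corresponding solution $u_0$. Set $\tilde u = 1-u_0$, which satisfies $\tilde u_t = \tfrac{\sigma^2}{2}\Delta\tilde u + \tilde f(\tilde u)$ with $\tilde f(s) = -f(1-s) > 0$ on $(0,1)$, $\tilde f(0) = \tilde f(1) = 0$, and $\tilde f'(1) = f'(0) < 0$. The initial condition is $\delta\one_{|x|\le L}$. Applying Proposition~\ref{prop:pde1} to $\tilde u$ (or, when $\tilde f'(0) = f'(1)$ vanishes, to a subsolution with a modified reaction $\tilde g \le \tilde f$ chosen to satisfy the Proposition's hypotheses on some $[0,\alpha]$, which is possible since $\tilde f$ is strictly positive on compact subsets of $(0,1)$) produces a spreading speed $w^* > 0$ depending only on $f$, and constants $L_\eta, T_\eta$ (depending on $\delta$) for each $\eta>0$ such that, for $L \ge L_\eta$,
\[
u_0(t,x) = 1-\tilde u(t,x) \le \eta, \qquad |x|\le L+2w^*t,\ t\ge T_\eta.
\]
The $L$ shift (rather than just $2w^* t$) comes from translation invariance of the PDE.

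\emph{Exponential rate and Feynman-Kac.} Since $f'(0) < 0$ and $f < 0$ on $(0,1)$, pick $\eta>0$ small enough and $c_0 = c_0(\delta) > 0$ with $f(u) \le -c_0 u$ on $[0,\eta]$; then on $R_s := \{|y|\le L+2w^*s\}$, $u_0$ satisfies the linear inequality $(u_0)_t \le \tfrac{\sigma^2}{2}\Delta u_0 - c_0 u_0$. Fix $w \in (0, w^*)$. Comparison on the moving domain $R_\cdot$ with boundary value $\le 1$ and initial value $\le\eta$, expressed via Feynman-Kac with Brownian motion killed at rate $c_0$, gives
\[
u_0(t,x) \le \eta\,e^{-c_0(t-T_\eta)} + P^x\bigl(\text{BM exits } R_\cdot \text{ before time } t-T_\eta\bigr)
\]
for $|x|\le L+2wt$, $t\ge T_\eta$. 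The strict speed gap $w^* - w > 0$ ensures that, running the BM backward from time $t$, its gap to the (shrinking) moving boundary of $R_\cdot$ is of order $2(w^*-w)t$, so by Gaussian tail bounds the exit probability is exponentially small in $t$. Taking $c_\delta$ to be a suitable positive multiple of $c_0$ and absorbing the transient $[0,T_\eta]$ (where $u_0 \le 1$) into $C_\delta$ yields $u_0(t,x) \le C_\delta e^{-c_\delta t}$ in the claimed region.

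\emph{Main obstacle.} The principal difficulty is producing the spreading estimate with the shift $L$ and handling the case $\tilde f'(0) = 0$, both resolved by comparison with a modified-reaction subsolution together with translation invariance. Equally important is the strict speed gap $w < w^*$ used in the Feynman-Kac step: without it, Brownian trajectories starting near $|x|=L+2wt$ would exit the moving good region too quickly to guarantee exponentially many killings.
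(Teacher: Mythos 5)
Your route — apply Proposition~\ref{prop:pde1} to $1-u$ to first make $u$ small on a linearly expanding region, then bootstrap to exponential decay by linearizing $f$ near $0$ and running Feynman--Kac on the moving good region — is genuinely different from the paper's, which instead extends $f$ past $1$ to a bistable $\tilde f$ on $[0,1+\delta_0]$ with $\int\tilde f<0$ and invokes Proposition~\ref{prop:pde3} (hence the Fife--McLeod traveling-wave estimate). Your second step, however, contains a real error. Write $\tau$ for the first backward exit time of Brownian motion started at $x$ from the region $\{(s',y):|y|\le L+2w^*(t-s')\}$. Because this region \emph{shrinks} in backward time $s'$ (at speed $2w^*$), while the Gaussian fluctuation of the walk only grows like $\sqrt{s'}$, the undiscounted exit probability $P^x(\tau<t-T_\eta)$ tends to $1$ as $t\to\infty$: a Brownian path started near $|x|=L+2wt$ and staying roughly still is overtaken by the contracting boundary at $s'\approx (1-w/w^*)t$, and even from the center the path wanders out of the $O(1)$ ball $R_{T_\eta}$ by time $t$. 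So the second term in your bound is not small, and the Gaussian-tail reasoning (which is valid for a \emph{fixed} domain of radius $L+2w^*t$) does not transfer to the moving domain. The fix is that the exit event must be weighted by the discount it has accrued: the Feynman--Kac representation gives $u_0(t,x)\le \eta\,e^{-c_0(t-T_\eta)}+E^x\bigl[e^{-c_0\tau}\mathbf 1(\tau<t-T_\eta)\bigr]$, and the second term is handled by splitting on $\tau\lessgtr \tfrac12(1-w/w^*)t$: on the first event the walk must have traveled distance $\gtrsim (w^*-w)t$ in time $\le\tfrac12(1-w/w^*)t$, which is exponentially unlikely by Gaussian tails; on the second, $e^{-c_0\tau}\le e^{-c_0(1-w/w^*)t/2}$.

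There is a second, smaller issue. Proposition~\ref{prop:pde4} does not assume $f'(1)\neq 0$ (that blanket assumption is introduced only afterward, for the case analysis in Section~\ref{ss:PDE}), so your first step must handle $\tilde f'(0)=f'(1)=0$. Your proposed fix — replace $\tilde f$ by $\tilde g\le\tilde f$ satisfying Proposition~\ref{prop:pde1}'s hypotheses — cannot work: if $\tilde f'(0)=0$ then $\tilde f(s)=o(s)$, and any $\tilde g\le\tilde f$ with $\tilde g(0)=0$ has $\tilde g'(0)\le 0$, contradicting the hypothesis $\tilde g'(0)>0$. A repair is to dominate $u$ (not $1-u$) by the solution of the equation with reaction $\bar f(u)=\max\{f(u),-cu(1-u)\}$ for small $c>0$, which still satisfies $\bar f<0$ on $(0,1)$, $\bar f'(0)<0$, and now $\bar f'(1)=c>0$; your argument then applies verbatim to $\bar f$. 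The paper avoids both issues at once by appealing to the traveling-wave machinery, which directly delivers the exponential rate and, via Fife--McLeod, tolerates degeneracy at the interior zero.
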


There are different cases depending on the number of solutions of $f(u)=0$ in $(0,1)$.
In all cases, we suppose that $f'(0)\neq 0$ and $f'(1)\neq 0$.

\medskip
\noindent
{\it Case I: $f$ has zero roots in $(0,1)$.} In this case we can apply Propositions~\ref{prop:pde1} (with $\alpha =1$) and \ref{prop:pde4}, and their obvious analogues for $-f$. 

\medskip
\noindent
{\it Case II: $f$ has one root $\rho\in(0,1)$.} There are two
possibilities here.\hfil\break 

(i) $f'(0)> 0$ and $f'(1)<0$ and so the interior fixed point $\rho\in(0,1)$ is attracting. In this case we will also assume $f'(\rho)\neq 0$. 
Then two applications of Proposition \ref{prop:pde1} show that 
if $v\not\equiv 0$ and $v\not\equiv 1$
\beq\label{pde5}
\lim_{t\to\infty} \sup_{|x| \le wt} |u(t,x)-\rho| = 0.
\eeq

(ii) $f'(0)<0$ and $f'(1)>0$, so that 0 and 1 are locally attracting and $\rho\in(0,1)$ is unstable. In this case the limiting behavior of the PDE is determined
by the speed $r$ of the traveling wave solutions, i.e., functions $w$ with $w(-\infty)=\rho$ 
and $w(\infty)=0$ so that $u(t,x) = w(x-rt)$ solves the PDE. The next result was first proved
in $d=1$ by Fife and McLeod \cite{FM77}. See page 296 and the appendix of \cite{DN94}
for the extension to $d>1$ stated below as Proposition~\ref{prop:pde2}. The assumption there on the non-degeneracy of the interior zeros are not necessary (see Fife and McLeod \cite{FM77}). These references also show that 
\begin{equation}\label{signr} \sgn(r)=\sgn\Bigl( \int_0^1f(u)du\Bigr).
\end{equation}
$|x|_2$ will denote the Euclidean norm of $x$ and the conditions of Case II(ii) will apply in the next two propositions.

\begin{prop} \label{prop:pde2} 
Suppose $\int_0^1f(u)du<0$ and fix $\eta>0$.  
If $\delta>0$ there are positive constants $L^0_\delta$, $c_0=c_0(\delta)$, and $C_0=C_0(\delta)$ so that if $L \ge L^0_\delta$ and $v(x) \le \rho-\delta$ when $|x|_2\le L$, then
$$
u(t,x) \le C e^{-ct} \quad\hbox{for $|x|_2 \le (|r|-\eta)t$}.
$$
\end{prop}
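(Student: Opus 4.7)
The plan is to adapt the Fife--McLeod \cite{FM77} bistable wavefront super-solution argument to the radial setting, as in the appendix of Durrett--Neuhauser \cite{DN94}. By the parabolic comparison principle I first enlarge $v$ to the radially symmetric
\[
V(x)=(\rho-\delta)\mathbf{1}\{|x|_2\le L\}+\mathbf{1}\{|x|_2>L\}\ge v(x),
\]
and work with the corresponding radial solution $\tilde u\ge u$. Under the hypotheses $\int_0^1 f<0$ and $f'(0),f'(1)$ non-zero (so $0,1$ are non-degenerate stable zeros in the bistable geometry of Case~II(ii)), there is a unique (up to translation) monotone traveling wave $\phi$ with $\phi(-\infty)=1$, $\phi(+\infty)=0$, $\phi'<0$, and exponential decay at both ends, satisfying $(\sigma^2/2)\phi''-|r|\phi'+f(\phi)=0$.

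I would try the ansatz $\bar u(t,x)=\phi(-|x|_2+s(t))+q(t)$ with $s(t)=s_0+(|r|-\eta/2)t$ and $q(t)=Qe^{-\lambda t}$, for constants $s_0,Q,\lambda>0$ to be chosen. Writing $R=|x|_2$, a direct computation using the wave equation gives
\[
\bar u_t-\tfrac{\sigma^2}{2}\Delta\bar u-f(\bar u)=\Bigl[-\tfrac{\eta}{2}+\tfrac{\sigma^2(d-1)}{2R}\Bigr]\phi'(-R+s(t))+q'(t)-\bigl[f(\phi+q)-f(\phi)\bigr].
\]
Since $\phi'<0$, the first summand is $\ge 0$ on $\{R\ge R_0:=\sigma^2(d-1)/\eta\}$ and bounded below by a positive constant on $\{R\ge 2R_0\}\cap\{\phi\in[2Q,1-2Q]\}$ (where $-\phi'$ is bounded away from zero). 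In the tail regions $\{\phi\le 2Q\}\cup\{\phi\ge 1-2Q\}$, the mean-value theorem combined with $f'(0),f'(1)<0$ gives $f(\phi+q)-f(\phi)\le -\lambda q$ once $Q$ is small and $\lambda<\min(-f'(0),-f'(1))$, so the $q$-terms are $\ge 0$; in the intermediate compact band of $R$-values, the positive first summand dominates the Lipschitz error in $f$, provided $L$ (and hence $s_0$) is large enough that this band sits inside $\{R\ge 2R_0\}$. Thus $\bar u$ is a super-solution on $\{R\ge R_0\}$.

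The initial dominance $\bar u(0,\cdot)\ge V$ is secured by taking $s_0=L-z^*$ with $\phi(z^*)=1-Q$: for $R\ge L$, $-R+s_0\le-z^*$ gives $\phi\ge 1-Q$ and hence $\bar u(0,R)\ge 1=V(R)$. On $\{R\le L\}$ the inequality $\bar u(0,\cdot)\ge V\equiv\rho-\delta$ requires a preliminary short-time comparison: for $t$ of order $1$ the diffusion has not propagated the boundary mass from $\{R>L\}$ across a distance $L$, so the interior values are bounded above by the ODE $y'=f(y)$ started from $\rho-\delta$, which decays since $f<0$ on $(0,\rho)$; taking $L\ge L^0_\delta$ allows this ODE comparison to drive the interior values below $Q$ by some fixed time $t_*$, after which the traveling wave super-solution takes over from a configuration it dominates. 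The interior ball $\{R<R_0\}$ is then handled by parabolic comparison on a cylinder with lateral boundary from $\bar u|_{R=R_0}$, which is exponentially small in $t$. Finally, for $R\le(|r|-\eta)t$, the inequality $-R+s(t)\ge s_0+\eta t/2$ and the exponential decay of $\phi$ at $+\infty$ give $\phi\le Ce^{-ct}$, which combined with $q(t)\le Qe^{-\lambda t}$ yields the claimed $\tilde u\le C'e^{-c't}$.

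The main obstacle is the tension between the super-solution inequality, which wants $Q$ small, and the initial dominance $\bar u(0,\cdot)\ge V$, which wants $Q$ large (in particular $\ge\rho-\delta$ on the interior ball) together with the wave profile very close to $1$ on the exterior. This tension is precisely the source of the hypothesis $L\ge L^0_\delta$, which buys enough space for the preliminary time step to lower the interior values below the target $Q$ before the traveling wave super-solution is activated. The radial curvature correction $\sigma^2(d-1)/(2R)$ is absorbed into the $\eta/2$ margin but forces the separate compact treatment of the ball $\{R<R_0\}$.
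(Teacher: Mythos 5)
The paper does not prove Proposition~\ref{prop:pde2} itself; as stated just before it, the result is quoted from Fife and McLeod \cite{FM77} for $d=1$ and from page~296 and the appendix of \cite{DN94} for $d>1$, which use precisely the radial bistable traveling-wave super-solution you propose, together with a short preliminary comparison to lower the interior data. So your route matches the literature the paper relies on rather than anything in the paper itself. Two points in your write-up need repair. First, a sign error: with $\phi$ decreasing and $\phi(-\infty)=1$, the equation $\phi(z^*)=1-Q$ forces $z^*<0$, so your $s_0=L-z^*$ exceeds $L$ and then for $R\ge L$ one has $-R+s_0\le -z^*>0$, where $\phi$ is near $0$, not near $1-Q$. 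You want $s_0=L+z^*$; then $-R+s_0\le z^*$ for $R\ge L$ and hence $\phi(-R+s_0)\ge\phi(z^*)=1-Q$.

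Second, the treatment of $\{|x|_2<R_0\}$ by ``parabolic comparison on a cylinder with lateral boundary from $\bar u|_{R=R_0}$'' is circular as written: the comparison $\tilde u\le\bar u$ on $\{|x|_2\ge R_0\}$ that is supposed to furnish the lateral boundary control at $R=R_0$ itself requires exactly that control as part of its parabolic boundary data. Moreover $\bar u(t,x)=\phi(-|x|_2+s(t))+q(t)$ is genuinely not a viscosity super-solution at $x=0$: it has a conical local minimum there (with $|\nabla\bar u|\to|\phi'(s(t))|>0$), so smooth test functions touching $\bar u$ from below at $0$ with $\nabla\psi(0)=0$ can have arbitrarily large $\Delta\psi(0)$, and the super-solution inequality fails. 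The standard fix (and the one you should cite or carry out) is to replace $|x|_2$ by a smooth convex $g(|x|_2)$ with $g(R)=R$ for $R\ge R_0$, $g'(0)=0$, $g''$ bounded. For $R<R_0$ the curvature terms that arise are bounded, and they are dominated by $q'(t)-[f(\phi+q)-f(\phi)]\ge c\,Qe^{-\lambda t}$ because $|\phi'(s(t)-g(R))|\le Ce^{-\mu(s(t)-R_0)}$ is smaller once $L$ (hence $s_0$) is large and $\lambda<\mu(|r|-\eta/2)$, where $\mu>0$ is the exponential decay rate of $\phi$ at $+\infty$. Your short-time preliminary step is the right idea, but ``the diffusion has not propagated the boundary mass'' should be made precise since the heat semigroup has infinite propagation speed; what one uses is a super-solution of the form $y(t)+e^{\|f'\|_\infty t}P_t\mathbf 1_{\{|x|_2>L\}}$ with $y'=f(y)$, $y(0)=\rho-\delta$, whose second term is $O(e^{-cL^2/t_*})$ on $\{|x|_2\le L/2\}$.
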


\noindent
For the block construction it is useful to have a version of the last result for the
$L^\infty$ norm, and which adds an $L$ to the region in which the result is valid.

\begin{prop} \label{prop:pde3} 
Suppose $\int_0^1f(u)du<0$.
There is a $w>0$, and if $\delta>0$ there
are positive constants $L_\delta$, $c=c_\delta$ and $C=C_\delta$ so that if $L\ge L_\delta$ and
 $v(x)\le \rho-\delta$ for $|x|\le L$, then
$$u(t,x)\le Ce^{-ct}\quad\hbox{ for }|x|\le L+2wt.$$
\end{prop}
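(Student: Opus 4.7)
The plan is to bootstrap Proposition~\ref{prop:pde2} into the larger, $L^\infty$-shaped region $\{|x|\le L+2wt\}$ by a translation argument. Since the PDE is translation-invariant, for each target $x_0$ in this region we shift coordinates so that Proposition~\ref{prop:pde2} applies to the shifted initial datum around a suitable centre $x_*$, and then read off the bound on $u(t,x_0)$. The point is that when $x_0$ lies near the boundary of the $L^\infty$-cube we must take $x_*\ne x_0$, and so verify (a) that the Euclidean ball of radius $L^0_\delta$ around $x_*$ still fits inside the cube $\{|z|\le L\}$ where $v\le\rho-\delta$, and (b) that $|x_0-x_*|_2\le(|r|-\eta)t$ so that $x_0-x_*$ sits in the decay region of Proposition~\ref{prop:pde2}.

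First I would fix $w,\eta>0$ (independent of $\delta$) with $2\sqrt d\,w<|r|-\eta$; this determines the constant $w$. Set $L_\delta:=L^0_\delta$, $c_\delta:=c_0(\delta)$, and $t_1:=\sqrt d\,L^0_\delta/(|r|-\eta-2\sqrt d\,w)$, all depending only on $\delta$. Given $x_0$ with $|x_0|\le L+2wt$, let $x_*$ be the $L^\infty$-projection of $x_0$ onto the closed $L^\infty$-ball of radius $L-L^0_\delta$, so $x_*=x_0$ when $|x_0|\le L-L^0_\delta$ and $x_*=(L-L^0_\delta)x_0/|x_0|$ otherwise. Then $|x_*|\le L-L^0_\delta$, so
\[\{z:|z-x_*|_2\le L^0_\delta\}\subset\{z:|z|\le L\},\]
and the translated datum $\tilde v(y):=v(y+x_*)$ satisfies $\tilde v\le\rho-\delta$ on $\{|y|_2\le L^0_\delta\}$. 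By translation-invariance Proposition~\ref{prop:pde2} applies to $\tilde u(t,y):=u(t,y+x_*)$, giving $\tilde u(t,y)\le C_0(\delta)e^{-c_0(\delta)t}$ for $|y|_2\le(|r|-\eta)t$.

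It remains to check that $y:=x_0-x_*$ lies in this decay region for $t\ge t_1$. By construction $|x_0-x_*|\le(|x_0|-(L-L^0_\delta))^+\le L^0_\delta+2wt$, so $|x_0-x_*|_2\le\sqrt d(L^0_\delta+2wt)\le(|r|-\eta)t$ for $t\ge t_1$, the last inequality being exactly how $t_1$ is defined. This yields $u(t,x_0)\le C_0(\delta)e^{-c_0(\delta)t}$ whenever $t\ge t_1$, $L\ge L_\delta$, and $x_0$ lies in the cube. For $0\le t<t_1$ the trivial bound $u\le 1\le e^{c_0(\delta)t_1}e^{-c_0(\delta)t}$ closes the remaining bounded-time gap, and the stated conclusion holds with $C_\delta:=\max\{C_0(\delta),e^{c_0(\delta)t_1}\}$. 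The main point to watch is that the constants must not pick up any hidden $L$-dependence; this is exactly why we impose the strict inequality $2\sqrt d\,w<|r|-\eta$, which forces $t_1$ to be a function of $\delta$ alone, so the trivial bound has only a bounded-time gap to absorb.
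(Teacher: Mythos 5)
Your proof is correct and follows essentially the same route as the paper: decompose a point $x$ in the cube $\{|x|\le L+2wt\}$ as $x_*+y$ with $x_*$ inside $\{|z|\le L-L^0_\delta\}$ and $|y|$ of order $L^0_\delta+2wt$, apply the translation invariance of \eqref{rdpde} together with Proposition~\ref{prop:pde2} at $x_*$, and close the bounded-time gap $t<t_1$ with the trivial bound $u\le1$. The only cosmetic differences are that the paper fixes $\eta=|r|/3$ and $w=|r|/(6\sqrt d)$ outright rather than leaving them constrained by $2\sqrt d\,w<|r|-\eta$, and that what you call the ``$L^\infty$-projection'' is actually a radial scaling (which is fine, since it still lands in the required ball with the same displacement estimate).
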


\noindent
The short derivation of Proposition \ref{prop:pde3} from Proposition \ref{prop:pde2} is given at the beginning of Section \ref{lockill}.

In the next three subsections we first motivate our main results  on coexistence
and domination by a single type (Theorems~\ref{thm:exist} and \ref{thm:nonexist}) by illustrating their use in three distinct families of examples.

\subsection{Lotka-Volterra systems}
\label{ss:LV}

We now apply Theorem~\ref{conv} to a stochastic spatial
Lotka-Volterra model introduced by Neuhauser and Pacala \cite{NP}.  In addition
to the kernel $p$ for the voter model, the flip rates depend on two non-negative
competition parameters, $\alpha_0$ and $\alpha_1$, and are
given by 
\begin{align}
\label{lvrates}
\nonumber  c_{LV}(x,\xi)=&f_1(f_0+\alpha_0f_1)(1-\xi(x))+f_0(f_1+\alpha_1f_0)\xi(x)\\
  =&c^v(x,\xi)+(\alpha_0-1)f_1^2(1-\xi(x))+(\alpha_1-1)f_0^2\xi(x).
\end{align}
In words, a plant of type $i$ at $x$ dies with rate
$f_i(x,\xi)+\alpha_if_{1-i}(x,\xi)$ and is immediately replaced
by the type of a randomly chosen neighboring plant, which will be
$1-i$ with probability $f_{1-i}(x,\xi)$. The death rate reflects the effects of competition from neighboring sites. The constant $\alpha_i$ represents the effect of competition on a type $i$ individual from neighbors of the opposite type.  If $\alpha_i<0$ for both $i=0,1$, then individuals prefer to be surrounded by the opposite type and ecological arguments suggest coexistence will hold. Conversely if both $\alpha_i>1$, we expect no coexistence.  

We refer to the associated Feller process $(\xi_t, t\ge 0)$
as the $LV(\alpha_0,\alpha_1)$ process.
Proposition~8.1 of \cite{CP-2} implies that
\begin{equation}\label{moncond}\hbox{if }\alpha_0\wedge\alpha_1\ge 1/2\hbox{ then }LV(\alpha_0,\alpha_1)\hbox{
    is monotone (or attractive)}.
\end{equation} 
Write $LV(\alpha)\le LV(\alpha')$ if $LV(\alpha')$
stochastically dominates $LV(\alpha)$, that is, if one can define these
processes, $\xi$ and $\xi'$, respectively, on a common probability space
so that $\xi\le \xi'$ pointwise a.s.  Then, as should be
obvious from the above interpretation of $\alpha_i$ (see
(1.3) of \cite{CP-2}),

\begin{align}\label{mona}
  &0\le \alpha_0'\le \alpha_0, 0\le \alpha_1\le
  \alpha_1',\hbox{ and either }\alpha_0\wedge\alpha_1\ge
  1/2\\\nonumber&\hbox{or }\alpha_0'\wedge\alpha_1'\ge 1/2,
  \hbox{ implies }LV(\alpha')\le LV(\alpha).
\end{align}

If $\theta_i\in\R$, let $\alpha_i=\alpha_i^\vep=1+\vep^2\theta_i$ and consider the rescaled
Lotka-Volterra process
\begin{equation}\label{resclv}
\xi^\vep_t(x)=\xi_{\vep^{-2}t}(\vep^{-1}x),\quad x\in\vep\Z^d.
\end{equation}
From \eqref{lvrates} we see that this process has rates given by \eqref{xirates} with
\beq\label{LVh} h_i^\vep(x,\xi)=h_i(x,\xi)=\theta_{1-i}f_i(x,\xi)^2,\, i=0,1,\quad x\in\Zd,\,\xi\in\{0,1\}^{\Zd}.
\eeq
To verify \eqref{hgrepn} take $0<\vep_1\le (\theta_0^-)^{-1/2}\wedge(\theta_1^-)^{-1/2}$,  
$N_0=2$, $Y^1,Y^2$ chosen independently according to $p$,
and define for $i=0,1$, 
\begin{equation}\label{LVgdef}
g^\vep_i(\eta_1,\eta_2)=g_i(\eta_1,\eta_2)=\vep_1^{-2}\eta_1(1-\eta_2)+(\vep_1^{-2}+\theta_{1-i})1(\eta_1=\eta_2=i)\ge 0.\end{equation}
Then
\begin{align*}
h^\vep_i(x,\xi)&=-\vep_1^{-2}f_i(x,\xi)+\vep_1^{-2}(f_i-f_i^2)(x,\xi)+(\vep_1^{-2}+\theta_{1-i})f_i(x,\xi)^2\\
&=-\vep_1^{-2}f_i(x,\xi)+E(g^\vep_i(\xi(x+Y^1),\xi(x+Y^2))),
\end{align*}
as required.  Therefore $\xi^\vep$ is a voter model perturbation.  

To calculate the limiting reaction function in this case consider the system of coalescing random walks $\{\hat B^x:x\in\Zd\}$ used in the duality formula \eqref{prodform}.  Let $\{e_1,e_2\}$ be i.i.d.~with law $p(\cdot)$ and independent of the $\{\hat B^x:x\in\Zd\}$. If we abuse our earlier notation and let $\langle\cdot\rangle_u$ denote expectation on the product space where $(e_1,e_2)$ and $\xi$ are independent, and $\xi$ is given the voter equilibrium with density $u$, then from \eqref{fdef}, \eqref{LVh} and the fact that $f_i(0,\xi)^2=P_e(\xi(e_1)=\xi(e_2)=i)$, we have
$$f(u) =\theta_0\langle(1-\xi(0))\xi(e_1)\xi(e_2)\rangle_u-\theta_1\langle\xi(0)(1-\xi(e_1))(1-\xi(e_2))\rangle_u.$$
In view of \eqref{prodform}
we will be interested in various coalescence probabilities. For example,
$$
p(x|y,z) = P(\exists t\ge 0 \text{ such that } \hat B^{y}_t=\hat B^{z}_t,\ \hbox{and }\forall t\ge 0,\
  \hat B^{y}_t\neq \hat B^x_t\hbox{ and }\hat
  B^{z}_t\neq \hat B^x_t )
$$
and 
$$p(x|y|z)=P(\hat B^x_t,\hat B^y_t\hbox{ and }\hat
B^z_t\hbox{ are all distinct for all }t).$$
In general walks within the same group coalesce and those separated by at least one bar do not.
If we define
\begin{equation}
\label{p2p3}
  p_2 = p(0|e_1,e_2), \quad
  p_3 = p(0|e_1|e_2),
\end{equation}
where the expected value is taken over $e_1$, $e_2$, 
then by the above formula for $f$ and \eqref{prodform},
\begin{align}\nonumber
  f(u)
  &=\theta_0u(1-u)p_2+\theta_0u^2(1-u)p_3-\theta_1(1-u)up_2-\theta_1(1-u)^2up_3\\  
  \label{lvf}&=u(1-u)[\theta_0p_2-\theta_1(p_2+p_3)+up_3(\theta_0+\theta_1)].
\end{align}

To see what this might say about the Lotka-Volterra model introduce
\begin{equation}\label{u*def}
  u^*(\theta_1/\theta_0) = \frac{ \theta_1 (p_2+p_3) - \theta_0 p_2 }{
    p_3(\theta_1 + \theta_0)}
\end{equation}
so that $f(u)=0$ for $u=0,1$ or $u^*(\theta_1/\theta_0)$. If
\begin{equation}\label{m0def}
  m_0\equiv {p_2\over p_2+p_3},
\end{equation}
then $u^*(m)$ increases from $0$ to $1$ as $m$ increases from $m_0$ to $m_0^{-1}$.  
We slightly abuse the notation and write $u^*$ for $u^*(\theta_1/\theta_0)$.

To analyze the limiting PDE we decompose the $\theta_0-\theta_1$ plane into $5$ open sectors 
drawn in Figure \ref{fig:LVpt} on which the above $0$'s are all simple. 

\begin{figure}
\begin{center}
\begin{picture}(260,260) 
\put(65,67){$R_1$}
\put(120,91){$R_3$} 
\put(87,138){$R_2$}
\put(115,137){$R_4$} 
\put(128,117){$R_5$}
\put(95,95){$\bullet$}
\put(220,88){$\theta_0$}
\put(85,230){$\theta_1$}
\put(208,210){$\hbox{slope }1$}
\put(106,239){$\hbox{slope }m_0^{-1}$}
\put(220,125){$\hbox{slope }m_0$}
\put(97,97){\line(1,1){110}}
\put(97,97){\line(-1,-4){20}}
\put(97,97){\line(1,4){35}}
\put(97,97){\line(4,1){120}}
\put(97,97){\line(-4,-1){80}}
\put(15,45){\line(1,0){60}}
\put(15,45){\line(1,1){15}}
\put(30,60){\line(1,-1){30}}
\put(60,30){\line(1,1){15}}
\put(105,60){\line(1,0){40}}
\put(105,60){\line(1,1){20}}
\put(125,80){\line(1,-1){20}}
\put(40,135){\line(1,0){40}}
\put(40,135){\line(1,-1){20}}
\put(60,115){\line(1,1){20}}
\put(160,150){\line(1,0){60}}
\put(160,150){\line(1,-1){10}}
\put(170,140){\line(1,1){30}}
\put(200,170){\line(1,-1){20}}
\put(130,200){\line(1,0){60}}
\put(130,200){\line(1,-1){20}}
\put(150,180){\line(1,1){30}}
\put(180,210){\line(1,-1){10}}
\end{picture}
\caption{Phase diagram near (1,1) for the Lotka-Volterra model
with the shape of $f$ in the regions.}
\label{fig:LVpt}
\end{center}
\end{figure}
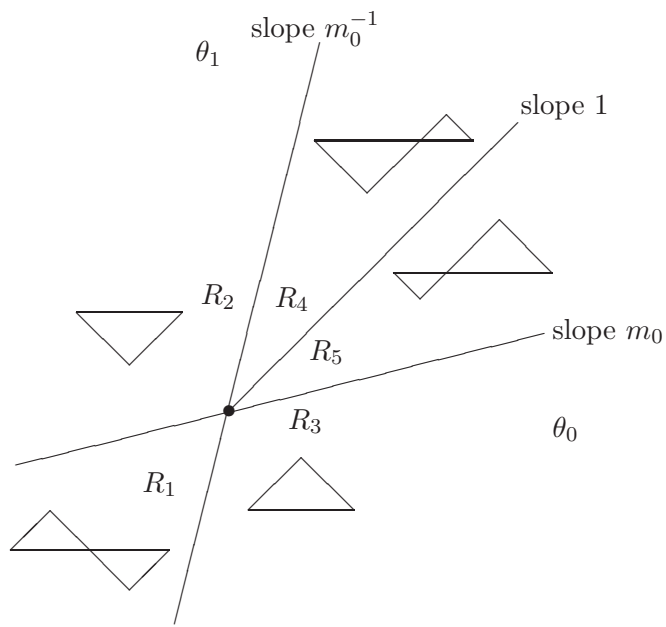

\begin{itemize}
\item If $\theta\in R_1$, $f>0$ on $(0,u^*)$, $f<0$ on $(u^*,1)$,
so $u^*\in(0,1)$ is an attracting fixed point for the ODE.
Then \eqref{pde5} shows the  PDE solutions will converge to $u^*$ given a non-trivial initial condition in $[0,1]$.
As a result
we expect coexistence in the particle system.  

\item If $\theta\in
R_2$, $f<0$ on $(0,1)$, $0$ is an attracting fixed point for
the ODE. Proposition \ref{prop:pde1} implies solutions of the PDE will converge to $0$ given
a non-trivial initial condition and we expect $0$'s to win.

\item If $\theta\in R_3$, $f>0$ on $(0,1)$, $1$ is an attracting
fixed point for the ODE and so by the reasoning from the previous case we expect $1$'s to win.

\item
On $R_4\cup R_5$, $u^*\in (0,1)$ is an unstable fixed point,
while $0$ and $1$ are attracting fixed points for the ODE.
This is case 2 of Durrett and Levin \cite{DL94}, 
so we expect the winner of the competition to be predicted 
by the direction of movement of the speed of the decreasing traveling wave solution
$u(x,t) = w(x-rt)$ with $w(-\infty)=1$ and $w(\infty)=0$. If
$r>0$ then 1's will win and if $r<0$ then 0's will
win. Symmetry dictates that the speed is 0 when
$\theta_0=\theta_1$, so this gives the dividing line between
the two cases and the monotonicity from \eqref{mona} predicts $0$'s
win on $R_4$ while $1$'s win on $R_5$.  Alternatively, by \eqref{signr} 
$r$ has the same sign as $\int_0^1
f(u)\,du$ which is positive in $R_5$ and negative in $R_4$.
\end{itemize}

Our next two results confirm these predictions for $\alpha$ close to
$(1,1)$. For $0\le \eta<1$, define regions that are versions of
$R_1$, $R_2 \cup R_4$ and $R_3 \cup R_5$ shrunken by changing the slopes of the boundary lines:
\[C^\eta=\Bigl\{(\alpha_0,\alpha_1)\in[0,1]^2: {(\alpha_0-1)(1-\eta)\over m_0}<\alpha_1-1<{m_0(\alpha_0-1)\over 1-\eta}
\Bigr\},\]
\begin{align*}\Lambda^\eta_0=\Bigl\{(\alpha_0,\alpha_1)\in(0,\infty)^2:\
  & 0<\alpha_0\le 1,\, m_0(1-\eta)(\alpha_0-1)<\alpha_1-1,\\ 
  &\hbox{or }1\le \alpha_0,\, (1+\eta)(\alpha_0-1)<\alpha_1-1\Bigr\},\\
  \Lambda^\eta_1=\Bigl\{(\alpha_0,\alpha_1)\in(0,\infty)^2:\
  &0<\alpha_0\le 1,\, \alpha_1-1<{\alpha_0-1\over
    m_0(1-\eta)},\\ 
  &\hbox{or }1\le \alpha_0,\,
  \alpha_1-1<(1-\eta)(\alpha_0-1)\Bigr\}.\end{align*}

\begin{thm}\label{thm:coex} For $0<\eta<1$ there is an
  $r_0(\eta)>0$, non-decreasing in $\eta$,  so that for the $LV(\alpha)$:

\noindent
(i) Coexistence holds for $(\alpha_0,\alpha_1)\in C^\eta$ and $1-\alpha_0<r_0(\eta)$.

\noindent
(ii) If $(\alpha_0,\alpha_1)$ is as in (i) and
$\nu_\alpha$ is a stationary distribution of
with $\nu_\alpha(\xi\equiv0\hbox{ or }\xi\equiv 1)=0$, then
  \[\sup_x\Bigl|\nu_\alpha(\xi(x)=1)-u^*\Bigl({\alpha_1-1\over
    \alpha_0-1}\Bigr)\Bigr|\le \eta.\]
\end{thm}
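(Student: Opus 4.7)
The plan is to derive Theorem~\ref{thm:coex} from the general coexistence result (Theorem~\ref{thm:exist}) applied to the rescaled Lotka--Volterra voter model perturbation, by translating $\alpha$ close to $(1,1)$ into $\vep \to 0$ with $\theta$ ranging over a compact subset of the interior-attracting sector $R_1$.

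\textbf{Step 1 (Rescaling).} Given $\alpha \in C^\eta$ with $\alpha_0 < 1$, set $\vep = \sqrt{1-\alpha_0}$; the rescaled process $\xi^\vep$ from \eqref{resclv} is then a voter model perturbation with $\theta_0 = -1$ and $\theta_1 = (\alpha_1-1)/(1-\alpha_0) < 0$. Dividing the defining inequalities of $C^\eta$ by the negative quantity $\alpha_0 - 1$ shows $-\theta_1 \in (m_0/(1-\eta),\,(1-\eta)/m_0) \subset (m_0, 1/m_0)$ (nontrivial exactly when $\eta < 1 - m_0$). Denote the resulting compact set of admissible $\theta$'s by $K^\eta$; note $K^\eta$ shrinks as $\eta$ increases.

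\textbf{Step 2 (Uniform PDE inputs).} For each $\theta \in K^\eta$ the reaction function $f_\theta$ from \eqref{lvf} has simple zeros at $0$, $u^*(-\theta_1) \in (0,1)$, and $1$, with $f_\theta > 0$ on $(0,u^*)$ and $f_\theta < 0$ on $(u^*,1)$; the value $u^*$ and the derivatives at the three zeros depend continuously on $\theta$ and are uniformly non-degenerate on the compact $K^\eta$. Applying Proposition~\ref{prop:pde1} to $f_\theta$ on $(0,u^*)$ and to $u \mapsto -f_\theta(1-u)$ on $(0,1-u^*)$ yields a common speed $w > 0$ and, for every $\delta > 0$, thresholds $T_\delta, L_\delta$ uniform in $\theta \in K^\eta$ such that any PDE solution whose initial data satisfies $\delta \le v \le 1-\delta$ on some ball of radius $L_\delta$ obeys $|u(t,x) - u^*| < \eta/2$ for $|x| \le 2w(t - T_\delta)$, $t \ge T_\delta$.

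\textbf{Step 3 (Application of Theorem~\ref{thm:exist} and conclusion).} The uniform PDE input of Step 2 is exactly what the general coexistence theorem requires. Combined with the hydrodynamic coupling (Theorems~\ref{conv} and \ref{thm:strongconv}), the underlying block/oriented-percolation construction produces an $\vep^*(\eta) > 0$ such that for all $\vep \in (0,\vep^*(\eta))$ and every $\theta \in K^\eta$, $\xi^\vep$ has a coexisting stationary distribution, and moreover every nontrivial stationary distribution $\nu^\vep_\theta$ satisfies $\sup_x |\nu^\vep_\theta(\xi(x)=1) - u^*(-\theta_1)| \le \eta$. Setting $r_0(\eta) := (\vep^*(\eta))^2$, which can be arranged to be non-decreasing in $\eta$ because $K^\eta$ shrinks (so the PDE is more stable), and transferring back to the unscaled process via the space-time rescaling \eqref{resclv}, gives both (i) and (ii).

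\textbf{Main obstacle.} The hardest step is to ensure that the block construction, and in particular the $\eta/2$-closeness of empirical densities to $u^*$, is uniform in $\theta \in K^\eta$ and transfers to \emph{every} stationary distribution rather than only the one constructed as a percolation limit. This hinges on using Theorem~\ref{thm:strongconv} to force the block-averaged density within $\eta/2$ of $u^*$ on a successful block with high probability, and then showing that the oriented-percolation renewal structure propagates this density bound into the invariant law. The uniform PDE control from Step 2 is what allows a single $\vep^*(\eta)$ to work across the entire region $C^\eta$, and is thus the quantitative ingredient that makes part (ii) of the theorem possible.
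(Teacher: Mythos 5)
Your core reduction---set $\vep^2 = 1-\alpha_0$, write $\theta_0=-1$ and $\theta_1=(\alpha_1-1)/(1-\alpha_0)$, check that Proposition~\ref{prop:pde1} and its mirror image verify Assumption~\ref{a1} with $u_*=u^*=u^*(-\theta_1)$, and invoke Theorem~\ref{thm:exist}---is exactly the paper's. The gap is in Step~2 and its use in Step~3. You assert that Proposition~\ref{prop:pde1} can be applied to the family $\{f_\theta : \theta \in K^\eta\}$ so as to produce a single speed $w>0$ and thresholds $L_\delta,T_\delta$ valid uniformly on the compact set, and then that Theorem~\ref{thm:exist} produces a single $\vep^*(\eta)$ valid for every $\theta \in K^\eta$. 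Neither claim is established, and neither is immediate: Proposition~\ref{prop:pde1} (whose short proof runs through Aronson--Weinberger and Proposition~\ref{prop:pde4}) is stated for one $f$, and nothing in the paper shows its constants depend continuously---or even semicontinuously---on $f$; likewise the $\vep$ in Theorem~\ref{thm:exist} is produced by a block construction attached to a \emph{fixed} voter model perturbation, so propagating it uniformly over a compact family of perturbations is a separate task. You correctly single out this uniformity as the crux, but ``this hinges on using Theorem~\ref{thm:strongconv}'' is not an argument that resolves it.

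The paper sidesteps the entire issue with a compactness/contradiction argument that never needs uniform constants in the PDE lemmas or the block construction. It allows $\theta_1^\vep$ to depend on $\vep$, subject only to $-\theta_1^\vep\in[m_0/(1-\eta),(1-\eta)/m_0]$ and $\theta_1^\vep\to\theta_1$; the resulting family, with $g_0^\vep$ now genuinely $\vep$-dependent, still falls within the class covered by Theorem~\ref{thm:exist}, so the conclusion \eqref{exconc} holds for each such one-parameter family for $\vep<\vep_0(\eta)$. If (ii) then failed, one would have $\vep_n\downarrow 0$ and stationary laws $\nu_n$ violating the density bound by a fixed $\kappa>0$; passing to a subsequence along which $\theta_1^{\vep_n}$ converges (compactness of the $\theta$-interval) and applying \eqref{exconc} with some $\eta'<\kappa$ (valid since $C^{\eta}\subset C^{\eta'}$) gives the contradiction, with (i) argued the same way. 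This yields a uniform $r_0(\eta)$ without ever opening up Proposition~\ref{prop:pde1} or Theorem~\ref{thm:exist}. The monotonicity of $r_0$ is then a triviality: $C^{\eta_2}\subset C^{\eta_1}$ when $\eta_1<\eta_2$, and the conclusion in (ii) only weakens as $\eta$ grows, so any $r_0$ that works for $\eta_1$ automatically works for all $\eta_2\ge\eta_1$; one just monotonizes. Your heuristic ``$K^\eta$ shrinks so the PDE is more stable'' is pointing at the same fact but is not a proof. If you wanted to carry out your Step~2 literally, you would end up rederiving this compactness argument inside the PDE proposition---better to do it once, at the level of the theorem's statement, as the paper does.
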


(i) is a consequence of Theorem 4 of \cite{CP-2}, which also applies to more general perturbations.
The main conditions of that result translate into $f'(0)\ge \eta$ and $f'(1)\ge \eta$ in our present setting (see \eqref{f'rep} in Section~\ref{ssec:CPcomp} below).
(ii) sharpens (i) by showing that if $\eta$ is small then the density of 1's in any nontrivial stationary
distribution is close to the prediction of mean-field theory. Durrett and Neuhauser \cite{DN94} prove
results of this type for some systems with fast stirring and $f(1)<0$.   Neuhauser and Pacala \cite{NP} conjectured that coexistence
holds for all $\alpha_0=\alpha_1<1$ (see Conjecture 1 of
that paper) and proved it for $\alpha_i$
sufficiently small. Hence (i) provides further evidence for
the general conjecture. 

The next result is our main contribution to the understanding of Lotka-Volterra models. It shows
that (i) of the previous result is asymptotically sharp, and verifies a
conjecture in \cite{CP-2} (after Theorem~4 in that work). We assume $p$ has finite support but believe this condition is not needed. 

\note{finrange}
\begin{thm}\label{thm:ext} Assume $p(\cdot)$ has finite support. 
  For $0<\eta<1$ there is an $r_0(\eta)>0$, non-decreasing in $\eta$, so that for the $LV(\alpha)$:

  (i) $0$'s take over for $(\alpha_0,\alpha_1)\in\Lambda
  _0^\eta$ and $0\le |\alpha_0-1|<r_0(\eta)$,

  (ii) $1$'s take over for $(\alpha_0,\alpha_1)\in\Lambda
  _1^\eta$ and $0\le |\alpha_0-1|<r_0(\eta)$.
\end{thm}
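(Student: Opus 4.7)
By the symmetry $\xi \leftrightarrow 1-\xi$ (which swaps $\theta_0 \leftrightarrow \theta_1$ and $\Lambda_0^\eta \leftrightarrow \Lambda_1^\eta$), it suffices to prove (i). The plan is to combine a PDE analysis of the reaction function $f$ in \eqref{lvf}, the hydrodynamic limit of Theorem~\ref{thm:strongconv}, a block construction, and the low-density super-Brownian methods from \cite{CP-1,CP-2}. Pick $\vep_0 = \vep_0(\eta)$ small (to be fixed later) and set $r_0(\eta) = \vep_0^2$. For any $(\alpha_0,\alpha_1) \in \Lambda_0^\eta$ with $|\alpha_0-1| < r_0(\eta)$, I choose $\vep^2 = \max(|\alpha_0-1|,|\alpha_1-1|) \le \vep_0^2$, so that $\theta_i = (\alpha_i-1)/\vep^2$ lies in a compact $\eta$-dependent subset $K_\eta$ of the $\theta$-analog of $\Lambda_0^\eta$ with $\max(|\theta_0|,|\theta_1|)=1$, and pass to the rescaled voter perturbation \eqref{resclv}. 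A direct computation gives
\[f'(0) = \theta_0 p_2 - \theta_1(p_2+p_3), \qquad \int_0^1 f(u)\,du = \tfrac{1}{12}(2p_2+p_3)(\theta_0-\theta_1),\]
and the $\eta$-shrinkage of the boundary slopes defining $\Lambda_0^\eta$ forces $f'(0) \le -c(\eta) < 0$ and $\int_0^1 f \le -c(\eta) < 0$ uniformly on $K_\eta$. Thus $f$ falls into one of two shapes: (a) $f<0$ on $(0,1)$ with $f'(0), f'(1) < 0$, or (b) $f$ has a unique interior root $u^* \in (0,1)$ with $f'(0)<0$, $f'(1)>0$, and $\int_0^1 f < 0$.

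In case (a) I apply Proposition~\ref{prop:pde4} and in case (b) Proposition~\ref{prop:pde3}. Compactness of $K_\eta$ yields uniform constants $L_0, w, c, C > 0$ and a threshold $\delta_0 > 0$ (depending only on $\eta$) such that any continuous $v\in[0,1]$ with $v \le u_0-\delta_0$ on $\{|x|\le L_0\}$, where $u_0 = 1$ in case (a) and $u_0 = u^*$ in case (b), yields $u(T,x) \le Ce^{-cT}$ on $\{|x| \le L_0+2wT\}$. Fix $T$ with $Ce^{-cT} < \delta_0/8$. Theorem~\ref{thm:strongconv} then transfers this into a space-time block statement: if the block-averaged density satisfies $\tilde u^\vep(0,\cdot) \le u_0-\delta_0$ on $\{|x| \le L_0\}$ (call the block \emph{good}), then with probability $\to 1$ as $\vep_0 \downarrow 0$, every shifted block inside $\{|x| \le L_0 + wT\}$ at time $T$ is again good. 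Combined with translation invariance this gives a coupling with a supercritical $1$-dependent oriented site percolation, whose infinite open cluster forces $0$'s to take over every finite region as $t\to\infty$.

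The main obstacle is the absence of attractiveness in the part of $\Lambda_0^\eta$ where $\alpha_0 \wedge \alpha_1 < 1/2$, which precludes any monotone coupling to automatically preserve the good-block condition across propagated blocks. The danger is most acute in case (b): random fluctuations could in principle push the block-averaged $1$-density above the unstable equilibrium $u^*$ before the PDE has a chance to drag it down; case (a) requires similar control over isolated pockets of $1$'s. To handle this I would invoke the low-density super-Brownian motion comparison of \cite{CP-1} (which is where the finite support of $p$ is used): any localized excess of $1$'s over its macroscopic profile is dominated by a super-Brownian motion with drift $f'(0) < 0$ which goes locally extinct on the time scale $T$, ruling out fluctuation-driven spikes above $u_0 - \delta_0/2$. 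With these non-monotone controls in hand the block construction runs to completion and yields (i).
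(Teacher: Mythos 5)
Your overall plan---reduce to (i) by symmetry, analyze $f$ from \eqref{lvf}, verify the pde hypotheses, and feed them to a block construction supplemented by low-density estimates---does mirror the structure of the paper's argument, and your computations of $f'(0)$ and $\int_0^1 f\,du$ are correct. However, there is a genuine gap in your parameterization. You set $\vep^2=\max(|\alpha_0-1|,|\alpha_1-1|)$ and claim this is $\le\vep_0^2$, but the hypothesis of the theorem constrains only $|\alpha_0-1|$; the pair $(\alpha_0,\alpha_1)\in\Lambda_0^\eta$ allows $\alpha_1$ to be arbitrarily far from $1$ (for $\alpha_0\le 1$, any $\alpha_1>1+m_0(1-\eta)(\alpha_0-1)$ is admitted). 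For such pairs your $\vep$ is not small and the voter-model-perturbation framework does not apply directly. The paper handles exactly this issue via the monotonicity \eqref{mona}: it first proves the claim along the two boundary rays (where $\vep$ can be taken small) using Theorem~\ref{thm:nonexist}, and then pushes the conclusion inward to the rest of $\Lambda_0^\eta$ by stochastic domination, which is valid because $\alpha_0\wedge\alpha_1\ge 1/2$ once $r_0(\eta)<1/2$. Your proposal never invokes \eqref{mona} and so cannot cover the full parameter region.

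Two smaller points. First, in your case (b) the sign of $f'(1)$ is wrong: if $0$ is stable and $u^*$ is unstable then $f>0$ on $(u^*,1)$ with $f(1)=0$, forcing $f'(1)\le 0$, and one checks directly from $f'(1)=\theta_1 p_2-\theta_0(p_2+p_3)$ that it is strictly negative along the paper's second boundary ray. This does not matter for Proposition~\ref{prop:pde3}, which places no condition on $f'(1)$, but it indicates the case analysis was not checked against the explicit cubic. Second, your invocation of ``domination by super-Brownian motion with drift $f'(0)<0$'' is looser than what the paper actually proves: the key extinction input is Lemma~\ref{survbnd}, a martingale drift estimate on $|\xi^\vep_t|$ showing exponential decay once the density is below $\vep^\beta$, together with a block/percolation argument (Sections~\ref{sec:perc} and~\ref{death}) to produce a linearly growing dead zone; there is no literal comparison to a superprocess. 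You are essentially proposing to re-derive Theorem~\ref{thm:nonexist} inline, whereas the paper proves that general result once and then applies it twice here with Propositions~\ref{prop:pde4} and~\ref{prop:pde3} supplying Assumption~\ref{a2}, which is both cleaner and, given the monotonicity step, complete.
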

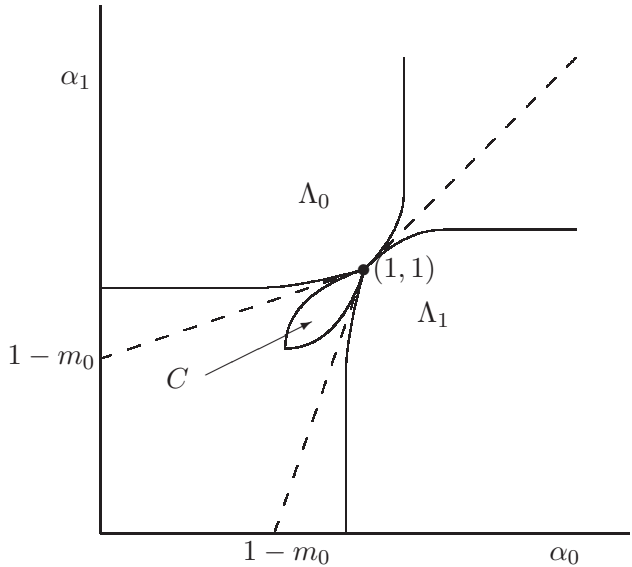
\begin{figure}
\begin{center}
  \begin{picture}(260,260) 
    \drawline(30,30)(30,230)
    \drawline(30,30)(230,30)
    \dashline{5}(130,130)(210,210)
    \dashline{5}(30,96)(130,130)
    \dashline{5}(96,30)(130,130)
    \put(84,20){$1-m_0$} 
    \put(-5,94){$1-m_0$}
    \put(200,20){$\alpha_0$}
    \put(15,200){$\alpha_1$} 
    \put(55,85){$C$} 
    \put(70,90){\vector(2,1){40}}
    \put(150,110){$\Lambda_1$} 
    \put(105,155){$\Lambda_0$}
    \put(127,127){$\bullet$} 
    \put(133, 127){$(1,1)$}
    \qbezier(100,100)(100,120)(130,130)
    \qbezier(100,100)(120,100)(130,130)
    \qbezier(130,130)(145,145)(160,145)
    \qbezier(123,90)(123,105)(130,130)
    \drawline(160,145)(210,145)
    \drawline(123,90)(123,30)
    \qbezier(130,130)(145,145)(145,160)
    \qbezier(90,123)(105,123)(130,130)
    \drawline(145,160)(145,210)
    \drawline(90,123)(30,123)
  \end{picture}
\caption{Coexistence on $C$, type $i$ takes over on $\Lambda_i$.}
\label{fig:lvregion}
\end{center}
\end{figure}
Conjecture 2 of \cite{NP} states that $1$'s take over for
$\alpha\in\Lambda_1^0$, $\alpha_0>1$ and $0$'s take over for
$\alpha\in\Lambda_0^0$, $\alpha_0>1$.  Theorem~\ref{thm:ext}
establishes this result asymptotically as  $\alpha$ gets close to $(1,1)$
at least for $d\ge 3$.

Together, Theorems \ref{thm:coex} and \ref{thm:ext} give a fairly complete description of the phase diagram
of the Lotka-Volterra model near the voter model. 
In Figure \ref{fig:lvregion} $C$ is the union over $\eta\in (0,1)$ of the
regions in Theorem~\ref{thm:coex}~(i) on which there is
coexistence, and $\Lambda_i$, $i=0,1$, is the
union over $\eta$ of the regions in Theorem~\ref{thm:ext}
(i) and (ii), respectively, on which $i$'s take
over, as well as other parameter values for which the same result holds by monotonicity.  For example, if $(\alpha_0,\alpha_1)\in\Lambda_1$, with $\alpha_0\wedge\alpha_1\ge 1/2$, and $(\alpha'_0,\alpha_1')$ has $\alpha'_0\ge \alpha_0$ and $\alpha'_1\le \alpha_1$, then by \eqref{mona}, $(\alpha_0',\alpha_1')\in\Lambda_1$.
Theorem~\ref{thm:coex} (i) and Theorem~\ref{thm:ext} show
that the three mutually exclusive classifications of
coexistence, $0$'s take over, and $1$'s take over, occur on the
three regions, $C$, $\Lambda_0$ and $\Lambda_1$, meeting at $(1,1)$ along mutually
tangential lines with slopes $m_0$, $1$ and $m_0^{-1}$. 

\subsection{Evolution of cooperation}\label{ssec:evolcoop}

Ohtsuki et al~\cite{Oetal} considered a system in which each site of a large ($N$ vertex) graph $G$ is occupied by a cooperator (1) or a
defector (0). Simplifying their setting a bit, we will assume that each vertex in $G$ has $k$ neighbors.  The interaction between these two types is governed by a payoff matrix with real entries
$$\begin{matrix}
 & {\bf C} & {\bf D} \\
{\bf C} & \alpha & \beta \\
{\bf D} & \gamma & \delta 
\end{matrix}
$$
This means that a cooperator receives a payoff $\alpha$ from each neighboring cooperator and a payoff $\beta$ from each neighboring defector, while for defectors the payoffs are $\gamma$ and $\delta$ from each neighboring cooperator or defector, respectively. The terminology is motivated by the particular case in which each cooperator pays a benefit $b\ge 0$ to each neighbor at a cost $c\ge 0$ per neighbor, while each defector accepts the benefit but pays no cost.  The resulting payoff matrix is then
\begin{align}\label{coopdef}
\begin{pmatrix}
\alpha & \beta \\
\gamma & \delta 
\end{pmatrix}
= \begin{pmatrix}
b-c & -c \\
b   &  0 
\end{pmatrix}.
\end{align}
In this case the payoff for $D$ always exceeds that for $C$ irregardless of the state making the payoff. 
As a result in a homogeneously mixing population cooperators will die out.
The fact that such cooperative behavior may nonetheless take over in a spatial competition is the reason for interest in these kind of models in evolutionary game theory.  In a spatial setting the intuition is that it may be possible to the $C$'s to form cooperatives which collectively have a selective advantage.

If $n_i(y)$ is the number of neighboring $i$'s for site $y\in G$, $i=0,1$, and $\xi(y)\in\{0,1\}$ is the state at site $y$, then the fitness $\rho_i(y)$ of site $y$ in state $i$ is determined by its local payoffs through
\begin{align}\label{fitnessdef}
\rho_1(y)=1-w+w(\alpha n_1(y)+\beta n_0(y))&\hbox{ if }\xi(y)=1\\
\nonumber \rho_0(y)=1-w+w(\gamma n_1(y)+\delta n_0(y))&\hbox{ if }\xi(y)=0.
\end{align}
Here $w\in[0,1]$ is a parameter determining the selection strength. Clearly for some $w_0(\alpha,\beta,\gamma,\delta,k)>0$, $\rho_i\ge 0$ for $w\in[0,w_0]$, which we assume in what follows.  For the death-birth dynamics in \cite{Oetal} a randomly chosen individual is eliminated at $x$ and its neighbors compete for the vacated site with success proportional to their fitness.  We consider the continuous time analogue which is the spin-flip system $\xi_t(x)\in\{0,1\},\ x\in G$, with rates (write $y\sim x$ if and only if $y$ and $x$ are neighbors)
\begin{align}\label{evolrates}
\nonumber c(x,\xi)&=(1-\xi(x))r_1(x,\xi)+\xi(x)r_0(x,\xi),\\
r_i(x,\xi)&=\frac{\sum_{y\sim x}\rho_{i}(y)1(\xi(y)=i)}{\sum_{y\sim x}\rho_1(y)\xi(y)+\rho_0(y)(1-\xi(y))}\in [0,1].
\end{align}

In \cite{Oetal} the authors use a non-rigorous pair approximation and diffusion approximation to argue that for the cooperator-defector model in \eqref{coopdef}, for large population size $N$ and small selection $w$, cooperators are ``favored" if and only if $b/c>k$.  Here ``favored" means that starting with a single cooperator the probability that cooperators take over is greater than $1/N$, the corresponding probability in a selectively neutral model.  They also carried out a number of simulations which showed reasonable agreement for $N\gg k$ although they noted that $b/c>k$ appeared to be necessary but not sufficient in general.  It is instructive for the reader to consider the nearest neighbor case on $\Z$ starting with cooperators to the right of $0$ and defectors to the left.  It is then easy to check that the $C/D$ interface will drift to the left, and so cooperators take over, if and only if $b/c>2$.  This was noted in \cite{Oetal} as further evidence for their $b/c>k$ rule.

Our main result here (Corollary~\ref{cor:coopdef} below) is a rigorous verification of the $b/c>k$ rule for general symmetric translation invariant graphs with vertex set $\Zd$ when $w$ is small. 
More precisely, choose a symmetric (about $0$) set
${\cal N}$ of neighbors of $0$ of size $k$, not containing $0$, and consider the graph with vertex set $\Zd$ and $x\sim y$ if and only if $x-y\in\cN$.  Assume also that the additive group generated by $\cN$ is $\Zd$ and $\sum_{x\in\cN}x_ix_j/k=\sigma^2\delta_{ij}$, so that $p(x)=k^{-1}1(x\in\cN)$ satisfies the conditions on our kernel given in, and prior to, \eqref{expbd1}. Set $w=\vep^2$.  For $x\in\Zd$ and $\xi\in\{0,1\}^{\Zd}$, let
\begin{align*}
f_i^{(2)}(x,\xi)&=k^{-1}\sum_{y\sim x}1(\xi(y)=i)f_i(x,\xi)=k^{-2}\sum_{y\sim x}\sum_{z\sim y}1(\xi(y)=\xi(z)=i)\in[0,1],\\
\theta_1(x,\xi)&=(\beta k-1)f_1(x,\xi)+k(\alpha-\beta)f_1^{(2)}(x,\xi),\\
 \theta_0(x,\xi)&=(\gamma k-1)f_0(x,\xi)+k(\delta-\gamma)f_0^{(2)}(x,\xi),\\
\phi(x,\xi)&=(\theta_0+\theta_1)(x,\xi).
\end{align*}
Using \eqref{fitnessdef} in \eqref{evolrates}, we get 
\begin{equation}\label{evolrates2}
r_i(x,\xi)=\frac{f_i+\vep^2\theta_i}{1+\vep^2\phi}(x,\xi).
\end{equation}
Note that 
\begin{equation}\label{thetabnds}
|\theta_1|\vee|\theta_0|\vee|\phi|(x,\xi)\le 2k(1+|\alpha|+|\beta|+|\gamma|+|\delta|)\equiv R,
\end{equation}
and
\begin{align*}\frac{f+\vep^2\theta}{1+\vep^2\phi}&=f+\vep^2(\theta-f\phi)+\vep^4\phi(f\phi-\theta)\Bigl[\sum_0^\infty(-\vep^2\phi)^k\Bigr]\\
&=f+\vep^2(\theta-f\phi)+\vep^4\psi_\vep(f,\phi,\theta).
\end{align*}
It follows that $\xi^\vep_t(\vep x)=\xi_{\vep^{-2}t}(x),\ x\in\Zd$ ($\xi$ has rates given by \eqref{evolrates}) 
has spin-flip rates given by \eqref{xirates} with
\begin{equation}\label{evolh}
h_i^\vep(x,\xi)=\theta_i(x,\xi)-f_i(x,\xi)\phi(x,\xi)+\vep^2\psi_\vep(f_i(x,\xi),\phi(x,\xi),\theta_i(x,\xi)).
\end{equation}
If
\begin{equation}\label{vepevol}
\vep^2<(2R)^{-1},
\end{equation}
then one easily from \eqref{thetabnds} that 
\begin{equation}\label{psivepbnd}
|\psi_\vep((f_i,\phi,\theta_i)(x,\xi))|\le 2R(R+1).
\end{equation}
From this and \eqref{evolh} it is clear that the hypotheses of Proposition~\ref{finiterangeh} hold with
\begin{equation}\label{evolgrates}\Vert \hat g^\vep_i-\hat g_i\Vert_\infty\le \vep^22R(R+1)
\end{equation}
and
$$h_i(x,\xi)=(\theta_i-f_i\phi)(x,\xi),\ i=0,1.$$
Hence $\xi^\vep_t$ is a voter model perturbation.  This also implies
\begin{equation}\label{hiden}
h_0+h_1=\theta_0+\theta_1-\phi=0.
\end{equation}
Some elementary arithmetic and \eqref{hiden} lead to
\begin{align}
\nonumber h_0(x,\xi)&=(\gamma-\beta)kf_0f_1(x,\xi)+k(\delta-\gamma)f_0^{(2)}(x,\xi)\\
\nonumber&\phantom{=(\gamma-\beta)}-kf_0(x,\xi)[(\alpha-\beta)f_1^{(2)}+(\delta-\gamma)f_0^{(2)}](x,\xi),\\
\label{hiden2}h_1(x,\xi)&=-h_0(x,\xi).
\end{align}

As before, let $e_1,e_2,e_3$ denote i.i.d.~ random variables with law $p$. If $P_e$ denotes averaging over the $e_i$'s then we have
\begin{align} \label{fiden}&f_i(0,\xi)=P_e(\xi(e_1)=i),\ f_i^{(2)}(0,\xi)=P_e(\xi(e_1)=i,\xi(e_1+e_2)=i),\\
\nonumber&f_{i_1}(0,\xi)f^{(2)}_{i_2}(0,\xi)=P_e(\xi(e_1)=i_1,\xi(e_2)=i_2,\xi(e_2+e_3)=i_2),
\end{align}
and similarly for higher order probabilities.  We also continue to let $\langle\cdot\rangle_u$ denote expectation on the product space where $(e_1,e_2,e_3)$ and the voter equilibrium $\xi$ are independent. If $\hat \xi=1-\xi$, then starting with \eqref{fdef} we have,
\begin{align*}
\frac{f(u)}{k}=&k^{-1}\langle \hat\xi(0)h_1(0,\xi)-\xi(0)h_0(0,\xi)\rangle_u=k^{-1}\langle h_1(0,\xi)\rangle_u,
\end{align*}
where in the last equality \eqref{hiden2} is used to see that what appears to be a quartic polynomial is actually a cubic. Using \eqref{fiden} and some arithmetic we obtain
\begin{align}\label{cubicf}
\frac{f(u)}{k}=&(\beta-\gamma)\langle\hat\xi(e_1)\xi(e_2)\rangle_u+(\alpha-\beta)\langle\hat\xi(e_1)\xi(e_2)\xi(e_2+e_3)\rangle_u\\
\nonumber&+(\gamma-\delta)\langle\xi(e_1)\hat\xi(e_2)\hat\xi(e_2+e_3)\rangle_u.
\end{align}

To simplify further we will use a simple lemma for coalescing random walk probabilities (Lemma~\ref{coalp} in Section \ref{ssec:ApplEg} below) together with the duality formula \eqref{prodform} to establish the following more explicit expression for $f$ in Section~\ref{ssec:ApplEg}.

\begin{lem}\label{evalf}$\frac{f(u)}{k}=[(\beta-\delta)+k^{-1}(\gamma-\delta)]p(0|e_1)u(1-u)$\\ 
$\phantom{Lemma 1.10.\
  \frac{f(u)}{k}==}+[(\alpha-\beta)-(\gamma-\delta)][u(1-u)(p(e_1|e_2,
e_2+e_3)+up(e_1|e_2|e_2+e_3))].$ 
\end{lem}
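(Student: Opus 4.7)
My plan is to apply the voter--coalescing-walk duality \eqref{prodform} to each of the three expectations in \eqref{cubicf} and then invoke Lemma~\ref{coalp}. Since each expectation involves at most three sites from $\{e_1,e_2,e_2+e_3\}$, duality expresses it as a polynomial in $u$ whose coefficients are (averages over $e_1,e_2,e_3$ of) asymptotic partition probabilities for the $3$-walker coalescing random walk. With the natural convention that the relevant escape probabilities vanish on coincidence of starting points, \eqref{prodform} gives
\[
\langle\hat\xi(e_1)\xi(e_2)\rangle_u = u(1-u)\,E_e[p(e_1|e_2)],
\]
\[
\langle\hat\xi(e_1)\xi(e_2)\xi(e_2+e_3)\rangle_u = u(1-u)P_2+u^2(1-u)P_3,
\]
where $P_2=E_e[p(e_1|e_2,e_2+e_3)]$ and $P_3=E_e[p(e_1|e_2|e_2+e_3)]$, and the symmetry $\xi\leftrightarrow 1-\xi$, $u\leftrightarrow 1-u$ gives $\langle\xi(e_1)\hat\xi(e_2)\hat\xi(e_2+e_3)\rangle_u = u(1-u)P_2+u(1-u)^2 P_3$.

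Substituting into \eqref{cubicf} and collecting powers of $u$ yields
\[
\frac{f(u)}{k}=u(1-u)\Bigl\{(\beta-\gamma)E_e[p(e_1|e_2)]+[(\alpha-\beta)+(\gamma-\delta)]P_2+(\gamma-\delta)P_3+u[(\alpha-\beta)-(\gamma-\delta)]P_3\Bigr\}.
\]
The $u^2(1-u)P_3$-coefficient already matches the claimed formula. Matching the $u(1-u)$-coefficient with the target and grouping by the independent parameter combinations $(\beta-\gamma)$ and $(\gamma-\delta)$ reduces the lemma to the two averaged identities
\[
E_e[p(e_1|e_2)]=Q\quad\text{and}\quad 2P_2+P_3=(1+k^{-1})Q,\qquad Q:=E_e[p(0|e_1)],
\]
both of which Lemma~\ref{coalp} supplies.

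Both identities stem from one-step analyses of the rate-$2$ $p$-difference walk and its escape probability $F$ from $0$. Because $F(0)=0$ and $F$ is $p$-harmonic off $0$, the hypothesis $p(0)=0$ gives $E_p[F]=E_{p*p}[F]$, which under the difference-walk reformulation is the first identity $E_e[p(0|e_1)]=E_e[p(e_1|e_2)]$. Iterating the same conditioning one more step picks up a boundary term $(p*p)(0)\,Q=k^{-1}Q$, since $(p*p)(0)=\sum_y p(y)^2=k^{-1}$ for $p$ uniform on the $k$-point set $\cN$, yielding $E_e[p(e_1|e_2+e_3)]=(1+k^{-1})Q$; the three-walker identity $2P_2+P_3=(1+k^{-1})Q$ then drops out by computing $\langle\xi(e_1)\hat\xi(e_2)\hat\xi(e_2+e_3)\rangle_u$ in two ways (direct duality versus inclusion--exclusion in pairwise expectations) and matching coefficients, aided by the parallel identity for $\langle\xi_1\xi_2\xi_3\rangle_u$ versus $\langle\hat\xi_1\hat\xi_2\hat\xi_3\rangle_u$. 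The main obstacle is precisely this $k^{-1}$ boundary correction: without the non-vanishing of $(p*p)(0)$ one would find $2P_2+P_3=Q$ and the target would be missing its $k^{-1}(\gamma-\delta)$ summand, so isolating this boundary contribution is the essential content of Lemma~\ref{coalp}.
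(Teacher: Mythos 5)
Your expansion of \eqref{cubicf} by direct duality is correct: applying \eqref{prodform} to each of the three averages and collecting powers of $u$ does reduce the lemma to the two averaged identities $E_e[p(e_1|e_2)]=E_e[p(0|e_1)]$ and $2P_2+P_3=(1+k^{-1})E_e[p(0|e_1)]$, where $P_2=E_e[p(e_1|e_2,e_2+e_3)]$, $P_3=E_e[p(e_1|e_2|e_2+e_3)]$. The first is Lemma~\ref{coalp}(a). But the second is a genuine three-walker identity, and Lemma~\ref{coalp} does \emph{not} supply it: Lemma~\ref{coalp}(b) gives only $E_e[p(e_1|e_2+e_3)]=(1+k^{-1})E_e[p(0|e_1)]$, so what you additionally need is $2P_2+P_3=E_e[p(e_1|e_2+e_3)]$, equivalently (since $p(e_1|e_2+e_3)=P_2+P_3+p(e_1,e_2|e_2+e_3)$) the nonobvious equality $E_e[p(e_1,e_2|e_2+e_3)]=E_e[p(e_1|e_2,e_2+e_3)]$, which is not a symmetry of the starting triple $(e_1,e_2,e_2+e_3)$.

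This is where your sketch has a gap. Computing $\langle\xi(e_1)\hat\xi(e_2)\hat\xi(e_2+e_3)\rangle_u$ by ``direct duality vs.\ inclusion--exclusion'' does not close the argument, because the inclusion--exclusion expansion carries the uncontrolled triple product $\langle\xi(e_1)\xi(e_2)\xi(e_2+e_3)\rangle_u$, whose polynomial coefficients encode the very same unknowns $P_2,P_3$ and the two other pair--singleton coalescence probabilities. The ``parallel identity'' you invoke, for $\langle\xi_1\xi_2\xi_3\rangle_u$ versus $\langle\hat\xi_1\hat\xi_2\hat\xi_3\rangle_u$, only reproduces the tautology obtained from decomposing the pairwise escape probabilities over the final coalescence partition of the three walks; it gives no new relation. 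What actually makes the triple product disappear is the paper's step \eqref{rom2}: one adds the two cubic averages appearing with coefficient $(\gamma-\delta)$ in \eqref{cubicf2}, observing $\xi(e_1)\hat\xi(e_2)\hat\xi(e_2+e_3)+\hat\xi(e_1)\xi(e_2)\xi(e_2+e_3)=\xi(e_1)-\xi(e_1)\xi(e_2)-\xi(e_1)\xi(e_2+e_3)+\xi(e_2)\xi(e_2+e_3)$, so the cubics cancel algebraically. Equivalently, in your framework the correct ``parallel'' computation is for $\langle\hat\xi(e_1)\xi(e_2)\xi(e_2+e_3)\rangle_u$ (namely $\langle\xi_2\xi_3\rangle_u-\langle\xi_1\xi_2\xi_3\rangle_u$, rather than for $\langle\hat\xi_1\hat\xi_2\hat\xi_3\rangle_u$); adding the two two-way identities cancels the triple products and yields
\[
2P_2+P_3=E_e\bigl[p(e_1|e_2)+p(e_1|e_2+e_3)-p(e_2|e_2+e_3)\bigr]=(1+k^{-1})E_e[p(0|e_1)],
\]
using Lemma~\ref{coalp}(a),(b) together with $E_e[p(e_2|e_2+e_3)]=E_e[p(0|e_3)]=E_e[p(0|e_1)]$. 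So your route is repairable and the essential content is the same, but the cancellation that makes the three-walker term tractable is precisely what was not pinned down in your sketch. (Also, re-deriving Lemma~\ref{coalp} by one-step analysis of the rate-$2$ difference walk is unnecessary since that lemma is already available, though it is not the source of the gap.)
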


Rather than try to analyze this cubic as in Section~\ref{ss:PDE}, assume $\alpha-\beta=\gamma-\delta$ (which holds in our motivating example) so that $f$ becomes a quadratic with roots at $0$ and $1$. 
If $\beta-\delta>k^{-1}(\delta-\gamma)$, then $f$ is strictly positive on $(0,1)$ and so Proposition~\ref{prop:pde1} shows the PDE solutions will converge to $1$.
 If $\beta-\delta<k^{-1}(\delta-\gamma)$, then  $f$ is strictly negative on $(0,1)$ and so by symmetry the PDE solutions will converge to $0$. 
As a result for $w=\vep^2$ small, in the former case we expect $1$'s to take over and in the latter case we expect $0$'s to take over, and this is in fact the case. The following result is proved in Section~\ref{ssec:ApplEg}.

\begin{thm}\label{thm:evol1} Consider the spin-flip system on $\Zd$ ($d\ge 3$) with rates given by 
 \eqref{evolrates} where $\alpha-\beta=\gamma-\delta$.  If $\gamma-\delta>k(\delta-\beta)$, then $1$'s take over for $w>0$ sufficiently small; if $\gamma-\delta<k(\delta-\beta)$, then $0$'s take over  for $w>0$ sufficiently small.
 \end{thm}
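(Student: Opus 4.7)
The plan is to read off $f$ from Lemma~\ref{evalf} under the hypothesis $\alpha-\beta=\gamma-\delta$, check that the resulting PDE \eqref{rdpde} falls into Case I of Section~\ref{ss:PDE}, and then invoke the general takeover result (Theorem~\ref{thm:nonexist}) after having first verified that $\xi^\vep$ is a voter model perturbation (already done in Section~\ref{ssec:evolcoop} via \eqref{evolh} and Proposition~\ref{finiterangeh}).

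First, substituting $\alpha-\beta=\gamma-\delta$ into Lemma~\ref{evalf} eliminates the cubic term, so
\[
f(u)=kC\,u(1-u),\qquad C=\Bigl[(\beta-\delta)+k^{-1}(\gamma-\delta)\Bigr]\,p(0|e_1),
\]
and $p(0|e_1)>0$ since two independent walks with step law $p$ coalesce with positive probability in $d\ge3$ only when this coefficient is properly accounted for; in any event $p(0|e_1)>0$ is a standard coalescing random walk estimate on the recurrent two-particle system. The sign of $C$ equals the sign of $(\gamma-\delta)-k(\delta-\beta)$, so the two hypotheses in the theorem correspond exactly to $C>0$ and $C<0$, respectively.

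Next, consider the case $C>0$; then $f>0$ on $(0,1)$, $f(0)=f(1)=0$, $f'(0)=kC>0$, and $f'(1)=-kC<0$. By Proposition~\ref{prop:pde1} (applied with $\alpha=1$), any solution $u$ of \eqref{rdpde} with a non-trivial initial datum $v\in[0,1]$, $v\not\equiv 0$, satisfies $\liminf_{t\to\infty}\inf_{|x|\le 2wt}u(t,x)\ge 1$, i.e.\ converges to $1$ on an expanding ball. Moreover, applied to the function $1-u$, which solves the PDE with reaction term $\tilde f(v)=-f(1-v)$ satisfying $\tilde f<0$ on $(0,1)$ and $\tilde f'(0)=f'(1)<0$, Proposition~\ref{prop:pde4} gives the exponential rate: whenever the initial datum obeys $(1-v)(x)\le 1-\delta$ for $|x|\le L$ (i.e.\ $v(x)\ge\delta$), one has $1-u(t,x)\le C_\delta e^{-c_\delta t}$ on a ball expanding with a positive speed $w$. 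These are precisely the PDE inputs needed to feed into the block construction giving Theorem~\ref{thm:nonexist}: starting from a configuration with infinitely many $1$'s, a sufficiently dense patch of $1$'s at some site is created with positive probability, this patch spreads out and dominates on an expanding ball via the hydrodynamic limit Theorem~\ref{conv} (or its $L^2$ strengthening Theorem~\ref{thm:strongconv}) together with the above exponential decay of $1-u$, and a standard oriented-percolation comparison then shows that $1$'s take over in the sense of \eqref{takeover} for all small $\vep=\sqrt{w}$. The case $C<0$ is completely symmetric: interchange the roles of $0$ and $1$, replacing $f$ by $\tilde f(v)=-f(1-v)$, which then satisfies the sign conditions of the first case, and conclude that $0$'s take over.

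The main obstacle is to verify, cleanly and without circularity, that our $f$ and the voter perturbation $\xi^\vep$ satisfy the hypotheses of Theorem~\ref{thm:nonexist}. In particular one needs to check the non-degeneracy $f'(0)\neq0$, $f'(1)\neq0$ (here both equal $\pm kC\neq 0$), and that $p(0|e_1)>0$ so that the coefficient $C$ is genuinely non-zero when $(\gamma-\delta)-k(\delta-\beta)\neq 0$; the latter is an easy consequence of transience ($d\ge 3$) together with the fact that $p$ is non-degenerate. Everything else (Feller property, duality identity \eqref{prodform}, the convergence of $g_i^\vep$ to $g_i$ with rate $O(\vep^2)$ from \eqref{evolgrates}) is already in place from the setup in Section~\ref{ssec:evolcoop}, so the theorem reduces to invoking Theorem~\ref{thm:nonexist} in each of the two sign cases.
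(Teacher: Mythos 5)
Your plan is essentially the paper's proof: use Lemma~\ref{evalf} with $\alpha-\beta=\gamma-\delta$ to reduce $f$ to a multiple of $u(1-u)$, feed Proposition~\ref{prop:pde4} into Assumption~\ref{a2}, verify the auxiliary hypotheses of Theorem~\ref{thm:nonexist}, and invoke that theorem (directly in one sign case, with $0$ and $1$ swapped in the other). The sign computation $\sgn(C)=\sgn((\gamma-\delta)-k(\delta-\beta))$ is correct, and $p(0|e_1)>0$ in $d\ge 3$ is indeed standard.

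Two small remarks. First, the appeal to Proposition~\ref{prop:pde1} is unnecessary: only Assumption~\ref{a2} is required as PDE input to Theorem~\ref{thm:nonexist}, and that is supplied entirely by Proposition~\ref{prop:pde4}; the paper goes straight to \ref{prop:pde4}. Second, in listing the hypotheses of Theorem~\ref{thm:nonexist} that must be checked, you omit \eqref{staydead}, i.e.\ $g_1^\vep(0,\dots,0)=0$; this is a genuine hypothesis and is verified in the paper from the expression for $h_1^\vep$ in \eqref{evolh} (when $\xi\equiv 0$, $f_1=f_1^{(2)}=0$, hence $\theta_1=0$ and $h_1^\vep=0$). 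You also do not explicitly note the finite range of $p$ and $q$ required by Theorem~\ref{thm:nonexist}, though this is immediate from $p(x)=k^{-1}1(x\in\cN)$ and Proposition~\ref{finiterangeh}. These are small but not entirely cosmetic: they are exactly the conditions that justify applying the black-box theorem you invoke.
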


The particular instance of  \eqref{coopdef} follows as a special case.

\begin{cor}\label{cor:coopdef} Consider the spin-flip system on $\Zd$ ($d\ge 3$) with rates given by 
 \eqref{evolrates} where the payoff matrix is given by \eqref{coopdef}.  If $b/c>k$, then the cooperators take over for $w>0$ sufficiently small, and if $b/c<k$, then the defectors take over for $w>0$ sufficiently small.
 \end{cor}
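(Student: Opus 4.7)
The plan is to deduce Corollary \ref{cor:coopdef} as a direct substitution into Theorem \ref{thm:evol1}. There is essentially no work beyond checking that the two hypotheses of that theorem (the equality $\alpha-\beta=\gamma-\delta$ and the sign condition on $\gamma-\delta$ versus $k(\delta-\beta)$) translate cleanly under the payoff matrix \eqref{coopdef} into the single condition $b/c>k$ (respectively $b/c<k$).

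Concretely, I would read off from \eqref{coopdef} that $\alpha=b-c$, $\beta=-c$, $\gamma=b$, $\delta=0$. Then I would compute
\begin{equation*}
\alpha-\beta=(b-c)-(-c)=b,\qquad \gamma-\delta=b-0=b,
\end{equation*}
so $\alpha-\beta=\gamma-\delta$, which is the standing assumption needed to invoke Theorem \ref{thm:evol1}. Next I would compute the decisive quantity
\begin{equation*}
k(\delta-\beta)=k(0-(-c))=kc,\qquad \gamma-\delta=b,
\end{equation*}
so that $\gamma-\delta>k(\delta-\beta)$ is precisely $b>kc$, i.e.\ $b/c>k$, while $\gamma-\delta<k(\delta-\beta)$ is precisely $b/c<k$.

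Finally I would recall that in our encoding cooperators correspond to state $1$ and defectors to state $0$ (cf.\ the paragraph preceding \eqref{fitnessdef}), so the conclusion ``$1$'s take over'' from Theorem \ref{thm:evol1} is exactly ``cooperators take over,'' and likewise for defectors. Applying Theorem \ref{thm:evol1} with the two cases above then gives the corollary. There is no real obstacle here: the content is entirely in Theorem \ref{thm:evol1}, and the corollary is a one-line specialization. The only thing to double-check is that $w=\vep^2$ sufficiently small matches the ``$w>0$ sufficiently small'' statement, which is immediate from the choice $w=\vep^2$ made just before \eqref{evolrates2}.
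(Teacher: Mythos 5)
Your proposal is correct and is essentially identical to the paper's own proof: both simply read off $\alpha-\beta=\gamma-\delta=b$ and $\delta-\beta=c$ from \eqref{coopdef} and observe that $\gamma-\delta>k(\delta-\beta)$ is exactly $b/c>k$, then invoke Theorem~\ref{thm:evol1} with $1=\,$cooperator, $0=\,$defector. Nothing to add.
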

 \begin{proof} In this case $\alpha-\beta=\gamma-\delta=b$, $\delta-\beta=c$, and so we have $$\gamma-\delta>k(\delta-\beta)\hbox{ iff }b>kc\hbox{ iff }b/c>k.
 $$
 \end{proof}

\subsection{Nonlinear voter models}\label{ssec:nlv}

Molofsky et al.~\cite{Metal} considered a discrete time particle system on $\Z^2$ in which
each site is in state 0 or 1 and
$$
P( \xi_{n+1}(x,y) = 1 | \xi_n ) = p_k
$$
if $k$ of the sites $(x,y), (x+1,y), (x-1,y), (x,y+1), (x,y-1)$ are in state 1.
They assumed that $p_0=0$ and $p_5=1$, so that all 0's and all 1's were absorbing states
and $p_1=1-p_4$ and $p_2=1-p_3$, so that the model was symmetric under interchange of 0's
and 1's. If the states of adjacent sites were independent then the density would evolve according
to the mean field dynamics
\begin{align*}
x_{t+1} = h(x_t) & = p_1 \cdot 5x_t(1-x_t)^4 + p_2 \cdot 10x_t^2(1-x_t)^3 \\
& + (1-p_2) \cdot 10x_t^3(1-x_t)^2 + (1-p_1) \cdot 5x_t^4(1-x_t) + x_t^5
\end{align*}

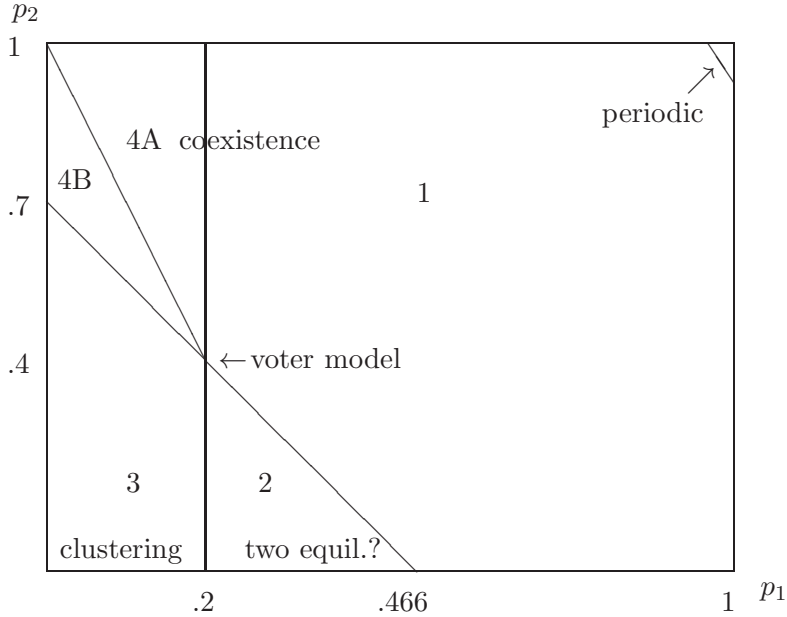
\begin{figure}[ht]
\begin{center}
\begin{picture}(320,260)
\put(30,30){\line(1,0){260}}
\put(30,30){\line(0,1){200}}
\put(30,230){\line(1,0){260}}
\put(290,30){\line(0,1){200}}
\put(30,170){\line(1,-1){140}}
\put(90,30){\line(0,1){200}}
\put(90,110){\line(-1,2){60}}
\put(170,170){1}
\put(110,60){2}
\put(60,60){3}
\put(60,190){4A}
\put(15,165){.7}
\put(34,175){4B}
\put(15,105){.4}
\put(15,225){1}
\put(17,240){$p_2$}
\put(85,15){.2}
\put(155,15){.466}
\put(285,15){1}
\put(300,21){$p_1$}
\put(95,107){$\leftarrow$}
\put(107,107){voter model}
\put(35,35){clustering}
\put(105,35){two equil.?}
\put(80,190){coexistence}
\put(290,215){\line(-2,3){10}}
\put(272,213){$\nearrow$}
\put(240,200){periodic}
\end{picture}
\caption{Conjectured phase diagram for the discrete time two-dimensional nonlinear voter model
of \cite{Metal}.}
\label{fig:nlvpt}
\end{center}
\end{figure}

Based on simulations and an analysis of the mean-field equation, Molofsky et al~\cite{Metal} predicted the
phase diagram given in Figure \ref{fig:nlvpt}. To explain this, $h(x)=x$ is a fifth degree equation with 0, 1/2, and 1 as roots. 
$h'(0)=h'(1)=5p_1$ so 0 and 1 are locally attracting if $5p_1<1$ and unstable if $5p_1>1$.
$h'(1/2) = (15-15p_1-10p_2)/8$, so 1/2 is locally attracting if $15p_1+10p_2>7$ and
unstable if $15p_1+10p_2<7$. From the stability properties of 0, 1/2, and 1, it is easy to determine when there are 
additional roots $\alpha$ and $1-\alpha$ in the unit interval and whether or not they are stable.
The four shapes are given in Figure \ref{fig:quint}. To make the drawing easier we have represented the quintic as
a piecewise linear function.

\begin{figure}[ht]
\begin{center}
\begin{picture}(250,105)
\put(30,25){3}
\put(37,27){$\bullet$}
\put(40,30){\line(1,0){60}}
\put(40,30){\line(1,-1){15}}
\put(55,15){\line(1,1){30}}
\put(85,45){\line(1,-1){15}}
\put(97,27){$\bullet$}
\put(50,32){$\leftarrow$}
\put(80,22){$\rightarrow$}
\put(30,70){1}
\put(40,75){\line(1,0){60}}
\put(40,75){\line(1,1){15}}
\put(55,90){\line(1,-1){30}}
\put(67,72){$\bullet$}
\put(85,60){\line(1,1){15}}
\put(50,67){$\rightarrow$}
\put(80,77){$\leftarrow$}
\put(130,25){4}
\put(137,27){$\bullet$}
\put(140,30){\line(1,0){80}}
\put(140,30){\line(1,-1){15}}
\put(155,15){\line(1,1){20}}
\put(175,35){\line(1,-1){10}}
\put(177,27){$\bullet$}
\put(185,25){\line(1,1){20}}
\put(205,45){\line(1,-1){15}}
\put(217,27){$\bullet$}
\put(150,32){$\leftarrow$}
\put(200,22){$\rightarrow$}
\put(130,70){2}
\put(140,75){\line(1,0){80}}
\put(140,75){\line(1,1){15}}
\put(155,90){\line(1,-1){20}}
\put(167,72){$\bullet$}
\put(175,70){\line(1,1){10}}
\put(185,80){\line(1,-1){20}}
\put(187,72){$\bullet$}
\put(205,60){\line(1,1){15}}
\put(150,67){$\rightarrow$}
\put(200,77){$\leftarrow$}
\end{picture}
\caption{Four possible shapes of the symmetric quintic $f$. Black dots indicate the locations
of stable fixed points.}
\label{fig:quint}
\end{center}
\end{figure}
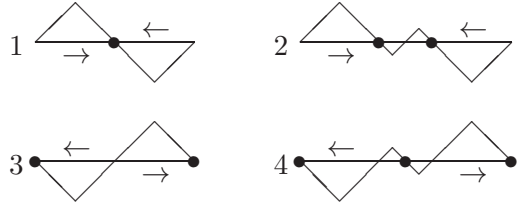

The implications of the shape of $f(u)$ ($=h(u)-u$ in the above) for the behavior 
for the system will be discussed below in the context of a similar system in continuous time.
There we will see that the division between 4A and 4B is dictated by the speed of traveling
waves for the PDE. Here we have drawn the ``Levin line" $6p_1+2p_2=2$ which comes from
computing the expected number of 1's at time 1 when we have two adjacent 1's at time 0.
Simulations suggest that the true boundary curve exits the square at $(0.024,1)$,
see page 280 in \cite{Metal}.

For our continuous time model the perturbation rate from the voter model
is determined by four points chosen at random from $x + {\cal N}$ where 
${\cal N}$ is the set of integer lattice points in $([-L,L]^d - \{0\})$. Let $a(i)\ge 0$ be the flip rate at a given site
when $i$ (randomly chosen) neighbors have a type disagreeing with that of the site and suppose $a(0)=0$.
Let $(Y_1, \ldots Y_4)$ be chosen at random and without replacement from ${\cal N}$.  Then we consider the spin-flip system $\xi(x), x\in\Zd$ with rates
\begin{align*}c(x,\xi)=c^v(x,\xi)+\vep^2[&(1-\xi(x))E_Y(g_1(\xi(x+Y^1),\dots,\xi(x+Y^4)))\\
&+\xi(x)E_Y(g_0(\xi(x+Y^1),\dots,\xi(x+Y^4)))],
\end{align*}
where
$$g_1(\xi_1,\dots,\xi_4)=a\Bigl(\sum_1^4\xi_i\Bigr), \quad g_0(\xi_1,\dots,\xi_4)=a\Bigl(4-\sum_1^4\xi_i\Bigr).$$
Then the rescaled system $\xi_t^\vep(\vep x)=\xi_{t\vep^{-2}}(x),\, x\in\Zd$ is a voter model perturbation since the required conditions are trivial (clearly \eqref{hgrepn} holds with $\vep_1=\infty$). 
We call $\xi$ the nonlinear voter model.  General models of this type were introduced and studied in \cite{CD91}.

We abuse our notation as before and incorporate expectation with respect to an independent copy of $Y=(Y^1,\dots,Y^4)$ in our voter equilibrium expectation $\langle\cdot\rangle_u$.  If $Y^0\equiv0$, then a short calculation shows that our reaction function in \eqref{fdef} is now 
\beq \label{nlvf}f(u)=\sum_{j=1}^4a(j)(q_j(u)-q_j(1-u)),\eeq
where
\[q_j(u)={4\choose j}\langle\prod_{i=0}^{4-j}(1-\xi(Y^i))\prod_{i=5-j}^4 \xi(Y^i)\rangle_u.\]
Clearly
\beq\label{froots} f(0)=f(1)=f(1/2)=0\hbox{ and }f(u)=-f(1-u).
\eeq
It does not seem easy to calculate $f$ explicitly, but if $L$ is large, 
most of the sum comes from $Y^i$ that are well separated and so the $\xi$ values at the above sites should be nearly independent.  To make this precise let 
$A=\{Y^{5-j},\dots, Y^4\}$, $B=\{Y^0,\dots,Y^{4-j}\}$, ($1\le j\le 4$), and note by \eqref{prodform} that
\begin{align*} 
q_j(u)&={4\choose j}\sum_{i=1}^j\sum_{k=1}^{5-j}u^i(1-u)^k P(|\hat\xi^A_\infty|=i, |\hat\xi_\infty^B|=k,\tau(A,B)=\infty)\\
&={4\choose j}u^j(1-u)^{5-j}+\hat q_j(u),
\end{align*}
where
\begin{align*}
\hat q_j(u)&={4\choose j}\Bigl[-u^j(1-u)^{5-j}P(|\hat \xi^{A\cup B}_\infty|<5)\\
&\phantom{={4\choose j}\Bigl[-}+\sum_{i=1}^j\sum_{k=1}^{5-j}1(i+k<5)u^i(1-u)^k P(|\hat\xi^A_\infty|=i, |\hat\xi_\infty^B|=k,\tau(A,B)=\infty)\Bigr]\\
&=\sum_{i=1}^5 d_i(j,L)u^i.
\end{align*}
If $\eta_0(L)=P(|\hat \xi_\infty^{A\cup B}|<5)$, that is the probability that there is a coalescence among the random walks starting at $Y^0,\dots,Y^4$, then it follows easily from the above that
$
|d_i(j,L)|\le c_0\eta_0(L)$.
Use this in \eqref{nlvf} to conclude that $f(u)=f_1(u)+f_2(u)$, where
$f_2$ includes the (smaller) contributions from the $\hat q_j$'s.  That is 
\beq
\nonumber f_2(u)=\sum_{j=1}^5 e(j,L)u^j,\eeq
where
\beq\label{f2cobnds}\sup_{1\le j\le 5}|e(j,L)|\le c_1\eta_0(L),
\eeq
and 
\begin{align*}
f_1(u) = & - u [ a(4) (1-u)^4 + a(3) \cdot 4u(1-u)^3 + a(2) \cdot 6u^2 (1-u)^2 + a(1) \cdot 4u^3(1-u) ] \\ 
& {} + (1-u) [ a(4) u^4 + a(3) \cdot 4u^3(1-u) + a(2) \cdot 6u^2 (1-u)^2 + a(1) \cdot 4u(1-u)^3 ] \\ 
= & b_1 u(1-u)^4 + b_2 u^2 (1-u)^3 - b_2 u^3(1-u)^2 - b_1 u^4(1-u)
\end{align*}
where $b_1=4a(1)-a(4)$ and $b_2=6a(2)-4a(3)$. By symmetry we have
\beq\label{f1roots} f_1(0)=f_1(1)=f_1(1/2)=0\hbox{ and }f_1(u)=-f_1(1-u).\eeq
Clearly $\eta_0(L)\to 0$ as $L\to\infty$, in fact well-known return estimates (such as Lemma~\ref{lem:notcrowd}(a) below with $t_0=0$, $r_0=1$ and $p$ large) and a simple optimization argument show that 
\beq\label{eta0bound}
\eta_0(L)\le C_\delta L^{{-[d(d-2)/(2(d-1))}]+\delta},\ \delta>0.\eeq
To prepare for the next analysis
we note that 
\begin{align*}
f_1'(u) = & b_1[ (1-u)^4 - 4 u(1-u)^3 ] + b_2 [ 2u(1-u)^3 - 3 u^2(1-u)^2] \\
& - b_2[ 3u^2(1-u)^2 - 2u^3(1-u) ] - b_1 [ 4u^3(1-u) - u^4 ],
\end{align*}
and so we have $f_1'(0)=f_1'(1)=b_1$ and $f_1'(1/2) = -(6b_1+2b_2)/16$. A little calculus,
left for the reader, shows
\beq
\int_0^{1/2} f_1(u) \, du = \frac{5b_1+b_2}{192}=-\int_{1/2}^1f_1(u)\, du.
\label{nlvint}
\eeq

\begin{figure}
\begin{center}
\begin{picture}(200,210)
\put(100,30){\line(0,1){150}}
\put(170,30){\line(-1,1){150}}
\put(100,100){\line(-1,2){40}}
\put(140,140){1}
\put(60,60){3}
\put(120,40){2}
\put(45,160){4B}
\put(75,160){4A}
\put(85,20){$b_1=0$}
\put(30,185){$5b_1+b_2=0$}
\put(150,55){$3b_1+b_2=0$}
\end{picture}
\caption{Phase diagram for the continuous time nonlinear voter model with large range in $d\ge 3$.}
\label{fig:nlvpt2}
\end{center}
\end{figure}
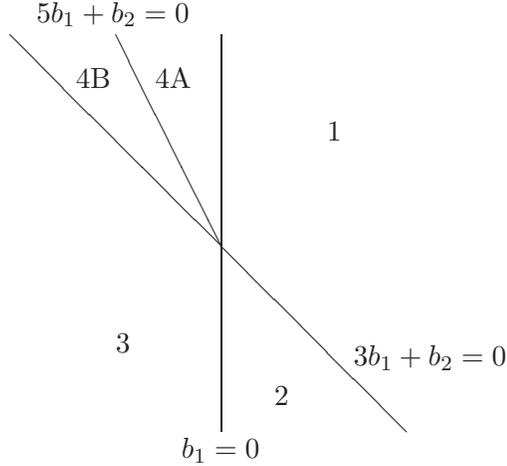

We are now ready to describe the phase diagram for the nonlinear voter. Consult
Figure \ref{fig:nlvpt2} for a picture. Note that in what follows when $L$ is chosen large, it is understood that how large depends on $\bar a=(a(1),\dots,a(4))$.

\begin{enumerate}
  \item  $f_1'(0)>0$, $f_1'(1/2)<0$ and so by \eqref{f2cobnds} for large enough $L$ the same is true for $f$. In this case 0, 1/2, and 1 are the only roots of $f$ (all simple) and
  1/2 is an attracting fixed point for the ODE. An application of Proposition~\ref{prop:pde1} on $[0,1/2]$ and a comparison principle, showing that solutions depend monotonically on their initial data (see Proposition 2.1 of \cite{AW78}), to reduce to the case where $v\in[0,1/2]$, shows that any non-trivial solution $u$ of the PDE \eqref{rdpde} satisfies $\liminf_{t\to\infty} \inf_{|x|\le 2wt}u(t,x)\ge 1/2$ for some $w>0$. The same reasoning with $0$ and $1$ reversed shows the corresponding upper bound of $1/2$. Therefore any non-trivial solution of  \eqref{rdpde} will converge to 1/2 and we expect coexistence.
  \item  $f_1'(0)>0$, $f_1'(1/2)>0$ and so by \eqref{f2cobnds} for large enough $L$ the same is true for $f$. In this case 0, 1/2, 1 are unstable fixed points for the ODE
  and there are attracting fixed points for the ODE at $a$ and $1-a$ for some $a \in (0,1/2)$. All are simple zeros of $f$.
 Another double application of Proposition~\ref{prop:pde1} now shows that any non-trivial solution $u(t,x)$ to the PDE will have $\liminf_{t\to\infty} \inf_{|x|\le 2wt} u(t,x) \ge a$ and $\limsup_{t\to\infty}\sup_{|x|\le 2wt} u(t,x)  \le 1-a$, so we expect
  coexistence. Simulations in \cite{Metal}, see Figure 7 and the discussion on page 278 of that work, suggest that in this case there
  may be two nontrivial stationary distributions: one with density near $a$ and the other
  with density near $1-a$. The symmetry of $f$ about $1/2$ is essential for this last possibility as we note below (see Theorem~\ref{thm:nlv2}).
  \item  $f_1'(0)<0$, $f_1'(1/2)>0$ and so by \eqref{f2cobnds} for large enough $L$ the same is true for $f$. In this case 0, 1/2, and 1 are the only roots of $f$ (all simple) and
  1/2 is an unstable fixed point while 0 and 1 are attracting. In this bistable
  case the winner is dictated by the sign of the speed of the traveling wave,
  but by symmetry (recall \eqref{froots}) the speed is 0. One would guess that clustering occurs in this
  case and there are only trivial stationary distributions, but our method yields no result.
  \item $f_1'(0)<0$, $f_1'(1/2)<0$ and so by \eqref{f2cobnds} for large enough $L$ the same is true for $f$. In this case 0, 1/2, 1 are attracting fixed points and 
 there are unstable fixed points at $a$ and $1-a$ for some $a \in (0,1/2)$ (all simple zeros of $f$).
By the discussion in Case II in Section~\ref{ss:PDE} (with $[0,1/2]$ and $[1/2,1]$ in place of the unit interval) there are traveling wave solutions $w_i(x-c_it)$, $i=1,2$ with $w_1(-\infty)=1$, $w_1(\infty)=w_2(-\infty)=1/2$
  and $w_2(\infty)=0$. Symmetry implies $c_2=-c_1$, but we can have $c_1<0<c_2$ (Case 4A)
  in which case Proposition~\ref{prop:pde3} and its mirror image show that solutions to the PDE will converge to 1/2 providing that the initial condition is bounded away from $0$ and $1$ on a large enough set. We again use the comparison principle as in Case 1 to assume the initial data takes values in the appropriate interval, $[0,1/2]$ or  $[1/2,1]$, and assume $L$ is large enough so that the integrals of $f_1$ and $f$ on $[0,1/2]$ (and hence on $[1/2,1]$) have the same sign. Hence we expect coexistence in Case 4A and all invariant distributions to have density near $1/2$.  If $c_1 > 0 > c_2$ (Case 4B)  and $L$ is large enough, there is a standing 
  wave solution $w_0(x)$ of the PDE in $d=1$ with $w_0(-\infty) =0$, $w_0(\infty)=1$ (see p. 284 in \cite{FM81}), and our method
  yields no result. 
  
\end{enumerate}
\begin{thm} \label{thm:nlv} Assume $(b_1,b_2)$ are as in Case 1, 2 or 4A. If $L$ is sufficiently large (depending on $\bar a$) then:

\noindent(a) Coexistence holds for $\vep$ small enough (depending on $L$ and $\bar a$).

\noindent(b) In Case 1 or 4A if $\eta>0$ there is an $\vep_0(\eta,L,\bar a)$ so that if $0<\vep\le \vep_0$ and $\nu$ is any stationary distribution for the nonlinear voter model satisfying\\ 
$\nu(\xi\equiv 0\hbox{ or }\xi\equiv 1)=0$, then 
$$\sup_x\Bigl|\nu(\xi(x)=1)-\frac{1}{2}\Bigr|\le \eta.$$
\end{thm}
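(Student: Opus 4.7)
The overall approach mirrors the proof of Theorem \ref{thm:coex}: combine the hydrodynamic limit (Theorem \ref{conv} and its $L^2$ strengthening Theorem \ref{thm:strongconv}) with the PDE propositions of Section \ref{ss:PDE} and the general coexistence theorem \ref{thm:exist} via a block construction. The key preparatory step is to choose $L=L(\bar a)$ large enough that the bound $|e(j,L)|\le c_1\eta_0(L)$ forces $f=f_1+f_2$ to inherit the qualitative zero structure of $f_1$ on $[0,1]$. In Cases 1 and 2 this yields $f'(0)=f'(1)=b_1+O(\eta_0(L))>0$, while in Case 4A we get $f'(0)=f'(1)<0$ together with $\int_0^{1/2}f(u)\,du = (5b_1+b_2)/192 + O(\eta_0(L))>0$; in each case every interior zero of $f_1$ perturbs to exactly one nearby simple zero of $f$ of the same stability type (a unique $\rho_\vep = 1/2+o_L(1)$ in Case 1; three zeros $a+o_L(1), 1/2+o_L(1), 1-a+o_L(1)$ in Cases 2 and 4A).

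For part (a) in Cases 1 and 2, the inequalities $f'(0)>0$ and $f'(1)>0$ are exactly the hypotheses used in Theorem \ref{thm:coex}(i) to invoke Theorem \ref{thm:exist} for the Lotka--Volterra model, so the same application produces a non-trivial stationary distribution for $\vep$ small. Case 4A is bistable and the direct appeal to \ref{thm:exist} via $f'(0),f'(1)>0$ is unavailable. Here I would use the sign of $\int_0^{1/2} f$, which by \eqref{signr} implies that the Case II(ii) traveling wave between the attracting fixed points $0$ and $1/2$ of $f|_{[0,1/2]}$ moves toward $0$; Proposition \ref{prop:pde3} together with its mirror image on $[1/2,1]$ then shows that any PDE solution whose initial datum lies in $[\delta,1-\delta]$ on a sufficiently large cube is driven uniformly to $1/2$ on a spatially expanding region. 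This is the PDE input that the block-construction version of Theorem \ref{thm:exist} demands in its general, non-monotone form, and feeding it in produces coexistence.

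For part (b), fix $\eta>0$ and let $\nu$ be any stationary law with $\nu(\xi\equiv 0\text{ or } \xi\equiv 1)=0$. In Case 1, two applications of Proposition \ref{prop:pde1} (once to $f$ on $[0,\rho_\vep]$ with $\alpha=\rho_\vep$, once to the reflected reaction on $[\rho_\vep,1]$) give $\lim_{t\to\infty}\sup_{|x|\le wt}|u(t,x)-\rho_\vep|=0$ for every non-trivial $[0,1]$-valued initial datum; in Case 4A the same conclusion holds with $\rho_\vep$ replaced by $1/2$, using the bistable convergence-to-$1/2$ argument already developed in part (a). Theorem \ref{thm:strongconv} then upgrades this PDE convergence to $L^2$-convergence of the macroscopic particle density $\tilde u^\vep(t,x)$ to $\rho_\vep$ (respectively $1/2$) on compact sets. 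Since coexistence (part (a)) implies that, under $\nu$, the local densities lie in $[\delta,1-\delta]$ on macroscopic blocks with positive probability, stationarity and translation invariance force $\nu(\xi(x)=1)$ to equal the unique PDE attractor, which is within $\eta$ of $1/2$ once $L$ is large and $\vep$ is small.

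The main obstacle is Case 4A. Because $0$ and $1$ are both attracting for the reaction ODE, one cannot simply cite the "$f'(0),f'(1)>0$" form of Theorem \ref{thm:exist}; instead one has to verify that the PDE information furnished by Proposition \ref{prop:pde3} on each of the two subintervals $[0,1/2]$ and $[1/2,1]$ is strong enough, and uniform enough in the size of the initial block, to drive the block construction. A secondary technical point is that the symmetry $f_1(u)=-f_1(1-u)$ is inherited by $f$ only up to $O(\eta_0(L))$, so the middle attractor is displaced from $1/2$ by $o_L(1)$; this discrepancy is absorbed into the tolerance $\eta$ in part (b) by taking $L$ large before sending $\vep\downarrow 0$.
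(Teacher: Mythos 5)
Your part (a) matches the paper's argument: verify Assumption \ref{a1} using Proposition \ref{prop:pde1} (Cases 1, 2) or Proposition \ref{prop:pde3}, its mirror image, and the comparison principle applied to $f|_{[0,1/2]}$ and $f|_{[1/2,1]}$ (Case 4A), then invoke Theorem \ref{thm:exist}. You should also note that \eqref{grate} holds trivially since $g_i^\vep = g_i$ here; Theorem \ref{thm:exist} requires it.

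Two issues in your treatment of part (b). First, you have overlooked that part (b) is already contained in Theorem \ref{thm:exist}: once Assumption \ref{a1} is verified with $u_* = 1/2 - \eta$, $u^* = 1/2 + \eta$, the second assertion of Theorem \ref{thm:exist} states directly that \emph{any} stationary $\nu$ with $\nu\bigl(\sum_x\xi(x)=0 \text{ or } \sum_x(1-\xi(x))=0\bigr)=0$ satisfies $\nu(\xi(x)=1)\in(u_*-\eta, u^*+\eta)$ for all $x$, with no translation-invariance hypothesis. Instead, you construct a separate argument through Theorem \ref{thm:strongconv} and then write that ``stationarity and translation invariance force $\nu(\xi(x)=1)$ to equal the unique PDE attractor.'' This is a genuine gap: the theorem's hypothesis does not include translation invariance of $\nu$, and passing from $L^2$-convergence of $\tilde u^\vep(t,\cdot)$ for a particular family of deterministic initial conditions to a density bound for an arbitrary stationary law is exactly the nontrivial content of the proof of Theorem \ref{thm:exist}'s second assertion (it uses Lemma \ref{nuequiv}, Lemma \ref{lem:hole}, and the block-construction comparison with oriented percolation). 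You cannot shortcut it by appealing to stationarity.

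Second, a smaller point: you write that the middle attractor of $f$ is ``displaced from $1/2$ by $o_L(1)$'' and that this displacement must be ``absorbed into $\eta$.'' In fact \eqref{froots} records the exact identity $f(u)=-f(1-u)$ for the full reaction function $f$ (not just $f_1$), so $f(1/2)=0$ exactly and the middle root is $1/2$ with no $L$-dependent shift. The $O(\eta_0(L))$ perturbation affects only the outer zeros $a,1-a$ and the derivative/integral signs you need to keep the correct stability type, which is what makes ``$L$ large'' necessary. This error does not derail the argument since you were being conservative, but it signals a misreading of the symmetry structure that the paper relies on to identify the target density as exactly $1/2$.
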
 

\noindent{\bf Remark.} The proof is given in Section~\ref{ssec:nlv2} below.  Case 4A is of particular interest as there is coexistence even though $f'(0)<0$. Here the low density limit theorem in \cite{CP-1} shows convergence to super-Brownian motion with drift $f'(0)<0$ (see the discussion in Section~\ref{ssec:CPcomp} below).  From this one might incorrectly guess (after an exchange of limits) that there is a.s. extinction of $0$'s for the nonlinear voter model, while our proof of coexistence will show that there is positive probability of survival of $0$'s even starting with a single $0$. 

\medskip

We next show that more can be said in Case 2 if we break the symmetry. This also demonstrates how one can handle higher degree reaction functions in the pde and still apply the general results in the next Section.  Consider the nonlinear voter model $\xi$ as before but now for $\lambda>0$ replace $g_1$ with
\beq
 g_{1,\lambda}(\xi_1,\dots,\xi_4)) = (1+\lambda) a\Bigl(\sum_1^4\xi_i\Bigr),
\label{nlvgdef2}
\eeq
while $g_0$ is unchanged.
To avoid trivialities we assume $\sum_1^4a(j)>0$.  A short calculation now shows that if $f$ is as in \eqref{nlvf}, then our reaction function in \eqref{fdef} becomes
\beq\label{nlvfdef2}
f_{(\lambda)}(u)=f(u)+\lambda\sum_{j=1}^4a(j)q_j(u)\equiv f(u)+\lambda f_0(u)>f(u) \hbox{ on }(0,1).
\eeq
Decomposing $q_j(u)$ as before we get 
\[f_{(\lambda)}(u)=f_{1,\lambda}(u)+f_{2,\lambda}(u),\]
where
\begin{align}\label{flambda1}
f_{1,\lambda}(u) & =  (b_1+4\lambda a(1)) u(1-u)^4 + (b_2+6\lambda a(2)) u^2 (1-u)^3 \\
\nonumber& \quad - (b_2-4\lambda a(3)) u^3(1-u)^2 - (b_1-\lambda a(4)) u^4(1-u)\\
\nonumber& = f_1(u)+\lambda f_3(u)>f_1(u)\hbox{ on }(0,1),\\
\label{flambda2}
f_{2,\lambda}(u)&=\sum_{j=1}^5 e(j,L,\lambda)u^j,\ \hbox{and }\sup_{1\le j\le 5}|e(j,L,\lambda)|\le c_2(\lambda +1)\eta_0(L).
\end{align}
We also have 
\begin{align}\label{flambdader}&f_{1,\lambda}'(0)=b_1+4\lambda a(1), \quad f_{1,\lambda}'(1)=b_1-\lambda a(4).
\end{align}

\begin{thm} \label{thm:nlv2} 
Suppose $b_1>0$ and $0<\lambda < b_1/a(4)$. \\
\noindent (a) Coexistence holds for large $L$ and small enough $\vep$ (depending on $L$, $\lambda$  and $\bar a$).  

\noindent(b) Assume $3b_1+b_2<0$ and let $1-a'$ denote the largest root of $f_{1}(u)=0$ in $(0,1)$.  If $\eta>0$, $L>\vep_1(\eta,\bar a)^{-1}$, $0<\lambda<\vep_1(\eta,\bar a)$, $0<\vep<\vep_0(\eta,L,\lambda,\bar a)$ and $\nu$ is any stationary distribution satisfying $\nu(\xi\equiv 0\hbox{ or }\xi\equiv 1)=0$, then 
$$\sup_x\Bigl|\nu(\xi(x)=1)-(1-a')\Bigr|\le \eta.$$
\end{thm}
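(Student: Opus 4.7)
Part (a) is a direct application of Theorem~\ref{thm:exist}. By~\eqref{flambdader} and the hypotheses $b_1>0$, $0<\lambda<b_1/a(4)$, we have $f_{1,\lambda}'(0)=b_1+4\lambda a(1)>0$ and $f_{1,\lambda}'(1)=b_1-\lambda a(4)>0$. By~\eqref{flambda2}, $|f_{2,\lambda}'(0)|+|f_{2,\lambda}'(1)|\le C(\lambda+1)\eta_0(L)$, which tends to zero as $L\to\infty$ by~\eqref{eta0bound}. Therefore $f_{(\lambda)}'(0)$ and $f_{(\lambda)}'(1)$ remain positive for $L$ large, and Theorem~\ref{thm:exist} yields coexistence for $\vep$ small.

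For part (b), the key idea is that the strict inequality $f_{(\lambda)}=f+\lambda f_0>f$ on $(0,1)$ breaks the antisymmetric balance of $f_1$ about $1/2$ and tips the bistable competition toward the larger attractor. The Case~2 conditions $b_1>0$ and $3b_1+b_2<0$ make $f_1'(0)=f_1'(1)=b_1>0$ and $f_1'(1/2)=-(6b_1+2b_2)/16>0$, so $f_1$ has five simple zeros $0<a<1/2<1-a<1$ with $a,1-a$ attracting and $0,1/2,1$ repelling. For small $\lambda$ and large $L$, $f_{(\lambda)}$ inherits the same zero and stability structure with perturbed roots $0<a_\lambda<b_\lambda<1-a'_\lambda<1$ converging to $0,a,1/2,1-a,1$. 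The crucial computation is
\[
\int_{a_\lambda}^{1-a'_\lambda}f_{(\lambda)}(u)\,du=\int_{a_\lambda}^{1-a'_\lambda}f(u)\,du+\lambda\int_{a_\lambda}^{1-a'_\lambda}f_0(u)\,du>0
\]
once $L$ is large enough relative to $\lambda$: the first integral is $O(\eta_0(L))$ via the antisymmetry $f_1(u)=-f_1(1-u)$ of~\eqref{f1roots} together with $f_2=O(\eta_0(L))$, while the second is bounded below by $c\lambda$ for some $c=c(\bar a)>0$ (using $f_0>0$ on $(0,1)$). By~\eqref{signr} applied on the bistable sub-interval $[a_\lambda,1-a'_\lambda]$, the corresponding traveling wave has speed of the sign that drives $1-a'_\lambda$ into $a_\lambda$.

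The next step converts this to a PDE statement by adapting Proposition~\ref{prop:pde3} to the bistable interval $[a_\lambda,1-a'_\lambda]$, via Fife--McLeod theory after the affine rescaling $u\mapsto (u-a_\lambda)/(1-a'_\lambda-a_\lambda)$: if $v(x)\ge b_\lambda+\delta$ on $|x|\le L_0$ with $L_0$ sufficiently large, then $u(t,x)\ge 1-a'_\lambda-\eta/4$ on $|x|\le L_0+2wt$. A mirror argument on $[1-a'_\lambda,1]$ (using $f_{(\lambda)}'(1)>0$) delivers the reverse bound $u(t,x)\le 1-a'_\lambda+\eta/4$. The particle-system version is obtained by invoking Theorem~\ref{thm:nonexist} applied to an auxiliary block process in which a block is declared \emph{good} when its local density lies within $\eta/4$ of $1-a'_\lambda$: the hydrodynamic convergence Theorem~\ref{thm:strongconv}, together with the PDE bound above, shows that good blocks produce more good blocks with high probability, generating a supercritical oriented percolation. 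Stationarity of any non-trivial $\nu$ then forces $|\nu(\xi(x)=1)-(1-a'_\lambda)|\le \eta/2$ uniformly in $x$, and since $|1-a'_\lambda-(1-a')|<\eta/2$ for $\lambda$ small and $L$ large, the conclusion follows.

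The principal obstacle is the block construction in the presence of the lower metastable attractor $a_\lambda$: one must show that stochastic fluctuations nucleate good blocks even starting from configurations close to the $a_\lambda$-equilibrium, so that the oriented percolation takes off from an arbitrary non-trivial $\nu$. This difficulty was absent in Theorem~\ref{thm:nlv}(b), where the symmetric reaction function had no rival attractor in play; here the symmetry breaking by $\lambda>0$ is exactly what is needed to suppress the $a_\lambda$-equilibrium, and the quantitative trade-off between $\lambda$ and $\eta_0(L)$ is what gets absorbed into the choice of $\vep_1(\eta,\bar a)$.
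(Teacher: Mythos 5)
Part (a) of your argument is essentially the same as the paper's and is fine. Part (b), however, has a genuine gap at the crucial step verifying the lower bound in Assumption~\ref{a1}, and your proposal also misidentifies where the difficulty lies.

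Your claim that one can establish the lower bound by ``adapting Proposition~\ref{prop:pde3} to the bistable interval $[a_\lambda,1-a'_\lambda]$'' via an affine rescaling does not work. Assumption~\ref{a1}(i) allows arbitrary initial data outside the ball $\{|x|\le L_0\}$, in particular $v\equiv 0$ there; and since $a_\lambda=p_1(\lambda)$ is an \emph{attracting} fixed point of the ODE $u'=f_{(\lambda)}(u)$, solutions with initial data below $p_1(\lambda)$ need not rise above it. Consequently the solution is not confined to $[p_1(\lambda),p_3(\lambda)]$ and there is no way to reduce to a two-level bistable problem by rescaling, as you can for the upper bound (where one may bound the initial data from above by $v_1\mathbf{1}(|x|\le L_1)+\mathbf{1}(|x|>L_1)\ge p_3(\lambda)$ and so stay in $[p_3(\lambda),1]$). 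The paper's proof tackles exactly this issue via Weinberger's spreading theorem (Theorem~\ref{weinb}): it uses the positivity of $\int_{p_1}^{p_3}f_{(\lambda)}$ (which you correctly identified) to get a positive Fife--McLeod speed $r_2(\lambda)$ over $(p_1,p_3)$, then compares the KPP-type minimal speed on $(0,p_1)$ (via auxiliary functions $g_1\le g_2$ with $g_2'(0)$ small) to show it is strictly smaller than $r_2(\lambda)$, and finally invokes Theorem~2.7 of Fife--McLeod to produce a traveling wave over the full range $(0,p_3(\lambda))$ with positive speed $r_1(\lambda)$, giving $\rho\ge r_1(\lambda)>0$. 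Without an argument of this type there is no way to get the solution past the trap at $p_1(\lambda)$.

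Two smaller but real errors: you invoke Theorem~\ref{thm:nonexist} for the particle-system step, but that is the extinction theorem requiring Assumption~\ref{a2}; the correct input is Theorem~\ref{thm:exist} once Assumption~\ref{a1} is verified (the percolation/block construction is already encapsulated there and does not need to be re-derived). Relatedly, your closing paragraph identifies the ``principal obstacle'' as a stochastic nucleation issue, but the difficulty is entirely in the deterministic PDE analysis; once Assumption~\ref{a1} holds, Theorem~\ref{thm:exist} applies without further probabilistic work. Finally, the paper's version of your integral computation is slightly cleaner: comparing $\int_{p_1}^{p_3}f_{(\lambda)}>\int_{a}^{1-a}f=0$ by matching the negative and positive humps separately and using the exact antisymmetry of $f$, avoids having to estimate separate $O(\eta_0(L))$ and $\Theta(\lambda)$ contributions, though your version could likely be made to work.
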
 

See Section~\ref{ssec:nlv2} for the proof.  For a concrete example, consider $a(1)=a(2)=1$, $a(3)=a(4)=3$, which is a version of the majority vote
plus random flipping. Then $b_1=4a(1)-a(4)=1$, $b_2=6a(2)-4a(3)=-6$, and $3b_1+b_2=-3$, and so the hypotheses of (b)
hold for small $\lambda >0$. As $\lambda\downarrow 0$, the density of any invariant measure approaches $1-a$ the largest root
of $f_1(u)=0$, while symmetric reasoning shows that if $\lambda\uparrow 0$, the densities will approach $a$. Of course
the closer $\lambda$ gets to 0, the smaller we must make $\vep$ to obtain the conclusion of
Theorem \ref{thm:nlv2}, so we are not able to prove anything about the case $\lambda=0$.

\subsection{General Coexistence and Extinction Results}\label{gensys}

Our results for the three examples will be derived from general results with hypotheses concerning properties of the limiting PDE. In this subsection, we state those results and give an overview of the contents of the rest of the paper. We work with a voter model perturbation $\xi^\vep_t(x), x\in\vep\Zd,t\ge 0$ throughout. 

In Section \ref{sec:construction} we first introduce a family of Poisson processes (``graphical representation") which we use to define our process $\xi^\vep_t(x)$ on $x\in\vep \Zd$. Using this and working backwards in time we define a ``dual process'' $X$ which is a branching coalescing random walk with particles jumping at rate $\vep^{-2}$ according to $p^\vep(x) = p(x/\vep)$ and with a particle at $x$ giving birth to particles at $x+ \vep Y^1, \ldots x+ \vep Y^{N_0}$ when a reaction occurs at $x$. The ideas in the definition of the dual are a combination of those of Durrett and Neuhauser \cite{DN94} for systems with fast stirring and those of Durrett and Z\"ahle \cite{DZ} for biased voter models that are small perturbations of the voter model. 

Duality allows us to compute the value at $z$ at time $T$ by running the dual process $X^{\vep}$ backwards from time $T$ to time 0 starting with one particle at $z$ at time $T$. Most of the work in Section \ref{sec:construction} is to use coupling to show that for small $\vep$, $X^\vep$ is close to a branching random walk $\hat X^\vep$.
Once this is done, it is straightforward to show (in Section \ref{sec:hydroproofs}) that as $\vep\to 0$ the dual converges to a branching Brownian motion $\hat X^0$, and then derive Theorem \ref{conv} which includes convergence of $P(\xi^\vep_t(x)=1)$ to the solution $u(t,x)$ of a PDE.




Our general coexistence result will be based on the
following assumption about solutions to the PDE. The
coexistence results for the models discussed in the previous
section are obtained by verifying this assumption in
the particular cases.

\begin{assum} \label{a1}
Suppose that there are constants $0< v_0 < u_* \le u^* < v_1< 1$, and $w,L_i>0$, so that

\noindent
(i) if $u(0,x) \ge v_0$ when $|x| \le L_0$, then 
$\liminf_{t\to\infty} \inf_{|x| \le wt} u(t,x) \ge u_*$.

\noindent
(ii) if $u(0,x) \le v_1$ when $|x| \le L_1$, then 
$\limsup_{t\to\infty} \sup_{|x| \le wt} u(t,x) \le u^*$.
\end{assum}

\noindent We also will need a rate of convergence in \eqref{gcvgce}, namely for some $r_0>0$, 
\beq\label{grate}
\sum_{i=0}^1\Vert g_i^\vep-g_i\Vert_\infty\le c_{\ref{grate}}\vep^{r_0}.
\eeq

Assumption~\ref{a1} shows that the limiting PDE in
Theorem~\ref{thm:strongconv} will have solutions which stay
away from $0$ and $1$ for large $t$.  A ``block
construction'' as in \cite{Dur95} will be employed in
Section 6 to convert this information about the PDE into
information about the particle systems. In effect, this
allows us to  
interchange limits as $\vep\to 0$ and $t\to\infty$ and
conclude the existence of a nontrivial stationary
distribution, and also show that any stationary distribution
will have particle density restricted by the asymptotic
behavior of the PDE solutions at $t=\infty$.

Both Theorem \ref{thm:exist} and \ref{thm:nonexist} below will be formulated for the voter model perturbations on $\vep\Z^d$, but the conclusions then follow immediately for the original unscaled particle systems in \eqref{origrates}.

\begin{thm} \label{thm:exist} Suppose Assumption \ref{a1} and \eqref{grate}.  If $\vep>0$ is small enough, then coexistence holds and the nontrivial stationary distribution $\nu$ may be taken to be translation invariant.

If $\eta>0$ and $\vep>0$ is small enough, depending on $\eta$, then 
any stationary distribution $\nu$ such that  
\begin{equation}\label{nucond}\nu\Bigl( \sum_x\xi(x)=0\hbox{ or }\sum_x(1-\xi(x))=0\Bigr)=0
\end{equation}
satisfies $\nu( \xi(x)=1) \in (u_*-\eta, u^*+\eta)$ for all $x$.
\end{thm}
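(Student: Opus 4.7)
The plan is to establish both parts of the theorem by a block construction that turns the PDE information in Theorem \ref{thm:strongconv} and Assumption \ref{a1} into a comparison with supercritical oriented percolation on $\Z^d\times\Z_+$, in the style of Durrett \cite{Dur95,Dur09} and Durrett--Neuhauser \cite{DN94}. I will describe the ``lower bound'' side, based on Assumption \ref{a1}(i); the ``upper bound'' side is completely symmetric via Assumption \ref{a1}(ii) applied to the $0$'s.

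First I would fix $\eta>0$ small and choose constants. By Assumption \ref{a1}(i) there exists $T$ large (depending on $L_0,v_0,u_*,w$) so that any PDE solution with $u(0,y)\ge v_0$ on $\{|y|\le L_0\}$ satisfies $u(T,y)\ge u_*-\eta/3$ throughout $\{|y|\le 3wT\}$. Call a space--time ``block'' $(x,nT)$ \emph{$1$-good} if the local density $\tilde u^\vep(nT,\cdot)$ from Theorem \ref{thm:strongconv} exceeds $v_0+\delta$ on a suitable cube around $x$. Theorem \ref{thm:strongconv}, together with the rate estimate \eqref{grate} supplying uniform-in-$\vep$ control of the PDE approximation, implies that for $\vep$ sufficiently small, a $1$-good block at time $nT$ produces $1$-good blocks at time $(n+1)T$ at all offsets in a large cubical grid inside $\{|z|\le 2wT\}$ with probability tending to $1$. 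The graphical construction of Section \ref{sec:construction}, together with the exponential tails \eqref{expbd1} and \eqref{expbd2}, ensures that blocks which are well separated in space depend on essentially disjoint portions of the Poisson graphical representation, so the comparison is with a $1$-dependent oriented percolation whose edge density can be made arbitrarily close to $1$ by shrinking $\vep$. Standard results make such a process dominate a supercritical nearest-neighbor oriented percolation with positive density of wet sites.

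To produce a translation-invariant stationary $\nu$, I would run the process from a Bernoulli product starting measure with density $1/2$ (trivially translation invariant and containing a $1$-good block near the origin with positive probability, and symmetrically a $0$-good block), and form Cesaro averages $\bar\mu_T^\vep=T^{-1}\int_0^T\mu_s^\vep\,ds$ of the time marginals. Compactness of $\{0,1\}^{\vep\Z^d}$ gives tightness, and any weak limit $\nu$ is stationary. The oriented-percolation comparison gives a positive uniform lower bound on the density of $1$'s in any box, and the symmetric construction for $0$'s gives the same for $0$'s; Lemma \ref{nuequiv} then converts this into \eqref{nucoexist}. Translation invariance of $\nu$ is inherited from the starting measure and the translation invariance of the dynamics.

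For the density bound on an arbitrary stationary $\nu$ satisfying \eqref{nucond}, I would argue as follows. By Lemma \ref{nuequiv}, $\nu$-a.s.\ both types are infinite, so with $\nu$-positive probability there is a $1$-good block near any given macroscopic point $x$. Running the process from $\nu$ and applying the block construction, the comparison with supercritical oriented percolation forces the $1$-density at $x$ at time $nT$ to be at least $u_*-\eta$ for $n$ large enough, and stationarity then yields $\nu(\xi(x)=1)\ge u_*-\eta$ for every $x$. The symmetric block construction for $0$'s (based on Assumption \ref{a1}(ii)) gives $\nu(\xi(x)=0)\ge 1-u^*-\eta$, i.e.\ $\nu(\xi(x)=1)\le u^*+\eta$. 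The main obstacles are (a) calibrating the block geometry and $\vep$ so that Theorem \ref{thm:strongconv} is quantitatively sharp on the relevant scales, (b) verifying the $1$-dependence needed for the oriented-percolation comparison despite the fact that the dual process has exponentially decaying (but not strictly finite) range, and (c) in the density-bound step, promoting the statement ``$\nu$ has a $1$-good block somewhere'' to ``$\nu$ has a $1$-good block near every $x$'', which is the genuinely new point beyond the by-now standard block machinery and will require an ergodic-theoretic or zero-one type argument exploiting the non-triviality condition \eqref{nucond}.
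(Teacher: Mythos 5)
Your overall strategy---block construction comparing the particle system to supercritical oriented percolation, applied in two symmetric directions via Assumption~\ref{a1}(i) and (ii)---is the same as the paper's, and your existence argument (start from translation-invariant Bernoulli, take Cesaro limits, use Lemma~\ref{nuequiv}) matches the proof modulo technical details (the paper uses an $M$-dependent rather than $1$-dependent percolation comparison, reduced to independent percolation via \cite{LSS}, and quantitative density estimates from Section~\ref{lockill} rather than Theorem~\ref{thm:strongconv} directly).

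However, there is a genuine gap at exactly the point you flag as obstacle (c): promoting ``a good block exists somewhere'' to ``the density is controlled near every $x$'' for an arbitrary stationary $\nu$, without assuming translation invariance. You propose to resolve this by an ``ergodic-theoretic or zero-one type argument exploiting \eqref{nucond},'' but this is not how the paper handles it, and it is not clear such an argument works since $\nu$ need not be translation-invariant or ergodic. What the paper does instead is constructive: Lemma~\ref{nuequiv} gives infinitely many $0$'s $\nu$-a.s., Lemma~\ref{lem:hole} then produces, a.s., a concrete ($\sigma(\xi_0^\vep)$-measurable) site $x_0$ with $\xi_0^\vep\equiv 0$ on $Q^\vep(x_0,L)$, and one shifts the percolation construction so that $x_0$ is the initial wet site. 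The point then is that the ``shape theorem'' part (ii) of Lemma~\ref{ballth}---which asserts that, on survival, the set of wet sites starting from a single wet point eventually agrees with the set of wet sites starting from everything wet, inside a linearly growing region---combined with the covering property \eqref{cLd} of the Voronoi tiling with $\alpha_0=(2D)^{-1}$, forces the density bound at $c_Lx_0+c_Lz+[-L,L]^d$ for every $z$ once $n$ is large, hence at every $x\in\R^d$. Stationarity then pulls this back to time $0$, and a short dual run (Lemma~\ref{lem:ICUD}) finishes. So the crucial missing ingredient in your outline is the use of the oriented-percolation shape theorem on the lattice $\cL_D$ (and the associated covering geometry from Section~\ref{sec:perc}) to spread a single good block over all of space with probability tending to $1$; no ergodic argument about $\nu$ is needed.
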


Note that in the second assertion we do not require that $\nu$ be translation invariant.


Results that assert 0's will take over will require a stronger pde input:

\begin{assum} \label{a2}
There are constants $0<u_1<1$, $c_2, C_2, w>0$, $L_0\ge 3$ 
so that for all $L\ge L_0$, if $u(0,x)\le u_1$ for $|x|\le L$ then for all $t\ge 0$
$$
u(t,x)\le C_2 e^{-c_2 t}\hbox{ for all $|x|\le L+2wt$.} 
$$
\end{assum}  
Finally we need to assume that the constant configuration of all $0$'s is a trap for our voter perturbation, that is,

\beq\label{staydead} g_1^\vep(0,\dots,0)=0,\hbox{ or equivalently } h_1^\vep(0,\underline 0)=0,\hbox{ for }0<\vep\le \vep_0\, ,
\eeq
where $\underline 0$ is the zero configuration in $\{0,1\}^{\Zd}$.
This clearly implies $f(0)=0$ and is equivalent to
it if $g_1^\vep$ does not depend on $\vep$, as is the case
in some examples. Recall the definition of ``$i$''s take
over" from \eqref{takeover} and that $q$ is the law of $(Y^1,\dots, Y^{N_0})$.

\note{finrange}
\begin{thm} \label{thm:nonexist}
Suppose Assumption \ref{a2}, \eqref{grate}, \eqref{staydead}, $p(\cdot)$ and $q(\cdot)$ have finite support, and $f'(0)<0$. Then for $\vep$ small the $0$'s take over.
\end{thm}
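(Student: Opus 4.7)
The approach is a block construction in the style of \cite{Dur95}, using the hydrodynamic limit (Theorem \ref{thm:strongconv}) together with Assumption \ref{a2} to kill $1$'s on large regions, and the trap condition \eqref{staydead} to freeze locally $0$ configurations. The finite-range hypotheses on $p$ and $q$ are essential to make block-scale events local and to apply an oriented percolation comparison.

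The core ingredient is a \emph{local extinction lemma}: for any $\eta>0$ one can choose $L$ large and $T=T(L)$ of order $\log L$ so that for all small $\vep$, if $\xi_0^\vep$ has smoothed local density at most $u_1/2$ on $B(0,L)$, then
\[
P\bigl(\xi_T^\vep(x)=0\ \text{for all}\ x\in B(0,L+wT)\cap\vep\Zd\bigr)\ge 1-\eta.
\]
To prove this I would apply Theorem \ref{thm:strongconv} with a continuous initial datum $v:\R^d\to[0,1]$ equal to $u_1/2$ on $B(0,L)$. Assumption \ref{a2} yields $u(t,x)\le C_2 e^{-c_2 t}$ on $B(0,L+2wt)$, so choosing $T$ with $C_2 e^{-c_2 T}$ times the volume of $B(0,L+wT)$ less than $\eta/2$ makes the expected number of $1$'s on the target ball small. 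Markov's inequality together with \eqref{staydead} (a $1$ cannot nucleate inside an all-$0$ ball under a finite-range flip rate) upgrades this bound to ``no $1$'s at all''. The rate hypothesis \eqref{grate} gives the uniform $\vep$-estimates needed for the hydrodynamic step.

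Next I would perform the block construction. Declare a space-time block of size $L\times T$ \emph{good} if $\xi^\vep$ has no $1$'s in its central cube at the block's starting time. By the extinction lemma a good block forces each of its spatial neighbor blocks to be good at the next time level except on an event of probability at most $\eta$. Because $p$ and $q$ are finitely supported this is a finite-range event, so the comparison theorem of \cite{Dur95} dominates the bad-block process by a subcritical oriented percolation once $\eta$ is small enough. Consequently above any fixed compact region almost surely only finitely many bad blocks appear, and once a good block forms above $[-L,L]^d$ the trap condition \eqref{staydead} keeps that cube in state $0$ from then on.

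The main obstacle is the \emph{seeding step}: the hypothesis only supplies infinitely many initial $0$'s, so I must show that a good block actually forms above any fixed region. The plan is to exploit $f'(0)<0$, which is the local linear stability of the all-$0$ state: with positive probability, independent of the rest of $\xi_0$, a single initial $0$ should nucleate a macroscopic all-$0$ ball by time $T\vep^{-2}$. Finite ranges of $p$ and $q$ make seeds at spatial separation $\gg L\vep^{-1}$ essentially independent, so the infinitely many initial $0$'s supply infinitely many attempts and Borel--Cantelli yields a good block a.s. Without attractiveness this single-seed lower bound is the delicate point; I expect it to be handled through the dual branching-coalescing random walk of Section~\ref{sec:construction}, where $f'(0)<0$ should translate into strictly subcritical growth of the dual, giving uniform control of $P(\xi_t^\vep(x)=1)$ via coalescence probabilities.
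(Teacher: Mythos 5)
Your high-level architecture (hydrodynamic limit $\to$ block construction $\to$ oriented percolation domination $\to$ seeding from the initial $0$'s) matches the paper. However, the key ``local extinction lemma'' as you propose it does not go through, and this is exactly where the paper has to invest the bulk of the work in Section~\ref{death}.

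Concretely: the hydrodynamic limit (Theorem~\ref{thm:strongconv} or the refined estimates behind Lemma~\ref{lem:gdlow}) does \emph{not} let you push $E(\xi_T^\vep(x))$ below a threshold of order $\vep^{\gamma}$ for some fixed $\gamma>0$, no matter how large you take $T$. The coupling between $(X,\zeta)$ and the branching Brownian motion introduces error terms that blow up like $e^{c_bT}\vep^{(\text{positive power})}$ (see e.g.\ Lemmas~\ref{lem:tildeG} and~\ref{poscoupl} and the bound \eqref{DEbound}), so one is forced to take $T=A\log(1/\vep)$ with $A$ small, which caps the resulting density at $\vep^{\gamma_{\ref{lem:gdlow}}}$. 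But then the expected number of $1$'s in a ball of microscopic radius $O(\vep^{-1}L)$ is of order $\vep^{\gamma-d}L^d\to\infty$, so Markov's inequality cannot give ``no $1$'s at all'' with probability close to one. The paper closes this gap with a completely separate argument (Lemma~\ref{survbnd} and its supporting Lemmas~\ref{lem:mombd}--\ref{lem:submartstuff}): once $|\xi^\vep_0|\le\vep^{-d+\beta}$, one controls the total mass $|\xi_t^\vep|$ via the semimartingale decomposition \eqref{totmass}, compares the drift $\psi_\vep(\xi^\vep_s)$ to that of the plain voter model, and shows the drift is $\approx f'(0)|\xi^\vep_s|<0$ up to explicit error terms, giving exponential decay of the total mass and extinction in time $O(\log(1/\vep))$. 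This is where $f'(0)<0$ enters — not in the seeding step, which in the paper (Lemma~\ref{lem:hole}) is a soft argument that uses only the existence of infinitely many initial $0$'s and the Poisson graphical construction, with no use of $f'(0)$ at all.

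A second, smaller gap: once a good block forms, oriented percolation gives you a linearly growing region of wet blocks, but this is not yet a linearly growing dead zone. A $1$ could in principle sit inside a dry block deep within the wet region; controlling this requires the backward contour argument on the lattice $\cL_D$ with downward edges $\cE_\downarrow$ (Lemmas~\ref{gyconn}--\ref{deadcore} and Remark~\ref{rem:deadzone}), which traces any surviving $1$ backward in time through a connected dry component and shows such components are exponentially rare. Your proposal stops at ``subcritical bad-block process,'' which is necessary but not sufficient.
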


We believe the theorem holds without the finite range assumptions on $p$ and $q$.  By Proposition~\ref{finiterangeh} in the above finite support setting, it suffices to assume \eqref{finrangerep} in place of \eqref{hgrepn}, and also assume \eqref{grate} holds for the $\hat g_i^\vep,\hat g_i$ appearing in \eqref{finrangerep} instead of the $g_i^\vep,g_i$.

In order to show that $0$'s take over, say, we will need several additional arguments.  The pde results will only ensure we can get the particle density down to a low level (see Section~\ref{lockill}) but clearly we cannot expect to do better than the error terms in this approximation. To then
drive the population to extinction on a large region with high probability we will need to refine some coalescing random walk
calculations from the low density setting in Cox and Perkins \cite{CP-2}--see Section~\ref{death} and especially Lemma~\ref{survbnd}. 
This is then used as input for another percolation argument of Durrett \cite{Dur92} to guarantee that there are no particles in a linearly growing region. \note{finrange}
Since there was an error in the original proof of the latter we give all the details here in Sections~\ref{sec:perc} and  \ref{ssec:thmextinct}.

Quantifying the outline above,
the first step in the proof of Theorem \ref{thm:nonexist}, taken in Section \ref{lockill} 
is to use techniques of Durrett and Neuhauser \cite{DN94}
to show that if $\xi^\vep_0$ has density at most $u_1$ on $[-L,L]^d$ then at time $T_1=c_1 \log(1/\vep)$ the
density is at most $ \vep^\beta$ on $[-L-wT_1,L+wT_1]^d$ (see Lemma~\ref{lem:gdlow}). Here $\beta$ is a small parameter.  The second step, taken in Section \ref{death},
is to show that if one waits an additional $T_2=c_2\log(1/\vep)$ units of time then there will be no particles
in $[-L-wT_1+AT_2,L+wT_1-A_2T_2]^d$ at time $T_1+T_2$. The first step here (Lemma~\ref{survbnd}) is to show that if 
we start a finite (rescaled) block of ones of density at most $\vep^\beta$ then with probability at least $1-\vep^{\beta/2}$ it will be extinct by time $C\log(1/\vep)$.  Here it is convenient that arguments for the low density regime of \cite{CP-1} (density $\vep^{d-2}$) continue to work all the way up to $\vep^\beta$ and also that the PDE arguments can be used to reduce the density down to $\vep^\beta$. In short,
although the precise limit theorems, Theorem~\ref{conv} and Corollary~1.8 in \cite{CP-1} 
apply in disjoint regimes (particle densities of $1$ and $\vep^{d-2}$, respectively) the methods underlying these results apply in overlapping regimes which together allow us to control the underlying
particle systems completely. Of course getting $1$'s to be extinct in a large block does not give us what we want.  \note{finrange} 
The block construction in \cite{DN94} is suitably modified to establish
complete extinction of $1$'s on a linearly growing set. A comparison result of \cite{LSS}, suitably modified to accommodate our percolation process, is used to simplify this construction.  

\subsection{Application to the examples}\label{ssec:ApplEg}

\subsubsection{ Lotka-Volterra systems}
 
 \noindent{\it Proof of Theorem \ref{thm:ext}.} (i) Let $0<\eta<1$ and consider first
 \[\alpha_0=\alpha_0^\vep=1-\vep^2,\ \ \alpha_1=\alpha_1^\vep=1-m_0(1-\eta)\vep^2,\]
 so that in the notation of Section~\ref{ss:LV} we have set $\theta_0=-1$, $\theta_1=-m_0(1-\eta)$.  The rescaled Lotka-Volterra process $\xi^\vep$ is a voter model perturbation and from \eqref{lvf} we have 
 \[f(u)=-u(1-u)[\eta p_2+up_3(1+m_0(1-\eta))]<0\hbox{ on }(0,1).\]
 Proposition~\ref{prop:pde4} verifies Assumption~\ref{a2} in Theorem~\ref{thm:nonexist} and $f'(0)<0$ is obvious.  \eqref{grate} is trivial ($g^\vep_i=g_i$) and \eqref{staydead} is immediate from \eqref{LVgdef}. 
\note{finrange} 
The finite range assumption on $q=p\times p$ is immediate from that on $p$. 
Theorem~\ref{thm:nonexist} implies $0$'s take over for $\vep$ small.  Therefore when $0<1-\alpha_0\le r_0(\eta)$ and $\alpha_1=1+m_0(1-\eta)(\alpha_0-1)$, then $0$'s take over for $LV(\alpha)$.  The monotonicity in \eqref{mona} shows this is also the case for $\alpha_1\ge  1+m_0(1-\eta)(\alpha_0-1)$ and $\alpha_0$ as above.

Next consider 
\[\alpha_0=\alpha_0^\vep=1+\vep^2,\ \alpha_1=\alpha_1^\vep=1+(1+\eta)\vep^2\ \ \hbox{ that is, }\theta_0=1,\ \theta_1=1+\eta.\]
In this case we have 
\[f(u)=u(1-u)[p_2-(1+\eta)(p_2+p_3)+up_3(2+\eta)],\]
and so, assuming without loss of generality (by \eqref{mona}) $1+\eta<m_0^{-1}$, from \eqref{u*def} $f$ has a zero, and an unstable fixed point for the ODE, at 
\[ u^*=\frac{(1+\eta)(p_2+p_3)-p_2}{p_3(2+\eta)}\in\Bigl(\frac{1}{2},1\Bigr).\] 
It follows that $\int_0^1 f(u)du<0$ and Proposition~\ref{prop:pde3} establishes Assumption~\ref{a2}.  As above, Theorem~\ref{thm:nonexist} and \eqref{mona} show that $0$'s take over if $0<\alpha_0-1$ is sufficiently small and $\alpha_1\ge 1+(1+\eta)(\alpha_0-1)$.
\medskip

\noindent (ii) Interchange the roles of $0$ and $1$ in (i).\qed

\medskip

 \noindent{\it Proof of Theorem \ref{thm:coex}.} We slightly extend the setting in Section~\ref{ss:LV} and for $\eta\in(0,1-m_0)$ consider
 \begin{equation}\label{alphachoice} \alpha_0^\vep=1-\vep^2, \alpha_1^\vep=1+\vep^2\theta_1^\vep, \hbox{ where }-\theta_1^\vep\in\Bigl[\frac{m_0}{1-\eta},\frac{1-\eta}{m_0}\Bigr],\, \lim_{\vep\downarrow 0}\theta_1^\vep=\theta_1.
 \end{equation}
 Then the rescaled Lotka-Volterra model $\xi^\vep$ in \eqref{resclv} remains a voter model perturbation but now $g_0^\vep$ may now depend on $\vep$.  From \eqref{lvf} we have 
 \[f(u)=u(1-u)[-p_2-\theta_1(p_2+p_3)+up_3(-1+\theta_1)],\]
 which has a zero, and attracting fixed point for the ODE, at 
 \beq\label{ustar}u^*(-\theta_1)=\frac{\theta_1(p_2+p_3)+p_2}{p_3(\theta_1-1)}\in(0,1).
 \eeq
 Proposition~\ref{prop:pde1} and its mirror image, with $0$ and $1$ reversed, establish Assumption~\ref{a1} with $u^*=u_*=u^*(-\theta_1)$ (see \eqref{pde5}). Theorem~\ref{thm:exist} therefore shows that for $0<\vep<\vep_0(\eta)$
 \begin{align}\label{exconc}&\hbox{coexistence holds, and if $\nu$ is a stationary distribution satisfying}\\
\nonumber&\hbox{$\nu(\xi\equiv 0\hbox{ or }\xi\equiv 1)=0$, then }
 \sup_x|\nu(\xi(x)=1)-u^*(-\theta_1)|<\eta.
 \end{align}
 
 Suppose first that (ii) of Theorem~\ref{thm:coex} fails.  Then there is a sequence $\vep_n\downarrow 0$,
 $(\alpha_0^{\vep_n},\alpha_1^{\vep_n})$ and $\kappa>0$ so that \eqref{alphachoice} holds with $\vep=\vep_n$, and there is a stationary measure $\nu_n$ for $\xi^{\vep_n}$ satisfying $\nu_n(\xi\equiv 0\hbox{ or }\xi\equiv 1)=0$ and such that 
 \[\sup_x|\nu_n(\xi(x)=1)-u^*(-\theta^{\vep_n}_1)|> \kappa.\]
 Since $u^*(-\theta^{\vep_n}_1)\to u^*(-\theta_1)$, if we choose $\eta<\kappa$ this contradicts \eqref{exconc} for large $n$, and so proves (ii).
 The proof of (i) is similar using the first part of \eqref{exconc}. That is, if (i) fails, there is a sequence $\vep_n\downarrow 0$ so that coexistence fails for $\alpha_i^{\vep_n}$ as in \eqref{alphachoice}, contradicting the first part of \eqref{exconc}.  \qed

\medskip
\subsubsection{Evolution of cooperation}

Here is the result on coalescing probabilities which will
help us simplify the formula \eqref{cubicf} for the reaction
function $f$.  The notation is as in
Section~\ref{ssec:evolcoop}.

\begin{lem}\label{coalp} (a) $p(e_1|e_2)=p(0|e_1)$.

\noindent(b) $p(e_1|e_2+e_3)=\Bigl(1+\frac{1}{k}\Bigr)p(0|e_1)$.
\end{lem}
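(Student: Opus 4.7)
The plan is to encode both quantities in terms of a single function $\phi:\Z^d\to[0,1]$ defined by $\phi(z)=P(\hat B^0_t\neq \hat B^z_t\text{ for all }t\ge 0)$ for $z\neq 0$, with $\phi(0)=0$. Because the probability that two independent rate-$1$ walks ever meet is unaffected by whether they coalesce upon meeting, translation invariance of the coalescing system gives
\[
p(0|e_1)=E[\phi(e_1)],\qquad p(e_1|e_2)=E[\phi(e_1-e_2)],\qquad p(e_1|e_2+e_3)=E[\phi(e_1-e_2-e_3)].
\]
The only ingredient beyond this is the harmonicity-off-$0$ identity
\[
\phi(z)=\sum_w p(w)\phi(z+w)\qquad\text{for all }z\neq 0,
\]
obtained by conditioning on the first jump of the difference walk $D_t=\hat B^z_t-\hat B^0_t$, which jumps at rate $2$ with step distribution $p$ (using symmetry of $p$) and which hits $0$ iff the two original walks meet. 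Note the right side is not equal to $\phi(0)=0$ at $z=0$, which is the source of the correction in (b).

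For part (a), I average the identity over $z=e_1\sim p$; since $p(0)=0$ the condition $z\neq 0$ holds almost surely, and introducing an independent copy $e_2\sim p$ yields $E[\phi(e_1)]=E[\phi(e_1+e_2)]$. Replacing $e_2$ by $-e_2$ (same distribution by symmetry of $p$) gives $E[\phi(e_1+e_2)]=E[\phi(e_1-e_2)]$, which is exactly $p(e_1|e_2)=p(0|e_1)$.

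For part (b), I apply the identity a second time. Writing
\[
E[\phi(e_1+e_2+e_3)]=\sum_{w_1,w_2}p(w_1)p(w_2)\sum_{w_3}p(w_3)\phi(w_1+w_2+w_3),
\]
the inner sum equals $\phi(w_1+w_2)$ when $w_1+w_2\neq 0$, and equals $\sum_{w_3}p(w_3)\phi(w_3)=E[\phi(e_1)]$ when $w_1+w_2=0$. Combining these with $\phi(0)=0$ and part (a) yields
\[
E[\phi(e_1+e_2+e_3)]=E[\phi(e_1+e_2)]+P(e_1+e_2=0)\,E[\phi(e_1)]=\bigl(1+P(e_1+e_2=0)\bigr)E[\phi(e_1)].
\]
Because $p$ is uniform on $\mathcal{N}$ with $|\mathcal{N}|=k$, $P(e_1+e_2=0)=\sum_x p(x)^2=1/k$, and a final symmetry argument ($e_2+e_3\stackrel{d}{=}-(e_2+e_3)$) converts the left side to $E[\phi(e_1-e_2-e_3)]=p(e_1|e_2+e_3)$, giving (b).

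The main obstacle is simply the bookkeeping around the exceptional event $\{e_1+e_2=0\}$ when applying the harmonic identity the second time; this is exactly what produces the extra factor $1/k$ distinguishing (b) from (a), and it is where the uniform-on-$\mathcal{N}$ structure of $p$ enters.
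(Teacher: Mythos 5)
Your proof is correct and follows essentially the same route as the paper: both reduce to the escape probability of the rate-$2$ difference walk, exploit its harmonicity off $0$, and extract the $1/k$ from the return probability $\sum_x p(x)^2$. The paper tracks the first and second jump times $T_1,T_2$ of the difference walk directly, while you average the harmonic identity and isolate the $\{e_1+e_2=0\}$ case, but this is the same computation in slightly different notation.
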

\begin{proof} Let $\tilde B^x_t$ denote a rate $2$ random
  walk with kernel $p$ starting at $x$, and note that for
  $x\ne 0$, $\hat B^{x}-\hat B^0$ has the same law as
  $\tilde B^x$ until it hits 0. Note also that for $x\ne 0$, $P(\tilde B^x_t\ne 0\
  \forall\ t\ge 0)=\sum_y p(y)P( \tilde B^{x+y}_t\ne 0\
  \forall\ t\ge 0)$.

For (a), 
\begin{align*}
p(0|e_1)&=\sum_{x_1}p(x_1)P(\tilde B^{x_1}_t\neq 0\hbox{ for all }t\ge 0)\\
&=\sum_{x_1}\sum_{x_2}p(x_1)p(x_2)P(\tilde B^{x_1+x_2}_t\neq 0\hbox{ for all }t\ge 0)\quad\hbox{(use $x_1\neq 0$)}\\
&=\sum_{x_1}\sum_{x_2}p(x_1)p(x_2)P(\tilde B^{x_1-x_2}_t\neq 0\hbox{ for all }t\ge 0)\quad\hbox{(by symmetry)}\\
&=p(e_1|e_2).
\end{align*}

For (b), let $T_j(x)$ be the time of the $j$th jump
of $\tilde B^{x}$. Then using symmetry,
\begin{align*}
p(e_1|e_2+e_3)&=\sum_{x_1,x_2,x_3}p(x_1)p(x_2)p(x_3)
P(\tilde B^{x_1+x_2+x_3}_t\neq 0\hbox{ for all }t\ge 0)\\
&=\sum_{x_1}p(x_1)
P(\tilde B^{x_1}_t\neq 0\hbox{ for all }t\ge T_2(x_1)) .
\end{align*}
Now using the above and first equality in the proof of (a), 
\begin{align*}
p(e_1|e_2+e_3)-p(0|e_1)
&=\sum_{x_1}p(x_1)P(\tilde B_{T_1}^{x_1}=0,\ \tilde B^{x_1}_t\neq 0\hbox{ for all }t\ge T_1(x_1))\\
&= k^{-1}\sum_{x_1}p(x_1) P(\tilde B^0_t\neq 0\hbox{ for all }t\ge T_1)\\
&=k^{-1}p(0|e_1).
\end{align*}
The result follows.
\end{proof}
\noindent{\it Proof of Lemma~\ref{evalf}.}
We first rewrite \eqref{cubicf} as 
\begin {align}
\nonumber \frac{f(u)}{k}=&(\beta-\gamma)\langle\hat\xi(e_1)\xi(e_2)\rangle_u+(\gamma-\delta)\langle\xi(e_1)\hat\xi(e_2)\hat\xi(e_2+e_3)+\hat\xi(e_1)\xi(e_2)\xi(e_2+e_3)\rangle_u\\
\nonumber&+((\alpha-\beta)-(\gamma-\delta))\langle\hat\xi(e_1)\xi(e_2)\xi(e_2+e_3)\rangle_u\\
\label{cubicf2}=&I+II+III.
\end{align}
Some elementary algebra shows that
\begin{equation}\label{rom2}
II=(\gamma-\delta)\langle\xi(e_1)-\xi(e_1)\xi(e_2)-\xi(e_1)\xi(e_2+e_3)+\xi(e_2)\xi(e_2+e_3)\rangle_u.
\end{equation}
Note that \eqref{prodform} and Lemma~\ref{coalp}(a) imply
\begin{align*}\langle\xi(e_1)\xi(e_2)\rangle_u=&u^2p(e_1|e_2)+u(1-p(e_1|e_2))\\
=&u^2p(0|e_1)+u(1-p(0|e_1))\\
=&\langle\xi(0)\xi(e_1)\rangle_u=\langle\xi(e_2)\xi(e_2+e_3)\rangle_u,
\end{align*}
the last by translation invariance.  Using this in
\eqref{rom2} and again applying \eqref{prodform}, we get

\begin{align}\label{rom12}
I+II&=(\beta-\gamma)u(1-u)p(e_1|e_2)+(\gamma-\delta)[u-\langle\xi(e_1)\xi(e_2+e_3)\rangle_u]\\
\nonumber&=(\beta-\gamma)u(1-u)p(e_1|e_2)\\
\nonumber&\phantom{=(}+(\gamma-\delta)[u-u^2p(e_1|e_2+e_3)-u(1-p(e_1|e_2+e_3))]\\
\nonumber&=u(1-u)[(\beta-\gamma)p(e_1|e_2)+(\gamma-\delta)p(e_1|e_2+e_3)]\\
\nonumber&=u(1-u)p(0|e_1)[(\beta-\gamma)+(1+k^{-1})(\gamma-\delta)],
\end{align}
where Lemma~\ref{coalp} is used in the last equality. A straightforward application of \eqref{prodform} allows us to find the coefficients of the cubic III in \eqref{cubicf2} and we obtain the required expression for $f(u)/k$.\qed

\medskip

\noindent{\it Proof of Theorem~\ref{thm:evol1}.} This is now an easy application of Theorem~\ref{thm:nonexist}.  Assume $\gamma-\delta<k(\delta-\beta)$.  Then Lemma~\ref{evalf} shows that 
$f(u)=c_1u(1-u)$ for $c_1<0$. Proposition~\ref{prop:pde4} shows that Assumption~\ref{a2} of Theorem~\ref{thm:nonexist} is valid for any $u_1\in(0,1)$. The condition \eqref{grate} holds with $r_0=2$ by \eqref{evolgrates} (recall $\Vert g_i^\vep-g_i\Vert_\infty=\Vert\hat g_i^\vep-\hat g_i\Vert_\infty$ by Proposition~\ref{finiterangeh}).  The condition \eqref{staydead} is clear from the expression for $h_1^\vep$ in \eqref{evolh}.  Since $f'(0)<0$ is clear from the above, and $w=\vep^2$, Theorem~\ref{thm:nonexist} completes the proof in this case.  The case where the inequality is reversed follows by a symmetrical argument, or, if you prefer, just reverse the roles of $0$ and $1$.
\qed
\medskip
\subsubsection{Nonlinear voter models}\label{ssec:nlv2}

\noindent{\it Proof of \ref{thm:nlv}.} Consider Case 4A first. As pointed out in this Case in Section~\ref{ssec:nlv}, for $L$ sufficiently large we may employ the mirror image of Proposition~\ref{prop:pde3} on $[0,1/2]$ with  $\rho=a$, the unique root of $f$ in $(0,1/2)$, and Proposition~\ref{prop:pde3} on $[1/2,1]$ with $\rho=1-a$, along with the comparison principle (Proposition~2.1 in \cite{AW78}), to see that Assumption~\ref{a1} holds for $\vep<\vep_0(\eta)$ with $u_*=\frac{1}{2}-\eta$, $u^*=\frac{1}{2}+\eta$, $v_0=\delta$, and $v_1=1-\delta$.  \eqref{grate} is trivial because $g_i^\vep=g_i$.  Theorem~\ref{thm:exist} now implies (a) and (b) in this case.  The proofs in Cases 1 and 2 are similar using Proposition~\ref{prop:pde1} (note all the zeros are simple in these cases) to verify Assumption~\ref{a1} (see the discussion in these cases in Section~\ref{ssec:nlv}).\qed

\medskip
\noindent{\it Proof of \ref{thm:nlv2}.} (a) The conditions on $b_1$ and $\lambda$ imply that $f_{1,\lambda}'(0)>0$ and $f_{1,\lambda}'(1)>0$.  Coexistence for large $L$ and small $\vep$ is now established as in Case 2 (or 1) of Theorem~\ref{thm:nlv}--see the discussion in Section~\ref{ssec:nlv}.

\noindent(b) By taking $\lambda$ and $L^{-1}$ small, depending on $(\eta,\bar a)$, we see from \eqref{flambda1}, \eqref{flambda2}, \eqref{flambdader}, and our conditions on the $b_i$ that 
$f_{(\lambda)}(u)=0$ will have $3$ simple roots in $(0,1)$, $p_1(\lambda)<p_2(\lambda)<p_3(\lambda)$, within $\eta/4$ of the respective roots
\[a'<1/2<1-a'\]
 of $f_1(u)=0$.  As \eqref{grate} is again obvious, we now verify Assumption~\ref{a1} of Theorem~\ref{thm:exist} with $u^*=1-a'+\frac{\eta}{2}$, $u_*=1-a'-\frac{\eta}{2}$, $v_0\in(p_2,u_*)$, and $v_1\in(u^*,1)$ ($\eta$ is small so these intervals are non-empty).  The result would then follow by applying Theorem~\ref{thm:exist}. The upper bound (ii) in Assumption~\ref{a1} is an easy application  of Proposition~\ref{prop:pde4}, with the interval $(p_3,1)$ in place of $(0,1)$, and the comparison principle. 

For the lower bound (i) in Assumption~\ref{a1}, we use a result of Weinberger \cite{W82}. To state the result we need some definitions.
His habitat ${\cal H}$ will be $\R^d$ in our setting and his space $B$ is the set of continuous functions from ${\cal H}$ to $[0,\pi_+]$. In our case $\pi_+=1$.
His result is for a discrete iteration $u_{n+1} = Q(u_n)$, where in our case $Q(u)$ is solution to the PDE at time 1 when the initial data is $u$.
His assumption (3.1) has five parts: 

\mn
(i) if $u \in B$ then $Q(u) \in B$.

\mn
(ii) If $T_y$ is translation by $y$ then $Q(T_yu) = T_yQ(u)$.

\mn
(iii) Given a number $\alpha$, let $Q(\alpha)$ be the constant value of $Q(u_\alpha)$ for $u_\alpha \equiv \alpha$. There are $0 \le \pi_0 < \pi_1 \le \pi_+$ so that if $\alpha \in (\pi_0,\pi_1)$ then $Q(\alpha) > \alpha$. $Q(\pi_0)=\pi_0$ and $Q(\pi_1)=\pi_1$. 

\mn
(iv) $u \le v$ implies $Q(u) \le Q(v)$. 

\mn
(v) If $u_n \in B$ and $u_n \to u$ uniformly on bounded sets then $Q(u_n)(x) \to Q(u)(x)$. 

\mn
Clearly (i) and (ii) hold in our application. For (iii) we let $\pi_0=p_2(\lambda)$ and $\pi_1=p_3(\lambda)$. (iv) a consequence of PDE comparison principles, see, e.g., Proposition 2.1 in Aronson and Weinberger (1978). (v) follows from the representation of solutions of the PDE in terms of the dual branching Brownian motion (see Lemma~\ref{lem:ident}).

The next ingredient for the result is  
$$
{\cal S} = \{ x \in \R^d : x\cdot \xi \le c^*(\xi) \hbox{ for all $\xi \in S^{d-1}$}\},
$$
where $S^{d-1}$ is the unit sphere in $\R^d$. $c^*(\xi)$ is the wave speed in direction $\xi$ defined in Section 5 of \cite{W82}. Due to the invariance of the PDE under rotation, all our speeds are the same, $c^*=\rho$, and ${\cal S}$ is a closed ball of radius $\rho$ or the empty set.  Here is Theorem~6.2 of \cite{W82}. 

\mn
\begin{thm}\label{weinb}Suppose (i)--(v) and that the interior of ${\cal S}$ is nonempty. Let ${\cal S}''$ be any closed and bounded subset of the interior of ${\cal S}$. For any $\gamma > \pi_0$, there is an $r_\gamma$ so that if $u_0(x) \ge \gamma$ on a ball of radius $r_\gamma$ and if $u_{n+1} = Q(u_n)$ then
\begin{equation}\label{propspeed}
\liminf_{n\to\infty} \min_{x \in {n\cal S}''} u_n(x) \ge \pi_1.
\end{equation}
\end{thm}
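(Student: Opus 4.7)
The plan is to adapt the classical spreading-speed argument by combining local attraction to $\pi_1$ with directional propagation built from one-dimensional traveling-wave subsolutions, and then covering ${\cal S}''$ by finitely many directions. First, using (iii) and monotonicity (iv), the constant iterates $Q^n(\alpha)$ with $\alpha\in(\pi_0,\pi_1)$ form a nondecreasing sequence whose limit must be a fixed point in $[\alpha,\pi_1]$, hence equals $\pi_1$ by (iii). Continuity (v) together with (ii) and (iv) upgrades this to the statement: if $u_0\ge \gamma\,\1_{B(0,r_\gamma)}$ for $r_\gamma$ large enough, then for any compact $K$ and any $\alpha'<\pi_1$ there is $n_0$ with $Q^n(u_0)\ge\alpha'$ on $K$ for all $n\ge n_0$. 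The radius $r_\gamma$ has to be chosen large enough that a single application of $Q$ to $\gamma\,\1_{B(0,r_\gamma)}$ still exceeds some $\gamma'>\pi_0$ on a ball, after which the monotone iteration takes over.

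Second, the propagation step is where the wave speeds enter. For each $\xi\in S^{d-1}$ and each $c<c^*(\xi)$, the definition of $c^*(\xi)$ in Section~5 of \cite{W82} supplies a compactly supported monotone ``wavelet'' $\phi_{\xi,c}:\R\to[0,\pi_1]$ with the property that $Q(\phi_{\xi,c}(x\cdot\xi))\ge \phi_{\xi,c}(x\cdot\xi-c)$; that is, one application of $Q$ pushes the profile forward in direction $\xi$ by at least $c$. By translation invariance (ii), monotonicity (iv), and Step~1, once $u_{n_0}$ dominates a translate of $\phi_{\xi,c}$ (possible by making the wavelet small in amplitude and support), iteration yields $u_{n_0+k}(x)\ge \phi_{\xi,c}((x-x_0-kc\xi)\cdot\xi)$ for every $k\ge 0$, so $u_n\ge \pi_1-\varepsilon$ on the expanding half-space $\{x:(x-n_0x_0)\cdot\xi\le (n-n_0)c\}$ minus a bounded set, for any $\varepsilon>0$ and $n$ large.

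Third, we cover ${\cal S}''$. Since ${\cal S}''$ is compact and strictly inside ${\cal S}$, there is $\delta>0$ with $y\cdot\xi\le c^*(\xi)-\delta$ for all $y\in {\cal S}''$ and all $\xi\in S^{d-1}$. Lower semicontinuity of $c^*$ and compactness of $S^{d-1}$ allow us to pick finitely many directions $\xi_1,\dots,\xi_N$ and speeds $c_j<c^*(\xi_j)$ with the slabs $\{y:y\cdot\xi_j\le c_j\}$ jointly containing ${\cal S}''$, so that $n{\cal S}''\subset\bigcap_j\{x:x\cdot\xi_j\le nc_j\}$. Applying Step~2 in each direction $\xi_j$ and intersecting gives $\liminf_{n\to\infty}\inf_{x\in n{\cal S}''}u_n(x)\ge \pi_1-\varepsilon$ for every $\varepsilon>0$, which is \eqref{propspeed}.

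The main obstacle is Step~2: the construction of the directional wavelet subsolutions at every speed strictly below $c^*(\xi)$ and the verification of the precise comparison inequality with the nonlinear operator $Q$. This is the technical core of \cite{W82}; it relies on the existence theory for one-dimensional traveling waves for $Q$ together with careful truncation to get compact support. In our rotationally invariant setting the burden is lessened because all wave speeds coincide ($c^*\equiv\rho$), so only a single direction really needs to be analyzed and the covering argument reduces to a finite-net covering of the sphere. Steps~1 and~3 are then straightforward monotone-dynamics and compactness arguments.
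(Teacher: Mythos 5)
The paper does not prove this statement: it quotes Theorem~6.2 of Weinberger \cite{W82} verbatim and then applies it as a black box (the next sentence in the paper is ``To be able to use this result, we have to show that $\rho>0$''). So there is no in-paper argument to compare against. Your sketch correctly identifies the three-part structure of Weinberger's own proof --- monotone iteration of constants from $(\pi_0,\pi_1)$ up to $\pi_1$, directional propagation by compactly supported ``wavelet'' subsolutions at every subcritical speed, and a finite covering of the compact set ${\cal S}''$ by half-spaces $\{x\cdot\xi_j\le c_j\}$ with $c_j<c^*(\xi_j)$ --- but you explicitly defer Step~2, the construction of the wavelets and the verification of the recursion $Q[\phi_{\xi,c}(\,\cdot\,\xi)](x)\ge\phi_{\xi,c}(x\cdot\xi-c)$, to \cite{W82}. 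Since that step is the entire technical content of Weinberger's theorem, what you have produced is a road map for the cited proof rather than an independent proof; in that sense it adds nothing beyond what the paper does by citing \cite{W82}.

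Two small inaccuracies in the sketch are worth flagging. First, a compactly supported wavelet $\phi_{\xi,c}$ does not take values near $\pi_1$: dominating a propagating translate of it gives $u_n\ge\gamma'$ on an expanding region for some fixed $\gamma'>\pi_0$, not $u_n\ge\pi_1-\varepsilon$ directly. The lift from $\gamma'$ to $\pi_1$ requires the uniform convergence of Step~1 on large balls and is a separate argument. Second, the radius $r_\gamma$ is not determined by the condition that one application of $Q$ to $\gamma\,\1_{B(0,r_\gamma)}$ exceed some $\gamma'$; it is chosen so that the initial data dominates (a translate of) one of the wavelets, after which the recursion propagates the lower bound. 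These are precisely the points on which Weinberger's proof does the real work.
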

\medskip
To be able to use this result, we have to show that $\rho>0$. Note that here we require lower bounds on the wave speed of solutions to the reaction diffusion equation in one spatial dimension.   This is because traveling wave solutions in the direction $\xi$ of the form $w(x\cdot\xi-\rho t)$ correspond to traveling waves $w$ in one spatial dimension.  Recall that in the decomposition \eqref{nlvfdef2} $f(u)$ is odd about $u=1/2$, and for large $L$ has $f'(1/2)>0$ by $3b_1+b_2<0$.  The latter shows $f$ has $3$ simple zeros in $(0,1)$ at $a<1/2<1-a$.  The strict inequality in \eqref{nlvfdef2} on $(0,1)$ now easily implies (compare the negative and positive humps separately)
\begin{equation}\label{intcomp}
\int_{p_1}^{p_3}f_{(\lambda)} (u)du>\int_a^{1-a}f(u)du=0.
\end{equation}
So by the discussion in part II(ii) of Section~\ref{ss:PDE} there is a one-dimensional decreasing traveling wave solution to \eqref{rdpde} (with $f=f_{(\lambda)}$) over $(p_1,p_3)$ with positive wave speed $r_2(\lambda)$.

To consider traveling waves over $(0,p_1(\lambda))$, we note that Kolmogorov, Petrovsky, and Piscounov \cite{KPP37} have shown that if we consider
$$
\frac{\partial u}{\partial t} = \frac{\sigma^2}{2} \Delta u + f(u)
$$
in one dimension where $f$ satisfies 
\beq
f(0)=f(1)=0, \quad f(u)>0 \hbox{ for $0<u<1$}, \quad 
f'(u) \le f'(0) \hbox{ for $0 < u \le 1$}
\label{KPPcond}
\eeq
then there is a traveling wave solution with speed $\sqrt{2\sigma^2 f'(0)}$ and this is the minimal wave speed. For this fact one can consult Bramson \cite{Br83} or Aronson and Weinberger \cite{AW75}. However, the intuition behind the answer is simple: the answer is the same as for the linear equation
$$
\frac{\partial u}{\partial t} = \frac{\sigma^2}{2} \Delta u + f'(0)u
$$
which gives the mean of branching Brownian motion.
For more on this connection, see McKean \cite{McK75}. 

Now let $g_1\le g_2$ be $C^1$ functions on $[0,1]$ such that
\[ 0<g_2\le f_{(\lambda)}\hbox{ on }(0,p_1),\ g_1=g_2=f_{(\lambda)}\hbox{ on }[p_1,1],\]
 \beq\label{g2props}g_2'(0)\in\left(0,\frac{r_2(\lambda)^2}{2\sigma^2}\right), \ g'_2(u)\le g_2'(0)\hbox{ on }[0,p_1],
 \eeq
 and for some $0<p_0<p_1$,
\begin{align}
g_1(0)=0,\ g_1<0 \hbox{ on }(0,p_0), g_1>0\hbox{ on }(p_0,p_1),
  \label{g1props} \int_0^{p_1} g_1(t)\,dt>0,\ g_1'(0)<0.
\end{align}
The existence of such functions is elementary. 
By the KPP result above, the minimal wave speed over $(0,p_1(\lambda))$ for the $g_2$ equation is \beq\label{speedcomp0}c_2=\sqrt{2\sigma^2g_2'(0)}<r_2(\lambda). 
\eeq
By Theorem~2.4 and Corollary~2.3 of \cite{FM77} (or the discussion in part II(ii) of Section~\ref{ss:PDE}) there is a unique traveling wave solution $u(t,x)=w(x-c_1t)$ ($w$ decreasing) to the $g_1$ equation with unique wave speed $c_1>0$ (since the integral in \eqref{g1props} is positive) and range $(0,p_1)$.  Note here and elsewhere that the traveling waves $w$ in \cite{FM77} are increasing and so our wave speeds have the opposite sign. By a comparison theorem for wave speeds (Proposition~5.5 of \cite{W82}) we may conclude that
\beq\label{speedcomp}c_2\ge c_1.\eeq
The hypothesis of the above comparison result is easily verified using $g_1\le g_2$ and the standard comparison principle (e.g. Proposition~2.1 of \cite{AW78}). It follows from \eqref{speedcomp0}
and \eqref{speedcomp} that $c_1<r_2(\lambda)$ which are the wave speeds of the $g_1$ equation
over $(0,p_1)$ and $(p_1,p_3)$, respectively.  We can therefore apply Theorem 2.7 of \cite{FM77} to conclude the existence of a traveling wave over $(0,p_3(\lambda))$ for the $g_1$ equation with speed
$r_1(\lambda)\in (c_1,r_2(\lambda))$.  The wave and its speed are both unique by Corollary~3.3 of \cite{FM77}.  Since $f_{(\lambda)}\ge g_1$ on $[0,p_3]$, another application of Proposition~5.5 of \cite{W82}
shows that $\rho\ge r_1(\lambda)$ and in particular $\rho>0$. 

 Using \eqref{propspeed}, we have proved that for $0<2w=\rho$, 
 \[\liminf_{n\to\infty}\inf_{|x|\le 2w n}u(n,x)\ge p_3(\lambda)\ge 1-a'-\frac{\eta}{4},
 \]
 providing that $u(0,x)\ge v_0$ for $|x|\le r_{v_0}$.  The same reasoning gives the same conclusion with $n\tau$ in place of $n$ for any $\tau>0$.  Taking $\tau$ small enough, a simple interpolation argument (use the weak form of the reaction diffusion equation and smoothing properties of the Brownian semigroup) now gives Assumption~\ref{a1}(i)  with $u_*=1-a'-\frac{\eta}{2}$ where the $2w$ in the above helps a bit in this last interpolation step. 
\qed

\subsection{Comparison with low density superprocess limit theorem}\label{ssec:CPcomp}
\noindent To make a comparison between our hydrodynamic limit theorem (Theorem~\ref{thm:strongconv}) and the superprocess limit theorem of Cox and Perkins~\cite{CP-1} we will write our perturbation terms in a different form, which will also be useful in Section~\ref{death}. Define
\[\Xi_S(\eta)=\prod_{i\in S}\eta_i\hbox{ for }\eta=(\eta_1,\dots,\eta_{N_0})\in\{0,1\}^{N_0}, S\in\hat\cP_{N_0}=\{\hbox{subsets of }\{1,\dots,N_0\}\},\]
and
\begin{align*}\chi(A,x,\xi)=\prod_{y\in A}\xi(x+ y)&,\ x\in \Z^d, \xi\in\{0,1\}^{\Z^d}, \\
&A\in\cP_{N_0}=\{\hbox{subsets of }\Z^d\hbox{ of cardinality at most }N_0\}.
\end{align*}
By adding an independent first coordinate to $Y$ we may assume $Y^1$ has law $p$.  If
\begin{equation}\label{gtildedef}\tilde g_i^\vep(\xi_1,\dots,\xi_{N_0})=-\vep_1^{-2}1(\xi_1=i)+g_i^\vep(\xi_1,\dots,\xi_{N_0}),
\end{equation}
and $\tilde g_i$ is as above without the superscript $\vep$, then 
\begin{equation}\label{tildegcvgce} \lim_{\vep\downarrow 0}\Vert \tilde g^\vep_i-\tilde g_i\Vert_\infty=0,
\end{equation} and
we may rewrite \eqref{hgrepn} as 
\begin{equation}\label{htildegrepn}h_i^\vep(x,\xi)=E_Y(\tilde g_i^\vep(x+Y^1,\dots,x+Y^{N_0})),\  i=0,1,\end{equation}
and similarly without the $\vep$'s.
It is easy to check that $\{\Xi_S(\cdot):S\in\hat\cP_{N_0}\}$ is a basis for the vector space of functions from $\{0,1\}^{N_0}$ to $\R$ and so there are reals $\hat\beta_\vep(S),\hat\delta_\vep(S)$, $S\in\hat\cP_{N_0}$, such that 
\beq\label{grepn}\tilde g_1^\vep(\eta)=\sum_{S\in\hat\cP_{N_0}}\hat\beta_\vep(S)\Xi_S(\eta),\quad \tilde g_0^\vep(\eta)=\sum_{S\in\hat\cP_{N_0}}\hat\delta_\vep(S)\Xi_S(\eta),
\eeq
and similarly without the $\vep$'s. If $S\in\hat\cP_{N_0}$, let $Y^S=\{Y^i:i\in S\}$, where $Y\in \Z^{dN_0}$ has law $q$ as usual. Let $E_Y$ denote expectation with respect to $Y$.  It is easy to use \eqref{htildegrepn} to check that
\begin{align}\label{hhatrepn}
 h_1^\vep(x,\xi)&=\sum_{S\in\hat\cP_{N_0}}\hat \beta_\vep(S)E_Y(\chi(Y^S,x,\xi))=\sum_{A\in\cP_{N_0}}\beta_\vep(A)\chi(A,x,\xi)\\
\label{hrepn}h_0^\vep(x,\xi)&=\sum_{S\in\hat\cP_{N_0}}\hat \delta_\vep(S)E_Y(\chi(Y^S,x,\xi))=\sum_{A\in\cP_{N_0}}\delta_\vep(A)\chi(A,x,\xi),
\end{align}
where for $A\in\cP_{N_0}$,
\beq\label{betadeltadef}\beta_\vep(A)=\sum_{S\in\hat \cP_{N_0}}\hat \beta_\vep(S)P(Y^S=A), \ \ \delta_\vep(A)=\sum_{S\in\hat \cP_{N_0}}\hat \delta_\vep(S)P(Y^S=A).
\eeq
Analogous equations to \eqref{hhatrepn}, \eqref{hrepn} and \eqref{betadeltadef} hold without the $\vep$'s.

 Now use \eqref{hhatrepn} and \eqref{hrepn} without the  $\vep$'s, and \eqref{prodform} to see that 
\begin{align}
\nonumber f(u)&\equiv\langle (1-\xi(0)h_1(0,\xi)-\xi(0)h_0(0,\xi)\rangle_u\\
\label{fformula}&=\sum_{A\in\cP_{N_0}}\Bigl[\beta(A)\Bigl[\sum_{j=1}^{|A|}u^j(1-u)P(|\hat\xi^A_\infty|=j,\tau(A,\{0\})=\infty)\Bigr]\\
&\phantom{=\sum_{A\in\cP_{N_0}}\Bigl[}+\beta(\emptyset)(1-u)-\delta(A)\Bigl[\sum_{j=1}^{|A\cup\{0\}|}u^jP(|\hat\xi_\infty^{A\cup\{0\}}|=j)\Bigr]\Bigr],
\end{align}
which is a polynomial of degree at most $N_0+1$ as claimed in Section~\ref{ssec:hydro}.  If $\beta(\emptyset)=0$, then $f(0)=0$ and
\beq\label{f'rep}
f'(0)
=\sum_{A\in\cP_{N_0}}\beta(A)P(\tau(A)<\infty,\tau(A,\{0\})=\infty)-\delta(A)P(\tau(A\cup\{0\})<\infty).
\eeq

From \eqref{grepn} one easily derives
\beq\label{grepinv}\hat\beta_\vep(S)=\sum_{V\subset S}(-1)^{|S|-|V|}\tilde g^\vep_1(1_V), \ \ \hat\delta_\vep(S)=\sum_{V\subset S}(-1)^{|S|-|V|}\tilde g^\vep_0(1_V),
\eeq
and similarly without the $\vep$'s.  Therefore
\beq\label{hatbetarate}|\hat\beta_\vep(S)-\hat\beta(S)|+|\hat\delta_\vep(S)-\hat\delta(S)|\le 2^{N_0}(\Vert \tilde g_1^\vep-\tilde g_1\Vert_\infty+\Vert \tilde g_0^\vep-\tilde g_0\Vert_\infty),\eeq
and so
\beq\label{betarate}
\sum_{A\in\cP_{N_0}}|\beta_\vep(A)-\beta(A)|+|\delta_\vep(A)-\delta(A)|\le 2^{2N_0}(\Vert \tilde g_1^\vep-\tilde g_1\Vert_\infty+\Vert \tilde g_0^\vep-\tilde g_0\Vert_\infty).
\eeq

Our spin-flips are now recast as 
\[c_\vep(\vep x,\xi_\vep)=\vep^{-2}c^v(x,\xi)+\sum_{A\in\cP_{N_0}}\chi(A,x,\xi)[\beta_\vep(A)(1-\xi(x))+\delta_\vep(A)\xi(x)],\]
which is precisely (1.17) of \cite{CP-1} with $\vep=N^{-1/2}$.  If we assume
\beq\label{betazero}g^\vep_1(0)=0 \hbox{ (and hence }\tilde g_1^\vep(0)=\hat\beta_\vep(\emptyset)=0)\hbox{ for small }\vep,
\eeq
and the voter kernel $p$ has finite support, then using the fact that the right-hand side of \eqref{betarate} approaches $0$ as $\vep\to 0$ (by \eqref{tildegcvgce}), it is easy to check that all the hypotheses of Corollary~1.8 of \cite{CP-1} hold. Alternatively, in place of the finite support assumption on $p$ one can assume the weaker hypothesis (P4) of Corollary~1.5 of \cite{CP-2}, and then apply that result. These results state that for $\vep$ as above if
$X_t^\vep=\vep^2\sum_{x\in\vep\Z^d}\xi^\vep_t(x)\delta_x$ and $X_0^\vep\to X_0$ weakly in the space $M_F(\R^d)$ of finite measures on $\R^d$,
then $X^\vep$ converges weakly in the Skorokhod space of $M_F(\R^d)$-valued paths to a super-Brownian motion with drift $\theta=f'(0)$ (as in \eqref{f'rep}).  
In this result we are starting $O(\vep^{-2})$ particles on a grid of $\vep^{-d}$ ($d\ge 3$) sites per unit volume, so it is a low density limit theorem producing a random limit, whereas Theorem~\ref{thm:strongconv} is a high density limit theorem producing a pde limit.  The latter result gives a natural explanation for the drift $\theta$ in the super-Brownian limit which was defined by the right-hand side of \eqref{f'rep} in \cite{CP-1}.  Namely, under \eqref{betazero}, in the low density limit we would expect a drift of $\lim_{u\to 0}f(u)/u=f'(0)$, which of course happens to equal the summation in \eqref{f'rep}.

\clearpage

\section{Construction, Duality and Coupling}

\label{sec:construction} 

In this section, we first introduce a family of Poisson processes which we use to
define $\xi_t$ on $\vep \Zd$, a ``dual process'' $X$ and a
``computation process'' $\zeta$. The duality equation
\eqref{dualityeq} below gives a representation of
$\xi_t(x)$ in terms of $(X,\zeta)$. Next we
show that for small $\vep$, $(X,\zeta)$ is
close to the simpler $(\hat X, \hat\zeta)$,
where $\hat X$ is a branching random walk system with
associated computation process $\hat\zeta$. Finally we show by a strong invariance principle
that for small $\vep$, $(\hat X,\hat\zeta)$ is close to a branching Brownian motion and its associated
computation process. 

However, our first task will be to prove Proposition~\ref{finiterangeh} and reduce to the case where $\vep_1=\infty$ in \eqref{hgrepn}. 

\subsection{Preliminaries} \label{ssec:prelimred}

{\it Proof of Proposition \ref{finiterangeh}.} Let $\underline p=\min\{p(y_i):p(y_i)>0\}$, choose $\vep_0>0$ so that $M=\sup_{0<\vep\le \vep_0}\Vert\hat g_0^\vep\Vert_\infty\vee\Vert\hat g_1^\vep\Vert_\infty<\infty$ and then choose $\vep_1>0$ so that 
\begin{equation}\label{vep1choice} \vep_1^{-2}\underline p>M.
\end{equation}
For $0<\vep<\vep_0$ define $g_i^\vep$ on $\{0,1\}^{N_0}$ by
\begin{equation}\label{gidefn}
g_i^\vep(\xi_1,\dots,\xi_{N_0})=\vep_1^{-2}\sum_1^{N_0}1(\xi_j=i)p(y_j)+\hat g^\vep_i(\xi_1,\dots,\xi_{N_0}),\ i=0,1,
\end{equation}
and define $g_i$ by the same equation without the $\vep$'s.  Clearly $\Vert g_i^\vep-g_i\Vert_\infty=\Vert \hat g_i^\vep -\hat g_i\Vert_\infty\to 0$ as $\vep\to 0$.  We may assume $y_1=0$.  By replacing $\hat g_i^\vep$ with $\hat g_i^\vep1(\xi_1=1-i)$ and redefining $h_i^\vep$ analogously (this will not affect \eqref{xirates}), we may assume
\begin{equation}\label{hatgzero}
\hat g_i^\vep(\xi_1,\dots,\xi_{N_0})=0\hbox{ if }\xi_1=i.
\end{equation}
We now show that $g_1^\vep\ge 0$.  Assume first 
\begin{equation}
\sum_1^{N_0}\xi_ip(y_i)=0.
\end{equation}
Choose $\xi\in\{0,1\}^{\Z^d}$ so that $\xi(y_i)=\xi_i$.  If $\xi(0)=0$, then by \eqref{xirates}, \eqref{finrangerep}, and \eqref{gidefn},
$$0\le c_\vep(0,\xi_\vep)=\hat g_1^\vep(\xi(y_1),\dots,\xi(y_{N_0}))=g_1^\vep(\xi_1,\dots,\xi_{N_0}).$$
If $\xi(0)=1$, then $\xi_1=\xi(0)=1$ and by \eqref{hatgzero}, $g_1^\vep(\xi_1,\dots,\xi_{N_0})=0$. Assume next that 
$$\sum_1^{N_0}\xi_ip(y_i)>0.$$
Then the above sum is at least $\underline p$ and so
$$g_1^\vep(\xi_1,\dots,\xi_{N_0})\ge \vep_1^{-2}\underline p-\Vert\hat g_1^\vep\Vert_\infty\ge \vep_1^{-2}\underline p -M>0,$$
the last by \eqref{vep1choice}.  This proves $g_1^\vep\ge 0$ and a similar argument shows $g_0^\vep\ge 0$. Finally \eqref{hgrepn} with $Y^i=y_i$ is immediate from \eqref{finrangerep} and the definition of $g_i^\vep$.
\qed

\medskip

We claim we may assume without loss of generality that $\vep_1=\infty$ in \eqref{hgrepn}, that is, the first term in the right-hand side of \eqref{hgrepn} is absent.  To see why, let $\tilde \vep^{-2}=\vep^{-2}-\vep_1^{-2}$ for $\vep<\vep_1$, and use \eqref{hgrepn} in \eqref{xirates} to rewrite the spin-flip rates of $\xi^\vep$ as 
$$c^\vep(\vep x,\xi_\vep)=\tilde \vep^{-2}c^v(x,\xi)+(1-\xi(x))\tilde h^\vep_i(x,\xi)+\xi(x)\tilde h_0^\vep(x,\xi),$$
where 
\begin{equation}\label{tildehrepn}
\tilde h_i^\vep(x,\xi)=E_Y(g_i^\vep(\xi(x+Y^1),\dots,\xi(x+Y^{N_0}))).
\end{equation}
So by working with $\tilde h_i^\vep$ in place of $h^\vep_i$ throughout, we may use \eqref{tildehrepn} in place of \eqref{hgrepn} and effectively set $\vep_1=\infty$.  Note first that this does not affect the definition of the reaction term $f(u)$ in the PDE \eqref{rdpde} since the terms involving $\vep^{-2}f_i(x,\xi)$ cancel in \eqref{fdef}.  The only cost is that $\vep^{-2}$ is replace with $\tilde \vep^{-2}$.  
The ratio of these terms approaches $1$ and so not surprisingly this only affects some of the proofs
in a trivial manner. Rather than carry this $\tilde \vep^{-2}$ with us throughout, we prefer to use $\vep$ and so
\begin{equation}
\hbox{\bf{henceforth set $\vep_1=\infty$ in \eqref{hgrepn}}.}
\label{vep1convention}\end{equation}

\subsection{Construction of $\xi_t$}\label{ssec:proc}

Define $c^*=c^*(g)$ by 
\begin{equation}\label{c*def}
c^*= \sup_{0<\vep\le \vep_0/2}\Vert g_1^\vep\Vert_\infty + \Vert g_0^\vep\Vert_\infty+1.
\end{equation}
To construct the process, we use a graphical representation.
For $x\in\vep\Zd$, introduce independent Poisson processes
$\{ T^{x}_n, n \ge 1 \}$ and $\{ T^{*,x}_n, n \ge 1\}$ with rates
$\vep^{-2}$ and $c^*$, respectively. Recall $p_\vep(y) = p(y/\vep)$ for $y \in \vep \Z^d$
and let $q_\ep(y) = q(y/\vep)$ for $y \in \vep \Z^{dN_0}$.
For $x\in\vep\Zd$ and $n\ge 1$, define independent random variables  $Z_{x,n}$ 
with distribution $p_\vep$, $Y_{x,n}=(Y^1_{x,n}, \ldots, Y^{N_0}_{x,n})$
with distribution $q_\vep$, and $U_{x,n}$ uniform on $(0,1)$. These random variables are 
independent of the Poisson processes and all are independent of an initial condition
$\xi_0\in \{0,1\}^{\vep\Z^d}$.
 
At times $t=T^x_n, n \ge 1$ (called voter times), we set $\xi_t(x)=\xi_{t-}(x+Z_{x,n})$.  To facilitate the definition of the dual, we draw an arrow from $(x,T^x_n) \to (x+Z_{x,n},T^x_n)$.
At times $t=T^{*,x}_n$, $n\ge 1$ (called reaction times), if $\xi_{t-}(x)=i$ we set $\xi_t(x)=1-i$ if
$$
U_{x,n}  < g^\vep_{1-i}(\xi_{t-}(x+Y^1_{x,n}), \ldots, \xi_{t-}(x+Y^{N_0}_{x,n}))/c^*,\hbox{ and otherwise }\xi_t(x)=\xi_{t-}(x).
$$
At these times, we draw arrows from $(x,T^{*,x}_n) \to (x+Y^{i}_{x,n}, T^{*,x}_n)$
for $1\le i \le N_0$. We write a * next to $(x,T^{*,x}_n)$ and call these *-arrows.
It is not hard to use ideas of Harris \cite{Har72} to show that under the exponential
tail conditions on $p$ and $q$, \eqref{expbd1} and \eqref{expbd2}, 
this recipe defines a pathwise unique process. This reference assumes finite range interactions but the proof applies in our infinite range setting as there are still finitely many sites that need to be checked at each reaction time. To verify this construction and to develop a useful dual process we now show how to compute the state of $x$ at
time $t$ by working backwards in time. It is easy to verify that $\xi$ is the unique in law $\{0,1\}^{\Z^d}$-valued Feller process with rates given by \eqref{voterrates} and \eqref{xirates}, or more precisely has generator as in \eqref{gendesc}.  For example one can recast the graphical representation in terms of SDE's driven by Poisson point processes and use stochastic calculus as in Proposition 2.1(c) of \cite{CP-2} (it is easy to verify condition (2.3) of that reference in our current setting).

We use $B^{\vep,x}$ to denote a continuous time random walk with jump rate $\vep^{-2}$ and jump distribution $p_\vep$ starting at $x\in\vep\Z^d$ and drop dependence on $x$ if $x=0$. We also assume
\beq\label{indrws} \{B^{\vep,x}:x\in\vep\Z^d\}\hbox{ are independent random walks distributed as above.}\eeq
It will be convenient to extend the Poisson times to the negative time line indexed by non-positive integers, and hence have
$\{T^x_n,n\in\Z\}$, $\{T^{*,x}_n,n\in\Z\}$ with the associated $\{Z_{x,n},n\in\Z\}$ and $\{(Y_{x,n},U_{x,n}),n\in\Z\}$, respectively.  At times it is useful to work with the associated independent Poisson point processes of reaction events $\Lambda^x_r(dt,dy,du)$ $(x\in\vep\Z^d)$ on $\R\times \vep\Z^{dN_0}\times [0,1])$ with points $\{(T_n^{*,x},Y_{x,n},U_{x,n})\}$ and intensity $c^*dt\times q_\vep\times du$, and also the independent Poisson point processes of walk steps $\Lambda^x_w(dt,dz)$ ($x\in\vep\Z^d$) on $\R\times\vep\Z^d$ with points $\{(T_n^x,Z_{x,n})\}$ and intensity $\vep^{-2}dt\times p_\vep$.

\subsection{The Dual $X$}
\label{ssec:Xdual}

Fix $T>0$ and a vector of $M+1$ distinct sites
$z=(z^0,\dots,z^M)$, each $z_i\in \vep\Zd$. Our
dual process $X=X^{z,T}$ starts from these sites at time $T$ and works backwards in time 
to determine the values $\xi_T(z_i)$. $X$ will be a {\it coalescing branching
random walk} with $X_0=(z_0,\dots,z_M,\infty,\dots)$ taking values in
\begin{align*}
 \cD=\{(X^0,X^1,\dots) & \in D([0,T],\R^d\cup\{\infty\})^{\Z_+}:\ \\
& \exists K_0\in\Z_+\hbox{ s.t. }X^k_t=\infty \ \forall t\ \in[0,T]\ \hbox{ and }k>K_0\}.
\end{align*}
Here $\infty$ is added to $\R^d$ as a discrete point,
$D([0,T],\R^d\cup\{\infty\})$ is given the Skorokhod $J_1$
topology, and $\cD$ is given the product topology.  

For $X=(X^0,X^1,\dots)\in\cD$, let $K(t)=\max\{i:X_t^i\neq\infty\}$, define 
$i\sim_t i'$ iff $X^i_t=X^{i'}_t\neq\infty$, and choose the minimal index
$j$ in each equivalence class in $\{ 0, \ldots K(t) \}$ to form the set $J(t)$.  We
also introduce  
$$
I(t)=\{X^i_t:i\in J(t)\}=\{X^i_t:X_t^i\neq\infty\}.
$$  
Durrett and Neuhauser \cite{DN94} call $I(t)$ the
influence set because it gives the locations 
of the sites we need to know at time $T-t$, to compute the values at
$z^0,\dots,z^M$ at time $T$.

\begin{figure}
\begin{center}
\begin{picture}(340,300)
\put(50,30){\line(0,1){240}}
\put(50,30){\line(1,0){230}}
\put(280,30){\line(0,1){240}}
\put(50,270){\line(1,0){230}}
\put(270,275){dual $X_t $}
\put(292,260){$\downarrow$}
\put(290,215){$R_1$}
\put(290,165){$R_2$}
\put(290,125){$R_3$}
\put(290,65){$R_4$}
\put(290,25){$T$}
\put(10,265){$T$}
\put(10,215){$T-R_1$}
\put(10,165){$T-R_2$}
\put(10,125){$T-R_3$}
\put(10,65){$T-R_4$}
\put(177,270){\line(-1,-3){17}}
\put(160,220){\line(0,-1){190}}
\put(175,275){0}
\put(149,218){$\bullet$}
\put(166,218){$\bullet$}
\put(173,218){$\bullet$}
\put(149,225){1}
\put(166,225){2}
\put(173,225){3}
\put(152,220){\line(-1,-1){50}}
\put(169,220){\line(1,-5){30}}
\put(176,220){\line(2,-3){60}}
\put(109,168){$\bullet$}
\put(92,168){$\bullet$}
\put(84,168){$\bullet$}
\put(84,175){4}
\put(92,175){5}
\put(117,168){6}
\put(111,170){\line(0,-1){28}}
\put(102,170){\line(1,-3){40}}
\put(142,50){\line(0,-1){20}}
\put(94,170){\line(0,-1){140}}
\put(86,170){\line(1,-5){8}}
\put(225,128){$\bullet$}
\put(242,128){$\bullet$}
\put(249,128){$\bullet$}
\put(217,130){7}
\put(242,135){8}
\put(249,135){9}
\put(227,130){\line(-1,-1){33}}
\put(236,130){\line(1,-3){33}}
\put(244,130){\line(0,-1){24}}
\put(252,130){\line(0,-1){48}}
\put(189,68){$\bullet$}
\put(205,68){$\bullet$}
\put(213,68){$\bullet$}
\put(180,75){10}
\put(200,75){11}
\put(220,68){12}
\put(191,70){\line(-1,-3){13}}
\put(199,70){\line(0,-1){40}}
\put(207,70){\line(-1,-2){8}}
\put(215,70){\line(1,-3){13}}
\put(23,30){$\uparrow$}
\put(20,15){$\zeta_t$}
\put(48,15){$J(T)=\{ 4,$}
\put(139,15){1,}
\put(157,15){0,}
\put(172,15){10,}
\put(196,15){2,}
\put(222,15){12,}
\put(266,15){$3\}$}
\end{picture}
\caption{An example of the dual with $N_0=3$.}
\label{fig:dual}
\end{center}
\end{figure}
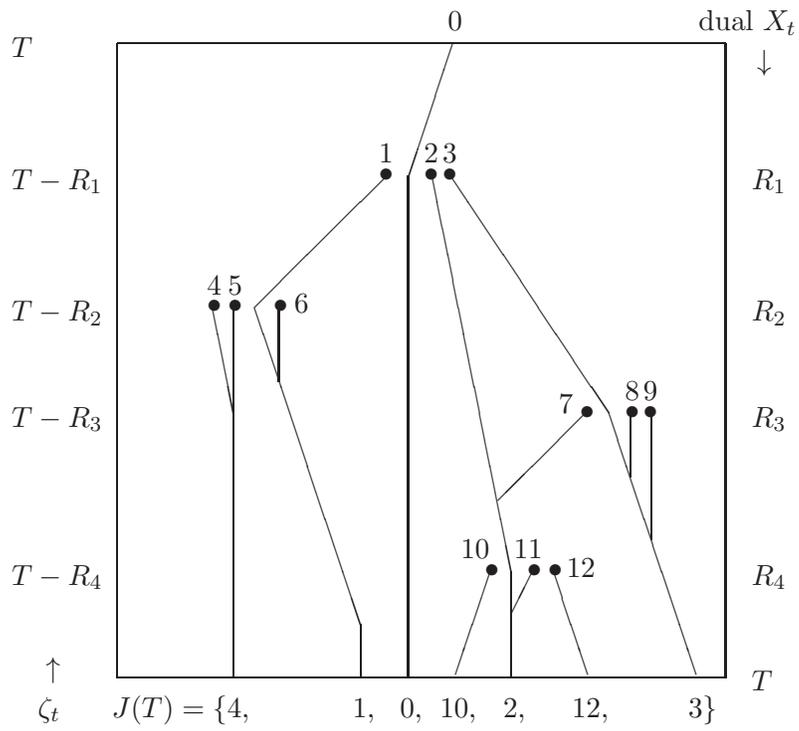

To help digest the definitions, the reader should consult Figure \ref{fig:dual}, which shows 
a realization of the dual starting from a single site when $N_0=3$.
If there were no reaction times $T^{*,x}_n$ then 
the coordinates $X^j_t,j\in J(t)$ follow the system of coalescing random walks dual to the voter part of
the dynamics. Coalescing refers to the fact that if $X^{j}_s=X^{j'}_s$ for some $s$ and $j,j'$, 
then $X^{j}_t=X^{j'}_t$ for all $t\in[s,T]$.
Jumps occur when a particle in the dual encounters the tail of an arrow in the graphical representation.
That is, if $j\in J(s-)$ and $x=X^j_{s-}$ has $T-s = T^x_n$ then $X^j_s=x+Z^x_n$. It coalesces with
$X^i_s=x+Z^x_n$ if such an $i$ exists, meaning that $i\vee j$ is removed from $J(s-)$ to form $J(s)$. 
If $B^\vep$ is a rate $\vep^{-2}$ random walk on $\vep\Z^d$ with step distribution $p_\vep$ then
the coalescing random walks  in the dual $X$ follow coalescing copies of $B^\vep$. 

To complete the definition, we have to explain what happens at the reaction times.
Put $R_0=0$, and for $m\ge 1$ let $R_{m}$ be the first time $t> R_{m-1}$ that a particle
in the dual encounters the tail of a *-arrow. If 
\begin{equation}\label{Rmdefn}
\hbox{$j\in J(R_m-)$ and $x=X^j_{R_m-}$ has $T-R_m = T^{x,*}_n$ for some n,}
\end{equation}
we let $\mu_{m}=j$ denote the parent site index.  In the example in Figure \ref{fig:dual} $\mu_1=0$, $\mu_2=1$, $\mu_3=3$, and $\mu_4=2$.

We create $N_0$ new walks by setting
$Y^i_m=Y^{i}_{x,n}$, $1\le i\le N_0$, 
\begin{equation}\label{dualdef}
\begin{aligned}
& K(R_{m})=K(R_{m-1})+N_0\,, \text{ and }\\
&X_{R_{m}}^{K(R_{m-1})+i}=x+Y^{i}_m,\, i=1,\dots,N_0\,.
\end{aligned}
\end{equation}
The values of the other coordinates $X^{j'}$, $j'\in
J(R_{m}-)$, $j'\ne \mu_{m}$ remain unchanged. 
Each ``new'' particle immediately coalesces with any particle
already at the site where it is born, and we make the resulting changes to
$J(R_{m}-)$ to construct $J(R_{m})\supset J(R_{m}-)$. 
To compute $\xi_T(z^i)$, we will also need the random variables 
\begin{equation}
\label{Umdef}
U_{m} = U_{x,n}\hbox{ where $x$, $m$, and $n$ are as in }\eqref{Rmdefn}.
\end{equation}
This computation is described in the next subsection.

$K(s)$ changes only at reaction times and always increases by exactly $N_0$, so 
\begin{equation}\label{Kform}K(s)=M+mN_0,\hbox{ for }s \in [R_m, R_{m+1}). 
\end{equation} 
Let $\cF_t$ be the right-continuous (time reversed) filtration generated by
the graphical representation restricted to $[T-t,T)$, but excluding the $\{U_{x,n}\}$.  More precisely
$\cF_t$ is the right-continuous filtration generated by
\begin{align}\label{defFt}&\{\Lambda_w^x([T-s,T)\times A):s\le t, x\in\vep\Z^d,A\subset \vep\Z^d\}, \\
\nonumber&\{\Lambda^x_r([T-s,T)\times B\times[0,1]):s\le t, B\subset\vep\Z^{dN_0},x\in\vep\Z^d\}.
\end{align}
The $\{R_m\}$ are then $(\cF_t)$-stopping
times and $X$ is $(\cF_t)$-adapted.  Since\break
 $P(R_{m+1}-R_m\in\cdot|\cF_{R_m})$ is
stochastically bounded below by an exponential random variable with mean
$(c^*(M+mN_0))^{-1}$, $R_m\uparrow\infty$ a.s. (recall our graphical variables were extended to negative values of time) and the
definition of $X$ is complete.  

Note that 
\begin{equation}\label{YmUm}
\mu_m \hbox{ is } \cF_{R_m}-\hbox{measurable and }\delta_{Y_m,U_m}=\Lambda_r^{X^{\mu_m}_{R_m-}}(\{T-R_m\}\times\cdot).
\end{equation}
As the above time reversed Poisson point processes are also Poisson point processes, one may easily see that 
\begin{equation}\label{Ymlaw}
\{Y_m\} \hbox{ are iid with law }q_\vep\hbox{ and }Y_m \hbox{ is }\cF_{R_m}-\hbox{measurable},
\end{equation}
and
\begin{equation}\label{Umlaw}
\{U_m\} \hbox{ are iid uniform on }[0,1]\hbox{ and are independent of }\cF_\infty.
\end{equation}

\subsection{The computation process $\zeta$}
\label{ssec:Xcomp}

Given an initial time $t_0\in[0,T)$, the coalescing
branching random walk $\{X_s,s\in[0,T-t_0]\}$, the sequence
of parent indices $\{\mu_m\}$, the sequence of uniforms
$\{U_m\}$, and a set of initial values in $\{0,1\}$,
$\zeta_{t_0}(j)=\xi_{t_0}(X^j_{T-t_0})$, $j\in J(T-t_0)$, we
will define $\{\zeta_{r}(k),r\in[0,T], 0\le k\le
K((T-r)-)\}$ so that
\begin{equation}\label{dualityeq}
\zeta_{r}(k)=\xi_{r}(X^k_{T-r})
\text{ for all }r\in[t_0,T]\text{ and }k\le K((T-r)-)\,. 
\end{equation}
The left hand limits here reflect the fact that we have reversed the direction of time from that of $X$.

In general we consider a general initial state $\zeta_{t_0}(j)\in\{0,1\}$, $j\in J(T-t_0)$. First we complete this initial state  by
setting $\zeta_{t_0}(k)=\zeta_{t_0}(j)$ if $k\sim_{T-t_0} j\in J(T-t_0)$. 
Suppose that for some $m\ge 1$, $R_m$ is the largest
reaction time smaller than $T-t_0$. The values $\zeta_{r}(k)$ do not change 
except at times $T-R_n$, so $\zeta_{r}=\zeta_{t_0}$ for $r<T-R_m$.
We decide whether or not to flip the value of $\zeta$ at $\mu_m$ 
at time $t-R_m$ as follows.  Define $V_m\in\{0,1\}^{N_0}$ by
\begin{equation}\label{Vdefn}
V_m^j=\zeta_{(T-R_m)-}(M+(m-1)N_0+j)\,, \quad j=1,\dots,N_0.
\end{equation}
Letting $i=\zeta_{(T-R_m)-}(\mu_m)$ we set
\begin{equation}\label{mu-flip}
\zeta_{(T-R_m)}(\mu_m)=\begin{cases} 1-i &\text{ if } U_m \le g_{1-i}(V_m)/c^*  \\
                                      i &\text{ otherwise.}
\end{cases}
\end{equation}
To update the dual now, for $k\le M+(m-1)N_0=K((T-(T-R_m))-)$ and $k\ne\mu_m$,  
\begin{equation}\label{zetadef-2}
\text{if } k \sim_{R_m} \mu_m 
\text{ set }\zeta_{T-R_m}(k)= 
\zeta_{T-R_m}(\mu_m) \,.
\end{equation}
Otherwise we keep $\zeta_{(T-R_m)}(k)=\zeta_{(T-R_m)-}(k)$.

The values $\zeta_{r}(k)$ remain constant for
$r\in[T-R_m,T-R_{m-1})$. Coming to $r=T-R_{m-1}$, if $m-1\ge 1$
we proceed as above. When we reach $r=T-R_0=T$ we end by setting
$\zeta_{T}=\zeta_{T-}$. If $\xi_{t_0}(j)=\xi(X^j_{T-t_0})$ for $j\in J(T-t_0)$, the verification of \eqref{dualityeq} is an easy exercise 
from the definitions of $X$ and $\zeta$.  

\subsection{Branching random walk approximation $\hat X$}
\label{ssec:brw}

Due to the transience of random walk in dimensions
$d\ge 3$, and the fact that the random walk steps are occurring at a
fast rate in $X$ when $\vep$ is small, any coalescing in $X$
will occur soon after a branching event and close to the
branching site.  As in Durrett and Z\"ahle \cite{DZ}, such births followed
quickly by coalescing are not compatible with weak
convergence of the dual. Thus we need a way to excise these
events from $X$. As in \cite{DZ} we define a
{\it (non-coalescing) branching random walk} 
$\hat X$ and associated computation process $\hat\zeta$. Later we will
couple $(X,\zeta)$ and $(\hat X,\hat\zeta)$ so that
they are close when $\vep$ is small.

For $m\in\NN$,  $\Pi_m$ denotes the set of
partitions of $\{0,\dots, m\}$ and for each $\pi\in\Pi_m$,
$J_0(\pi)$ is the subset of $\{0,\dots,m\}$ obtained by
selecting the minimal element of each cell of $\pi$. 
Write $i\sim_\pi j$ if $i$ and $j$ are in the same cell of $\pi$. 
Let $\{\hat B^{Y^i},i=0,\dots N_0\}$ be the rate one coalescing random
walk system on $\Zd$ with step distribution $p$ and initial points at
$Y^0=0, Y^1, \ldots,Y^{N_0}$ where $(Y^1,\dots,Y^{N_0})$ has law
$q$. Let $\nu_0$
denote the law on $\Pi_{N_0}$ of the random partition
associated with the equivalence relation $i\sim j$ iff $\hat
B^{Y^i}(t)=\hat B^{Y^j}(t)$ for some $t\ge 0$.  For $\vep>0$ let $\nu_\vep$ denote the
law on $\Pi_{N_0}$ of the random partition associated with
the equivalence relation $i\sim^\vep j$ iff $\hat
B^{Y^i}(\vep^{-3/2})=\hat
B^{Y^j}(\vep^{-3/2})$.  Note that $\vep^{-3/2}=\vep^{1/2}\vep^{-2}$ so this is a short 
amount of time for the sped up process.  For later use when we
define the branching Brownian motion $Z$ we note that since $\vep^{-3/2}\to\infty$, 
\begin{equation}\label{nuconv}
\nu_\vep \text{ converges weakly to } \nu_0
\text{ as }\vep\downarrow 0 \,.
\end{equation}

As before we will have a fixed $T>0$ and distinct sites
$z_0,\dots,z_M$ in $\vep\Z^d$. Our branching random walk
$\hat X$ will have paths in $\cD$ and an associated set of
indices $\hat J(t)=\{j: \hat X^j_t\ne\infty\}$.  Let
$\pi_0\in\Pi_M$ be defined by the equivalence relation
$i\sim j$ iff $\hat
B^{\vep^{-1}z_i}(\vep^{-3/2})=\hat
B^{\vep^{-1}z_j}(\vep^{-3/2})$. In words, $\pi_0$
will be used to ``mimic'' the initial coalescence in $X$ of
the particles starting at $z_i$ before any reaction events
occur. 

For $n\ge 1$ let $\pi_n\in \Pi_{N_0}$ be iid with law
$\nu_\vep$ and independent of $\pi_0$.
From $\{\pi_n\}$ we inductively define a
sequence of nonempty subsets $\{\hat J_n\}$ of $\Z_+$ by $\hat J_0 = J_0(\pi_0)$
and for $n \ge 0$
\begin{align}
  \label{hatJn}\hat J_{n+1}=\hat J_n\cup\{M+nN_0+j:j\in
  J_0(\pi_{n+1})\setminus\{0\}\}.
\end{align} Set $\hat R_0=0$ and
conditional on $\{\pi_n\}$ let $\{\hat R_{n+1}-\hat R_n:n\ge
0\}$ be independent exponential random variables with means
$(c^*|\hat J_n|)^{-1}$, and let $\{\hat \mu_n\}$ be an
independent sequence of independent random variables where
$\hat \mu_n$, $n\ge 1$, is uniformly distributed over $\hat
J_{n-1}$. $\hat\mu_n$ is the index of the particle that gives birth at time $\hat R_n$.

To define $\hat X$ inductively we start with 
\begin{equation}\label{hatX0def}\hat X^j_0=z_j\hbox{ if }j\in\hat J_0=\hat J(0)
\hbox{ and }\infty\hbox{ otherwise}. 
\end{equation}
On $[\hat R_n,\hat R_{n+1})$, the $\hat X^j:j\in\hat J_n$
follow independent copies of $B^\vep$ starting at $\hat X^j_{\hat R_n}$. At $\hat R_{n+1}$ we define
\begin{equation*}
  \hat X^j_{\hat R_{n+1}}=
  \begin{cases}\hat X^j_{\hat R_{n+1}-}&\text{ if }j\in \hat J_n=\hat J(\hat R_n) \,,\\
    \hat X^{\hat\mu_{n+1}}_{\hat R_{n+1}-}&\text{ if }j\in \hat J_{n+1}-\hat J_n\, ,\\
    \infty&\text{ otherwise.}
  \end{cases}
\end{equation*}
Note that offspring are no longer displaced from their parents and that coalescence reduces the number of particles born at time $\hat R_{n+1}$,
but otherwise no coalescence occurs as $\hat J(t) = \hat J_n$ on $[\hat R_n, \hat R_{n+1})$.
Thus, conditional on the sequence $\{\pi_n\}$, $\hat X$ is
a branching random walk starting with particles at $z_j$,
$j\in J_0(\pi_0)$, with particle branching rate $c^*$ and giving birth
to $|\pi_n|-1$ particles on top of the parent $\hat X^{\hat
  \mu_n}_{\hat R_n}$ (who also survives) at the $n$th branch time $\hat
R_n$.

\subsection{Computation process $\hat\zeta$}
\label{ssec:hatXcomp}

As we did for $X$, for $t_0\in[0,T)$ we now define an computation process
$\{\hat\zeta_r(k):0\le k\le \hat K((T-r)-), r\in[t_0,T]\}$ for $\hat X$. Here $\hat K(s)=M+mN_0$ if $s\in[\hat R_m,\hat R_{m+1})$.  Given are the branching random walks $\{ \hat X_s, s \in [0,T-t_0]\}$, the
associated sequence $\{\pi_n,\hat R_n,\hat \mu_n\}$, 
a sequence of iid random variables $\{\hat U_n\}$, uniformly distributed on $[0,1]$ and independent of $(\hat X,\{\pi_n,\hat R_n,\hat \mu_n:n\in\NN\})$, and a set of initial values $\hat\zeta_{t_0}(j), j \in \hat J(T-t_0)$.
In the next section when we
couple $(X,\zeta)$ and $(\hat X,\hat \zeta)$ we will set
$\hat U_n$ equal to $U_n$ defined in \eqref{Umdef}. Define an equivalence relation $\approx_{\hat R_n}$ on $\{0,\dots, M+nN_0\}$ by
\begin{align}\nonumber M+(m-1)N_0+j&\approx_{\hat R_n}M+(m-1)N_0+i\ (1\le i,j\le N_0,\,1\le m\le n)  \hbox{ iff }j\sim_{\pi_{m}} i,\\
\label{approxdef}M+(m-1)N_0+j&\approx_{\hat R_n}\hat \mu_m \phantom{+mN\ }\ (1\le j\le N_0,\,1\le m\le n)\ \ \hbox{ iff }j\sim_{\pi_{m}} 0,\\
\nonumber j&\approx_{\hat R_n}i,\phantom{+mN_0+} (0\le i,j\le M)\ \ \ \hbox{ iff } j\sim_{\pi_0} i.
\end{align}
Finally if $\hat R_n\le t< \hat R_{n+1}$ define $i\approx_t j$ iff $i\approx_{\hat R_n}j$ for $0\le i,j\le M+nN_0$.
To prepare for the proof of
Lemma \ref{lem:compeq}, note that the definition of $\hat\zeta$ that follows is just
the definition of $\zeta$ with hats added and $\approx_t$ used in place of $\sim_t$.

First we complete the initial state $\hat\zeta_{t_0}$ by setting $\hat\zeta_{t_0}(k)=\hat\zeta_{t_0}(j)$
if $k \approx_{T-t_0} j\in \hat J(T-t_0)$, $k\le K(T-t_0)=K((T-t_0)-)$ a.s. Suppose that for some $m\ge 1$, $\hat R_m$ is the largest
branch time smaller than $T-t_0$. The values $\hat\zeta_r(k)$ do not change except at times
$T-\hat R_n$, so $\hat\zeta_r = \hat\zeta_{t_0}$ for $t < T - \hat R_m$.
We decide whether or not to flip the value of $\hat\zeta$ at $\hat\mu_m$ 
at time $t-\hat R_m$ as follows.  Define $\hat V_m\in\{0,1\}^{N_0}$ by
\begin{equation}\label{hatVdefn}
\hat V_m^j=\hat\zeta_{(T-\hat R_m)-}(M+(m-1)N_0+j)\,, \quad j=1,\dots,N_0.
\end{equation}
Letting $i=\hat\zeta_{(T-\hat R_m)-}(\hat\mu_m)$ we set
\begin{equation}\label{mu-flip-hat}
\hat\zeta_{(T-\hat R_m)}(\hat\mu_m)=\begin{cases} 1-i &\text{ if } \hat U_m \le g^\vep_{1-i}(\hat V_m)/c^*  \\
                                      i &\text{ otherwise.}
\end{cases}
\end{equation}
To update $\hat\zeta$ now, for $k\le M+(m-1)N_0$ and $k\ne\hat \mu_m$, 
\begin{equation}\label{hatzetadef}
\text{if } k \approx_{\hat R_m} \hat \mu_m 
\text{ set }\hat\zeta_{T-\hat R_m}(k)= \hat\zeta_{(T-\hat R_m)}(\hat\mu_m) \,,
\end{equation}
and for the remaining values of $k\le M+(m-1)N_0$ keep
$$\hat\zeta_{T-\hat R_m}(k)= \hat\zeta_{(T-\hat R_m)-}(k).$$

The values $\hat\zeta_{r}(k)$ remain constant for
$r\in[T-\hat R_m,T-\hat R_{m-1})$. Coming to $r=T-\hat R_{m-1}$, if $m-1\ge 1$
we proceed as above. When we reach $r=T-\hat R_0=T$ we end by setting
$\hat \zeta_{T}=\hat \zeta_{T-}$.

\subsection{Coupling of $(X,\zeta)$ and $(\hat X,\hat\zeta)$}
\label{ssec:XhatXcoupling}
We now give a construction of $\hat X, \hat \zeta$ which
will have the property that with high probability for small $\vep$,
(i) $X$ and $\hat X$  are close and (ii) given identical inputs,
$\zeta$ and $\hat\zeta$ will compute the same result. As before,
$T>0$ and $z=(z_0,\dots,z_M)$, $z_i\in\vep\Zd$ are
fixed. Recall the reaction times $R_m$, the uniform
random variables $U_m$ from \eqref{Umdef}, and the natural
time-reversed filtration $\cF_t$ used in the construction of the dual $X$
given in \eqref{dualdef}.

The following general definition will be used to construct the partitions 
$\{\pi_n:n\in\Z_+\}$ needed to define $\hat X$,(distributed as in Section~\ref{ssec:brw}) in terms of the graphical representation.
Let $V$  be an $\cF_t$-stopping time (think of $V=R_m$),
and let $\gamma_0 \dots,\gamma_{M'}\in\vep\Z^d\in\vep\Z^d$ be $\cF_V$-measurable.
Let $\{\hat B^{\vep,\gamma_i}:i=0,\dots,M'\}\subset\vep\Z^d$ be the rescaled coalescing
random walk system, starting at time $V$ at locations $\gamma_0,\dots,\gamma_{M'}$, determined by the $\{T_n^x\}$ in the graphical representation. That is, $\{\hat B^{\vep,\gamma_i}:i=0,\dots,M'\}$ are as described in Figure~\ref{fig:dual} but now starting at time $T-V$ at sites $\gamma_0,\dots,\gamma_M'$. 
For each $t>0$ let $\pi_{V,\gamma}(t)\in
\Pi_{M'}$ be the random partition of $\{0,\dots,M'\}$
associated with the equivalence relation $i\sim i'$ iff $\hat
B^{\vep,\gamma_i}(t)=\hat B^{\vep,\gamma_{i'}}(t)$.  We call
$\pi_{V,\gamma}(t)$ the random partition at time $V+t$ with initial condition
$\gamma=(\gamma_0,\dots, \gamma_{M'})$ at time $V$.

Let  $\pi_0=\pi_{0,z}(\sqrt\vep)\in \Pi_M$ be the
random partition of $\{0,\dots,M\}$ at time $\sqrt{\vep}$ with initial
condition $z=(z_0,\dots,z_M)$ at time 0, and note that its law is the
same as the law of the $\pi_0$ described just before
\eqref{nuconv}.  For $m\ge 1$ let
$$
\gamma_m=(X^{\mu_m}_{R_m},X^{\mu_m}_{R_m}+Y^1_m,\dots, X^{\mu_m}_{R_m}+Y^{N_0}_m)
$$
and $\{\pi'_m, m\in\NN\}$ be an iid sequence with law $\nu_\vep$ and chosen independent of $\cF_\infty$.  
For $m\in\NN$, define
\begin{equation}\label{pimdef}
\pi_m=\begin{cases} \pi_{R_m,\gamma_m}(\sqrt\vep) &\text{ if }R_n>R_{n-1}+\sqrt\vep\hbox{ for all }1\le n\le m\\
\pi'_m &\text{ otherwise.}
\end{cases}
\end{equation}
By the translation invariance and independent increments
properties of the Poisson point processes used in the graphical representation and also \eqref{Ymlaw}, 
$\pi_m$ is independent of ${\cal F}_{R_{m-1}+\sqrt\vep}\vee\sigma(\pi'_n,n<m)\equiv\bar\cF_{m-1}$, and has law
$\nu_\vep$ defined just before \eqref{nuconv}.  It is also easy to check that $\pi_m$ is
$\bar\cF_m$-measurable ($m\ge 0$) and so $\{\pi_m,m\ge 0\}$ are independent and distributed as in
Section~\ref{ssec:brw}.  

For $m\in\NN$ let
\begin{align*}
 \tau'_{m}  & = \inf\{s\ge R_{m-1}:  \exists i \neq j \hbox{ both in } J(R_{m-1}-), \hbox{ or} \\
& i\in J(R_{m-1}-)\setminus\{\mu_{m-1}\}, j\in J(R_{m-1})\setminus J(R_{m-1}-),
\hbox{ so that $X_s^i=X_s^j$} \}, \\
 \tau_{m} & = \inf\{s\ge R_{m-1}+\sqrt\vep:
\inf_{i\neq j\in J(s)} |X_s^i-X_s^j|\le \vep^{7/8}\},\\
Y^*_m&=\max\{|Y^i_m|:i=1,\dots,N_0\}.
\end{align*}
We introduce the time, $T_b$, that one of four possible
``bad events'' occurs:
\begin{align*} T_b=&\min\{R_m:m\ge 1, R_m\le R_{m-1}+\sqrt\vep\hbox{ or }Y^*_m\ge\frac{\vep}{\kappa} \log(1/\vep)\}\\
\nonumber&\wedge\min\{\tau_m:m\ge 1, \tau_m<R_m\}\wedge\min\{\tau'_m:m\ge 2, \tau'_m\le R_{m-1}+\sqrt\vep\}.
\end{align*}
Here $\min\emptyset=\infty$
To see why the last two minima should be large, note that after a birth of $N_0$ particles from particle $\mu_m$ at time $R_m$, we expect
some coalescence to occur between the parent and its children.  After time $\sqrt\vep$, particles should all be separated by at least $\vep^{7/8}$ and remain that way until the next reaction time when again there may be coalescing within the family producing offspring but no other coalescing events.  The qualifier $m\ge 2$ is needed in the last minimum because we have no control
over the spacings between particles at time 0.
The collision of particles 2 and 7 in Figure \ref{fig:dual} is an example of
a bad event that enters into the definition of $\tau'_4$.
We assume throughout that
\begin{equation}\label{epskappa}
0<\vep<\vep_{1}(\kappa)\hbox{ so that }\frac{\vep}{\kappa}\log(1/\vep)<\vep^{7/8}/2.
\end{equation}

Given $\{\pi_m\}$ we now construct $\hat X$ and $\hat A(s)=((\hat \mu_n,\hat R_n)1(\hat R_n\le s))_{n\in\NN}$
(with the law described in Section~\ref{ssec:brw}) initially up to time $\hat T=T_b\wedge\hat T_b$, where
$$\hat T_b=\min\{\hat R_m:m\ge 1, \hat R_m-\hat R_{m-1}\le \sqrt\vep\}.$$
Once one of the five bad events (implicit in the definition of $\hat T$) occurs, we will give up and continue the definition of the
branching random walk using independent information.
The coupling of $X$ and $\hat X$ will be through our definition of $\{\pi_n\}$ and also through the use of the random walks steps of $X^j$ to define corresponding random walk steps in $\hat X^j$ whenever possible, as will be described below.

We begin our inductive construction by setting $\hat R_0=0$, $\hat J(0)=J_0(\pi_0)$, and define $\hat X_0$ as in \eqref{hatX0def}. Note that
\begin{equation}\label{hatJ0}
\hat J(0)=J(\sqrt\vep)=J_0(\pi_0)\hbox{ if }R_1>\sqrt\vep.
\end{equation}
Assume now that $(\hat X,\hat A)$ has been defined on $[0,R_m\wedge\hat T]$.  Assume also that $R_m<\hat T$ implies the following for all $1\le i\le m$:
\begin{equation}\label{Rmu} \hat R_i=R_i,\ \ \hat\mu_i=\mu_i,
\end{equation}
\begin{equation}\label{hatJdef}
\hat J(R_i) = \hat J(R_{i-1}) \cup\{M+(i-1)N_0+j:j\in J_0(\pi_i)\setminus\{0\}\}.
\end{equation}
\begin{equation}\label{hatJinc}
\hat J(R_{i-1})=\hat J(s)\subset J(s)\hbox{ for all }s\in[R_{i-1},R_i).
\end{equation}
\begin{equation}\label{Jeq}
\hat J(s)=J(s)=J(R_{i-1}+\sqrt\vep)\hbox{ for all }s\in[R_{i-1}+\sqrt\vep,R_i),
\end{equation}

The $m=0$ case of the induction is slightly different, due for example to the special nature of $\pi_0$, so let us assume $m\ge 1$ first.  To define $(\hat X,\hat A)$ on\break 
$(R_m\wedge\hat T,R_{m+1}\wedge\hat T]$ we may assume $R_m(\omega)<\hat T(\omega)$ and so \eqref{Rmu}-\eqref{Jeq} hold by induction.
On $(R_m,(R_m+\sqrt\vep)\wedge R_{m+1}\wedge\hat T]$ let $(\hat X,\hat A)$ evolve as in Section~\ref{ssec:brw} conditionally independent of $\cF_\infty$ given $\{\pi_n\}$.  Here it is understood that the unused partitions $\{\pi_i:i>m\}$ are used to define the successive branching events as in \eqref{hatJn}.

Next, to define $(\hat X,\hat A)$ on $((R_m+\sqrt\vep)\wedge R_{m+1}\wedge\hat T,R_{m+1}\wedge\hat T]$ we may assume $R_m(\omega)+\sqrt\vep<R_{m+1}\wedge\hat T(\omega)$.
By the definition of $\hat T_b$ this implies $\hat R_{m+1}>R_m+\sqrt\vep$ and so for all $s\in[R_m,R_m+\sqrt\vep]$,
\begin{align}
\nonumber \hat J(s)=\hat J(R_m)&=\hat J(R_{m-1})\cup\{M+(m-1)N_0+j:j\in J_0(\pi_m)\setminus\{0\}\}\\
\label{hatJJI}&=J(R_{m-1}+\sqrt\vep)\cup\{M+(m-1)N_0+j:j\in J_0(\pi_m)\setminus\{0\}\}.
\end{align}
In the first equality we used \eqref{hatJdef} and in the second we used \eqref{hatJinc} and \eqref{Jeq} with $s=R_{m-1}+\sqrt\vep$.  The fact that $\tau_m\le R_m$ (since $T_b>R_m+\sqrt\vep$) shows there are no coalescings of $X$ on $[R_{m-1}+\sqrt\vep,R_m)$ and so 
\begin{equation}\label{JeqII}
J(R_{m-1}+\sqrt\vep)=J(R_m-). 
\end{equation}
Again use $T_b>R_m+\sqrt\vep$ together with \eqref{epskappa} to see that $Y^*_m\le \frac{\vep}{\kappa}\log(1/\vep)\le \frac{\vep^{7/8}}{2}$, and so the spacings of the previously existing particles at time $R_m\le \tau_m$ ensures that none of the $N_0$ new particles at time $R_m$ will land on a previously occupied site. Therefore if
$$J_1(Y_m)=\{1\le j\le N_0:Y_m^j\notin \{Y_m^i:0\le i<j\}\},$$
then
$$J(R_m)=J(R_m-)\cup\{M+(m-1)N_0+j:j\in J_1(Y_m)\}.$$
The fact that $R_{m+1}\wedge\tau'_{m+1}>R_m+\sqrt\vep$ means that $X$ has no branching events in $(R_m,R_m+\sqrt\vep]$ and $X$ has no particles coalescing on $[R_m,R_m+\sqrt\vep]$ except those involving $X^{\mu_m}_{R_m}+Y^i_m, i=0,\dots,N_0$. 
Therefore, the definition of $\pi_m$ ensures that
\begin{align}
\nonumber J(R_m+\sqrt\vep)&=J(R_m-)\cup\{M+(m-1)N_0+j:j\in J_0(\pi_m)\setminus\{0\}\}.\\
\label{hatJJIII}&=\hat J(s)\hbox{ for all }s\in[R_m,R_m+\sqrt\vep],
\end{align}
where in the last line we have used \eqref{hatJJI} and \eqref{JeqII}.
For $s\in[R_m+\sqrt\vep,R_{m+1}\wedge\hat T)$ we have $s<\tau_{m+1}$ and so
\begin{equation}\label{sepn}
|X^j_s-X^k_s|>\vep^{7/8}\hbox{ for all }j\neq k \hbox{ both in }J(s), \hbox{ for all }s\in[R_m+\sqrt\vep,R_{m+1}\wedge\hat T).
\end{equation}
In particular $X$ can have no coalescings on the above interval and so $J(s)=J(R_m+\sqrt\vep)$ for $s\in[R_m+\sqrt\vep,R_{m+1}\wedge\hat T)$. On $(R_m+\sqrt\vep,R_{m+1}\wedge \hat T]$ let $(\hat X^j_s,j\in\hat J(s))$ follow the random walk steps and branching events of $\{X^j:j\in J(s)\}$ (of course there is at most one of the latter at time $R_{m+1}$ providing $R_{m+1}\le \hat T$). In particular we are setting
\begin{equation}\label{hatJdefII}
\hat J(s)=J(s)=J(R_m+\sqrt\vep)\hbox{ for }s\in[R_m+\sqrt\vep,R_{m+1}\wedge\hat T)\hbox{ or }s=\hat T<R_{m+1}.
\end{equation}
\eqref{sepn} shows that the random walk steps and branching events for distinct particles  of $X$ on $(R_m+\sqrt\vep, R_{m+1}\wedge\hat T]$
are independent.  In addition, these steps and branching events are independent of the random walk increments used to define $\{\pi_n\}$.  This shows that $\hat X$ evolves like the branching random walk  described in Section~\ref{ssec:brw} on $(R_m,R_{m+1}\wedge \hat T)$, and on $(R_m,R_{m+1}\wedge \hat T]$ if either $\hat T<R_{m+1}$, or $R_m+\sqrt\vep\ge R_{m+1}\wedge\hat T$.
(In the latter case the first part of the above construction did the job and in the former case there is no reaction event to define at $\hat T\wedge R_{m+1}=\hat T$.) So to complete the construction at $t=R_{m+1}\wedge \hat T$ we may assume
 \begin{equation}\label{weakhyp}R_m+\sqrt\vep<R_{m+1}\le \hat T.
 \end{equation}
The above definition shows that $\hat R_{m+1}=R_{m+1}$, we use \eqref{hatJdef} with $i=m+1$ to define $\hat J(R_{m+1})$ and we set $\hat \mu_{m+1}=\mu_{m+1}$.  Clearly $\hat \mu_{m+1}$ is uniform on $\hat J(R_m)=J(R_m+\sqrt\vep)$ (given $\{\pi_n\}$) and is independent of $\{\hat\mu_n:n<m\}$.  In addition the branching events used to define $\{\hat \mu_n\}$ are independent of the random walk steps used to define $\{\pi_n\}$.  This completes our inductive definition of $(\hat X,\hat A)$ on $[0,R_{m+1}\wedge\hat T]$.  
 
 Next we complete the inductive step of the derivation of \eqref{Rmu}-\eqref{Jeq} for $m+1$ under \eqref{weakhyp} which is in fact weaker than the $R_{m+1}<\hat T$ condition.  \eqref{hatJdefII} implies 
\eqref{Jeq} for $i=m+1$, and \eqref{Rmu} and \eqref{hatJdef} hold by definition.  On $\{R_m+\sqrt\vep<R_{m+1}\le \hat T\}$ $J$ can only decrease on $[R_m,R_m+\sqrt\vep]$ due to coalescings of the random walks, while $\hat J$ is constant on this interval by \eqref{hatJJIII}.  The inclusion \eqref{hatJinc} therefore follows from the equality in \eqref{Jeq}.  

To complete the inductive construction of $(\hat X,\hat A)$ on each $[0,R_m\wedge\hat T]$ and proof of \eqref{Rmu}=\eqref{Jeq} it remains to give the $m=0$ step of the construction and verify the $m=1$ case of the induction.  Both follow by making only minor changes in the above induction step. For example,  \eqref{hatJ0} is used in place of the (now non-existent) induction hypothesis \eqref{hatJdef} both in defining $\hat X$ on the initial interval and in obtaining \eqref{hatJJIII} for $m=0$.

Since $R_m\uparrow\infty$ a.s. we have defined $(\hat X,\hat A)(s)$ on $[0,\hat T]$ and to complete the definition we let it evolve conditionally independently (given $\{\pi_n\}$) for $s\ge \hat T$.  

The above construction and \eqref{Umlaw} show that 
\begin{equation}\label{Umind} (X,\{\pi_n\},\{\mu_n\},\hat X,\{\hat \mu_n\},\{\hat R_n\}) \hbox{ is independent of }\{U_n\},
\end{equation}
where $\{U_n\}$ are the uniforms from \eqref{Umdef}.  Therefore the computation process $\hat \zeta$ for the above $\hat X$ may be defined as in Section~\ref{ssec:hatXcomp} but with $\hat U_n=U_n$.

\begin{lem}\label{GmdefOK}
(a) For all $m\in\Z_+$, $R_m<T_b$ and $\hat R_m<\hat T_b$ imply $R_m=\hat R_m<\hat T$.

\noindent (b) For all $m\in\NN$, if 
\begin{align*} G_m=\{\omega:&\Bigl(\wedge_{i=1}^m R_i-R_{i-1}\Bigr)\wedge\Bigl(\wedge_{i=2}^{m+1}\tau'_i-R_{i-1}\Bigr)>\sqrt\vep,\\
&R_i\le \tau_i\ \forall i\le m, \max_{i\le m}Y^*_i<\frac{\vep}{\kappa}\log(1/\vep),
\wedge_{i=1}^m \hat R_i-\hat R_{i-1}>\sqrt\vep\},
\end{align*}
then $G_m\subset\{\hat R_m=R_m<\hat T\}$.
\end{lem}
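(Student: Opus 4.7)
\medskip\noindent\textbf{Proof proposal.} The plan is to reduce part (b) to part (a) by directly verifying that the event $G_m$ forces both $R_m<T_b$ and $\hat R_m<\hat T_b$; the substance is therefore in part (a). For (a), I would proceed by induction on $m$, with most of the heavy lifting already done by the construction of $(\hat X,\hat A)$ in Section~\ref{ssec:XhatXcoupling}: the relations \eqref{Rmu}--\eqref{Jeq} were established there under the running hypothesis $R_m<\hat T$, so the only task remaining for (a) is to upgrade the assumptions $R_m<T_b$ and $\hat R_m<\hat T_b$ to the strict inequality $R_m<\hat T$.

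The base case $m=0$ is trivial since $R_0=\hat R_0=0<\hat T$ a.s. For the inductive step, assume $R_m<T_b$ and $\hat R_m<\hat T_b$. Since $R_{m-1}<R_m<T_b$ and $\hat R_{m-1}<\hat R_m<\hat T_b$, the induction hypothesis gives $R_{m-1}=\hat R_{m-1}<\hat T$. I would argue by contradiction that $R_m\le\hat T$. Suppose $R_m>\hat T$. Since $R_m<T_b$, we must then have $\hat T=\hat T_b$, and so $\hat R_m<\hat T_b=\hat T$. By the construction, however, any branching event of $\hat X$ in $(\hat R_{m-1},\hat T]$ must occur either (i) during the initial conditionally independent evolution on $(R_{m-1},(R_{m-1}+\sqrt\vep)\wedge\hat T]$, in which case $\hat R_m-\hat R_{m-1}\le\sqrt\vep$ forces $\hat R_m\ge\hat T_b$, contradicting $\hat R_m<\hat T_b$, or (ii) during $(R_{m-1}+\sqrt\vep,R_m\wedge\hat T]$, where $\hat X$ copies the random walk and branching events of $X$; but $X$ has no reaction events in $(R_{m-1},R_m)$ by definition of $R_m$, so no branching of $\hat X$ occurs there either. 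This contradicts $\hat R_m\le\hat T$, so $R_m\le\hat T$ must hold. Combining with the construction yields $\hat R_m=R_m$, and then $R_m=\hat R_m<\hat T_b$ together with $R_m<T_b$ gives $R_m<\hat T$.

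For part (b), I would verify $G_m\subset\{R_m<T_b\}\cap\{\hat R_m<\hat T_b\}$ and invoke (a). The inclusion into $\{\hat R_m<\hat T_b\}$ is immediate from the clause $\hat R_i-\hat R_{i-1}>\sqrt\vep$ for $i\le m$ in the definition of $G_m$. For $R_m<T_b$, I would check that none of the four families of bad times in the definition of $T_b$ can produce a time in $(0,R_m]$: the first three (gap $R_i-R_{i-1}\le\sqrt\vep$, excess $Y^*_i$, and $\tau_i<R_i$) are excluded for $i\le m$ by the corresponding clauses of $G_m$, and for $i>m$ they lie past $R_m$; the fourth type $\tau'_i\le R_{i-1}+\sqrt\vep$ can produce a time $\le R_m$ only for $2\le i\le m+1$ (since $\tau'_i\ge R_{i-1}$), and precisely these indices are excluded by the clause $\tau'_i-R_{i-1}>\sqrt\vep$ for $2\le i\le m+1$ in $G_m$. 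This explains the otherwise mysterious upper bound $m+1$ in the definition of $G_m$.

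The main obstacle is the contradiction step in (a), which must correctly identify in which phase of the two-stage inductive construction the (hypothetical) branching $\hat R_m\le\hat T$ would occur; but since the construction itself was set up precisely to align $\hat R_i$ with $R_i$ below $\hat T$, the argument reduces to a short bookkeeping check against \eqref{Rmu}--\eqref{Jeq} rather than a fresh analysis.
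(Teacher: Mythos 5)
Your proof is correct and takes essentially the same approach as the paper. For (b) you identify the same inclusion $G_m\subset\{R_m<T_b\}\cap\{\hat R_m<\hat T_b\}$ by checking the four families of bad times (the paper's one-line version is $T_b\ge R_{m+1}\wedge\tau_{m+1}\wedge\tau'_{m+2}>R_m$), and for (a) you run the same induction on the two-phase coupling construction; the only stylistic difference is that you phrase the key step as a contradiction (assume $R_m>\hat T$, exclude both phases, contradict $\hat R_m<\hat T_b=\hat T$) whereas the paper argues directly that $\hat T>R_m+\sqrt\vep$, that the coupled phase makes the next $\hat X$-branching time $R_{m+1}$, and hence $\hat T_b\ge R_{m+1}$ and $R_{m+1}\le\hat T$.
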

\begin{proof} (a) The implication is trivial for $m=0$ so
  assume it for $m$ and assume also $R_{m+1}<T_b$, $\hat
  R_{m+1}<\hat T_b$.  By induction we have $R_m=\hat
  R_m<\hat T$.  Since $R_{m+1}\wedge \hat
  R_{m+1}>R_m+\sqrt\vep$, we also know $\hat
  T>R_m+\sqrt\vep$.  The construction of $\hat X$ on
  $(R_{m}+\sqrt\vep,R_{m+1}\wedge\hat T]$ shows that the
  next reaction time of $\hat X$ on this interval must be
  $R_{m+1}$ (if it exists) and so $\hat T_b\ge
  R_{m+1}$. Since $T_b> R_{m+1}$ by hypothesis we get
  $(R_{m}+\sqrt\vep,R_{m+1}\wedge\hat
  T]=(R_m+\sqrt\vep,R_{m+1}]$.  Hence our construction of
  $\hat X$ on this interval shows $\hat R_{m+1}=R_{m+1}$ and
  so the result follows for $m+1$.

\noindent (b) The first four conditions in the definition of $G_m$ imply
$$T_b\ge R_{m+1}\wedge\tau'_{m+2}\wedge \tau_{m+1}>R_m.$$
The last condition implies $\hat T_b>\hat R_m$. Now apply (a).

\end{proof}
As an immediate consequence of the above and our inductive proof of \eqref{Rmu}-\eqref{Jeq} we get the following:

\begin{lem}\label{lem:hatagree}
$$G_m\Rightarrow R_m<\hat T\Rightarrow \hbox{ for all }1\le i\le m\ \eqref{Rmu}-\eqref{Jeq} \hbox{ hold.}$$
\end{lem}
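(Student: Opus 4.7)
The plan is to decompose the statement into its two distinct implications and verify each by reference to what has already been established. The first implication, $G_m \Rightarrow R_m < \hat T$, requires no new work at all: it is an immediate consequence of Lemma~\ref{GmdefOK}(b), which in fact yields the strictly stronger statement $G_m \subset \{\hat R_m = R_m < \hat T\}$.

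The bulk of the work is the second implication, $R_m < \hat T \Rightarrow \eqref{Rmu}\text{--}\eqref{Jeq}$ for all $1\le i\le m$, which I would establish by a finite induction on $i$ running from $1$ to $m$. The key observation is that $R_m < \hat T \le T_b$ forces, for each $1\le i\le m$, the chain of inequalities $R_{i-1} < R_i \le R_m < \hat T$; moreover the very first clause in the definition of $T_b$ (the one that kicks in if some $R_j \le R_{j-1} + \sqrt\vep$) together with $\hat T \le T_b$ forces $R_i - R_{i-1} > \sqrt\vep$. Combining these, the hypothesis \eqref{weakhyp}, namely $R_{i-1} + \sqrt\vep < R_i \le \hat T$, is in force for every such $i$.

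But \eqref{weakhyp} is precisely the condition under which the inductive paragraph immediately following it in Section~\ref{ssec:XhatXcoupling} verified \eqref{Rmu}--\eqref{Jeq} at index $i$, given that \eqref{Rmu}--\eqref{Jeq} were known at indices $1,\dots,i-1$. Thus, feeding the base case ($i=0$, which the construction handled separately as noted in the text) into that argument and iterating upward to $i=m$ yields \eqref{Rmu}--\eqref{Jeq} for all $1\le i\le m$ simultaneously.

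I do not anticipate any serious obstacle: the lemma really is a bookkeeping statement collating the construction of $(\hat X,\hat A)$ with Lemma~\ref{GmdefOK}(b). The only point worth being a little careful about is confirming that the weakened hypothesis \eqref{weakhyp} (rather than the stronger $R_{m+1} < \hat T$ sometimes used in the construction) suffices for the inductive step, which the author has already flagged explicitly when deriving \eqref{Rmu}--\eqref{Jeq} in the construction.
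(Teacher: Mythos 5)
Your proposal is correct and follows essentially the same route as the paper, which dispatches this lemma with a one-sentence remark that it is an immediate consequence of Lemma~\ref{GmdefOK}(b) together with the inductive derivation of \eqref{Rmu}--\eqref{Jeq} already carried out in the construction of $(\hat X, \hat A)$. Your spelled-out check that $R_m < \hat T \le T_b$ forces $R_{i-1}+\sqrt\vep < R_i \le \hat T$ for each $1\le i\le m$, so that \eqref{weakhyp} holds at every level and the construction's inductive step applies, is exactly the bookkeeping the authors have in mind.
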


On $G_m$ and on the intervals $[R_{m-1}+\sqrt{\ep},R_{m})$  our definition of $\hat X$ and Lemma~\ref{GmdefOK}(b) shows that the movement of particles
in $X$ and $\hat X$ are coupled (they take identical steps) but on $[R_{m-1},R_{m-1}+\sqrt\ep)$ they move
independently. To bound the discrepancies that accumulate during these
intervals we use:

\begin{lem} \label{lem:Xclose}
If $\omega\in G_m$, then 
\begin{align} \label{Xclose}
\sup&\{|\hat X^j_s-X^j_s|: j\in \hat J(s),\ s\in[0,R_m)\}\\
    \nonumber &\le (m-1)\frac{\vep}{\kappa} \log(1/ \ep) +\sum_{l=0}^{m-1} \sup_{j\in
      \hat J(R_{l}), s\in[R_l,R_l+\sqrt\vep]}|\hat
    X^j_{s}-\hat X^j_{\hat R_l}|+|X^j_{s}-X^j_{R_l}|.
  \end{align}
\end{lem}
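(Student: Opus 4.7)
\medskip\noindent\textbf{Proof sketch of Lemma \ref{lem:Xclose}.} The plan is to work on $G_m$, on which by Lemma~\ref{lem:hatagree} we have $R_i=\hat R_i$ and $\mu_i=\hat\mu_i$ for $i\le m$, the sets $\hat J(s)$ and $J(s)$ are locked together on each ``good'' sub-interval $[R_{l-1}+\sqrt\vep,R_l)$, and $\hat X^j$ follows the same random walk steps as $X^j$ on these sub-intervals. So the discrepancy between $\hat X^j_s$ and $X^j_s$ can only be created from three sources: (a) independent motion on the short ``bad'' intervals $[R_l,R_l+\sqrt\vep)$, (b) the offset between offspring positions at a reaction time $R_l$ (which are born on top of the parent in $\hat X$, but at displacements $Y^i_l$ from the parent in $X$), and (c) propagation of previously-accumulated discrepancy forward through coupled motion. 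The $G_m$ bound $Y^*_l<\frac{\vep}{\kappa}\log(1/\vep)$ controls (b) and the separation condition $R_l>R_{l-1}+\sqrt\vep$ controls the length of the intervals in (a).

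More precisely, let $D(s)=\sup_{j\in \hat J(s)}|\hat X^j_s-X^j_s|$ and set
\[
\Delta_l=\sup\bigl\{|\hat X^j_s-\hat X^j_{\hat R_l}|+|X^j_s-X^j_{R_l}|:j\in\hat J(R_l),\ s\in[R_l,R_l+\sqrt\vep]\bigr\}.
\]
First, $D(0)=0$ since the surviving initial particles occupy identical sites $z_j$ in both systems. Next, on a coupled interval $[R_{l-1}+\sqrt\vep,R_l)$ the steps of $\hat X^j$ and $X^j$ agree for each $j\in\hat J(s)=J(s)$, so $D$ is constant there; in particular $D(R_l-)\le F_{l-1}:=\sup_{s\in[R_{l-1},R_l)}D(s)$. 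At the reaction time $R_l$ (for $l\ge 1$), particles already present have unchanged positions in both systems, so their contribution to $D(R_l)$ is bounded by $D(R_l-)$; a newborn particle at index $M+(l-1)N_0+k$ satisfies $\hat X^j_{R_l}=\hat X^{\mu_l}_{R_l}$ and $X^j_{R_l}=X^{\mu_l}_{R_l}+Y^k_l$, so
\[
|\hat X^j_{R_l}-X^j_{R_l}|\le |\hat X^{\mu_l}_{R_l}-X^{\mu_l}_{R_l}|+|Y^k_l|\le D(R_l-)+\tfrac{\vep}{\kappa}\log(1/\vep).
\]
Finally, on $[R_l,R_l+\sqrt\vep)$ motion is independent, so by the triangle inequality $D(s)\le D(R_l)+\Delta_l$ there. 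Combining these observations gives the recursion
\[
F_l\le F_{l-1}+\tfrac{\vep}{\kappa}\log(1/\vep)\,\mathbf{1}(l\ge 1)+\Delta_l,\qquad l=0,1,\dots,m-1,
\]
with $F_{-1}=0$. Iterating from $l=0$ to $l=m-1$ yields
\[
\sup_{s\in[0,R_m)}D(s)=F_{m-1}\le (m-1)\tfrac{\vep}{\kappa}\log(1/\vep)+\sum_{l=0}^{m-1}\Delta_l,
\]
which is precisely \eqref{Xclose}.

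The only delicate point is justifying that, on $G_m$, the separation bound $Y^*_l<\frac{\vep}{\kappa}\log(1/\vep)<\vep^{7/8}/2$ forces the newborn offspring in $X$ to land on previously unoccupied sites (so that they indeed correspond one-to-one with the newborn indices in $\hat X$ via $J_1(Y_l)$ and $J_0(\pi_l)\setminus\{0\}$, with the coupling of partitions defined in Section~\ref{ssec:XhatXcoupling}). This is exactly the content of the chain of identities leading to \eqref{hatJJIII} in the construction, and once this is in hand the recursion above is routine bookkeeping — this bookkeeping is the main obstacle, but it requires no estimates beyond those already assumed on $G_m$.
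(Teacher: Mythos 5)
Your argument matches the paper's proof essentially step for step: both proceed by induction on $l$, using that on $[R_l+\sqrt\vep,R_{l+1})$ the coupled random walk steps keep the discrepancy constant, that the reaction at $R_l$ adds at most the offspring displacement $Y^*_l<\frac{\vep}{\kappa}\log(1/\vep)$ (offspring are placed on the parent in $\hat X$ but at $Y^i_l$ from it in $X$), and that on $[R_l,R_l+\sqrt\vep]$ the independent motions contribute the increment terms $\Delta_l$. The only slip is the claim $\sup_{s\in[0,R_m)}D(s)=F_{m-1}$; this should be $\max_{0\le l\le m-1}F_l$, but since each $F_l$ is bounded by the same final right-hand side, the conclusion is unaffected.
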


\begin{proof}
Suppose first that $m> 1$ and we are on $G_m$. By the coupling of the spatial motions noted above, for $j\in \hat J(R_{m-1})$
\begin{align*}
    &\sup_{s\in[R_{m-1},R_{m})}|\hat X^j_s-X^j_s|
= \sup_{s\in[R_{m-1},R_{m-1}+\sqrt\vep]}|\hat X^j_{s}-X^j_{s}|\\ 
&\quad \le |\hat X^j_{R_{m-1}}-X^j_{R_{m-1}}| + \sup_{s\in[R_{m-1},R_{m-1}+\sqrt\vep]}|\hat X^j_{s}-\hat X^j_{R_{m-1}}|\\
&\phantom{\quad \le |\hat X^j_{R_{m-1}}-X^j_{R_{m-1}}|}  +\sup_{s\in[R_{m-1},R_{m-1}+\sqrt\vep]}|X^j_s- X^j_{R_{m-1}}|.
\end{align*}
On $G_m$, a newly born particle to $X^j_{R_{m-1}-}$ may jump a distance at most $\frac{\vep}{\kappa} \log(1/\ep)$ from its
parent, while for $\hat X^j_{R_{m-1}-}$ it will be born on its
parent site, so the above is at most
\begin{align*}
& \sup_{k\in J(R_{m-1}-)}|\hat X^k_{R_{m-1}-}-X^k_{R_{m-1}-}|+ \frac{\vep}{\kappa} \log(1/\vep) \\ 
&\qquad+\sup_{s\in[R_{m-1},R_{m-1}+\sqrt\vep]}|\hat X^j_{s}-\hat X^j_{R_{m-1}}|
+\sup_{s\in[R_{m-1},R_{m-1}+\sqrt\vep]}|X^j_s-X^j_{R_{m-1}}|.
\end{align*}
Things are simpler when $m=1$ because there are no initial jumps to worry about and so the second term in the above is absent. The required bound now follows by induction in $m$ and the fact that $G_m$ is decreasing in $m$.
\end{proof}
 \subsection{Bounding the probability of bad events}\label{ssec:pbad}

Here and in what follows it is useful to dominate $X$ with a branching random walk 
$\bar X$, also with paths in $\cD$ and with the same
initial state. Particles in $\bar X$ follow
independent copies of $ B^\vep$ and with rate $c^*$
give birth to $N_0$ particles  located at $
B^\vep_t+Y_m^i,\, i=1,\dots,N_0$, where $
B^\vep_t$ is the
location of the parent particle.  At the $m$th birth time
$\bar R_m$ we use $X^{M+(m-1)N_0+i},\, i=1,\dots,N_0$ to
label the new particles, so that if $\bar
J(t)=\{j:X^j_t\neq\infty\}$, then $\bar J(\bar
R_m)=\{0,\dots,M+mN_0\}$. Coalescence is avoided in $\bar
X$ by having the coalescing particle with the larger index
have its future steps and branching events dictated by an
independent copy of the graphical representation. 
This will ensure that $J(t)\subset \bar J(t)$ and
$\{X^j(t):j\in J(t)\}\subset\{\bar X^j(t):j\in \bar J(t)\}$
for all $t\ge 0$.

Let $N_T=\min\{m:R_m>T\}$ and define $\bar N_T$ in the same way, using the branching times
$\{\bar R_m\}$. Let
\begin{equation}\label{cbdef} 
c_b= c^* N_0\ge 1 \,.
\end{equation}
 We will also need to separate the particles in $\hat X$ and so define
 \begin{equation}
 \hat\tau_m=\inf\{s\ge \hat R_{m-1}+\sqrt\vep:\inf_{i\neq j\in \hat J(s)} |\hat X^i_s-\hat X^j_s|\le \vep^{7/8}\}, m\in\NN.
 \end{equation}

\begin{lem}\label{lem:good1} There is a constant
  $c_{\tref{lem:good1}}$ so that for all $T>0$ and $n\in\NN$ 

  \noindent(a) $P(N_T>n)\le P(\bar N_T>n)\le e^{c_bT}(M+1)(nN_0)^{-1}$.

  \noindent(b) $P(\min_{1\le m\le N_T} R_m-R_{m-1}\le \sqrt\vep \hbox{ or }\min_{1\le m\le N_T} \hat R_m-\hat R_{m-1}\le \sqrt\vep)$\hfil\break
 $\phantom{P(\min_{1\le m\le N_T}}   \le c_{\tref{lem:good1}}e^{c_bT}(M+1) \vep^{1/6}$.
\end{lem}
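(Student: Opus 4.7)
The plan is to exploit the domination of $X$ by the branching random walk $\bar X$ and the near-exponentiality of the gaps between consecutive reaction times. Part (a) is an easy Markov/mean computation; part (b) is an optimization between the tail control on $N_T$ from (a) and an exponential-gap estimate for each reaction interval.

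For part (a), the coupling defining $\bar X$ (which ``undoes'' every coalescence) makes each reaction time $R_m$ of $X$ also a reaction time of $\bar X$, so $N_T \le \bar N_T$ and the first inequality holds. For the second, observe that $|\bar J(t)|$ is a pure-birth chain starting at $M+1$ that jumps from $k$ to $k+N_0$ at rate $c^* k$, so its mean satisfies $\tfrac{d}{dt} E|\bar J(t)| = c^* N_0\, E|\bar J(t)| = c_b\, E|\bar J(t)|$, giving $E|\bar J(t)| = (M+1) e^{c_b t}$. Since $\bar N_T = (|\bar J(T)| - (M+1))/N_0$, Markov's inequality yields
\[
P(\bar N_T > n) \le \frac{E\bar N_T}{n} \le \frac{(M+1) e^{c_b T}}{n N_0}.
\]

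For part (b), fix $n_0 \in \NN$ to be chosen and decompose
\[
P\bigl(\min_{1 \le m \le N_T}(R_m - R_{m-1}) \le \sqrt\vep\bigr) \le P(N_T > n_0) + \sum_{m=1}^{n_0} P(R_m - R_{m-1} \le \sqrt\vep).
\]
By the strong Markov property at $R_{m-1}$, the reactions in $X$ fire at rate $c^* |J(t)|$, and coalescences can only decrease $|J(t)|$ between consecutive reactions, so $|J(t)| \le |J(R_{m-1})|$ on $[R_{m-1}, R_m)$. A standard non-homogeneous Poisson comparison then yields
\[
P(R_m - R_{m-1} \le \sqrt\vep \mid \cF_{R_{m-1}}) \le c^* |J(R_{m-1})| \sqrt\vep.
\]
The deterministic bound $|J(R_{m-1})| \le M+1+(m-1)N_0 \le (M+1)\, m\, N_0$ (using $m, N_0 \ge 1$) and summation in $m$ give $\sum_{m=1}^{n_0} P(R_m - R_{m-1} \le \sqrt\vep) \le c_b (M+1) n_0^2 \sqrt\vep$. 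Combining with (a) and choosing $n_0 = \lceil \vep^{-1/6} \rceil$ balances the two contributions: $P(N_T > n_0) \le (M+1) e^{c_b T} \vep^{1/6}/N_0$ and $c_b (M+1) n_0^2 \sqrt\vep \le c_b (M+1) \vep^{1/6}$, for a total bound of the form $c_{\tref{lem:good1}} e^{c_b T}(M+1) \vep^{1/6}$.

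The $\hat R_m$ contribution is handled identically; if anything it is slightly easier, since by construction $\hat R_m - \hat R_{m-1}$ is, conditional on $\cF_{\hat R_{m-1}}$ and $\{\pi_n\}$, exactly exponential with rate $c^* |\hat J(\hat R_{m-1})|$, and the same deterministic bound $|\hat J(\hat R_{m-1})| \le M+1+(m-1)N_0$ holds because each branching adds at most $N_0$ particles. I do not foresee a genuine obstacle: the only point needing a sentence of justification is the exponential-gap inequality for $X$ under a time-varying rate, which is a direct Poisson thinning argument. The exponent $1/6$ arises naturally as the unique solution of $n_0^{-1} \asymp n_0^2 \sqrt\vep$.
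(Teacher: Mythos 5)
Your proof follows essentially the same route as the paper's: dominate $X$ by $\bar X$, compute $E|\bar J(T)| = (M+1)e^{c_bT}$, control the tail of $N_T$ by Markov, decompose part (b) into a tail term plus a union bound over gaps, bound each gap probability by (rate)$\times\sqrt\vep$, and balance by taking $n\sim\vep^{-1/6}$. Two small remarks. In (a) the stated identity $\bar N_T=(|\bar J(T)|-(M+1))/N_0$ is off by one: the number of reactions of $\bar X$ by time $T$ is $\bar N_T-1$, so $|\bar J(T)|=M+1+(\bar N_T-1)N_0$. With the corrected identity, Markov applied to $E\bar N_T$ gives an extra $+1/n$, which can exceed the target when $N_0>M+1$; the paper avoids this entirely by noting $\{\bar N_T>n\}=\{|\bar J(T)|\ge M+1+nN_0\}$ and applying Markov directly to $|\bar J(T)|$, yielding the claimed bound cleanly. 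In (b) you bound the gaps of $X$ directly via the Poisson comparison (valid since $|J(t)|$ is nonincreasing on $[R_{m-1},R_m)$, so the reaction intensity is at most $c^*|J(R_{m-1})|$), whereas the paper passes to $\bar X$, where the gaps $\bar R_m-\bar R_{m-1}$ are exactly exponential of rate $c^*(M+1+(m-1)N_0)$; both arguments give the same estimate, and your handling of the $\hat R_m$ gaps (exactly exponential given $\{\pi_n\}$) matches the paper's brief remark that the same calculation applies there.
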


\begin{proof} (a) The first inequality follows from the
domination of $X$ by $\bar X$.  For the second one note that
$E(\bar J(T))=(M+1)e^{c_bT}$ and conclude
\begin{align*} 
P(\bar N_T>n)&\le P(|\bar J(T)|\ge M+1+nN_0)\\
&\le (M+1+nN_0)^{-1}(M+1)e^{c_bT}.
\end{align*}

  (b) Let $Z$ be a mean one exponential random variable. The
  domination of $X$ by $\bar X$ shows that for any $n\ge 1$,
  \begin{align*}
    P&\left(\min_{1\le m\le N_T}R_m-R_{m-1}\le \sqrt\vep\right)
    \le P\left(\min_{1\le m\le \bar N_T}\bar R_m-\bar R_{m-1}\le \sqrt\vep\right)\\
    &\le P(\bar N_T>n)+\sum_{m=1}^nP\Bigl(\frac{Z}{M+1+(m-1)N_0}\le \sqrt\vep\Bigr)\\ 
    &\le
    e^{c_bT}(M+1)(nN_0)^{-1}+\sum_{m=1}^n(M+1+(m-1)N_0)\sqrt\vep,
  \end{align*}
  by (a).  Now set $n=\lceil\vep^{-1/6}\rceil$ and note that the sum is at most 
$ (M+1)n \sqrt{\vep} + n^2 N_0 \sqrt{\vep}$.
A similar calculation gives the same upper bound for the $\hat R_m$'s.
\end{proof}

\begin{lem} \label{lem:good4}
There is a constant $c_{\tref{lem:good4}}$ so that for all $T>0$ 
$$
P( Y^*_m > \frac{\vep}{\kappa}\log(1/\vep) \hbox{ for some $m\le N_T$}) 
\le c_{\tref{lem:good4}}e^{c_bT}(M+1) \vep^{1/2}
$$
\end{lem}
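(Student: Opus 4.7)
The plan is to combine the exponential tail bound \eqref{expbd2} on $Y^*$ with the bound on $N_T$ from Lemma \ref{lem:good1}(a), using a union-bound split at a well-chosen cutoff $n$.

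First I would handle the marginal tail. Since $Y_m = (Y^1_m,\dots,Y^{N_0}_m)$ has law $q_\vep$ by \eqref{Ymlaw}, the variable $\vep^{-1}Y^*_m$ is distributed as $Y^* = \max\{|Y^1|,\dots,|Y^{N_0}|\}$ under $q$. Hence by \eqref{expbd2},
\[
P\Bigl(Y^*_m > \tfrac{\vep}{\kappa}\log(1/\vep)\Bigr)
= P\Bigl(Y^* > \tfrac{1}{\kappa}\log(1/\vep)\Bigr) \le C\vep.
\]

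Next, for any $n \in \NN$, split on the size of $N_T$:
\begin{align*}
P\Bigl(\exists\, m\le N_T:\,Y^*_m > \tfrac{\vep}{\kappa}\log(1/\vep)\Bigr)
&\le P(N_T > n) + P\Bigl(\exists\, m\le n:\,Y^*_m > \tfrac{\vep}{\kappa}\log(1/\vep)\Bigr)\\
&\le P(N_T > n) + n\, P\Bigl(Y^*_1 > \tfrac{\vep}{\kappa}\log(1/\vep)\Bigr).
\end{align*}
The second inequality is a union bound, which uses only that each $Y^*_m$ has the same marginal (no independence from $N_T$ needed). By Lemma \ref{lem:good1}(a) the first term is at most $e^{c_bT}(M+1)(nN_0)^{-1}$, and by the computation above the second is at most $nC\vep$.

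Choosing $n=\lceil\vep^{-1/2}\rceil$ balances the two terms: the first becomes $O(e^{c_bT}(M+1)\vep^{1/2})$ and the second $O(\vep^{1/2})$, giving a total bound of the desired form $c_{\ref{lem:good4}}\,e^{c_bT}(M+1)\vep^{1/2}$. There is no real obstacle here; the only thing to notice is that independence of $Y_m$ from $N_T$ is not required, since the decomposition reduces everything to the marginal tail and the absolute bound on $N_T$.
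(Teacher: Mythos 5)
Your proof is correct and is essentially the paper's proof spelled out in more detail: both split on the cutoff $n=\lceil\vep^{-1/2}\rceil$, bound $P(N_T>n)$ via Lemma~\ref{lem:good1}(a), and use a union bound with the marginal tail estimate $P(Y^*_m > \tfrac{\vep}{\kappa}\log(1/\vep))\le C\vep$ coming from \eqref{expbd2} after undoing the $\vep$-scaling. Your remark that independence of the $Y_m$ from $N_T$ is not needed is accurate and is implicit in the paper's argument as well.
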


\begin{proof}
By (a) of Lemma \ref{lem:good1}, $P(N_T>n)\le e^{c_bT}(M+1)(nN_0)^{-1}$.
Using \eqref{expbd2} gives
$$
P( Y^*_m >\frac{\vep}{\kappa} \log(1/ \vep) \hbox{ for some $m\le n$}) \le n C \vep.
$$
Taking $n=\lceil\vep^{-1/2}\rceil $ now gives the desired result. 
\end{proof}

The following facts about random walks will be used frequently.

\begin{lem} \label{lem:notcrowd}
Let $Z_s$ denote a continuous time rate $2$ random walk on $\Z^d$ jumping
with kernel $p$, and starting at $x\in\Z^d$ under $P^{x}$, and $B^\vep$ be our continuous time rescaled copy of $Z$, starting at $z\in\vep\Z^d$ under $P_z$.

\noindent(a) For any $t_0\ge 0$, $r_0\ge 1$, $x\in\Z^d$ and $p\ge 2$,
$$P^x(|Z_s|\le r_0\hbox{ for some }s\ge t_0)\le c_{\ref{lem:notcrowd}}\int_{t_0}^\infty\Bigl[[(|x|-r_0)^+]^{-p}(s^{p/2}\vee s)\Bigr]\wedge \Bigl[(s\vee 1)^{-d/2}r_0^d \Bigr]ds.$$

\noindent (b)
$
\sup_{x}P^{x}(|Z_s|\le \vep^{-1/8}\hbox{ for some $s\ge \vep^{-3/2}$}) \le c_{\ref{lem:notcrowd}} \ep^{3/8}.
$

\noindent (c) For any $z\in\vep\Z^d$, $r_0\ge 1$
$$P_z(|B^\vep_s|\le r_0\vep\hbox{ for some }s\ge 0)\le c_{\ref{lem:notcrowd}} (|z|\vep^{-1})^{-(2/3)(d-2)}r_0^{2(d+1)/3}.$$
\end{lem}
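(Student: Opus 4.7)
The three estimates form a natural chain: (a) is the master bound, and (b) and (c) follow by easy specialization and a rescaling, respectively. For (a), my plan is the standard occupation-time trick. Let $T=\inf\{s\ge t_0:|Z_s|\le r_0\}$; applying the strong Markov property at $T$ gives
\[\int_{t_0}^\infty P^x(|Z_s|\le r_0)\,ds = E^x\!\left[\int_{t_0}^\infty \mathbf{1}_{B(0,r_0)}(Z_s)\,ds\right]\ge P^x(T<\infty)\cdot\inf_{|y|\le r_0}E^y\!\left[\int_0^\infty \mathbf{1}_{B(0,r_0)}(Z_s)\,ds\right].\]
In dimensions $d\ge 3$ the continuous-time Green's function satisfies $G(y,w)\ge c(1+|y-w|)^{2-d}$, and a short sub-region argument shows that even for $y$ on the boundary of $B(0,r_0)$ the sum $\sum_{|w|\le r_0} G(y,w)$ is at least $c r_0^2$: a sub-cylinder of volume $\asymp r_0^d$ inside the shifted ball $B(-y,r_0)$ at distance $\asymp r_0$ from the origin provides enough mass (using $r_0\ge 1$ to pass from volume to lattice-point count). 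Hence
\[P^x(T<\infty)\le c r_0^{-2}\int_{t_0}^\infty P^x(|Z_s|\le r_0)\,ds.\]

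The remaining step is to dominate $P^x(|Z_s|\le r_0)$ by the minimum of two estimates. A local CLT / heat-kernel bound gives $P^x(|Z_s|\le r_0)\le Cr_0^d(s\vee 1)^{-d/2}$. On the other hand, $|Z_s|\le r_0$ forces $|Z_s-x|\ge(|x|-r_0)^+$, so Markov's inequality at order $p$ combined with the moment bound $E^x|Z_s-x|^p\le C_p(s^{p/2}\vee s)$ (a Rosenthal-type estimate, valid because the exponential tail \eqref{expbd1} gives finite moments of all orders for the step distribution) yields $P^x(|Z_s|\le r_0)\le C_p[(|x|-r_0)^+]^{-p}(s^{p/2}\vee s)$. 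Inserting the pointwise minimum of the two bounds into the previous display and absorbing $r_0^{-2}\le 1$ into the constants produces (a).

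Part (b) is then immediate: take $t_0=\vep^{-3/2}$ and $r_0=\vep^{-1/8}$ in (a) and, since the resulting bound must be uniform in $x$, drop the $x$-dependent term of the minimum. The remaining integral equals $Cr_0^d\int_{\vep^{-3/2}}^\infty s^{-d/2}\,ds = C\vep^{(5d-12)/8}$, which is at most $C\vep^{3/8}$ for every $d\ge 3$ (equality at $d=3$). For (c), set $Z'_u:=B^\vep_{2\vep^2 u}/\vep$; matching Poisson rates shows that $Z'$ under $P_z$ is distributed as $Z$ under $P^{z/\vep}$, so the LHS of (c) equals $P^{z/\vep}(|Z_u|\le r_0\text{ for some }u\ge 0)$. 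If $|z|/\vep\le 2r_0$, then the right side of (c) is already bounded below by $cr_0^2\ge c$ (by a quick exponent check), so there is nothing to prove. Otherwise $|z|/\vep-r_0\ge |z|/(2\vep)$, and applying (a) with $t_0=0$ and $p=2d$ makes the two terms in the minimum balance at $s_\star=(|z|/\vep-r_0)^{2p/(p+d)}r_0^{2d/(p+d)}$; both halves of the integral are of order $r_0^d s_\star^{1-d/2}$, which with $p=2d$ simplifies via the identities $d(p+2)/(p+d)=2(d+1)/3$ and $p(d-2)/(p+d)=2(d-2)/3$ to the claimed $Cr_0^{2(d+1)/3}(|z|/\vep)^{-(2/3)(d-2)}$.

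The main technical point I expect to need care with is the uniform lower bound on $\sum_{|w|\le r_0} G(y,w)$ for $y$ near the boundary of $B(0,r_0)$, where the usual central-$y$ Green's function calculation does not directly apply; once this and the Rosenthal moment bound are pinned down, the rest is routine random-walk estimation together with the algebraic optimization that singles out $p=2d$ in part (c).
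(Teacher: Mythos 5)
Your proof is correct and the overall strategy (occupation-time argument to dominate the hitting probability by $\int_{t_0}^\infty P^x(|Z_s|\le r_0)\,ds$, then combine an LCLT bound with a moment bound, and finally specialize/optimize for (b) and (c)) is the same as the paper's. The one genuine divergence is in how the occupation-time step is executed. You stop at the ball-entry time $T$ and lower bound $\inf_{|y|\le r_0}\sum_{|w|\le r_0}G(y,w)\gtrsim r_0^2$, which requires the pointwise Green's function lower bound $G(x)\gtrsim (1+|x|)^{2-d}$ and a geometric sub-region argument (the point you yourself flag as needing care), and then you throw away the resulting extra factor $r_0^{-2}$. The paper instead applies the strong Markov property at $T(t_0,y)$, the first visit to an individual site $y$ after $t_0$, giving $\int_{t_0}^\infty P^x(Z_s=y)\,ds = G\,P^x(T(t_0,y)<\infty)$, and then sums this identity over $|y|\le r_0$; a union bound over $y$ yields the inequality with the constant $G^{-1}=G(0,0)^{-1}$. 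That version needs only transience (finiteness of $G(0,0)$) and no lower bounds on the Green's function, so it is technically lighter; it also never produces the stronger $r_0^{-2}$ prefactor that your argument earns and then discards. The rest — Rosenthal-type moment estimate for $P^x(|Z_s|\le r_0)\le C_p[(|x|-r_0)^+]^{-p}(s^{p/2}\vee s)$, LCLT estimate $\le C r_0^d(s\vee 1)^{-d/2}$, the exponent count in (b), the time-change $Z'_u=B^\vep_{2\vep^2 u}/\vep$, the reduction to $|z|/\vep\ge 2r_0$, and the optimization with $p=2d$ and $s_\star\asymp (|z|/\vep)^{4/3}r_0^{2/3}$ — agrees with the paper's computation.
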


\begin{proof}  (a)
Use $T(t_0,y)\le \infty$ to denote the time of the first
visit of $Z$ to $y$ after time $t_0$, and let
$$
G=\int_0^\infty P^0(Z_s=0)\,ds
$$ 
be the expected time at 0 (which is finite since $d\ge 3$). Then
  \begin{align*}
    \int_{t_0}^\infty P^{x}(Z_s=y) \, ds
    &=E^{x}\Bigl(1\{T(t_0,y)(\omega)<\infty\}
\int_{T(t_0,y)(\omega)}^\infty P^y(Z_{s-T(t_0,y)(\omega)}=y) \, ds\Bigr)\\
    &= GP^{x}(T(t_0,y)<\infty).
  \end{align*}
Summing over $|y|\le r_0$ for $r_0\ge 1$ and rearranging, we get
  \begin{align}
    \nonumber P^{x}(|Z_s|\le r_0 &\hbox{ for some }s\ge t_0) 
\le G^{-1}\sum_{|y|\le r_0}\int_{t_0}^\infty P^{x}(Z_s=y)\, ds\\ 
&=G^{-1}\int_{t_0}^\infty P^x(|Z_s|\le r_0)ds.
 \label {retbnd}
\end{align}
 A martingale square function inequality shows that for $p\ge 2$,  
\beq\label{martineq}
  P^x(|Z_s|\le r_0)\le P^0(|Z_s|\ge (|x|-r_0)^+)\le c((|x|-r_0)^+)^{-p})(s^{p/2}\vee s).
\eeq
A local central limit theorem (see, e.g. (A.7) in \cite{CDP}) 
shows that 
\beq\label{uniflclt}
P^x(|Z_s|\le r_0)\le c (s\vee1)^{-d/2}r_0^d.
\eeq
Use the above two inequalities to bound the integrand in \eqref{retbnd} and derive (a).
\medskip

\noindent(b)
Set $r_0=\vep^{-1/8}$ and $t_0=\vep^{-3/2}$ in (a) and use only the second term in the infimum inside the integral. The right-hand side is $c \vep^{-(d/8)-(3/2)+ (3d/4)}$.
To complete the proof we note that exponent is smallest when $d=3$.
\medskip

\noindent (c) We may assume without loss of generality that $r_0\le |z|\vep^{-1}/2=M/2$ (or the bound is trivial) and so $t_1=M^{4/3}r_0^{2/3}\ge 1$.  Apply (a) with $p=2d$ and break the integral at $t_1$ to see that the probability in (c) is 
\begin{align*}
P^{z\vep^{-1}}(|Z_s|\le r_0\hbox{ for some }s\ge 0)
&\le c\Bigl[\int_0^{t_1}M^{-2d}(s^d\vee s)ds+\int_{t_1}^\infty s^{-d/2}r_0^d ds\Bigr]\\
&\le c(M^{-2d}t_1^{d+1}+t_1^{1-(d/2)}r_0^d)\\
&\le cM^{-(2/3)(d-2)} r_0^{2(d+1)/3}.
\end{align*}
\end{proof}

\begin{lem} \label{lem:good2}
$P(\tau_m<R_m\hbox{ or }\hat \tau_m<\hat R_m\hbox{ for some }1\le m\le N_T)$\hfil\break
$\phantom{P(\tau_m<R_m\hbox{ or }\hat \tau_m<\hat R_m}\le c_{\tref{lem:good2}}e^{c_bT}(M+1)^2\vep^{3/32}$.
\end{lem}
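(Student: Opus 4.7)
The plan is to combine a truncation on the total number of reaction times with a union bound over particle pairs, using the ``no-return'' estimate Lemma~\ref{lem:notcrowd}(b). The key numerical observation is that waiting $\sqrt{\vep}$ of rescaled time amounts to $\vep^{-3/2}$ units of walk time for the rate-$\vep^{-2}$ walks, which is precisely the burn-in needed for transience in $d\ge 3$ to drive pair collisions to be rare.

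First I would truncate to $\{N_T\le n\}$ for $n:=\lceil \vep^{-3/32}\rceil$, contributing $c\,e^{c_bT}(M+1)\vep^{3/32}$ via Lemma~\ref{lem:good1}(a). For each $m\le n$ I bound $P(\tau_m<R_m)$ by conditioning on $\cF_{R_{m-1}}$ and applying a union bound over the at most $(M+1+(m-1)N_0)^2$ ordered pairs $(i,j)$, $i\ne j$, in $J(R_{m-1})$. On $[R_{m-1},R_m)$, the particles $X^i$ and $X^j$ move as independent copies of $B^\vep$ until they coalesce, and coalescence only helps us, since once $i$ and $j$ are identified the pair is no longer counted in $J(s)$. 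Hence the rescaled difference $(X^i_{R_{m-1}+u}-X^j_{R_{m-1}+u})/\vep$ may be dominated by a rate-$2\vep^{-2}$ walk on $\Zd$ with kernel $p$, which equals the rate-$2$ walk $Z$ of Lemma~\ref{lem:notcrowd} at time $u/\vep^2$. The condition $|X^i_s-X^j_s|\le\vep^{7/8}$ for some $s\ge R_{m-1}+\sqrt\vep$ translates to $|Z_t|\le\vep^{-1/8}$ for some $t\ge\vep^{-3/2}$, which Lemma~\ref{lem:notcrowd}(b) bounds by $c_{\ref{lem:notcrowd}}\vep^{3/8}$ uniformly in the initial configuration.

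The identical bound applies to $P(\hat\tau_m<\hat R_m)$: between $\hat R_{m-1}$ and $\hat R_m$ the particles of $\hat X$ also move as independent copies of $B^\vep$, and newborns simply start on top of their parent $\hat X^{\hat\mu_m}_{\hat R_{m-1}}$---a deterministic starting configuration absorbed by the $\sup_x$ in Lemma~\ref{lem:notcrowd}(b). No truncation of $\hat N_T$ is needed, since summing the pair bounds over $m=1,\dots,n$ suffices once we are on $\{N_T\le n\}$. Assembling, and using $|J(R_{m-1})|\le M+1+(m-1)N_0$ a.s.,
\begin{align*}
P(\text{event})&\le P(N_T>n)+2\sum_{m=1}^n(M+1+(m-1)N_0)^2\, c_{\ref{lem:notcrowd}}\vep^{3/8}\\
&\le c\,e^{c_bT}(M+1)\vep^{3/32}+c\,n(M+1+nN_0)^2\vep^{3/8}.
\end{align*}
With $n=\lceil\vep^{-3/32}\rceil$ and $(M+1+nN_0)^2\le 2(M+1)^2+2N_0^2 n^2$, the second term decomposes as $c(M+1)^2\vep^{9/32}+cN_0^2\vep^{3/32}$, both absorbed into $c(M+1)^2\vep^{3/32}$.

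The only real obstacle is choosing the truncation exponent correctly: the two competing error sources are $P(N_T>n)\sim(M+1)/n$ and $n\cdot(nN_0)^2\vep^{3/8}$, which balance precisely when $n\sim\vep^{-3/32}$, producing the asserted rate $\vep^{3/32}$. The nontrivial probabilistic input is already done---it is the transience estimate of Lemma~\ref{lem:notcrowd}(b)---so the work here is essentially bookkeeping of exponents.
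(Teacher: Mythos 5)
Your proposal is correct and follows essentially the same route as the paper: condition on $\cF_{R_{m-1}}$, union-bound over the at most $(M+1+(m-1)N_0)^2$ pairs in $J(R_{m-1})$, apply Lemma~\ref{lem:notcrowd}(b) to get $c\vep^{3/8}$ per pair (the $\sqrt\vep$ burn-in becoming $\vep^{-3/2}$ walk time), control $N_T$ via Lemma~\ref{lem:good1}(a), and optimize the truncation level at $n\sim\vep^{-3/32}$. The paper takes $n=\lceil\vep^{-3/32}/N_0\rceil$ and bounds the sum via the integral estimate $\sum(a+(m-1)b)^2\le ca^2(nb)^3$, but this is only a cosmetic difference from your $n=\lceil\vep^{-3/32}\rceil$ bookkeeping.
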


\begin{proof}
To bound $P(\tau_m<R_m\hbox{ for some }1\le m\le N_T)$, we start with 
  \begin{align*}
    P(\tau_m<R_m|\cF_{R_{m-1}})
    &\le P(R_m>R_{m-1}+\sqrt\vep,\ \exists i\neq j\hbox{ both in } J(R_{m-1}+\sqrt\vep),\hbox{ s.t. }\\
    &\phantom{\le P(R_m}\inf_{\sqrt\vep+R_{m-1}\le s\le
      R_m}|X_s^i-X_s^j|\le\vep^{7/8}|\cF_{R_{m-1}}).
  \end{align*}
Now $i\neq j\hbox{ both in } J(R_{m-1}+\sqrt\vep)$ and $R_m>R_{m-1}+\sqrt\vep$ imply $i,j\in J(R_{m-1})$ and
$X^i_s\neq X^j_s$ for all $s\in[R_{m-1},R_{m-1}+\sqrt\vep]$. Therefore, the above is at most
  \begin{align}
    \label {ctbnd}& \sum_{i\neq j\in J(R_{m-1})} P(X^i_s-X^j_s\neq 0,\ \forall s\in[R_{m-1},R_{m-1}+\sqrt\vep],\\
    \nonumber&\phantom{\le \sum_{i\neq j\in J(R_{m-1})} P(}
    |X^i_s-X^j_s|\le \vep^{7/8}\ \exists s\ge R_{m-1}+\sqrt\vep|\cF_{R_{m-1}}).
  \end{align}
If $Z$ as in Lemma \ref{lem:notcrowd}, we may use (b) of that result to bound the above by
  \begin{align*}
    & |J(R_{m-1})|^2 \, \sup_{z_0\neq 0}P^{z_0}(|Z_s|\le \vep^{-1/8}\ \exists s\ge \vep^{-3/2})\\
    &\le (M+1+(m-1)N_0)^2 \cdot c \vep^{3/8}
\end{align*}
Using Lemma \ref{lem:good1}(a), we conclude
  \begin{align*}
    &P(\tau_m<R_m\hbox{ for some }1\le m\le N_T)\\
    &\le e^{c_bT}(M+1)(nN_0)^{-1}+\sum_{m=1}^n(M+1+(m-1)N_0)^2 c\vep^{3/8}.
\end{align*}
To bound the sum we note that for $a,b\ge 1$,
$$
\sum_{m=1}^n(a+(m-1)b)^2 \le \int_0^n (a+xb)^2 \, dx = \frac{1}{3b} [(a+nb)^3 - a^3] \le ca^2(nb)^3.
$$
Taking $n=\lceil \vep^{-3/32}/N_0 \rceil$ gives the desired bound. A similar calculation (in fact there is some simplification) gives the same upper bound for $$P(\hat \tau_m<\hat R_m\hbox{ for some }1\le m\le N_T.)$$
\end{proof}

\begin{lem}\label{lem:good3} 
$$
P(\min_{1\le m\le N_T}\tau'_{m+1}-R_{m}\le \sqrt\vep) \le c_{\tref{lem:good3}}e^{c_bT}(M+1)^2\vep^{1/40}.
$$
\end{lem}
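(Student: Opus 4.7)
The plan is to follow the template of the proof of Lemma~\ref{lem:good2}: restrict to a good event on which the active particles are well-separated at each reaction time, then union-bound over the pairs of particles whose collision would trigger $\tau'_{m+1}$, and finally sum over $m$ up to a carefully chosen cap on $N_T$.

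Concretely, take $n=\lceil \vep^{-1/40}\rceil$ and work on the event
\[
G = \{N_T\le n\}\cap\bigcap_{m=1}^{N_T}\bigl\{R_m-R_{m-1}>\sqrt\vep,\ \tau_m>R_m,\ Y^*_m<(\vep/\kappa)\log(1/\vep)\bigr\}.
\]
By Lemma~\ref{lem:good1}(a), $P(N_T>n)\le c(M+1)e^{c_bT}\vep^{1/40}$, and combining this with Lemmas~\ref{lem:good1}(b), \ref{lem:good2} and \ref{lem:good4} gives $P(G^c)\le c(M+1)^2 e^{c_bT}\vep^{1/40}$. On $G$ the condition $\tau_m>R_m$ together with $R_m>R_{m-1}+\sqrt\vep$ forces $|X^i_{R_m-}-X^j_{R_m-}|>\vep^{7/8}$ for every $i\ne j$ in $J(R_m-)$, while $Y^*_m<\vep^{7/8}/2$ (using \eqref{epskappa}) places each of the $N_0$ newborn children within $\vep^{7/8}/2$ of its parent. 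By the triangle inequality, every pair $(i,j)$ appearing in the definition of $\tau'_{m+1}$ then satisfies $|X^i_{R_m}-X^j_{R_m}|>\vep^{7/8}/2$.

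For each such pair I would upper-bound $P(X^i_s=X^j_s\text{ for some }s\in[R_m,R_m+\sqrt\vep]\mid\cF_{R_m})$ by the same probability for two free (non-coalescing) independent copies of $B^\vep$ started at $X^i_{R_m}$ and $X^j_{R_m}$, which in turn is bounded by the probability that the rescaled difference walk $Z$ on $\Z^d$ ever visits the unit ball around the origin. Applying Lemma~\ref{lem:notcrowd}(a) with $t_0=0$, $r_0=1$ and $p=12$, the resulting integral evaluates (in the worst case $d=3$) to $c|z|^{-p(d-2)/(p+d)}=c|z|^{-4/5}\le c\vep^{1/10}$, since $|z|>\vep^{-1/8}/2$ in rescaled units. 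Summing over the $O((M+1+mN_0)^2)$ relevant pairs for each $m\le n$ contributes at most $C(M+1)^2n^3\vep^{1/10}=C(M+1)^2\vep^{-3/40+1/10}=C(M+1)^2\vep^{1/40}$. Combining with the bound on $P(G^c)$ yields the advertised estimate.

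The main obstacle is producing a per-pair collision estimate that is sharp enough: a direct application of the stated Lemma~\ref{lem:notcrowd}(c) only gives $c\vep^{1/12}$, which when balanced against $P(N_T>n)$ caps the overall exponent at $1/48<1/40$. The fix is to bypass (c) and return to Lemma~\ref{lem:notcrowd}(a) with $p=12$ (rather than the $p=2d=6$ implicit in the derivation of (c)), exploiting that the walk steps have all moments; this raises the effective Green-function exponent $p(d-2)/(p+d)$ from $2/3$ to $4/5$ in the critical dimension $d=3$, just enough to close the accounting.
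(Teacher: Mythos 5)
Your proof is correct and follows the same overall template as the paper's: restrict to a good event securing $\vep^{7/8}/2$ separation among the relevant pairs at time $R_m$, union-bound over pairs, and sum over $m$ with the cap $n=\lceil\vep^{-1/40}\rceil$. The one genuine difference is the per-pair collision estimate. The paper invokes the exact escape probability from potential theory, $P^{z_0}(T_0<\infty)\le c|z_0|^{-1}$ (P26.2 of Spitzer, projecting to three coordinates when $d>3$), which with $|z_0|\ge\vep^{-1/8}/2$ gives $\vep^{1/8}$ per pair; after its somewhat wasteful double sum $\sum_{k\le n}\sum_{m\le k}$ this yields $n^4\vep^{1/8}=\vep^{1/40}$. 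You instead rerun the derivation of Lemma~\ref{lem:notcrowd}(c) from (a) but with $p=12$ rather than $p=2d$, which sharpens the Green-function exponent from $\tfrac{2}{3}(d-2)$ to $\tfrac{p(d-2)}{p+d}\ge\tfrac45$, giving only $\vep^{1/10}$ per pair but then coupling it with the tighter single sum $\sum_{m\le n}$ (order $n^3$) to recover $\vep^{1/40}$. This trade is legitimate: the moment bound in the proof of Lemma~\ref{lem:notcrowd}(a) holds for every $p\ge 2$ thanks to the exponential tails in \eqref{expbd1}, with a constant depending on $p$ but not on $\vep$; fixing $p=12$ is fine. Note that as $p\to\infty$ your exponent $\tfrac{p(d-2)}{8(p+d)}\to\tfrac{d-2}{8}$, recovering the paper's $1/8$ in $d=3$, so the Spitzer bound is precisely the limiting version of your estimate. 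Your version is more self-contained (it avoids the reference to Spitzer and the ``project down'' step) at the cost of a slightly coarser per-pair rate; both land on the advertised $\vep^{1/40}$.
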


\begin{proof} Define $S_m\supset G_m$ as $G_m$ (in Lemma~\ref{GmdefOK}) but without the lower bounds on $\wedge_{i=2}^{m+1}\tau_i'-R_{i-1}$ or $\wedge_{i=1}^m\hat R_i-\hat R_{i-1}$. Note that $S_m\in\cF_{R_m}$ and
if $\omega\in S_m$, then
  \begin{equation}\label{sep}
    |X^i_{R_m}-X^j_{R_m}|\ge \vep^{7/8}\hbox{ for all distinct }i,j\in J(R_m-).
  \end{equation}
In addition, since $Y^*_m \le\frac{\vep}{\kappa} \log(1/\vep)$ we have that for all $i\in J(R_m-)-\{\mu_m\}$, $j\in J(R_m)-J(R_m-)$
\beq\label{sep2}
|X^i_{R_m}-X^j_{R_m}|\ge \vep^{7/8}- \frac{\vep}{\kappa} \log(1/\vep) \ge \vep^{7/8}/2
\eeq
since $\vep<\vep_1(\kappa)$ (recall \eqref{epskappa}).

If $T_0$ is the return time to zero of the random walk $Z$
in Lemma \ref{lem:notcrowd}, we have (see P 26.2 in \cite{Spi}
for $d=3$ and project down for $d>3$)
\begin{equation}\label{pot}
P^{z_0}(T_0<\infty)\le c|z_0|^{-1}.
\end{equation}
Use \eqref{sep}, \eqref{sep2}, and \eqref{pot} with scaling, and the bound
$$
|J(R_m-)|\le M+1+(m-1)N_0
$$ 
to see that on $S_m\in\cF_{R_m}$,
\begin{align}
    \nonumber P&\left(\tau'_{m+1}-R_m\le \sqrt\vep|\cF_{R_m}\right)\\
    \nonumber &\le c[(M+1+(m-1)N_0)^2\vep^{1/8}+(M+1+(m-1)N_0)N_0\vep^{1/8}]\\
    \label{condbnd}&\le c(M+1)^2m^2N_0^2\vep^{1/8}.
\end{align}
Using the bound in Lemma \ref{lem:good1}(a), we conclude
  \begin{align*}
    P&\left(\min_{1\le m\le N_T} \tau'_{m+1}-R_m\le \sqrt\vep, S_{N_T} \right)\\
    &\le P(N_T>n)+\sum_{k=1}^nP(N_T=k,S_k,\min_{1\le m\le k}\tau'_{m+1}-R_m\le \sqrt\vep)\\
    &\le c_{\tref{lem:good1}} e^{c_bT}(M+1)(nN_0)^{-1}
       +\sum_{k=1}^n\sum_{m=1}^{k}P(S_m,\, \tau'_{m+1}-R_m\le \sqrt \vep).
\end{align*}
Using \eqref{condbnd} now, the above sum is at most
$$
  cN_0^2(M+1)^2n\vep^{1/8}\sum_{m=1}^{n}m^2  \le c (M+1)^2 n^4 \ep^{1/8}.
$$
Take $n=\lceil\vep^{-1/40}\rceil$ and use Lemmas \ref{lem:good1}, \ref{lem:good4}, and \ref{lem:good2} to bound
$P(S^c_{N_T})$ to get the desired result.
\end{proof}

\subsection{When nothing bad happens, $(X,\zeta)$ and $(\hat X,\hat\zeta)$ are close} 

The next result gives a concrete bound on the difference between $X$ and $\hat X$ and
deals with the final interval $[R_m,R_m \wedge T]$. Let
$$\bar G_m=G_m\cap \{\hat T\ge \hat R_m\},\ m\in\NN,$$
and for $0<\beta\le 1/2$, define 
\begin{align}
\tilde G^\beta_{T} = \bar G_{N_{T}} & \cap\{\sup_{s\le T}\sup_{j\in\hat J(s)} |X^j_s-\hat X^j_s|\le \vep^{1/6}\}
\nonumber \\
& \cap \{ T \notin \cup_{m=0}^{N_{T}-1}[R_m,R_m+2\vep^\beta]\}
\label{def:tildeG}.
\end{align}
Allowing smaller $\beta$ values will be useful in Sections \ref{sec:hydroproofs} and \ref{lockill}, but for now the reader may take $\beta=1/2$.

\begin{lem}\label{lem:tildeG} There is a $c_{\tref{lem:tildeG}}$ and $\vep_{\tref{lem:tildeG}}(\kappa)>0$ so that for any $T \ge 2\vep^\beta$, $0<\vep<\vep_{\tref{lem:tildeG}}(\kappa)$, 
$$
P(({\tilde G}^\beta_T)^c) \le c_{\tref{lem:tildeG}}e^{c_bT}(M+1)^2\vep^{\frac{1}{40}\wedge \frac{\beta}{3}}.
$$
On $\tilde G^\beta_{T}$ we have $\hat J(s)=J(s)$ for all $s\in[T-\vep^\beta,T]$, and $|\hat X^i_{T}-\hat X^j_{T}|\ge \vep^{7/8}$ for all $i\neq
j$ in $\hat J(T)$.
\end{lem}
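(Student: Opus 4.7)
The plan is to decompose $(\tilde G^\beta_T)^c$ into the three ``bad'' events in \eqref{def:tildeG} and bound them separately.  First, $\bar G_{N_T}^c = G_{N_T}^c$, because Lemma~\ref{GmdefOK}(b) already gives $G_m \subset \{\hat R_m = R_m < \hat T\}$; summing the estimates from Lemmas~\ref{lem:good1}(b), \ref{lem:good4}, \ref{lem:good2} and \ref{lem:good3} then yields $c\,e^{c_bT}(M+1)^2\vep^{1/40}$, with $1/40$ as the smallest exponent.  Second, for the window event $\{T \in \cup_{m=0}^{N_T-1}[R_m, R_m+2\vep^\beta]\}$, I would dominate the reaction-time point process by that of the branching random walk $\bar X$: the instantaneous branching rate is bounded by $c^* E|\bar J(s)| \le c^*(M+1)e^{c_bT}$, so the expected number of $R_m$ in $[T-2\vep^\beta,T]$ is at most $2\vep^\beta c^*(M+1)e^{c_bT}$, and Markov's inequality gives a bound $c(M+1)e^{c_bT}\vep^\beta$, which is at least as good as $\vep^{\beta/3}$.

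The main step is the spatial-closeness bound $\{\sup_{s\le T}\sup_{j\in\hat J(s)}|X^j_s - \hat X^j_s| > \vep^{1/6}\}\cap G_{N_T}$.  Set $n = \lceil \vep^{-1/40}\rceil$; Lemma~\ref{lem:good1}(a) handles $\{N_T > n\}$ with probability $\le e^{c_bT}(M+1)\vep^{1/40}/N_0$.  On $G_{N_T} \cap \{N_T \le n\}$, Lemma~\ref{lem:Xclose} dominates the supremum by $n\vep\log(1/\vep)/\kappa$, which is $o(\vep^{1/6})$ for small $\vep$, plus the sum over $l < n$ of the maximum displacement of both $X$-particles and $\hat X$-particles in $\hat J(R_l)$ over the interval $[R_l, R_l+\sqrt\vep]$.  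Each such displacement involves at most $M+1+nN_0$ independent copies of $B^\vep$ run for time $\sqrt\vep$; Doob's $L^2$-inequality together with $E[|B^\vep_{\sqrt\vep}|^2]\le \sigma^2\sqrt\vep$ yields $P(\sup_{s\le\sqrt\vep}|B^\vep_s-B^\vep_0|>\delta)\le 4\sigma^2\sqrt\vep/\delta^2$.  Taking $\delta = \vep^{1/6}/(4n)$ and union-bounding over at most $2n(M+1+nN_0)$ (interval, particle, process) triples produces a total probability of order $C(M+1)n^4\vep^{1/6} \le C(M+1)^2\vep^{1/15}\le C(M+1)^2\vep^{1/40}$.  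Combining with the $N_T>n$ term gives the desired bound for this event.

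The main obstacle is the exponent-balancing in the previous paragraph: the union bound over the $O(n)$ successive branching intervals and the $O(n(M+1))$ particles within each interval produces an $n^4$ blowup, which has to be absorbed by the $\vep^{1/6}$ polynomial gain coming from the random-walk moment bound, all while keeping $P(N_T>n)$ polynomially small.  The choice $n=\vep^{-1/40}$ meets all three constraints simultaneously; higher-moment BDG bounds could be substituted in if a better exponent were needed elsewhere.

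Finally, for the deterministic assertions on $\tilde G^\beta_T$: the condition $T \notin [R_{N_T-1}, R_{N_T-1}+2\vep^\beta]$ together with $2\vep^\beta > \sqrt\vep$ (valid for $\beta \le 1/2$ and $\vep$ small) forces $T \in [R_{N_T-1}+\sqrt\vep, R_{N_T})$.  By Lemma~\ref{lem:hatagree}, property \eqref{Jeq} at $i = N_T$ gives $\hat J(s) = J(s)$ on $[R_{N_T-1}+\sqrt\vep, R_{N_T})$, which contains $[T-\vep^\beta, T]$.  On this interval the coupling of Section~\ref{ssec:XhatXcoupling} makes $\hat X^j$ and $X^j$ trace identical paths for all $j \in \hat J(s) = J(s)$, so the separation condition $\tau_{N_T}\ge R_{N_T}$ (a defining event of $G_{N_T}$) transfers directly to give $|\hat X^i_T - \hat X^j_T| > \vep^{7/8}$ for all distinct $i,j\in\hat J(T)$.
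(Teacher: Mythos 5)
Your probability estimates are essentially fine (and in one place better than the paper's), but the deterministic separation claim at the end contains a genuine gap.

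For the window event $\{T\in\cup_m[R_m,R_m+2\vep^\beta]\}$, your direct Markov computation — integrating the compensator $c^*E|\bar J(s)|\le c^*(M+1)e^{c_b s}$ over a window of length $2\vep^\beta$ — is cleaner than the paper's and gives the sharper bound $O((M+1)e^{c_bT}\vep^\beta)$, versus the paper's $O(\vep^{\beta/3})$, which the paper obtains via a two-term decomposition over $\{\bar N_T>\ell\}$ and the individual exponential spacings and then balances with $\ell=\lceil\vep^{-\beta/3}\rceil$. Either suffices here. Your treatment of the spatial-closeness term matches the paper's almost exactly (same choice $n=\lceil\vep^{-1/40}\rceil$, same Doob $L^2$ argument).

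The gap is in your argument for $|\hat X^i_T-\hat X^j_T|\ge\vep^{7/8}$. You claim that since the coupling makes $\hat X^j$ and $X^j$ ``trace identical paths'' on $[R_{N_T-1}+\sqrt\vep,R_{N_T})$, the $X$-separation $\tau_{N_T}\ge R_{N_T}$ transfers directly. This is wrong: the coupling equates only the \emph{increments} of $\hat X^j$ and $X^j$ on intervals of the form $(R_m+\sqrt\vep,R_{m+1}]$. On the preceding intervals $[R_m,R_m+\sqrt\vep]$ the two processes move \emph{independently}, so by time $T$ each particle $j$ carries an accumulated discrepancy $\hat X^j_T-X^j_T$ that is in general nonzero and varies with $j$. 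Hence $|\hat X^i_T-\hat X^j_T|$ is not equal to $|X^i_T-X^j_T|$, and it does not follow from $|X^i_T-X^j_T|>\vep^{7/8}$. A triangle-inequality patch via the spatial-closeness bound also fails: on $\tilde G^\beta_T$ the discrepancies are only controlled by $\vep^{1/6}$, and since $1/6<7/8$ this swamps the $\vep^{7/8}$ scale. The correct route is the one used implicitly in the paper: one needs the separate bad event $\{\hat\tau_m<\hat R_m\text{ for some }m\le N_T\}$ (already bounded in Lemma~\ref{lem:good2}, which explicitly handles $\hat\tau_m$ as well as $\tau_m$), and then the separation of $\hat X$-particles at time $T\in[\hat R_{N_T-1}+\sqrt\vep,\hat R_{N_T})$ follows from $\hat\tau_{N_T}\ge\hat R_{N_T}$ by the very definition of $\hat\tau_{N_T}$, with no need to relate $\hat X$ back to $X$. (The paper's exposition is itself slightly terse about including the $\hat\tau_m$ condition in the bad-event list, but Lemma~\ref{lem:good2} is tailored for exactly this purpose.) Your proof omits any control of $\hat\tau_m$, so as written the separation assertion does not follow.
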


\begin{proof}  Dependence on $\beta$ will be suppressed.  For $s$ as above, Lemma \ref{lem:hatagree} implies $\hat J(s)=J(s)$ on $\tilde G_T$ since $s\in[R_{N(T)-1}+\sqrt\vep,R_{N(T)})$ on $\tilde G_T$. The last assertion of the Lemma holds on $\tilde G_T$ because on $\tilde G_T$, $\hat\tau_{N(T)}\ge \hat R_{N(T)}$ and $$T\in[R_{N(T)-1}+\sqrt\vep,R_{N(T)})=[\hat R_{N(T)-1}+\sqrt\vep,\hat R_{N(T)}).$$

Lemmas~\ref{lem:good1}, \ref{lem:good4}, \ref{lem:good2}, and \ref{lem:good3} imply
\begin{equation}\label{GTbnd} 
P({\bar G}^c_{N_T})\le c e^{c_b T}(M+1)^2\vep^{1/40}.
\end{equation}
To deal with the first additional good event in $\tilde G_T$, we note that by Lemma \ref{lem:Xclose}
\begin{align*}
P &(G_{N_T},\sup_{s\le T}\sup_{j\in\hat J(s)}|X^j_s-\hat X^j_s|>\vep^{1/6}) \le P(N_T>n)\\
&+P\Bigl((n-1)\frac{\vep}{\kappa}\log(1/\vep) +\sum_{i=0}^{n-1} \sup_{j\in\hat J(R_i)}\sup_{s\in[R_i,R_i+\sqrt\vep]} 
|\hat X^j_s-\hat X^j_{R_i}|+|X^j_s-X^j_{R_i}|>\vep^{1/6}\Bigr)
\end{align*}
By (a) in Lemma~\ref{lem:good1} the first term is at most $ e^{c_bT}(M+1)(nN_0)^{-1}$. If
\beq
(n-1)\frac{\vep}{\kappa}\log(1/\vep) < \vep^{1/6}/2,
\label{ncond}
\eeq
then it enough to bound 
\begin{align*}    
&P\left(\sum_{i=0}^{n-1}\sup_{j\in J(R_i)} \sup_{s\in[R_i,R_i+\sqrt\vep]} 
|\hat X^j_s-\hat X^j_{R_i}|+|X^j_s-X^j_{R_i}|>\frac{\vep^{1/6} }{2}\right) \\
&\quad \le \sum_{i=0}^{n-1}(M+1+iN_0)2P\Bigl(\sup_{s\le \se}|B_s^\vep|>\frac{\vep^{1/6}}{ 4n}\Bigr)
\le c(M+1)n^2N_0 \cdot n^2\vep^{-2/6}\vep^{1/2},
\end{align*}
by the $L^2$ maximal inequality for martingales. If $n=\lceil\vep^{-1/40}\rceil$ (so that \eqref{ncond}
holds for $\vep<\vep_{{\tref{lem:tildeG}}}(\kappa)$) the above gives
\begin{equation}\label{tGbnd} 
P\left( {\bar G}_{N_T},\ \sup_{s\le T} \sup_{j\in\hat J(s)} |X_s^j-\hat X_s^j|>\vep^{1/6} \right)
\le ce^{c_{b}T}(M+1)\vep^{1/15}.
\end{equation}

The domination of $X$ by $\bar X$ ensures that
$$
\cup_{m=0}^{N_T-1}[R_m,R_m+2\vep^\beta]\subset
\cup_{m=0}^{\bar N_T-1}[\bar R_m,\bar R_m+2\vep^\beta].
$$
Therefore (recall $T>2\vep^\beta$) for any $\ell\in\NN$
\begin{align*}
P&(T \in\cup_{m=0}^{N_{T}-1}[R_m,R_m+2\vep^\beta])\\
    &\le P(\bar N_T>\ell ) + P( T \in\cup_{m=1}^{\ell-1}[\bar R_m,\bar R_m+2\vep^\beta]).
\end{align*}
Lemma~\ref{lem:good1}(a) shows that the first term is at most $e^{c_bT}(M+1)(\ell N_0)^{-1}$. Conditional on ${\cF}_{\bar R_{m-1}}$, 
$\bar R_m-\bar R_{m-1}$ is an exponential random variable with rate $(M+1+(m-1)N_0)c^*$, so the second term is at most
\begin{align*}
& E \left( \sum_{m=1}^{\ell} P(T-2\vep^\beta-\bar R_{m-1}\le\bar R_m-\bar R_{m-1}\le T-\bar R_{m-1}|
 {\cF}_{\bar R_{m-1}}) \right)\\
& \le 2\vep^\beta  \sum_{m=1}^\ell ((M+1+(m-1)N_0)c^*) \le c e^{c_bT} (M+1) \ell^2 \ep^\beta.
\end{align*}
Taking $\ell=\lceil\vep^{-\beta/3}\rceil$ then using \eqref{GTbnd} and \eqref{tGbnd}  
gives the desired bound on $P(\tilde G_T^c)$.
\end{proof}

The next ingredient required for the convergence theorem is:
  
\begin{lem}\label{lem:compeq}Assume $T>2\vep^\beta$, $t_0\in[0,\vep^\beta]$,  and $\omega\in\tilde G^\beta_T$. If
$\hat\zeta_{t_0}(j)=\zeta_{t_0}(j)$ for all $j\in \hat J(T-t_0)$, then
$\hat\zeta_T(i)=\zeta_T(i)$, $i=0,\dots, M$. In particular if $\hat\zeta _{t_0}(j)=\xi_{t_0}(X_{T-t_0}^j)$ for $j\in J(T-t_0)$, then $\hat\zeta_T(i)=\xi_T(z_i)$ for $i=0,\dots,M$.
\end{lem}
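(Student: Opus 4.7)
The plan is to prove by forward induction on $r \in [t_0, T]$ that on $\tilde G^\beta_T$ one has $\zeta_r(k) = \hat\zeta_r(k)$ for every $k$ in the common tracked range $\{0, \dots, K((T-r)-)\}$; evaluating at $r = T$ then yields $\hat\zeta_T(i) = \zeta_T(i)$ for $i = 0, \dots, M$, and the ``in particular'' clause follows by combining this with the duality identity \eqref{dualityeq}. First I record the structural consequences of $\tilde G^\beta_T$: by Lemma~\ref{lem:hatagree}, $R_m = \hat R_m$ and $\mu_m = \hat\mu_m$ for every $1 \le m \le N_T - 1$, so the update times of $\zeta$ and $\hat\zeta$ in $(t_0, T)$ coincide, and $\hat J(s) = J(s)$ for $s \in [R_{N_T-1} + \sqrt\vep, R_{N_T})$ by \eqref{Jeq}. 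Because $T \notin [R_{N_T-1}, R_{N_T-1} + 2\vep^\beta]$ and $t_0 \le \vep^\beta$, one has $T - t_0 > R_{N_T-1} + \vep^\beta \ge R_{N_T-1} + \sqrt\vep$ (using $\beta \le 1/2$ and $\vep$ small), so $T - t_0$ lies in the post-coalescence phase following the last birth and in particular $\hat J(T-t_0) = J(T-t_0)$.

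The heart of the argument is the matching of equivalence relations on $\tilde G^\beta_T$: the $X$-equivalence $\sim_{T-t_0}$ on $\{0, \dots, K((T-t_0)-)\}$ coincides with $\approx_{T-t_0}$, and for each $1 \le m \le N_T - 1$ the restriction of $\sim_{R_m}$ to $\{0, \dots, M + (m-1)N_0\}$ agrees with the restriction of $\approx_{\hat R_m}$ to the same set. To see this, note that the conditions packaged into $\tilde G^\beta_T$ (via $\bar G_{N_T}$: $R_m - R_{m-1} > \sqrt\vep$, $\tau_m \ge R_m$, $\tau'_{m+1} - R_m > \sqrt\vep$, and $Y^*_m \le \vep^{7/8}/2$) force every coalescence of $X$ on $[0, T-t_0]$ to fall either in the initial window $[0, \sqrt\vep]$ or in some post-birth window $[R_m, R_m + \sqrt\vep]$, and these are exactly the coalescing-walk structures whose partitions define $\pi_0 = \pi_{0,z}(\sqrt\vep)$ and $\pi_m = \pi_{R_m, \gamma_m}(\sqrt\vep)$ in \eqref{pimdef}. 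For the restriction to old indices, $\pi_m$ only links newborn indices $M + (m-1)N_0 + j$ to $\hat\mu_m$ and to each other, so it does not change the equivalence classes among $\{0, \dots, M + (m-1)N_0\}$ compared with $\approx_{\hat R_{m-1}}$; symmetrically, the only coalescences present in $\sim_{R_m}$ but not in $\sim_{R_m-}$ are between newborns and $\mu_m$ (or among newborns), so restriction to old indices kills these new relations as well, and the two restricted equivalences coincide.

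Armed with this matching, the induction is routine. At $r = t_0$ the hypothesis gives $\zeta_{t_0}(j) = \hat\zeta_{t_0}(j)$ for $j \in J(T-t_0) = \hat J(T-t_0)$, and both processes extend to $\{0, \dots, K((T-t_0)-)\}$ by $\sim_{T-t_0}$, respectively $\approx_{T-t_0}$, which agree; hence $\zeta_{t_0} = \hat\zeta_{t_0}$ on the full tracked range. At each update time $r = T - R_m$, processed in order of decreasing $m$, the inductive hypothesis gives $V_m = \hat V_m$ by evaluation at the newborn indices $M + (m-1)N_0 + j$, equality of the pre-flip values at $\mu_m = \hat\mu_m$, and $U_m = \hat U_m$ by construction (see \eqref{Umind}); since the flip rules \eqref{mu-flip} and \eqref{mu-flip-hat} use the same function of the same argument, this yields the same post-flip value at $\mu_m$. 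Propagation to the old indices $\{0,\dots,M+(m-1)N_0\}\setminus\{\mu_m\}$ agrees by the equivalence matching just established, so $\zeta_{T-R_m} = \hat\zeta_{T-R_m}$ on the (shrunken) new active range. Between updates both processes are constant, and at $r = T$ one obtains $\zeta_T(i) = \hat\zeta_T(i)$ for $i = 0, \dots, M$. For the ``in particular'' statement, the hypothesis on $\hat\zeta_{t_0}$ together with $\zeta_{t_0} = \hat\zeta_{t_0}$ on $\hat J(T-t_0)$ permits application of \eqref{dualityeq}, which yields $\zeta_T(i) = \xi_T(z_i)$, and the preceding equality transfers this to $\hat\zeta_T(i)$.

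The crux is the equivalence-matching in the second paragraph; once one has the correct bookkeeping of where and when coalescences of $X$ are permitted under $\tilde G^\beta_T$ and checks that the instantaneous ($s = R_m$) birth coalescences cannot contaminate the old-index classes, the rest is a direct induction driven by $U_m = \hat U_m$ and $\mu_m = \hat\mu_m$.
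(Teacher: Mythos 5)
Your proof is correct and follows essentially the same route as the paper's: you identify the same key obstacle (matching the equivalence relations $\sim_t$ and $\approx_t$ used in the two extension/propagation steps), invoke the same input (under $\tilde G^\beta_T$ all coalescences of $X$ fall in windows $[R_m,R_m+\sqrt\vep]$ where they are exactly what $\pi_m=\pi_{R_m,\gamma_m}(\sqrt\vep)$ records), and then run the same forward-in-time induction driven by $\mu_m=\hat\mu_m$ and $U_m=\hat U_m$.
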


\begin{rem} By Lemma~\ref{lem:tildeG}, $J(T-t_0)=\hat
  J(T-t_0)$ on $\tilde G^{\beta}_T$, and so
  all the necessary inputs required for both
  computations are prescribed in the above result.
\end{rem}
\begin{proof} 
The last statement is immediate from the first and \eqref{dualityeq} with $r=T$. 

By the definition of $G_{N_T}\supset \tilde G^\beta_T$ and Lemma~\ref{lem:hatagree} there is a unique $n<N_T$ so that 
\beq \label{mgap}R_n+\sqrt\vep\le T-\vep^\beta\le t-t_0< T<R_{n+1}
\eeq 
and
\begin{equation} \hat R_m=R_m\hbox{ and }\hat \mu_m=\mu_m\hbox{ for }m\le n+1,\ \hat K(s)=K(s)\hbox{ for }s\in[0,T].
\end{equation}
As was noted in Section~\ref{ssec:hatXcomp} the inductive
definitions of $\zeta$ and $\hat \zeta$ are identical except
the latter has hats on the relevant variables and uses
$\approx_t$ in place of $\sim_t$.  The above shows that in
our current setting the relevant variables are the same with
or without hats (recall we are using $\hat U_n=U_n$ in our
coupled construction of $\hat \zeta$) and so it remains to
show the equivalence relations are the same and we do this
now for the initial extensions.  That is, we extended $\hat
\zeta_{t_0}$ to $\{0,\dots, K(T-t_0)\}$ by
$\hat\zeta_{t_0}(k)=\hat\zeta_{t_0}(j)$ if
$k\approx_{T-t_0}j\in \hat J(T-t_0)=J(T-t_0)$ (see the above
Remark) and extended $\zeta_{t_0}$ in the same way but if
$k\sim j\in J(T-t_0)$ which means $X^j_{T-t_0}=X^k_{T-t_0}$,
and so we now show these equivalencies are the same and
hence so are the extensions. Note that in applying
\eqref{approxdef} to extend $\hat \zeta_{t_0}$ we are using
$\pi_m=\pi_{R_m,\gamma_m}(\sqrt\vep)$ for $m>0$ and
$\pi_0=\pi_{0,z}(\sqrt\vep)$.  This means two indices $j,k$
in a family which has branched at time $\hat R_m=R_m$, $0\le
m\le n$ (if $m=0$ this means two initial indices) are
equivalent (in the $\approx$ sense) at time $T-t_0$ if their
corresponding $X$ paths coalesce by time $R_m+\sqrt\vep$.
Lemma~\ref{lem:hatagree} implies that on $\tilde G^\beta_T$
there are no coalescing events in $[0,T-t_0]$ (in fact on
$[0,T]$) except for those in $[R_m,R_m+\sqrt\vep]$,
involving a common family born at $R_m$, for $m\le
n$. Therefore, the above condition is equivalent to
$X^j_{T-t_0}=X^k_{T-t_0}$ and the required result is proved.

The Lemma now follows easily by induction up the tree of $X$.  In place of the above we must show equivalence of the equivalencies used in \eqref{zetadef-2} and \eqref{hatzetadef} at times $R_m$. Note here that for the indices of interest in \eqref{zetadef-2} and \eqref{hatzetadef} this is equivalent to the corresponding equivalencies at times $R_{m-1}+\sqrt\vep$ and this follows as above for $m\ge 1$. 
\end{proof}

\subsection{The branching Brownian motion and 
  computation process} 
We now define a branching Brownian motion $\hat X^0$
starting at $x\in\Rd$ with paths in $\cD$.  Let $\{\pi^0_n,n\ge 1\}$ be an iid sequence of
partitions with law $\nu_0$ (defined in the second paragraph of Section~\ref{ssec:brw}).   Particles
in $\hat X^0$ branch at rate $c^*$ and at the $n$th branching time, $|\pi_n^0|-1$ particles are born at the location of the parent who also remains alive. After birth,
particles in $\hat X^0$ move as independent Brownian motions
in $\R^d$ with variance parameter $\sigma^2$. 
To couple $\hat X^0$ with the branching random walk $\hat X^\vep$ from Section~\ref{ssec:brw} we need two preliminary lemmas which allow us to couple the corresponding particle motions and offspring numbers, respectively, of the two branching processes. 

\begin{lem}\label{brcoup} 
We may define our scaled random walk $B^\vep$ and a
$d$-dimensional Brownian motion $B$ with variance
$\sigma^2$, starting at $0$, on the same space so that for some constant $c_{\tref{brcoup}}$
$$
P\left(\sup_{t\le T} |B^\vep_t-B_t|\ge \se\right) \le c_{\tref{brcoup}}T \vep.
$$
\end{lem}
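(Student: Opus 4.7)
My plan is to realize the scaled walk as $B^\vep_t = \vep S_{N(t)}$, where $S_n = X_1+\dots+X_n$ is the embedded discrete-time random walk with i.i.d.\ $p$-distributed steps $X_i$ and $N$ is an independent Poisson process of rate $\vep^{-2}$. Then I will define the target Brownian motion by $B_t := \vep W_{\vep^{-2}t}$, where $W$ is a $d$-dimensional Brownian motion with covariance $\sigma^2 I$ coupled to $(S_n)$ via a strong invariance principle. With this rescaling $B$ is BM of covariance $\sigma^2 t I$ as required.

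The coupling of $(S_n)$ with $W$ comes from the multidimensional Koml\'os--Major--Tusn\'ady strong approximation (e.g.\ in the form of Einmahl or Zaitsev), which applies since \eqref{expbd1} gives the necessary exponential moments. This produces $(S_n)$ and $W$ on a common probability space satisfying
\[P\Bigl(\max_{n\le M}|S_n - W_n| \ge K_1\log M + y\Bigr) \le K_2 e^{-K_3 y}, \qquad y\ge 0,\]
for constants $K_i$ depending only on $p$ and $d$. I would then decompose
\[B^\vep_t - B_t = \vep\bigl(S_{N(t)} - W_{N(t)}\bigr) \;+\; \vep\bigl(W_{N(t)} - W_{\vep^{-2}t}\bigr)\]
and bound the two pieces separately.

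For the first term, on the event $\{N(T)\le M\}$ with $M=\lceil 2T\vep^{-2}\rceil$ (whose complement has probability exponentially small in $T\vep^{-2}$ by the Poisson Chernoff bound), the KMT estimate above with $y = \vep^{-1/2}/(2K_1)$ gives that $\vep\sup_{t\le T}|S_{N(t)}-W_{N(t)}|$ exceeds $\vep^{1/2}/2$ with probability at most $K_2 \exp(-K_3\vep^{-1/2}/(2K_1))$, a quantity much smaller than $T\vep$ for small $\vep$, since $\vep K_1\log M=O(\vep\log(1/\vep))=o(\sqrt\vep)$.

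The second term is the delicate one, and is where the main obstacle lies. Since $N_t - \vep^{-2}t$ is a martingale with predictable quadratic variation $\vep^{-2}t$, a Bennett/Bernstein concentration bound gives
\[P\Bigl(\sup_{t\le T}|N(t) - \vep^{-2}t| > A\Bigr) \le 2\exp\bigl(-c A^2\vep^2/T\bigr) \quad \text{for } A\lesssim T\vep^{-2},\]
while a L\'evy-type modulus of continuity estimate for $W$ on $[0,M]$ at increment scale $A$ controls $\sup_{t\le T}|W_{N(t)}-W_{\vep^{-2}t}|$ by $\sqrt{C A\log(M/A)}$ with exponentially decaying tail. One then tunes $A$ so that the Poisson tail and the Brownian-modulus tail are each $O(T\vep)$ and the modulus, multiplied by $\vep$, is at most $\sqrt\vep/2$; the tight exponential inputs from both the Poisson process (using the Laplace transform of $N$) and the Brownian motion (reflection principle and a union bound over a dyadic grid of length $M$) are essential in squeezing out the logarithmic factors that a naive application of either tool would leave behind. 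Combining this with the first-term estimate yields the claimed probability bound $c_{\tref{brcoup}}T\vep$.
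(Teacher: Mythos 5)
Your approach differs substantially from the paper's, and unfortunately the difference introduces a genuine obstruction. You couple the embedded discrete-time walk $S_n$ with a Brownian motion $W$ at integer times, and then pick up a separate ``time-change'' error $\vep\,|W_{N(t)}-W_{\vep^{-2}t}|$ from the discrepancy between the Poisson clock $N(t)$ and its mean. The paper avoids this term altogether by treating the time-$1$ increment of the continuous-time walk, $\sum_{i=1}^{N(1)}X_i$, as the basic step of a \emph{discrete-time} random walk, so the Poisson randomness is absorbed into the i.i.d.\ steps. One then applies a weighted approximation theorem (Cs\"org\H{o}--Horv\'ath with $H(x)=x^6$, giving a polynomial tail $P(\sup_{s\le S}|B^1_s-B'_s|\ge r)\le cSr^{-6}$, not KMT) directly to that step distribution; the only remaining cost is the interpolation within unit intervals, where the compound Poisson path and the Brownian path both have exponential tails, so the union bound over $S\sim\vep^{-2}T$ intervals is negligible, and $r=\vep^{-1/2}$, $S=\vep^{-2}T$ yields $cS r^{-6}=cT\vep$ exactly.

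The tuning of $A$ you describe cannot actually be done, so the second term does not close. To make $P(\sup_{t\le T}|N(t)-\vep^{-2}t|>A)\le c\,T\vep$ you need $A\gtrsim\vep^{-1}\sqrt{T\log(1/\vep)}$. With $M\sim 2\vep^{-2}T$, the Brownian modulus at spread $A$ with tail $O(T\vep)$ is of order $\sqrt{A\log(M/(A T\vep))}\gtrsim\sqrt{A\log(1/\vep)}$. Multiplying by $\vep$ gives
\[
\vep\sqrt{A\log(1/\vep)}\gtrsim\vep\sqrt{\vep^{-1}\sqrt{T\log(1/\vep)}\cdot\log(1/\vep)}=\sqrt{\vep}\,T^{1/4}\bigl(\log(1/\vep)\bigr)^{3/4},
\]
which exceeds the target threshold $\sqrt{\vep}/2$ by a diverging logarithmic factor. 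The constraints pull in opposite directions: any $A$ small enough that $\vep\sqrt{A\log(M/A)}\le\sqrt{\vep}/2$ makes the Poisson tail $\gg T\vep$. The extra logarithms are not an artifact of loose constants but arise structurally from requiring both a uniform bound over $t\le T$ and a tail of size $T\vep$; they cannot be squeezed out by sharper exponential inequalities. The fix is to adopt the paper's reparametrization so that there is no separate time-change error to estimate.
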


\begin{proof} Apply Theorem 2.3(i) of Chapter 1 of \cite{CH}
with $H(x)=x^6$, to see we may define the unscaled random
walk $B^1$ (rate $1$, step distribution $p$) and a
Brownian motion as above, $B'$ on the same space so that
for all $S>0$ and $r\ge 1$
\begin{equation}\label{CH}
P\left(\sup_{s\le S}|B^1_s-B'_s|\ge r\right)\le cS r^{-6}.
\end{equation}
Although the above reference applies to discrete time
random walks, we apply it to the step distribution
$\sum_{i=1}^{N(1)} X_i$, where $\{X_i\}$ are iid
$p(\cdot)$ and $N(1)$ is an independent Poisson$(1)$
random variable.  We arrive at the above after a short
interpolation calculation for $B^1$.

To get the desired result from \eqref{CH} we set $B^\vep_t=\vep B^1_{\vep^{-2}t}$, 
$B_t=\vep\tilde B'_{\vep^{-2}t}$ and use $r=\vep^{-1/2}$ to conclude that
\begin{align*}
P(\sup_{t\le T} |B^\vep_t-B_t|\ge \se)
& \le P(\sup_{t\le T} |\vep B^1_{\vep^{-2}t}- \vep B'_{\vep^{-2} t}|\ge \se) \\ 
&\le P(\sup_{t\le \vep^{-2}T} |B^1_t-B'_t|\ge \vep^{-1/2}) \\
&\le c\vep^{-2}T \vep^3= cT\vep
\end{align*}
which proves the desired result.
\end{proof}

\begin{lem}\label{pincoupl}
For each $\vep>0$ we may construct the sequence
$\{\pi^0_n : n \ge 1 \}$ on the same space as
$\{\pi_n^\vep : n \ge 1 \}$ so that
$$
P(\pi_n^\vep\neq \pi_n^0)\le c_{\tref{pincoupl}} \vep^{3/4}.
$$
\end{lem}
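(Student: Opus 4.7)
The natural coupling is to build $\pi^{\vep}$ and $\pi^{0}$ from one and the same realisation of the coalescing system. Draw $(Y^{1},\dots,Y^{N_{0}})$ with law $q$ (and set $Y^{0}=0$), and on the same probability space realise the rate-one coalescing random walks $\{\hat B^{Y^{i}}:0\le i\le N_{0}\}$ with step distribution $p$, for instance via the Harris graphical construction (independent rate-one Poisson clocks at each site of $\Z^{d}$, with jumps drawn from $p$). Let $\pi^{0}$ be the partition of $\{0,\dots,N_{0}\}$ under ``$i\sim j$ iff $\hat B^{Y^{i}}_{t}=\hat B^{Y^{j}}_{t}$ for some $t\ge 0$'' and $\pi^{\vep}$ the partition under ``$i\sim j$ iff $\hat B^{Y^{i}}_{\vep^{-3/2}}=\hat B^{Y^{j}}_{\vep^{-3/2}}$''. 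By the definitions preceding \eqref{nuconv}, $\pi^{0}$ has law $\nu_{0}$ and $\pi^{\vep}$ has law $\nu_{\vep}$. Applying this construction independently for each $n\in\NN$ produces the required joint sequence, so it suffices to bound $P(\pi^{\vep}\neq\pi^{0})$ for a single copy.

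Since coalescences in the system are permanent, $\pi^{\vep}$ refines $\pi^{0}$, and the two partitions differ if and only if some ordered pair $i\neq j$ coalesces strictly after time $\vep^{-3/2}$. Hence a union bound gives
\[
P(\pi^{\vep}\neq\pi^{0})\le \sum_{i\neq j} P\bigl(\exists\, t>\vep^{-3/2}:\ \hat B^{Y^{i}}_{t}=\hat B^{Y^{j}}_{t}\bigr),
\]
where the sum has at most $\binom{N_{0}+1}{2}$ terms. For a fixed pair $i\neq j$, the difference $D^{ij}=\hat B^{Y^{i}}-\hat B^{Y^{j}}$, run until the two walks coalesce with each other, evolves as a rate-two random walk on $\Z^{d}$ with step distribution $p$ and initial position $Y^{i}-Y^{j}$. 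The only subtle point is that intermediate coalescences with third walks do not disturb this structure: in the graphical representation, each individual trajectory is driven by an independent collection of site-based Poisson clocks, so the joint motion of the two walks of interest remains that of two independent rate-one walks until they meet each other.

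Thus, writing $Z$ for a rate-two random walk with step $p$ under $P^{x}$, the pairwise probability is bounded by $P^{Y^{i}-Y^{j}}(Z_{t}=0\text{ for some }t\ge\vep^{-3/2})$, which in turn is bounded, \emph{uniformly} in the starting point, by the Green function identity used in the proof of Lemma~\ref{lem:notcrowd}(a): with $G=\int_{0}^{\infty}P^{0}(Z_{s}=0)\,ds<\infty$,
\[
P^{x}\bigl(Z_{t}=0\ \exists\, t\ge t_{0}\bigr)=G^{-1}\int_{t_{0}}^{\infty}P^{x}(Z_{s}=0)\,ds\le c\int_{t_{0}}^{\infty} s^{-d/2}\,ds,
\]
where the uniform local CLT bound \eqref{uniflclt} (with $r_{0}=1$) was used in the last step. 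Taking $t_{0}=\vep^{-3/2}$ and $d\ge 3$ yields at most $c\,\vep^{3(d-2)/4}\le c\,\vep^{3/4}$, which, summed over the constantly-many pairs, gives the asserted bound. The calculation is essentially routine once the coupling is set up; the only mildly delicate point, as noted above, is justifying that $\hat B^{Y^{i}}-\hat B^{Y^{j}}$ really is a rate-two $p$-walk inside the coalescing system, and this is built into the graphical construction.
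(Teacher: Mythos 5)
Your proposal is correct and takes essentially the same approach as the paper: couple $\pi^\vep_n$ and $\pi^0_n$ via one common realisation of the coalescing system, note that a discrepancy forces some pair to coalesce only after time $\vep^{-3/2}$, and apply the Green-function return estimate of Lemma~\ref{lem:notcrowd} (via \eqref{retbnd} and \eqref{uniflclt}) to the rate-two difference walk. Your write-up is in fact more explicit than the paper's (which silently absorbs the union bound over pairs into the constant) and you correctly flag and resolve the subtle point about third-party coalescences not disturbing the pairwise difference dynamics. One small slip worth correcting: in your displayed union bound, the event $\{\exists\,t>\vep^{-3/2}:\hat B^{Y^i}_t=\hat B^{Y^j}_t\}$ is, by permanence of coalescence, the same as $\{\tau_{ij}<\infty\}$ and is therefore \emph{not} dominated by $P^{Y^i-Y^j}(Z_t=0\text{ for some }t\ge\vep^{-3/2})$; the event you actually want (and describe in words just before the display) is $\{\vep^{-3/2}<\tau_{ij}<\infty\}$, for which the intended domination holds because the first zero of the unabsorbed rate-two walk then occurs strictly after $\vep^{-3/2}$. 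With that substitution the rest of the computation is exactly right.
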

\begin{proof} The obvious way to couple $\pi^\vep_n$ and $\pi_n^0$ is to use the same system of rate one coalescing random walks $\{\hat B^{Y^i}:i=0,\dots,N_0\}$.  If $Z$ is as in Lemma~\ref{lem:notcrowd}, 
then by \eqref{retbnd} and \eqref{uniflclt}
\begin{align*} 
P(\pi_n^\vep\neq\pi^0_n)&\le\sup_x P^x(Z_s=0\hbox{ for some }s\ge \vep^{-3/2})\\
&\le c(\vep^{-3/2})^{-1/2}=c\vep^{3/4}.
\end{align*}
\end{proof}

Let $x_\vep\in\vep\Zd$ for $\vep>0$ and assume $x_\vep\to
x\in\Rd$. Our goal now is a joint construction of $(\hat
X^\vep,\hat X^0)$ started from $(x_\vep,x)$, and associated computation processes
$(\hat\zeta^\vep,\hat\zeta^0)$ with the property that if
$\hat\zeta^\vep,\hat\zeta^0$ have the same inputs then they
will have the same outputs with probability close to one.

The branching random walk $\hat X^\vep$ starting with a
single particle at $x_\vep$, along with the associated index
sets $\hat J^\vep(\cdot)$, branch times $\{\hat R^\vep_m\}$,
and parent variables $\hat \mu^\vep_m\}$, are constructed as
in Section~\ref{ssec:brw} using the sequence
$\{\pi^\vep_m\}$ in Lemma~\ref{pincoupl}.  There is no
initial coalescing step now as we are starting with a single
particle.  We use the coupled sequence $\{\pi_n^0:n\ge 1\}$
to define the offspring numbers, branching times
$\{R_n^0:n\ge 1\}$, index sets $J^0(\cdot)$ and parent
variables $\{\mu_n^0:n\ge 1\}$ with the same conditional
laws (given $\{\pi^0_m\}$) as in the definition of $\hat
X^\vep$.  We may couple these two constructions so that for
all $n\in \Z_+$, on the set
\[G_n^{0,\vep}=\{\pi^0_m=\pi^\vep_m\hbox{ for all }0\le m<n\},\]
we have
\begin{equation}\label{rematch} \hat R^\vep_m=R^0_m, \hat
  \mu^\vep_m=\mu^0_m,\hbox{ and }J^0(s)=\hat J^\vep(s)
  \hbox{ for all }s<R^0_m,\hbox{ for all }m\le n.
\end{equation} 
Define $N^0_t=\inf\{m:R^0_m>t\}$.  Using these sequences we follow the
prescription in Section~\ref{ssec:brw} for constructing
$\hat X$ but substituting Brownian motion paths for random
walk paths. Couple these random walks
and  Brownian motions as in
Lemma~\ref{brcoup} at least as long as the branching
structure of the two are the same.  Note that if there are $n$ branching 
events up to time $T$ there are at most $1+nN_0$ independent random walk segments 
and Brownian motions of length at most $T$ to couple  (recall our labeling scheme
from Section ~\ref{ssec:brw}).  In addition to the errors in Lemma~\ref{brcoup}
there will be a small error from the difference in initial
positions at time $0$, and so we get
\begin{align}\label{ZXmatch}
  &P(G^{0,\vep}_{N^0_T}, \sup_{s\le T}\sup_{j\in J^0(s)}
  |\hat X^{0,j}_s-\hat X^{\vep,j}_s|\ge |x_\vep-x|+\se)\\
  \nonumber&\le
  P(N^0_T>n)+(1+nN_0)c_{\tref{brcoup}}T\vep+c_{\ref{pincoupl}}n\vep^{3/4}.
\end{align}

The first time $\pi_n^\vep\neq \pi_n^0$ we declare the coupling a failure
and complete the definition of $\hat X^0_t$ for $t\ge  R_n^0$ using random variables independent of $\hat X^\vep$. 

Fix $T>0$ and $t_0\in[0,T)$. Given $\hat X^0_t, J^0(t), 0\le t\le T$,
the sequences $\{\pi^0_n\}$, $\{R^0_n\}$, $\{\mu^0_n\}$ an independent sequence of iid uniform $[0,1]$ random variables $\{U^0_m\}$,
and initial inputs $\{\zeta_{t_0}(j):j\in J^0(T-t_0)\}$, we define a computation process
$\hat\zeta^0_t, t_0\le t\le T$. The definition is analogous
to that of $\hat\zeta_t$ given in
Subsection~\ref{ssec:hatXcomp} for $\hat X^\vep$ started at a
single point, but we use $g_{1-i}$ in place of  $g^\vep_{1-i}$ in \eqref{mu-flip-hat}. 
That is, as in \eqref{hatVdefn}, \eqref{mu-flip-hat}, we have 
\[V_m^{0,j}=\hat\zeta^0_{(T-R^0_m)-}((m-1)N_0+j), \ j=1,\dots,N_0,\]
and if $i=\hat \zeta^0_{(T-R^0_m)-}(\mu^0_m)$, we have 
\begin{equation}\label {mu-flip-0}\hat\zeta^0_{(T-\hat R_m)}(\mu^0_m)=\begin{cases} 1-i &\text{ if } U^0_m \le g_{1-i}(\hat V^0_m)/c^*  \\
                                      i &\text{ otherwise.}
\end{cases}
\end{equation}
We further couple $\hat\zeta^0$ and $\hat\zeta^\vep$ by
using the same sequence of independent uniforms:
$\{U_m^0\}=\{U_m\}$ in their inductive definitions.  Just as
in \eqref{Umind} we can show that this sequence is
independent of all the other variables used to define $\hat
X^0$ and $\hat\zeta^0$, as required. We let $\hat\F^0_t$
denote the right-continuous filtration generated by $\hat
X^0$, $\hat X^\vep$ and $\hat A^0(t)=((R_m^0, \mu_m^0,
\pi_m^0,U_m)1(R_m^0\le t))_{m\in\\N}$ as well as its
counterpart for $\hat X^\vep$.

\bigskip

\noindent{\bf Notation.} $\tilde G^{0,\vep}_T=G_{N^0_T}^{0,\vep}\cap\{\sup_{s\le T}\sup_{j\in J^0(s)}|\hat X^{0,j}_s-\hat X^{\vep,j}_s|\le |x_\vep-x|+\sqrt\vep\}$,\\
$\bar G^{0,\vep}_T=\tilde G^{0,\vep}_T\cap\Bigr\{U_m\notin \Bigl[\frac{g_i(\xi)\wedge g_i^\vep(\xi)}{c^*}, \frac{g_i(\xi)\vee g_i^\vep(\xi)}{c^*}\Bigr]\hbox{ for all }\xi\in\{0,1\}^{N_0}, m<N^0_T, i=0,1\Bigl\}$.

\begin{lem} \label{poscoupl}
(a) On $\tilde G^{0,\vep}_T$, we have 
$$\hat R^\vep_m=R^0_m, \hat\mu^\vep_m=\mu^0_m, \pi^\vep_m=\pi^0_m,\hbox{ for all }m\le N^0_T, \hbox{ and } \hat J^\vep(s)=J^0(s)\hbox{ for all }s\le T.$$

\noindent (b) $P((\tilde G^{0,\vep}_T)^c)\le c_{\ref{poscoupl}}e^{c_bT}\vep^{3/8}$.

\noindent (c) On $\bar G_T^{0,\vep}$ we also have for any $t_0\in[0,T)$, if $\hat\zeta^0_{t_0}(j)=\hat\zeta^\vep_{t_0}(j)$ for all $j\in J^0(T-t_0)$, then $\hat\zeta^0_T(0)=\hat\zeta^\vep_T(0)$.

\noindent (d) $P((\bar G^{0,\vep}_T)^c)\le c_{\ref{poscoupl}}e^{c_bT}\Bigl[\vep^{3/8}+\sqrt{\sum_{i=0}^1 \Vert g_i^\vep-g_i\Vert_\infty}\Bigr]$. 
\end{lem}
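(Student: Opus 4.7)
The proof divides naturally into the four parts, each leaning on the coupled construction of $(\hat X^\vep,\hat X^0)$ and $(\hat\zeta^\vep,\hat\zeta^0)$ already in place. Part~(a) I would handle by a short induction on $m\le N^0_T$: on $G^{0,\vep}_{N^0_T}$ the partitions $\pi^\vep_m=\pi^0_m$ agree for $m<N^0_T$, so the cardinalities $|\hat J^\vep(\hat R^\vep_{m-1})|$ and $|J^0(R^0_{m-1})|$ coincide. Since the next branch times for both processes are exponential with rate $c^*$ times this common cardinality, and each parent index is uniform on the common index set, we may couple these to agree, giving $\hat R^\vep_m=R^0_m$ and $\hat\mu^\vep_m=\mu^0_m$. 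The identity $\hat J^\vep(s)=J^0(s)$ for $s\le T$ then follows from the recursion \eqref{hatJn} together with $R^0_{N^0_T}>T$.

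For part~(b), I would begin from \eqref{ZXmatch}, absorbing $|x_\vep-x|\le\se$ for $\vep$ small. The $M=0$ case of Lemma~\ref{lem:good1}(a) gives $P(N^0_T>n)\le Ce^{c_bT}/n$, so
\[P((\tilde G^{0,\vep}_T)^c)\le \frac{Ce^{c_bT}}{n}+cTn\vep+cn\vep^{3/4}.\]
The choice $n=\lceil\vep^{-3/8}\rceil$ balances the first and last terms and delivers the claimed $\vep^{3/8}$ rate.

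For part~(c), part~(a) already guarantees that on $\tilde G^{0,\vep}_T$ the branching structures, parent indices, and the induced equivalence relations $\approx_t$ coincide; and the coupling has set $U^0_m=U_m$. The only remaining potential source of disagreement between $\hat\zeta^\vep$ and $\hat\zeta^0$ is the flip decisions in \eqref{mu-flip-hat} and \eqref{mu-flip-0}, which compare $U_m$ to the thresholds $g^\vep_{1-i}(V_m)/c^*$ and $g_{1-i}(V_m)/c^*$, respectively. The extra event built into $\bar G^{0,\vep}_T$ precisely rules out any $U_m$ (with $m<N^0_T$) separating these two thresholds for any $\xi\in\{0,1\}^{N_0}$ and $i\in\{0,1\}$, so a routine induction on the reaction times then yields $\hat\zeta^0_T(0)=\hat\zeta^\vep_T(0)$.

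For part~(d), I only need to add the probability of the additional bad event to the bound from (b). By construction the $U_m$'s are uniform and independent of the entire branching structure of both $\hat X^\vep$ and $\hat X^0$ (just as in \eqref{Umind}), so for each $m$
\[P\bigl(U_m\in [g_i(\xi)\wedge g^\vep_i(\xi),\,g_i(\xi)\vee g^\vep_i(\xi)]/c^*\text{ for some }\xi,i\mid\text{structure}\bigr)\le \frac{2^{N_0+1}}{c^*}\Delta,\]
where $\Delta\equiv \sum_{i=0}^1\Vert g^\vep_i-g_i\Vert_\infty$. Summing over $m\le n$ and again using $P(N^0_T>n)\le Ce^{c_bT}/n$ gives a bound $Ce^{c_bT}(1/n+n\Delta)$; the choice $n=\lceil 1/\sqrt{\Delta}\rceil$ produces the $\sqrt{\Delta}$ contribution, and adding (b) completes the estimate. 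The main technical care is verifying that the $U_m$ independence indeed survives the joint construction with the Brownian branching process, which is straightforward since the $U_m$'s are taken to be a separate independent sequence from the start.
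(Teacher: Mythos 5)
Your proposal is correct and follows essentially the same route as the paper's proof for all four parts. The only stylistic difference is in part (a): the paper simply cites the coupling identity \eqref{rematch} (which was built into the joint construction of $\hat X^\vep$ and $\hat X^0$ in Section~2.8) together with the definition of $\tilde G^{0,\vep}_T$, whereas you re-derive that coupling from scratch by induction on $m$ — matching cardinalities of the index sets via $\pi^\vep_m=\pi^0_m$ and then coupling the exponential waiting times and uniform parent choices. That is exactly the justification for \eqref{rematch}, so your argument is sound, just slightly redundant relative to the paper's setup. Parts (b), (c), (d) — using \eqref{ZXmatch} and $P(N^0_T>n)\lesssim e^{c_bT}/n$ with $n=\lceil\vep^{-3/8}\rceil$, the induction on reaction times comparing the $g^\vep$ and $g$ thresholds against $U_m$, and the union bound over $m\le n$ followed by the optimization $n=\lceil\Delta^{-1/2}\rceil$ — all match the paper's argument step for step, with the same dominant error terms.
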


\begin{proof} (a) is immediate from \eqref{rematch} and the definition of $\tilde G_T^{0,\vep}$.

\noindent (b) follows from \eqref{ZXmatch} and the now familiar bound $P(N^0_T>n)\le \frac{e^{c_bT}}{nN_0}$, by setting $n=\lceil \vep^{-3/8}\rceil$.

\noindent (c) On $\bar G_T^{0,\vep}$, we see from (a) and the inductive definitions of $\hat\zeta^0$ and  $\hat\zeta^\vep$, that all the variables used to define $\hat\zeta^0_T(0)$ and $\hat\zeta^\vep_T(0)$ coincide.  Therefore these outputs can only differ due to the use of $g_{i-1}$ in \eqref{mu-flip-0} and the use of $g^\vep_{i-1}$ in \eqref{mu-flip-hat}.  By induction we may assume $\hat V_m=V^0_m$ and the additional condition defining $\bar G^{0,\vep}_T$ now ensures that these two steps produce the same outputs. 

\noindent (d) The additional condition defining $\bar G_T^{0,\vep}$ fails with probability at most (recall $c^*\ge 1$)
\begin{align*}P(N_T>n)+n2^{N_0}\Bigl[\sum_{i=0}^1 \Vert g_i-g_i^\vep\Vert_\infty\Bigr]\le \frac{e^{c_bT}}{nN_0}+n2^{N_0}\Bigl[\sum_{i=0}^1 \Vert g_i-g_i^\vep\Vert_\infty\Bigr].
\end{align*}
Now let $n=\lceil \sum_{i=0}^1 \Vert g_i-g_i^\vep\Vert_\infty\rceil^{-1/2}$ and use (b) to complete the proof.
\end{proof}

\clearpage

\section{Proofs of Theorems~\ref{conv} and \ref{thm:strongconv}}
\label{sec:hydroproofs}
\subsection{Proof of Theorem \ref{conv}}
\label{ssec:convproof}

We start with a key estimate giving the product structure in 
Theorem~\ref{conv}. This relies on the fact that duals starting
at distant points with high probability will not collide.  For some
results we will need a quantitative estimate.  Let $x^k_\vep\in\vep\Z^d$, $y_i\in\Z^d$ and 
$z^\vep_{ik}=z_{ik}=x_\vep^k+\vep y_i$, for $0\le i\le L$ and $1\le k\le K$.   Set 
$$\Delta_\vep=\min_{1\le i,i'\le L, 1\le k\neq k'\le K}|z_{ik}-z_{i'k'}|\vep^{-1}.$$
The notation is taken
to parallel that in Theorem~\ref{conv} and hypothesis~\eqref{xcond} of that result implies
\begin{equation}\label{deltato0}\lim_\vep \Delta_\vep=\infty.\end{equation}
Let $X=X^{z,T}$ be the dual
process starting at $z$ for the time period $[0,T]$, with
associated computation process $\zeta_t$ which has initial
inputs $\zeta_0(j) = \xi^\vep_0(X^j_T)$, $j\in J(T)$. 
Let $z_k=(z_{ik},i=0,\dots,L)$ and consider the duals $X^{z_k,T}$, $1\le k\le K$
defined as in Section~\ref{sec:construction} with their associated uniforms $\{U^k_m\}$ and parent
variables $\{\mu^k_m\}$. These duals are naturally embedded in $X^{z,T}$, and although the numbering of the particles may differ, we do have
\begin{equation}\label{partlist}\{X^{z,j}_t:j\in J(t)\}=\cup_{k=1}^K\{X_t^{z_k,j}:j\in J^{z_k}(t)\},\ t\in[0,T]. 
\end{equation}
Define
\begin{align}\label{Vdefnep}V_{z,T,\vep}=\inf\{t\in[0,T]:&X_t^{z_k,T,j}=X_t^{z_{k'},T,j'}\\
\nonumber&\hbox{ for some }1\le k\neq k'\le K, j\in J^{z_k}(t),j'\in J^{z_{k'}}(t)\},
\end{align}
where $\inf\emptyset=\infty$.

\begin{lem}\label{lem:Vprob}
$P(V_{z,T,\vep}<\infty)\le c_{\ref{lem:Vprob}}(K,L)e^{c_bT}(\Delta_\vep)^{-(d-2)/(d+3)}$.
\end{lem}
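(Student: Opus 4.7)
The plan is to bound $P(V_{z,T,\vep}<\infty)$ by reducing the cross-tree collision event to a per-pair single random walk hitting estimate after dominating each dual by an independent non-coalescing branching random walk.

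First I would observe that before time $V_{z,T,\vep}$ the $K$ sub-duals $X^{z_k,T}$ occupy pairwise disjoint sites and therefore use disjoint portions of the graphical representation, so up to this first inter-tree collision they evolve as $K$ independent coalescing branching random walks. Coupling each with a non-coalescing dominating branching random walk $\bar Y^k$ (as in Section~\ref{ssec:pbad}) on a product probability space gives
\[P(V_{z,T,\vep}<\infty)\le P\bigl(\exists\,k\ne k',\ t\le T,\ j,j':\ \bar Y^{k,j}_t=\bar Y^{k',j'}_t\bigr).\]
Since $\bar Y^k$ has no coalescence it splits further into $L+1$ mutually independent single-root subtrees, and only pairs of roots with different $k$, $k'$, whose separation is at least $\Delta_\vep\vep$, can contribute. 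Next I would truncate the total number of branching events across all $K(L+1)$ subtrees: by Lemma~\ref{lem:good1}(a), $P(\bar N_T^{\mathrm{tot}}>n)\le c(K,L)e^{c_bT}/n$, and on the complement each single-root subtree has at most $1+nN_0$ particles ever born in $[0,T]$.

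For the per-pair estimate I would use an ancestral extension. Fix a subtree rooted at $w:=z_{i,k}$ and a particle $j$ alive in it at some time. Extend $\bar Y^{k,j}$ backwards through its ancestral chain to $t=0$, obtaining a path $\tilde Y^{k,j}\colon[0,T]\to\vep\Z^d$ with $\tilde Y^{k,j}_0=w$. Because offspring in $\bar Y^k$ are born at the parent's site and every particle moves at rate $\vep^{-2}$ with kernel $p_\vep$, the extended path is distributed exactly as $B^\vep$ started at $w$. The same construction for a particle $j'$ in the subtree rooted at $w'=z_{i',k'}$ with $k'\ne k$ produces an independent copy of $B^\vep$ started at $w'$. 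Any meeting of $\bar Y^{k,j}$ and $\bar Y^{k',j'}$ during their joint lifetime forces the extensions to coincide at the same $t\in[0,T]$, so the per-pair probability is at most the probability that a rate-$2\vep^{-2}$ random walk started at $w'-w$ (with $|w-w'|/\vep\ge\Delta_\vep$) visits the origin, which by Lemma~\ref{lem:notcrowd}(c) applied with $r_0=1$ is bounded by $c\Delta_\vep^{-(2/3)(d-2)}$.

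Finally, a union bound over the $\le K^2(L+1)^2$ cross-tree root pairs and the $\le (1+nN_0)^2$ particle pairs within each root pair yields
\[P(V_{z,T,\vep}<\infty)\le c(K,L)\Bigl[\frac{e^{c_bT}}{n}+(1+n)^2\Delta_\vep^{-(2/3)(d-2)}\Bigr].\]
Choosing $n=\lceil\Delta_\vep^{(d-2)/(d+3)}\rceil$ one checks (using $d\ge 3$) that $2(d-2)/(d+3)-(2/3)(d-2)\le -(d-2)/(d+3)$, so the second term is dominated by the first and the desired bound $c_{\ref{lem:Vprob}}(K,L)e^{c_bT}\Delta_\vep^{-(d-2)/(d+3)}$ follows. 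The main obstacle is the uniformity of the per-pair estimate: even though particles sharing ancestors in the same subtree have correlated ancestral extensions, each extension has the marginal distribution of $B^\vep$ from the subtree root, and independence between different trees is inherited from the independent dominating branching random walks, so the Lemma~\ref{lem:notcrowd}(c) bound applies uniformly to every cross-tree pair.
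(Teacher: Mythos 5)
Your ancestral-extension idea is clean and would indeed shorten the optimization, but it hinges on a claim that is false for the dominating process you invoke. In the paper's $\bar X$ from Section~\ref{ssec:pbad}, offspring are \emph{not} born at the parent's site: at a branching time the $N_0$ new particles are placed at $B^\vep_t+Y^i_m$, $i=1,\dots,N_0$, i.e.\ displaced from the parent by the (rescaled) $q$-jumps. Consequently the backward ancestral path of a particle in $\bar X^{z_k}$ is not a copy of $B^\vep$; it is a concatenation of random-walk segments interleaved with up to $n$ additional displacement jumps of sizes $\vep|Y^i_m|$. So ``the extended path is distributed exactly as $B^\vep$ started at $w$'' is not correct, and the step where you reduce the per-pair collision probability to $P_{w-w'}(\text{a rate-}2\vep^{-2}\text{ walk hits }0)\le c\Delta_\vep^{-(2/3)(d-2)}$ (Lemma~\ref{lem:notcrowd}(c) with $r_0=1$) does not go through.

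This is not a cosmetic issue: the displacements are exactly what forces the paper's two-parameter optimization. One must bound the event that the pure random-walk difference enters a ball of radius $\sum_{m=1}^n\vep|Y^*_m|$, truncate $Y^*_m\le R$ using the exponential tail \eqref{expbd2} at cost $nCe^{-\kappa R}$, and then Lemma~\ref{lem:notcrowd}(c) with $r_0=nR$ gives a per-pair estimate $c\Delta_\vep^{-(2/3)(d-2)}(nR)^{2(d+1)/3}$ rather than $c\Delta_\vep^{-(2/3)(d-2)}$. The final exponent $-(d-2)/(d+3)$ then comes from optimizing jointly over $n$ and $R$ (the paper sets $\kappa R=c_d\log(1/\delta)$ with $\delta=\Delta_\vep^{-(2/3)(d-2)}$ and $n\approx e^{\kappa R/4}$), and is weaker than what your cleaner per-pair bound would permit; the loss is precisely the price paid for the displacements. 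Your arithmetic check that $2(d-2)/(d+3)-(2/3)(d-2)\le-(d-2)/(d+3)$ for $d\ge3$ is fine as far as it goes, but it rests on a per-pair estimate that you don't actually have. If you replaced the dominating $\bar X$ by a BRW with on-site births, you would no longer dominate $X$ (whose particles really do land at displaced sites), so that route is closed; you must carry the $Y^*$ correction as the paper does.
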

\begin{proof} We may dominate $X^{z_k,T}$ by the branching random walks $\bar X^{z_k,T}$ from Section~\ref{ssec:pbad}.  By Lemma~\ref{lem:good1}(a), if $\{Y^*_m\}$ are iid, equal in law to $Y^*$, and independent of $B^\vep$ in what follows, then for $R\ge 1$,
\begin{align*}
&P(V< \infty)\\
&\le P(\max_{k\le K}\bar N^{z_k}_T>n)\\
&\ +\sum_{1\le k\neq k'\le K}P(|\bar X_t^{z_k,j}-\bar X_t^{z_{k'},j'}|=0\ \exists j\in\bar J^{z_k}(t), j'\in \bar J^{z_{k'}}(t), t\le T, \bar N_T^{z_k}\vee \bar N_T^{z_{k'}}\le n).
\end{align*}
The first term is bounded by $ Ke^{c_bT}n^{-1}$ and the second term is at most
\begin{align*}
&\sum _{1\le k\neq k'\le K; 0\le i,i'\le L}(1+nN_0)^2P_{z_{k,i}-z_{k'i'}}(|B^\vep_{2t}|\le \sum_{m=1}^n \vep|Y^*_m|\ \exists t\le T)\\
&\le \sum _{1\le k\neq k'\le K; 0\le i,i'\le L}(1+nN_0)^2\Bigl[nCe^{-\kappa R}+P_{z_{ki}-z_{k'i'}}(|B^\vep_{2t}|\le n\vep R\ \exists t\le T)\Bigr],
\end{align*}
where we used \eqref{expbd2} in the last line.  By Lemma~\ref{lem:notcrowd}(c), the probability in the last term is at most $c_{\ref{lem:notcrowd}}\Delta_\vep^{-(2/3)(d-2)} (nR)^{2(d+1)/3}$, and so if 
$\delta=\Delta_\vep^{-(2/3)(d-2)}$,
\[P(V<\infty)\le cK^2(L+1)^2e^{c_bT}N_0^2\Bigl[n^{-1}+n^3 e^{-\kappa R}+\delta(nR)^{2(d+1)/3}\Bigr].
\]
Now, optimizing over $n$ and $R$, set $c_d=\frac{12}{2d+5}$, $\kappa R=c_d\log(1/\delta)$ and $n=\lceil e^{\kappa R/4}\rceil$.  Here we may assume without loss of generality that $\Delta_\vep\ge M(\kappa)$ so that $R\ge 1$.  A bit of arithmetic now shows the the above bound becomes
\[P(V<\infty)\le c(K,L)e^{c_bT}\delta^{3/(2d+6)},\]
and the result follows.
\end{proof}

We suppose now that the assumptions of Theorem~\ref{conv}
are in force.  That is, $T>0$ is fixed, $\xi^\vep_0$ has law $\lambda_\vep$ 
satisfying the local density condition \eqref{densdef} for a
fixed $r\in(0,1)$, and \eqref{xcond} holds.  It is intuitively
clear that the density hypothesis is weakened by reducing $r$.  To
prove this, note that the boundedness of the density and uniformity in $x$
of the convergence in \eqref{densdef} shows that the contribution to
the density on larger blocks from smaller blocks whose density is
not near $v$ is small in $L^1$. We may therefore approximate the mass in a large block by 
the mass in smaller sub-blocks of density near $v$, and use the fact that the contributions close to the boundary of the large block is negligible to derive the density condition \eqref{densdef} for the larger blocks.  As a result we may assume that $r<1/4$.   

By
inclusion-exclusion, it suffices to prove for $-1\le L_k\le L$,
 \begin{multline}\label{convergence}
    \lim_{\vep\to 0}P(\xi^\vep_T(x_\vep^k+\vep
    y_{i_j})=1,\ \ j=0,\dots,L_k,\, k=1,\dots K)\\ 
    =\prod_{k=1}^K\langle 1\{\xi(y_{i_j})=1,\ \
    j=0,\dots,L_k\}\rangle_{u(T,x^k)}.
  \end{multline}
Allowing $k$-dependence in $L_k$ and general subsets of the $y_i$'s is needed for the inclusion-exclusion, but to reduce
eyestrain we will set $i_j=j$ and $L_k=L$ in what follows.  The general case requires
only notational changes.
By the duality
equation \eqref{dualityeq},
\begin{align}\label{dualeq2}
P(\xi^\vep_T(z_{ik})=1,\ & i=0,\dots,L, k=1,\dots,K ) \\
\nonumber& = P(\zeta_T(i,k)=1, i=0,\dots,L,  k=1,\dots,K ) ,
\end{align}
so \eqref{convergence} is then equivalent to
\begin{multline}\label{convgoal1}
P(\zeta_T(i,k)=1, i=0,\dots,L,  k=1,\dots,K )\\
\to \prod_{k=1}^K \langle 1\{ \xi(y_i)=1, i=0,\dots, L\}
\rangle_{u(T,x^k)} \text{ as }\vep\to 0.
\end{multline}
The proof of \eqref{convgoal1} uses the approach of
  \cite{DN94}, pp. 304-306.

To work with the left-hand
side of \eqref{convgoal1} we need the following preliminary
result to simplify the initial inputs $\zeta_0$.  Define
\begin{equation}\beta=1.9r\hbox{ and }t_\vep = \vep^{\beta}.\label{betaeps}
\end{equation}

\begin{lem}\label{lem:rev3}  Assume $\xi^\vep_0$ is
independent of the rescaled random walks $\{
B^{\vep,w}:w\in\vep\Zd\}$ as in \eqref{indrws}. Then for any $n\in\NN$ and $k>0$,  
\[
\lim_{\vep\to 0}\sup_{\substack{{|w_1|,\dots,|w_n|\le k,w_i\in\vep\Z^d}\\
w_i\ne w_j\text{ for }i\ne j}}
\Bigl|E\Bigl(\prod_{i=1}^n\xi^\vep_0( B^{\vep,w_i}_{t_\vep})\Bigr)
-\prod_{i=1}^n v(w_i)\Bigr| = 0 \,.
\]
\end{lem}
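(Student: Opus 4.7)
The plan is to use independence to reduce the problem to a single-walk estimate. Since the walks $\{B^{\vep,w}:w\in\vep\Z^d\}$ are mutually independent (by \eqref{indrws}) and independent of $\xi_0^\vep$, and the $w_i$'s are distinct, conditioning on $\xi_0^\vep$ gives
\[
E\Bigl[\prod_{i=1}^n \xi_0^\vep(B^{\vep,w_i}_{t_\vep})\Bigr]
= E_{\lambda_\vep}\Bigl[\prod_{i=1}^n \Phi_i^\vep(\xi_0^\vep)\Bigr],\quad \Phi_i^\vep(\xi):=E^{w_i}\bigl[\xi(B^\vep_{t_\vep})\bigr]\in[0,1].
\]
Since the $\Phi_i^\vep$ are uniformly bounded, bounded convergence reduces the claim to showing $\Phi_i^\vep(\xi_0^\vep)\to v(w_i)$ in $\lambda_\vep$-probability, uniformly for $|w_i|\le k$.

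To prove this, fix $\delta>0$. By uniform continuity of $v$ on $\{|x|\le k+1\}$, choose $M>0$ (not depending on $\vep$) so that $|v(x)-v(y)|<\delta/4$ whenever $|x|,|y|\le k+1$ and $|x-y|\le M$. The cutoff bound
\[
P^{w_i}\bigl(|B^\vep_{t_\vep}-w_i|>M\bigr)\le d\sigma^2 t_\vep/M^2\to 0
\]
is uniform in $w_i$, so the contribution to $\Phi_i^\vep$ from $|y-w_i|>M$ is negligible. Let $\mathcal{X}_M=\mathcal{X}_M(w_i)$ denote the set of $\tilde x\in a_\vep\Z^d$ with $(Q_\vep+\tilde x)\cap B(w_i,M)\ne\emptyset$; note $|\mathcal{X}_M|$ is bounded by a constant depending only on $M,k$. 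On each such block, write
\[
p^\vep_{t_\vep}(w_i,y):=P^{w_i}(B^\vep_{t_\vep}=y)=\bar p_{\tilde x}+\bigl[p^\vep_{t_\vep}(w_i,y)-\bar p_{\tilde x}\bigr],\quad \bar p_{\tilde x}:=\frac{1}{|Q_\vep|}\sum_{y\in Q_\vep+\tilde x}p^\vep_{t_\vep}(w_i,y).
\]
The block-average piece produces $\sum_{\tilde x\in\mathcal{X}_M}P^{w_i}(B^\vep_{t_\vep}\in Q_\vep+\tilde x)\,D(\tilde x,\xi_0^\vep)$, and a union bound in \eqref{densdef} (with $R=k+M+1$) lets us replace each $D(\tilde x,\xi_0^\vep)$ by $v(\tilde x)$, and then by $v(w_i)$ via uniform continuity, up to total error $\delta/2$ on a $\lambda_\vep$-set of probability $1-o(1)$.

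The main obstacle is controlling the oscillation term $\sum_{\tilde x\in\mathcal{X}_M}\sum_{y\in Q_\vep+\tilde x}[p^\vep_{t_\vep}(w_i,y)-\bar p_{\tilde x}]\xi_0^\vep(y)$. By a local central limit theorem for $B^\vep$ (as in (A.7) of \cite{CDP}, valid under \eqref{expbd1}), $p^\vep_{t_\vep}(w_i,y)$ is uniformly well-approximated by the Gaussian density of scale $\sqrt{t_\vep}$, whose $y$-gradient is bounded by $C|y-w_i|/t_\vep$ times its value. Hence the oscillation on a block $Q_\vep+\tilde x$ is at most $C a_\vep(|\tilde x-w_i|/t_\vep)\bar p_{\tilde x}$, and summing the per-block errors $|Q_\vep|\cdot C a_\vep(|\tilde x-w_i|/t_\vep)\bar p_{\tilde x}$ over $\mathcal{X}_M$ bounds the total oscillation by
\[
\frac{C a_\vep}{t_\vep}\,E^{w_i}\bigl[|B^\vep_{t_\vep}-w_i|\bigr]\;\le\;\frac{C a_\vep}{\sqrt{t_\vep}}\;=\;C\vep^{\,r-\beta/2}\;=\;C\vep^{0.05r}\;\longrightarrow\;0,
\]
where the specific choice $\beta=1.9r$ in \eqref{betaeps} (equivalently $a_\vep\ll\sqrt{t_\vep}$) is precisely what drives this decay. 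Combining the tail, block-average, and oscillation estimates yields $|\Phi_i^\vep(\xi_0^\vep)-v(w_i)|\le\delta$ on an event of $\lambda_\vep$-probability $1-o(1)$, uniformly in $|w_i|\le k$, as required.
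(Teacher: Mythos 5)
Your factor-by-independence strategy is genuinely different from the paper's: the paper works directly with the joint expectation via the events $\Gamma(z_1,\dots,z_n)$ that the $n$ walks simultaneously land in specified blocks, uses a \emph{ratio} form of the local CLT (\eqref{ratio}) to make the transition probability essentially constant within each block, and then needs the density hypothesis only on the $n$ blocks appearing in each term, with the outer sum controlled by $\sum_z P(\Gamma(z))\le 1$. Your reduction to a single-walk functional $\Phi_i^\vep$ and a block-average/oscillation split is cleaner in concept, but it introduces a problem the paper carefully avoids.

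The concrete gap is your claim that $|\mathcal{X}_M|$ is \emph{bounded by a constant depending only on $M,k$}. This is false: $\mathcal{X}_M$ contains all $\tilde x\in a_\vep\Z^d$ with $(\tilde x+Q_\vep)\cap B(w_i,M)\neq\emptyset$, so $|\mathcal{X}_M|\asymp (M/a_\vep)^d\to\infty$ as $\vep\to 0$. Consequently the ``union bound in \eqref{densdef}'' does not go through: \eqref{densdef} gives only that $\sup_{|x|\le R}\lambda_\vep(|D(x,\xi)-v(x)|>\delta)\to 0$, with no rate, so $|\mathcal{X}_M|\cdot\sup_x\lambda_\vep(|D(x,\xi)-v(x)|>\delta)$ need not vanish, and you cannot conclude that \emph{all} the $D(\tilde x,\xi_0^\vep)$ are simultaneously close to $v(\tilde x)$ on a $1-o(1)$ set. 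The paper sidesteps exactly this issue by never union-bounding over an $\vep$-dependent number of blocks. Your approach is salvageable — replace the ``high-probability event'' phrasing with an $L^1$ estimate, using $\sum_{\tilde x}P^{w_i}(B^\vep_{t_\vep}\in\tilde x+Q_\vep)\le 1$ to get
\[
E_{\lambda_\vep}\Bigl[\Bigl|\sum_{\tilde x}P^{w_i}(B^\vep_{t_\vep}\in\tilde x+Q_\vep)\,(D(\tilde x,\xi_0^\vep)-v(\tilde x))\Bigr|\Bigr]\le\sup_{|\tilde x|\le k+M+1}E_{\lambda_\vep}\bigl[|D(\tilde x,\xi_0^\vep)-v(\tilde x)|\bigr]\to 0,
\]
and then close the product via $E[\prod_i\Phi^\vep_i]-\prod_i v(w_i)=O(\sum_iE|\Phi^\vep_i-v(w_i)|)$ — but as written, the step does not follow.

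Two smaller points: (i) the citation of (A.7) of \cite{CDP} does not support the gradient/ratio bound $|\nabla p^\vep_{t_\vep}|\lesssim(|y-w_i|/t_\vep)p^\vep_{t_\vep}$ — (A.7) there is a one-sided \emph{uniform} bound on near-diagonal mass, not a pointwise Gaussian approximation with controlled relative error; for the relative estimate one needs a sharper LCLT such as the Remark after P7.8 in \cite{Spi} that the paper uses for \eqref{ratio}. (ii) Your truncation at fixed $M$ rather than $O(\sqrt{t_\vep})$ takes you far outside the diffusive window, where such a relative LCLT need not hold; this is harmless in the end because those contributions are tiny, but it needs to be said rather than implicitly relying on the diffusive-window estimate there.
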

\begin{proof}
For $z_1,\dots,z_n\in a_\vep\Zd$ define
\[
\Gamma(z_1,\dots, z_n) = 
\{ B^{\vep,w_i}_{t_\vep} \in z_i+Q_\vep 
\text{ for }1\le i\le n \} \,,
\]
and $\gamma(w_i,z_i)=P( B^{\vep,w_i}_{t_\vep}\in
z_i+Q_\vep)$, so that $P(\Gamma(z_1,\dots,z_n))=\prod_{i=1}^n
\gamma(w_i,z_i)$. Let $G$ be the union of the events $\Gamma(z_1,\dots,z_n)$ 
over distinct $z_1,\dots,z_n\in a_\vep \Zd$ such that $|z_i-w_i| \le
k\sqrt{t_\vep}$ for $1\le i\le n$. 
We claim that $P(G^c)$ is small for $k$ large enough. To see this, 
fix $\delta>0$ and choose $k$ large enough so that
\begin{equation}\label{tc1}
P(|B^{\vep,w_i}_{t_\vep}-w_i|\ge (k-1)\sqrt{t_\vep})=P(| B^{\vep,0}_{t_\vep}| >(k-1) \sqrt{t_\vep})
<\delta/n\,. 
\end{equation}
By a standard estimate (and also since $r<1/4$), for $w_i$ as above and $i\ne j$,
\[
P( | B^{\vep,w_i}_{t_\vep} -
B^{\vep,w_j}_{t_\vep}|\le 2 a_\vep)
\le c |Q_\vep| P( B^{\vep,0}_{2t_\vep}=0)
\le c |Q_\vep| (\vep^{-2}t_\vep)^{-d/2} \le c \vep^{(d-1)3/4}\,,
\]
which implies 
\begin{equation}\label{tc0}
P( | B^{\vep,w_i}_{t_\vep} -
B^{\vep,w_j}_{t_\vep}|\le 2 a_\vep \text{ for some }1\le
i<j\le n) \le C n^2\vep^{(d-1)3/4}\,.
\end{equation}
By \eqref{tc0} and
\eqref{tc1},
\begin{equation}\label{tc2}
P(G^c) \le Cn^2\vep^{(d-1)3/4} + \delta\ .
\end{equation}

Now consider the decomposition
\begin{equation}\label{tc3}
E\Bigl(\prod_{i=1}^n\xi^\vep_0(
B^{\vep,w_i}_{t_\vep}); G\Bigr)  =
\sum_{z_1,\dots,z_n} E\Bigl(\prod_{i=1}^n\xi^\vep_0(
B^{\vep,w_i}_{t_\vep}); \Gamma(z_1,\dots,z_n)\Bigr) 
\end{equation}
where the sum is taken over only those $(z_1,\dots,z_n)$
used in the definition of $G$. A typical term in this sum
takes the form
\begin{multline*}
\sum_{e_1,\dots,e_n \in Q_\vep} E\Bigl(\prod_{i=1}^n\xi^\vep_0(z_i+e_i)
1(B^{\vep,w_i}_{t_\vep}=z_i+e_i)\Bigr)\\
= \sum_{e_1,\dots,e_n \in Q_\vep} E\Bigl(\prod_{i=1}^n\xi^\vep_0(z_i+e_i)\Bigr)
\prod_{i=1}^nP(B^{\vep,w_i}_{t_\vep}=z_i+e_i) \ .
\end{multline*}
Since $\sqrt{t_\vep} \gg a_\vep$, the probabilities 
$P(B^{\vep,w_i}_{t_\vep}=z_i+e_i)
=P(B^{\vep,0}_{t_\vep}=z_i-w_i+e_i)$ 
are almost constant over $e_i\in Q_\vep$.
In fact, a calculation, using the version of the local central limit
theorem in the Remark after P7.8 in \cite{Spi} to
expand\break
 $P( B^{\vep,0}_{t_\vep}=z_i-w_i+e_i) =
P(B^{0}_{\vep^{-2}t_\vep}=(z_i-w_i+e_i)/\vep)$, shows that
\begin{equation}\label{ratio}
\lim_{\vep\to 0} \sup_{\substack{
e,e'\in Q_\vep\\
|z_i-w_i|\le k\sqrt{t_\vep}}
} 
\dfrac{P(B^{\vep,0}_{t_\vep}=z_i-w_i+e)}
{P(B^{\vep,0}_{t_\vep}=z_i-w_i+e')} 
=1
\end{equation}
The continuous
time setting is easily accommodated, for example by
noting that along multiples of a fixed time it becomes a
discrete time random walk.

Consequently, for all
sufficiently small $\vep>0$, 
we have $k\sqrt{t_\vep}<1$ and uniformly in $|w_i|\le k$,
$|z_i-w_i|\le k\sqrt{t_\vep}$ and $e\in Q_\vep$,
\begin{equation}\label{densebound}
1-\delta\le
\dfrac{|Q_\vep| P(B^{\vep,w_i}_{t_\vep}=z_i+e)} 
{\gamma(w_i,z_i)}
\le 1+\delta \ .
\end{equation}
Using this bound and the fact that the
$z_i$ are distinct we have
\begin{align*}
\sum_{e_1,\dots,e_n \in Q_\vep}
E\Bigl(\prod_{i=1}^n\xi^\vep_0(z_i+e_i)\Bigr) 
&\prod_{i=1}^nP( B^{\vep,w_i}_{t_\vep}=z_i+e_i)\\
&\le \sum_{e_1,\dots,e_n \in Q_\vep}
E\Bigl(\prod_{i=1}^n\xi^\vep_0(z_i+e_i)\Bigr) 
\dfrac{(1+\delta)^n}{|Q_\vep|^n}\prod_{i=1}^n \gamma(w_i,z_i)\\
&= (1+\delta)^n 
E\Bigl(\prod_{i=1}^nD(z_i,\xi^\vep_0)\Bigr)
P(\Gamma(z_1,\dots,z_n))
 \,.
\end{align*}

The continuity of $v$ implies that for small enough $\vep$,
for all $|w|\le k$ and $|z-w|\le k\sqrt{t_\vep}$,
$|v(w)-v(z)|< \delta$. Also for sufficiently small $\vep$ and
$z\in a_\vep\Z^d$, $|z|\le k+1$,  we have
$P(D(z,\xi^\vep_0)>v(z)+\delta)\le \delta/n$. Thus  
\[
E\Bigl(\prod_{i=1}^n D(z_i,\xi^\vep_0)\Bigr)
\le \delta + \prod_{i=1}^n (v(z_i)+\delta) 
\le  \delta+\prod_{i=1}^n (v(w_i)+2\delta)  \ .
\]
Returning to the decomposition \eqref{tc3}, the above bounds imply
that for sufficiently small $\vep$,
\begin{align*}
E\Bigl(\prod_{i=1}^n \xi^\vep_0( B^{\vep,w_i}_{t_\vep}) ; G\Bigr)&
\le (1+\delta)^n\Bigl[\delta + \prod_{i=1}^n (v(w_i)+2\delta)  \Bigr]
\sum_{z_1,\dots,z_n} P(\Gamma(z_1,\dots,z_n)) \\
&\le (1+\delta)^n\Bigl[\delta + \prod_{i=1}^n
(v(w_i)+2\delta) \Bigr] \,. \\
\end{align*}
Let $\vep\to 0$ and then $\delta\to 0$ above
and in \eqref{tc2} to obtain 
\[
\limsup_{\vep\to 0} \sup_{|w_1|,\dots,|w_n|\le
  k}
\Bigl(E\Bigl(\prod_{i=0}^n\xi^\vep_0( 
B^{\vep,w_i}_{t_\vep})\Bigr)
- \prod_{i=1}^n v(w_i)\Bigr) \le 0 \,.
\]
A similar argument gives a reverse  inequality needed
to complete the proof.
\end{proof}

\bigskip We break the proof of \eqref{convgoal1} into three
main steps. Introduce
$$S=T-t_\vep=T-\vep^\beta.$$

\bigskip\noindent \emph{Step 1. Reduction to Bernoulli inputs and $K=1$.} 

\noindent Let $\tilde X=\tilde X^{z,T}$ be the
modification of the dual in which particles ignore reaction
and coalescing events on $[S,T]$, and let 
$\tilde\zeta_t$ be the associated computation
process with inputs $\tilde\zeta_0(j)= \xi^\vep_0(\tilde X^j_T)$. 
That is,  $\tilde X_t =X_t$ for
$t\in[0,S]$, and during the time period $[S,T]$, 
$\tilde X^j_t$, $j\in J(S)$ follows the same path as $X^j_t$
until the first time a reaction or coalescence occurs, at
which time all the $\tilde X^j_t$ switch to following 
completely independent $B^\vep$ random walks.

On the event $\tilde G_T^\beta$
defined in \eqref{def:tildeG}
there are no reaction or coalescing events during $[S,T]$.
Thus, $\tilde X_t=X_t$ for all $t\in[0,T]$ on 
$\tilde G_T^\beta$, so it follows from
Lemma~\ref{lem:tildeG} that 
\begin{equation}\label{ignoreonST}
P( \zeta_t\ne \tilde\zeta_t  \text{ for some } 
t\in[0,T])
\le c_{\tref{lem:tildeG}}[(L+1)K]^2e^{c_bT}\vep^{\frac{1}{40}\wedge
  \frac{\beta}{3}} \,.
\end{equation}

Let $\psi_\vep(x)=P^\vep_{t_\vep}\xi_0^\vep(x)$, where 
\begin{equation}\label{rwsgroup}P_t^\vep
  f(x)=E(f(x+B_t^\vep)),\ x\in\vep\Z^d,\hbox{ is the
    semigroup of }B^\vep,
\end{equation}
and let $W_1,W_2,\dots$ be an iid sequence of uniforms on
the interval $[0,1]$, independent of $\xi^\vep_0$ and the
random variables used in Section 2. We will use this
sequence throughout the rest of this section and also in
Section~\ref{lockill}.  Define a second computation process
$\zeta^*_{t},t_\vep\le t\le T$, for $\tilde X$, with inputs
\begin{equation}\label{starinputs}
\zeta^{*}_{t_\vep}(j)= 1 \{W_j\le v( \tilde X^j_S )\},
\quad j\in J(S). 
\end{equation}
It is clear that conditional on $\sigma(\xi^\vep_0)\vee\F_\infty$,
the variables $\zeta^*_{t_\vep}(j)$, $j\in J(S)$, respectively $\tilde \zeta_{t_\vep}(j)$, $j\in J(S)$, are independent
Bernoulli with means $v(\tilde X^j_S)$, respectively $\psi_\vep(\tilde X_S^j)$.  Let $\bar X=\bar X^{z,T}$ be the branching random walk dominating $X$ which was introduced in Section~\ref{ssec:pbad}.
If we  fix $\delta>0$, then  
using Lemma~\ref{lem:good1}(a) it is not 
hard to see that there exist $n,k$ such that for 
all $\vep$ sufficiently small,  
\begin{equation}\label{compact}
P( |\bar J(S)|\le n \text{ and }
|\bar X^\vep_j(S)|\le k \text{ for all }j\in J(S)) > 1-\delta.
\end{equation}

\noindent It now follows from \eqref{compact},
Lemma~\ref{lem:rev3} and the definitions of $\tilde X$,
$\tilde \zeta_0$ and $\zeta^*_{t_\vep}$ that for any
$b:\Z^+\to\{0,1\}$,
\begin{align*}
|&
P(\tilde\zeta_{t_\vep}(j)=b_j, j\in J(S)) - 
P(\zeta^{*}_{t_\vep}(j)=b_j, j\in J(S))|\\
&\le E(|P(\tilde\zeta_{t_\vep}(j)=b_j, j\in J(S)|\F_S\vee\sigma(\xi_0^\vep))-P(\zeta^{*}_{t_\vep}(j)=b_j, j\in J(S))|\F_S\vee\sigma(\xi_0^\vep))|)\\
& \to 0
\text{ as } \vep\to 0.
\end{align*}
As a consequence, since both $\tilde \zeta_t,\zeta^*_t,
t_\vep\le t\le T$ are defined relative to $\tilde X$ with identical 
$\{U_m\}$, $\{\mu_m\}$ and $\{R_m\}$, by conditioning on the input values,
 the above implies 
\begin{multline}\label{red2}
P(\tilde \zeta_T(i,k)=1, 
i=0,\dots,L,  k=1,\dots,K )\\
- P(\zeta^{*}_T(i,k)=1, 
i=0,\dots,L,  k=1,\dots,K )\to 0 \text{ as }\vep\to 0.
\end{multline}

Let $\zeta^{*,z_k}_t$ be the computation process associated with $X^{z_k,T}$, $1\le k\le K$ 
with inputs as in
\eqref{starinputs}. That is, for $j\in J^{z_k}(S)$ 
there exists a $j'\in J(S)$ with $X^{j'}_S= X^{z_k,j}_S$ (by \eqref{partlist})
and we set $\zeta^{*,z_k}_{t_\vep}(j) = 1\{W_{j'}\le
v(X^{z_k,j}_S)\}$. 
Up to time $V=
V_{z,T,\vep}$
the duals $X^{z_k,T}$, $k\le K$, use independent random walk steps and branching mechanisms, and on $\{V=\infty\}$ the computation processes $\zeta^{*,z_k}$ also use independent uniforms and parent variables as well as independent inputs at time $t_\vep$. It follows that (see below)
\begin{align}
\nonumber|P&(\zeta^*_T(i,k)=1,
i=0,\dots,L,  k=1,\dots,K ) \\ 
\nonumber&\phantom{(\zeta^*_T(i,k)=1,
i=0,\dots,L,}-\prod_{k=1}^K P(\zeta^{*,z_k}_T(i)=1,
0\le i\le L )|\\
\label{K1}&\le P(V<\infty) \to 0
\text{ as }\vep\to 0.
\end{align}
The last limit follows from Lemma~\ref{lem:Vprob} and
\eqref{deltato0}. Perhaps the easiest way to see the first
inequality is to extend $X_t^{z_k,T}$ to $t\in[V,T]$ by
using independent graphical representations and define the
corresponding computation processes $\zeta^{' *,z_k}$ using
independent collections of $\{W_j\}$'s for the inputs at
time $t_\vep$. The resulting computation processes
$\zeta^{'*,z_k}$ are then independent, each $\zeta^{'
  *,z_k}$ is equal in law to $\zeta^{*,z_k}$, and the two
are identical for all $k$ on $\{V=\infty\}$.  On this set we
also have
\[\{\zeta_T^*(i,k): i, k\}=\{\zeta_T^{*,z_k}(i):i,k\},\]
and so \eqref{K1} follows.
It is therefore enough to set $K=1$ and  drop the
superscript $k$. Altering our notation to 
$z=(z_i)$, $z_i=x_\vep+\vep y_i$ where $x_\vep\to x$, 
it suffices now to prove
\begin{equation}\label{convgoal2}
P(\zeta^*_T(i)=1, 0\le i\le L ) \to
\langle 1\{\xi(y_i)=1, 0\le i\le L 
\}\rangle_{u(T,x)} \text{ as }\vep\to 0 .
\end{equation}

\bigskip\noindent\emph{Step 2. Reduction to $L=0$.} 
Let $\hat X = \hat X^{z,T}, 0\le t\le T$ be the branching random walk
started at $z$, with associated computation 
process $\hat\zeta_t,t_\vep \le t\le T$. We suppose that $X$ and $\hat X$
are coupled as in Subsection~\ref{ssec:XhatXcoupling}, and that $\hat\zeta_t$ has
initial inputs
\[
\hat\zeta_{t_\vep}(j) = 1\{ W_j \le v(\hat X^j_S) \}, j\in
\hat J(S) .
\]
On the
event $\tilde G^\beta_{T}$, $J(S)=\hat J(S)$ and all the
differences $|X^j_S- \hat X^j_S|$, $j\in J(S)$ are small. 
It therefore follows from \eqref{compact} (if we take $Y^i=0$ $\bar X$ will stochastically dominate $\hat X$), the continuity of
$v$, the definitions of $\zeta^*_{t_\vep}$ and $\hat \zeta_{t_\vep}$, and
Lemma~\ref{lem:tildeG} that 
\begin{equation}\label{comp-coup}
P(\tilde G^\beta_T, \zeta^{*}_{t_\vep}(j) =\hat\zeta_{t_\vep}(j)
\text{ for all }j
\in J(S)) \to 1
\text{ as } \vep \to 0.
\end{equation}
By Lemma~\ref{lem:compeq}, on the event in
\eqref{comp-coup}, the outputs $\zeta^*_T$ and
$\hat\zeta_T$ agree, and consequently
\begin{equation}\label{red3}
P(\zeta^*_T(i)=1, 0\le i\le L)- P(\hat \zeta_T(i)=1, 0\le i\le L) \to 0
\text{ as }\vep\to 0.
\end{equation}

Using the branching structure we can now
reduce to the case $L=0$. To see this,  
let $\hat X^{z_i,T}$ be the branching random walk started
from $z_i=x_\vep+\vep y_i$, with associated computation
process $\hat\zeta^{z_i}_t,t_\vep\le t\le T$ 
with initial inputs $\hat \zeta^{z_i}_{t_\vep}(j)$ which,
conditional on $\hat X^{z_i}_t, 0\le t\le S$ are independent with means
$v(\hat X^{z_i,j}_S)$. The branching property and definition of $\hat X_0$ in \eqref{hatX0def} imply  (recall $\nu_\vep$ from just
above \eqref{nuconv}) 
\[
P(\hat \zeta_T(i)=1,  i=0,\dots, L) =
\sum_{\pi\in \Pi_L}\nu_\vep(\pi)\prod_{j\in J(\pi)}
P(\hat\zeta^{z_j}_T(0) = 1 ) .
\]
Since $z_j\to x$ as $\vep\to 0$  for $i=0,\dots,L$,  
if we can establish
\begin{equation}\label{convgoal3}
P(\hat\zeta^{x_\vep}_T(0) = 1 ) \to \hat u(T,x)
\text{ as }\vep\to 0,
\end{equation}
for some $\hat u:\R_+\times\R^d\to [0,1]$, 
then the convergence $\nu_\vep\To\nu_0$ implies
\begin{multline}\label{Lconv}
P(\hat\zeta_T(i)=1, i=0,\dots,L) 
\to 
\sum_{\pi\in \Pi_L}\nu_0(\pi)( \hat u(T,x))^{|\pi|}
\\ = \langle 1\{ \xi(y_i)=1, 0\le i\le L
\}\rangle_{\hat u(T,x)} \text{ as }\vep\to 0\,,
\end{multline}
where \eqref{prodform} is used in the last line.
Combining this with \eqref{red3} gives the desired result \eqref{convgoal2} but
with $\hat u$ in place of $u$, that is, we get
\begin{equation}\label{convgoal2'}
P(\zeta^*_T(i)=1, 0\le i\le L ) \to
\langle 1\{\xi(y_i)=1, 0\le i\le L 
\}\rangle_{\hat u(T,x)} \text{ as }\vep\to 0 .
\end{equation}
We first turn now to the proof of \eqref{convgoal3}.

\bigskip
\noindent\emph{Step 3. Convergence and identification of the limit.} Let $\hat X^{0}$ be the
branching Brownian motion started at $x\in \R^d$ run over the time
period $[0,T]$,  with 
associated computation process $\hat \zeta^0_t,t_\vep\le
t\le T$ with inputs 
\[
\hat \zeta^0_{t_\vep}(j) = 1\{ W_j\le v(\hat X^{0,j}_S)\}, j\in
J^0(S) .
\]
Using the obvious analogue of \eqref{compact} for $\hat
X^0$, the continuity of $v$ and the definitions of
$\hat\zeta_{t_\vep}$ and $\hat\zeta^0_{t_\vep}$,
Lemma~\ref{poscoupl} (and the uniform convergence of $g^\vep_i$ to $g_i$) implies
\begin{equation}\label{comp-coup2}
P(\bar G^{0,\vep}_T, \hat \zeta_{t_\vep}(j) =\hat\zeta^0_{t_\vep}(j)
\text{ for all }j
\in J^0(S)) \to 1
\text{ as } \vep \to 0.
\end{equation}
By Lemma~\ref{poscoupl}(c), on the event in \eqref{comp-coup2},
$\hat\zeta_T(0)=\hat\zeta^0_T(0)$, and thus 
\begin{equation}\label{red4}
P(\hat \zeta^{x_\vep}_T(0) =1 ) -
P(\hat \zeta^{0}_T(0) =1 )\to 0\hbox{ as }\vep\to 0,
\end{equation}
where we note that both quantities in the above depend on $\vep$.
If we take the initial inputs for the
computation process $\hat \zeta^0=\hat \zeta^{0,*}$ at time $0$ to be
\beq\label{hatinputs}
\hat\zeta^{0,*}_0(j) = 1\{W_j\le v(X^{0,j}_T)\}, j\in J^0(T),
\eeq
it is now routine to see that $P(\hat\zeta^0_{t_\vep}=
\hat\zeta^{0,*}_{t_\vep})\to 1$ as $\vep\to 0$, and so by \eqref{red4}
\begin{equation}\label{red5}
\lim_{\vep\to 0}P(\hat \zeta^{x_\vep}_T(0) =1 ) =P(\hat \zeta_T^{0,*}(0)=1)\equiv\hat u(T,x).
\end{equation}
This proves \eqref{convgoal3}, hence \eqref{Lconv} and so to complete the proof of 
\eqref{convgoal2}, and hence Theorem~\ref{conv}, we only need show $\hat u=u$:

\begin{lem} \label{lem:ident} 
Let $\hat X^0_t,0\le t\le T$ be the
branching Brownian motion started at $x\in\R^d$, with associated
computation process $\hat\zeta^{0,*}_t,0\le t\le T$ with initial inputs
as in \eqref{hatinputs}. Then
$$
P( \hat\zeta^{0,*}_T(0)=1 ) = u(T,x),
$$
where $u$ is the solution of the PDE \eqref{rdpde}.
\end{lem}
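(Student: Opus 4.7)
Plan: Set $\hat u(t,x) = P(\hat\zeta^{0,*}_t(0)=1)$, viewing the total running time $t$ and the BBM's starting point $x$ as parameters. The strategy is to derive the mild (Duhamel) form of \eqref{rdpde} for $\hat u$, so that $\hat u = u$ follows from the standard Lipschitz uniqueness of $[0,1]$-valued mild solutions to a semilinear heat equation with Lipschitz reaction.

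I would begin by conditioning on the first branching time $R^0_1 \sim \operatorname{Exp}(c^*)$ and on $B_{R^0_1}$. On $\{R^0_1 > t\}$ the BBM is a single Brownian path started at $x$ and $\hat\zeta^{0,*}_t(0) = 1\{W_0 \le v(x + B_t)\}$, contributing $e^{-c^*t}\, P^\sigma_t v(x)$, where $P^\sigma_t$ denotes the Brownian semigroup with variance $\sigma^2$. On $\{R^0_1 = r \le t\}$ the parent sits at $y = x + B_r$ and the partition $\pi^0_1 \sim \nu_0$ divides the slots $\{0,1,\dots,N_0\}$ into cells; by the strong Markov property, each cell is the root of an independent BBM rooted at $y$ run for further time $t-r$, and its post-flip output is, recursively, Bernoulli$(\hat u(t-r,y))$, with the cell outputs independent.

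The key step is to recognize, via \eqref{prodform} and the definition of $\nu_0$ just above \eqref{nuconv}, that the joint law of the slot values $(i, V_1^1, \dots, V_1^{N_0})$ entering the root flip at dual time $t-r$ coincides with that of $(\xi(0), \xi(Y^1), \dots, \xi(Y^{N_0}))$, where $\xi$ is a voter equilibrium at density $\hat u(t-r, y)$ and $Y$ is an independent $q$-sample. Applying the flip rule \eqref{mu-flip-0} and the definitions \eqref{hcvgce} of $h_i$ and \eqref{fdef} of $f$, the conditional probability that the root equals $1$ collapses to $\hat u(t-r, y) + c^{*-1} f(\hat u(t-r, y))$. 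Combining the two cases yields
\begin{equation*}
\hat u(t,x) = e^{-c^* t}\, P^\sigma_t v(x) + \int_0^t c^*\, e^{-c^* r}\, P^\sigma_r\!\Bigl[\hat u(t-r,\cdot) + \tfrac{1}{c^*}\, f\bigl(\hat u(t-r,\cdot)\bigr)\Bigr](x)\, dr,
\end{equation*}
which is exactly the mild form of $\hat u_t = \tfrac{\sigma^2}{2}\Delta \hat u + f(\hat u)$ with $\hat u(0,x) = v(x)$, so $\hat u = u$ by uniqueness.

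The main obstacle is the duality identification: one must verify carefully that the BBM-side construction---sample $\pi^0_1$, assign one Bernoulli$(\hat u(t-r,y))$ per cell, and propagate equal values across each cell via the $\approx$-equivalence---produces exactly the voter-equilibrium joint sample $(\xi(0), \xi(Y^1), \dots, \xi(Y^{N_0}))$ encoded in \eqref{prodform}, with care taken when some of the $Y^i$ coincide (so that the corresponding slots necessarily share a cell on both sides). Once this combinatorial matching is in hand, the remainder is routine Feynman--Kac bookkeeping.
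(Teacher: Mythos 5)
Your proposal is correct, but it takes a genuinely different route from the paper's proof. The paper identifies $\hat u$ by going back through the \emph{particle system}: it writes down the martingale-problem identity \eqref{meanequ} for $E(\xi^\vep_T(x_\vep))$, and then uses the convergence already established in \eqref{Lconv} (together with Lemma~\ref{brcoup} and uniform convergence of the $g_i^\vep$) to pass $\vep\to 0$ in the Duhamel integral, landing on the weak form of \eqref{rdpde} for $\hat u$; uniqueness is then cited from Durrett--Neuhauser. You instead work intrinsically with the branching Brownian motion and its computation process, decomposing over the first branching time $R^0_1\sim\operatorname{Exp}(c^*)$ and the parent location, identifying the slot inputs $(i,V_1^1,\dots,V_1^{N_0})$ at the root flip with a voter-equilibrium sample $(\xi(0),\xi(Y^1),\dots,\xi(Y^{N_0}))$ at density $\hat u(t-r,y)$ via $\nu_0$ and \eqref{prodform}, computing the resulting flip probability as $\hat u(t-r,y)+c^{*-1}f(\hat u(t-r,y))$, and arriving at the $c^*$-killed mild formulation
$\hat u(t,x)=e^{-c^*t}P^\sigma_t v(x)+\int_0^t c^*e^{-c^*r}P^\sigma_r\bigl[\hat u(t-r,\cdot)+\tfrac{1}{c^*}f(\hat u(t-r,\cdot))\bigr](x)\,dr$,
which is indeed equivalent to the standard mild form of \eqref{rdpde} (fold in the $\mp c^*u$ terms), so $\hat u=u$ by Lipschitz uniqueness on $[0,1]$. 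The key duality identification is correct: the cells of $\pi^0_1$ are the coalescence classes of $\{0,Y^1,\dots,Y^{N_0}\}$, each independent subtree output is Bernoulli$(\hat u(t-r,y))$ by the strong Markov property of the BBM, and the $\approx$-propagation assigns each slot the Bernoulli of its cell --- which is exactly how \eqref{prodform} builds the voter equilibrium marginal. Your approach is more self-contained (no detour through $\xi^\vep$ is needed, and it is essentially a McKean-type representation of $u$ via the BBM), whereas the paper's approach reuses machinery from Theorem~\ref{conv} and fits the Durrett--Neuhauser template. Both are valid; the comparison is that yours isolates the BBM-level mechanism, while the paper's emphasizes consistency with the particle-system martingale problem. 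One small note: in a polished write-up the recursive step (the Markov/translation property giving Bernoulli$(\hat u(t-r,y))$ subtree outputs, and the transitive closure of $\approx$ when the parent's cell also contains several of the new slots) should be spelled out, but there is no gap.
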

\begin{proof} This is very similar to the proof in Section 2(e) of
  \cite{DN94}.  Recall $P^\vep_t$ is the semigroup of $B^\vep$.  Let $x\in\R^d$ and $x_\vep\in\vep\Z^d$ satisfy $|x-x_\vep|\le \vep$ and let $\xi^\vep$ be our rescaled particle system where $\{\xi^\vep_0(\vep y):y\in\Z^d\}$ are independent Bernoulli random variables with means
  $\{v(\vep y):y\in\Z^d\}$.  If
\beq\label{dvepdef}d_\vep(\vep y,\xi_\vep) =-\xi(y)h_0^\vep(y,\xi)+(1-\xi(y))h_1^\vep(y,\xi),\ y\in\Z^d,\ \xi\in\{0,1\}^{\Z^d},
\eeq
then the martingale problem for $\xi^\vep$ shows that (cf. (2.25) of \cite{DN94})
\beq\label{meanequ}
E(\xi^\vep_T(x_\vep))=E(P^\vep_T\xi_0^\vep(x_\vep))+\int_0^TE_{x_\vep}\times E(d_\vep(B^\vep_{T-s},\xi^\vep_s))\,ds,
\eeq
where $B^\vep_0=x_\vep$ under $P_{x_\vep}$.  Our hypotheses on $\xi_0^\vep$ imply 
\[E(P_T^\vep\xi_0^\vep(x_\vep))=P^\vep_Tv(x_\vep)\to P_Tv(x)\hbox{ as }\vep\to0,\]
where $P_t$ is the $d$-dimensional Brownian semigroup with variance $\sigma^2$.  
Recall we have proved (\eqref{Lconv} and the preceding results) that
\[\lim_{\vep\to 0} P(\xi_T^\vep(x_\vep+\vep y_i)=\eta_i,\ i=0,\dots,L)=\langle1\{\xi(y_i)=\eta_i,\  i=0,\dots,L\}\rangle_{\hat u(T,x)}.\]
Now use the above with Fubini's theorem, the uniform convergence of $g_i^\vep$ in \eqref{gcvgce} and the coupling of $B^\vep$ and $B$ in Lemma~\ref{brcoup} to conclude that
\begin{align*}
&\lim_{\vep\to 0}E_{x_\vep}\times E(d_\vep(B^\vep_{T-s},\xi^\vep_s))\\
&=\lim_{\vep\to 0} E_{x_\vep}\times E\times E_Y\Bigl(-\xi^\vep_s(B^\vep_{T-s})g_0^\vep(\xi_s^\vep(B^\vep_{T-s}+\vep Y^1),\dots,\xi_s(B^\vep_{T-s}+\vep Y^{N_0}))\\
&\phantom{=\lim_{\vep\to0} E_{x_\vep}\times E\times E_Y(}+(1-\xi^\vep_s(B^\vep_{T-s}))g_1^\vep(\xi_s^\vep(B^\vep_{T-s}+\vep Y^1),\dots,\xi^\vep_s(B^\vep_{T-s}+\vep Y^{N_0})\Bigr)\\
&=E_x\Bigl(\langle -\xi(0)h_0(0,\xi)+(1-\xi(0))h_1(0,\xi)\rangle_{\hat u(s,B_{T-s})}\Bigr)\\
&=E_x(f(\hat u(s,B_{T-s})),
\end{align*}
the last by \eqref{fdef}.
Now use the above to take limits in \eqref{meanequ} to show that $\hat u$ solves the weak form of \eqref{rdpde}.  As in Lemma 2.21 of \cite{DN94} it follows that $\hat u$ solves \eqref{rdpde} and so equals $u$. 
\end{proof}
The following asymptotic independence result follows easily from Step 1 in the above argument.

\begin{prop}\label{prop:xiind} If $K\in\NN$, there is a
    $c_{\tref{prop:xiind}}(K)$ so that if
    $z_1,\dots,z_K\in \vep\Z^d$ satisfy $\inf_{j\neq
      k}|z_j-z_k|\ge \vep^{1/4}$ and $\xi_0^\vep$ is
    deterministic, then
    \[|E(\prod_{k=1}^K\xi^\vep
    _T(z_k))-\prod_{k=1}^KE(\xi^\vep _T(z_k))|\le
    c_{\tref{prop:xiind}}(K) e^{c_bT}\vep^{1/8}.\]
\end{prop}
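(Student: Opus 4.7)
The plan is to mimic Step 1 of the proof of Theorem~\ref{conv}, specialized to the case of single-particle duals and a deterministic initial configuration (so that no Bernoulli-input reduction is required). First, applying the duality equation \eqref{dualityeq} to the deterministic initial state $\xi_0^\vep$, we have
\[
E\Bigl(\prod_{k=1}^K \xi^\vep_T(z_k)\Bigr) = P(\zeta_T(k)=1,\ k=1,\dots,K),
\]
where $\zeta$ is the computation process for the joint dual $X=X^{z,T}$ started at $z=(z_1,\dots,z_K)$ with inputs $\zeta_0(j)=\xi_0^\vep(X^j_T)$, $j\in J(T)$. Likewise $E(\xi^\vep_T(z_k))=P(\zeta^{z_k}_T(0)=1)$ for the single-particle dual $X^{z_k,T}$ and its computation process $\zeta^{z_k}$.

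Next, exactly as in the derivation of \eqref{K1}, let $V=V_{z,T,\vep}$ be the first time (set to $\infty$ if it never happens) that two of the duals $X^{z_k,T}$, $k=1,\dots,K$, share a particle site. On $\{V=\infty\}$ the $K$ duals consult disjoint (random but $\cF$-measurable) portions of the graphical representation, and since $\xi_0^\vep$ is deterministic, the inputs $\zeta_0(j)=\xi_0^\vep(X^j_T)$ are already a measurable function of the individual dual positions with no extra random coupling. Extending each $X^{z_k,T}$ on $[V\wedge T,T]$ by an independent graphical representation (and supplying each computation process $\zeta^{z_k}$ with an independent sequence of uniforms on $[V\wedge T,T]$) produces processes $\zeta^{\prime,z_k}$ that are fully independent in $k$, each equal in law to $\zeta^{z_k}$, and that agree with $\zeta^{z_k}$ for every $k$ on $\{V=\infty\}$. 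Consequently
\[
\Bigl|P(\zeta_T(k)=1,\ k=1,\dots,K)-\prod_{k=1}^K P(\zeta^{z_k}_T(0)=1)\Bigr|\le P(V<\infty).
\]

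Finally, the quantity $\Delta_\vep$ appearing in Lemma~\ref{lem:Vprob} reduces here (since we have $L=0$, i.e.\ one particle per dual) to $\min_{j\neq k}|z_j-z_k|\vep^{-1}\ge \vep^{-3/4}$. Lemma~\ref{lem:Vprob} with $L=0$ therefore gives
\[
P(V<\infty)\le c(K)e^{c_bT}\vep^{\frac{3}{4}\cdot\frac{d-2}{d+3}}.
\]
Since $d\ge 3$, the exponent $\frac{3(d-2)}{4(d+3)}$ is minimized at $d=3$, where it equals $\frac{1}{8}$, which yields the claimed bound. The only subtle point is the independence claim on $\{V=\infty\}$: this is not automatic from the graphical representation, and requires precisely the observation used in \eqref{K1}, that the $K$ duals together with their Poisson inputs can be decoupled on $\{V=\infty\}$ by replacing each with an independent extension past $V\wedge T$. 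Everything else is bookkeeping plus the quantitative collision bound of Lemma~\ref{lem:Vprob}.
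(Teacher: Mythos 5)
Your proposal is correct and follows essentially the same route as the paper: it invokes the duality equation to reduce to collision avoidance, bounds the left-hand side by $P(V<\infty)$ by the independent-extension argument used for \eqref{K1}, and then applies Lemma~\ref{lem:Vprob} with $L=0$ and $\Delta_\vep\ge\vep^{-3/4}$, finishing with the observation that the exponent is minimized (at $1/8$) when $d=3$.
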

\begin{proof} Define $V=V_{z,T,\vep}$ as in \eqref{Vdefnep}
  but now with $z_k\in\vep\Z^d$, that is $L=0$. Use the dual
  equation \eqref{dualeq} and argue just as in the
  derivation of \eqref{K1} to see that
\[|E(\prod_{i=1}^K\xi^\vep
    _T(z_i))-\prod_{i=1}^KE(\xi^\vep _T(z_i))|\le P(V<\infty).\]
The fact that $\xi^\vep_0$ is deterministic makes the independence argument simpler in this setting.  
Now use Lemma~\ref{lem:Vprob} and the separation hypothesis on the $z_k$'s to bound the right-hand side of the above by 
$$c_{\ref{lem:Vprob}}(K,0)e^{c_bT}\vep^{(3/4)(d-2)/(d+3)}\le c_{\ref{lem:Vprob}}(K,0)e^{c_bT}\vep^{1/8}.$$
\end{proof}
\subsection{Proof of Theorem \ref{thm:strongconv}}
\label{ssec:corproof}

\begin{proof} Let $t>0$ and choose $\eta(\vep)\downarrow 0$ so that
$\eta(\vep)/\vep \to \infty$ and $\eta(\vep)/\delta(\vep)\to 0$. Recall $I_\delta(x)$ is the semi-open cube containing $x$ defined prior to Theorem~\ref{thm:strongconv}.
Write $E((\tilde u^\delta(t,x)-u(t,x))^2) =$
$$ =\Bigl(\frac{\vep}{\delta(\vep)}\Bigr)^{2d}\sum_{x_1,x_2\in I_\delta(x)}
E(\xi_t^\vep(x_1)\xi^\vep_t(x_2)-u(t,x)(\xi^\vep_t(x_1)+\xi_t^\vep(x_2))+u(t,x)^2).
$$
The contribution to the above sum from $|x_1-x_2|\le
\eta(\vep)$ is trivially asymptotically small, uniformly in
$x$, as $\vep\to0$.  Theorem~\ref{conv} shows that the
expectation in the above sum goes to zero uniformly in
$x_1,x_2\in I_{\delta}(x),\, |x_1-x_2|\ge \eta(\vep),\, x\in
[-K,K]^d$ as $\vep\to 0$. The result follows.
\end{proof}

\clearpage
  
\section{Achieving low density}
\label{lockill}

The first step in the proof of Theorem~\ref{thm:nonexist} is to use the convergence to the partial differential equation in Theorem~\ref{conv}, and more particularly the estimates in the proof, to get the particle density in \eqref{densdef0} low on a linearly growing region.  

As we will now use the partial differential equation results in Section~\ref{ss:PDE}, we begin by giving the short promised proofs of Propositions~\ref{prop:pde1}
, \ref{prop:pde4} and \ref{prop:pde3}.

\medskip

\noindent{\it Proof of Proposition \ref{prop:pde3}.}
Set $\eta=|r|/3$ and let $L^0_\delta$, $C_0$ and $c_0$ be the constants in
Proposition~\ref{prop:pde2}, and define 
\[
L_\delta=L^0_{\delta}, \quad  c_1=c_0,\quad
t_\delta = L^0_{\delta} \cdot 3\sqrt{d}/|r|, \quad 
C_1= (C_0\vee 1)e^{c_0t_\delta} \,.
\]
Suppose $t\ge t_\delta$, $L\ge L_{\delta}$ and $|x|\le L + (|r|/3)t/\sqrt{d}$. Then we may 
write $x=x_0+y$, where 
\begin{equation}\label{xdecomp}
      |y|\le \frac{2|r|}{3} \frac{t}{\sqrt d}
\quad\text{ and }\quad |x_0|\le L-\frac{|r| t}{3\sqrt d} 
\le L-\frac{|r| t_\delta}{3\sqrt d}  = L-L^0_{\delta}. 
\end{equation}
For $t\ge 0$ and $z\in\Rd$ define $\tilde u(t,z)=u(t,x_0+z)$.  If
$|z|\le L^0_{\delta}$, then $|x_0+z|\le |x_0|+L^0_{\delta}\le L$, which implies that 
$\tilde u(0,z)\le \rho-\delta$.  
Applying Proposition~\ref{prop:pde2} to $\tilde u$, and recalling the bound on $|y|$ in
\eqref{xdecomp}, which implies $|y|_2 \le \frac{2|r|}{3}t$ we have that for $t\ge t_\delta$,
and $|x|\le L + (|r|/3)t/\sqrt{d}$
$$
u(t,x)=\tilde u(t,y)\le C_0 e^{-c_0 t} \le C_1 e^{-c_1 t} \,.
$$
Since the right-hand side above is at least 1 if $t\le t_\delta$, 
the above bound follows for all $t\ge 0$, and we have proved the result with $w=|r|/6\sqrt{d}$. \qed

\medskip

\noindent {\it Proof of Proposition~\ref{prop:pde4}.} Extend $f|_{[0,1]}$ to a smooth function $\tilde f$ on $[0,1+\delta_0]$ so that $\tilde f>0$ on $(1,1+\delta_0)$, $\tilde f(1+\delta_0)=0$, $\tilde f'(1+\delta_0)<0$ and $\int_0^{1+\delta_0}\tilde f(u)du<0$.  The situation is now as in Proposition~\ref{prop:pde3} with $0$, $1$ and $1+\delta_0$ playing the roles of $0$, $\rho$ and $1$. As the solutions take values in $[0,1]$ the extension will not affect the solutions and the Theorem follows from Proposition~\ref{prop:pde3}. \qed

\medskip
\noindent{\it Proof of Proposition~\ref{prop:pde1}.}  
The version of Proposition~\ref{prop:pde4} with the roles of $0$ and $1$ reversed, applied on the interval $(0,\alpha)$ shows there are positive constants $L$, $c$, and $C$ so that if $u(0,x)\ge \alpha/2$ for $|x|\le L$, then 
\[u(t,x)\ge \alpha-Ce^{-c t}\hbox{ for }|x|\le L+2wt.\]
It is here that we need $f'(\alpha)<0$, corresponding to $f'(0)<0$ in Proposition~\ref{prop:pde4}.  By Theorem~3.1 of Aronson and Weinberger \cite{AW78} there is a $T_0$ so that 
\[u(T,x)\ge \alpha/2\hbox{ for }|x|\le L \hbox{ and }T\ge T_0.\]
Therefore we have 
\[u(t+T_0,x)\ge \alpha-Ce^{-ct}\hbox{ for }|x|\le L+2w(t+T_0)-2wT_0,\]
and so for $t\ge 2T_0$,
\[u(t,x)\ge \alpha-Ce^{cT_0}e^{-ct}\hbox{ for }|x|\le L+wt.\]
The result follows as we may replace $w$ with $2w$.
\qed

\medskip

Recall the parameter $r\in(0,1)$, and definitions of $a_\vep$, $t_\vep$, $Q_\vep$, and $D(x,\xi)$ in
\eqref{densdef0}. We first show the density $D(x,\xi^\vep_T)$ is close to its mean.

\begin{lem}\label{lem:meanD} Let $T>0$ and assume $\xi_0^\vep$ is deterministic. 

\noindent(a) If $0<r<\frac{5}{24}$, then  for all $x\in a_\vep\Z^d$, 
\[E((D(x,\xi^\vep_T)-E(D(x,\xi^\vep_T)))^2)\le C_{\ref{lem:meanD}}e^{c_bT}\vep^{1/8}.\]

\noindent(b) If $0<r\le 1/(16d)$ and $C=y+[-L,L]^d$ for $y\in \R^d$, then for all $\eta>0$,
$$P\Bigl(\sup_{x\in C\cap a_\vep\Z^d}|D(x,\xi^\vep_T)-E(D(x,\xi^\vep_T))|\ge \eta\Bigl)\le C_{\ref{lem:meanD}}\vep^{1/16}L^d e^{c_bT}\eta^{-2}.$$
\end{lem}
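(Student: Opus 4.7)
The plan for part (a) is to expand the variance as
\[
E((D(x,\xi^\vep_T)-E(D(x,\xi^\vep_T)))^2) = \frac{1}{|Q_\vep|^2}\sum_{y_1,y_2\in Q_\vep} \mathrm{Cov}(\xi^\vep_T(x+y_1),\xi^\vep_T(x+y_2)),
\]
and to split the double sum according to whether $|y_1-y_2|<\vep^{1/4}$ (``close'' pairs) or $|y_1-y_2|\ge\vep^{1/4}$ (``far'' pairs). For close pairs, use the trivial bound $|\mathrm{Cov}|\le 1$; the number of such pairs is at most $|Q_\vep|$ times the cardinality of a $\vep^{1/4}$-ball in $\vep\Z^d$, which is $O(\vep^{-3d/4})$. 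Since $|Q_\vep|^{-1}\le\vep^{d(1-r)}$, the close pairs contribute at most $C\vep^{d(1/4-r)}$. Because $r<5/24$ and $d\ge 3$, we have $d(1/4-r)>d/24\ge 1/8$ (tightness occurs at $d=3$, $r=5/24$), so this is bounded by $C\vep^{1/8}$. For far pairs, invoke Proposition~\ref{prop:xiind} with $K=2$: since $\xi^\vep_0$ is deterministic and the sites are separated by at least $\vep^{1/4}$, each covariance is at most $c_{\ref{prop:xiind}}(2)e^{c_bT}\vep^{1/8}$. Summing and dividing by $|Q_\vep|^2$ yields the same bound.

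For part (b), note that the assumption $r\le 1/(16d)$ implies $r<5/24$ for $d\ge 3$, so part (a) applies. Apply Chebyshev's inequality pointwise to obtain
\[
P(|D(x,\xi^\vep_T)-E(D(x,\xi^\vep_T))|\ge\eta)\le C_{\ref{lem:meanD}}e^{c_bT}\vep^{1/8}\eta^{-2}
\]
for every $x\in a_\vep\Z^d$. Take a union bound over $x\in C\cap a_\vep\Z^d$: since $a_\vep\ge\vep^r$, the cardinality of this set is at most $CL^d a_\vep^{-d}\le CL^d\vep^{-rd}$. The resulting sum is
\[
CL^d\vep^{-rd}\cdot e^{c_bT}\vep^{1/8}\eta^{-2}=CL^d e^{c_bT}\vep^{1/8-rd}\eta^{-2},
\]
and the hypothesis $r\le 1/(16d)$ gives $1/8-rd\ge 1/16$, which yields the claim.

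The only real input is Proposition~\ref{prop:xiind}, which supplies the asymptotic independence of $\xi^\vep_T$ at sites separated by at least $\vep^{1/4}$; the proof is otherwise just an accounting exercise. The constraint $r<5/24$ in (a) is dictated by balancing the number of close pairs (growing as $r$ shrinks) against the $\vep^{1/8}$ budget from Proposition~\ref{prop:xiind}, and is sharp precisely at $d=3$. The stronger restriction $r\le 1/(16d)$ in (b) is forced by the union bound over a region of volume $L^d$, which costs an extra factor of $\vep^{-rd}$ and must be absorbed into the $\vep^{1/8}$ budget while leaving $\vep^{1/16}$ to spare.
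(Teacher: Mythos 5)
Your proof is correct and follows essentially the same route as the paper's: for (a), split the variance sum according to whether the two sites are within $\vep^{1/4}$, count the close pairs via the volume of a $\vep^{1/4}$-ball in $\vep\Z^d$ (giving $\vep^{d(1/4-r)}$), and invoke Proposition~\ref{prop:xiind} with $K=2$ for the far pairs (giving $\vep^{1/8}$); for (b), apply Chebyshev at each point and a union bound over $C\cap a_\vep\Z^d$, whose cardinality is $O(L^d\vep^{-rd})$. The numerology ($r<5/24$ to absorb the close-pair term; $r\le 1/(16d)$ to leave $\vep^{1/16}$ after the union bound) matches the paper exactly.
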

\begin{proof} (a)  Note that 
\begin{align}\label{zcount}
 |&\{(z_1,z_2)\in(x+Q_\vep)^{2}:\,|z_1-z_2|\le\vep^{1/4}\}|\\ 
    \nonumber&\le (2\vep^{-3/4}+1)^d|Q_\vep|\le
    c_d|Q_\vep|^{2}(\vep^{{1\over 4}-r})^d.
  \end{align}
If $\Sigma^x_z$ denotes the sum over
\begin{equation}\label{zdom}
    z\in\{(z_1, z_2)\in
    (x+Q_\vep)^{2}:\,|z_1-z_2|>
    \vep^{1/4}\}, 
\end{equation}
then by \eqref{zcount} and Proposition~\ref{prop:xiind} with $K=2$,
\begin{align}\nonumber
    E&((D(x,\xi^\vep_T)-E(D(x,\xi^\vep_T)))^{2})\\
  \nonumber  &\le
    |Q_\vep|^{-2}\Bigl[c_d|Q_\vep|^{2}(\vep^{{1\over
        4}-r})^d+\hbox{$\sum_z^x$}[E(\prod_{k=1}^2\xi_T^\vep(z_k))-\prod_{k=1}^2E(\xi_T^\vep(z_k))]\Bigr]\\  
 \nonumber  &\le c_d\vep^{({1\over
        4}-r)d}+4c_{\ref{prop:xiind}}(2)e^{c_bT}\vep^{1/8}\\
 \label{Dmom}        &\le C_{\ref{lem:meanD}}e^{c_bT}\vep^{1/8},
\end{align}
where our condition on $r$ is used in the last line.

\noindent(b) Note that 
$$| C\cap a_\vep\Z^d|\le c_dL^da_\vep^{-d}\le c_dL^d\vep^{-rd}\le c_dL^d\vep^{-1/16}.$$
The result now follows from (a) and Chebychev's inequality.
\end{proof}

We recall  the following hypothesis from Section~\ref{gensys}:

\mn
{\bf Assumption \ref{a2}.}
There are constants $0<u_1<1$, $c_2, C_2, w>0$, $L_0\ge 3$ 
so that for all $L\ge L_0$, if $u(0,x)\le u_1$ for $|x|\le L$ then for all $t\ge 0$
$$
u(t,x)\le C_2 e^{-c_2 t}\hbox{ for all $|x|\le L+2wt$.} 
$$ 
\medskip

We also recall the following condition from  the same Section: For some $r_0>0$, 

\beq\label{grate2}
\sum_{i=0}^1\Vert g_i^\vep-g_i\Vert_\infty\le c_{\ref{grate}}\vep^{r_0}.
\eeq

  We say that $\xi\in\{0,1\}^{\vep\Z^d}$ has density at most $\kappa$ (respectively, in $[\kappa_1,\kappa_2]$) on $A\subset\R^d$ iff $D(x,\xi)\le \kappa$ (respectively $D(x,\xi)\in[\kappa_1,\kappa_2]$) for all $x\in(a_\vep\Z^d)\cap A$.  We set (recall \eqref{betaeps})
\beq \label{Tdefn}r=\frac{1}{16d},\hbox{ hence }\beta=\frac{1.9}{16d},\,t_\vep=\vep^{1.9/(16d)}, T=A_{\ref{lem:gdlow}}\log(1/\vep),\hbox{ and }S=T-t_\vep,\eeq
where $A_{\ref{lem:gdlow}}=c_b^{-1}\Bigl(\frac{1}{100d}\wedge\frac{r_0}{4}\Bigr)$.

\begin{lem} \label{lem:gdlow} Suppose Assumption~\ref{a2} and \eqref{grate2} hold. 
Let $u_2\in(0,u_1)$ and \break 
$\gamma_{\ref{lem:gdlow}}=\Bigl(\frac{c_2}{c_b}\wedge 1\Bigr)\Bigl(\frac{1}{120d}\wedge\frac{r_0}{5}\Bigr)$. There is an $\vep_{\ref{lem:gdlow}}>0$, depending on $(u_1,u_2,w,c_2,C_2)$ and satisfying
\beq\label{veplemgdlow} \vep_{\ref{lem:gdlow}}^{\gamma_{\ref{lem:gdlow}}}\le u_2,
\eeq 
so that if 
$0<\vep\le \vep_{\ref{lem:gdlow}}$ and $2+L_0\le L\le \vep^{-.001/d}$,  then
whenever $\xi_0^\vep$ has density at most $u_2$ in $[-L,L]^d$, 
\begin{equation*}
P(\xi_T^\vep\hbox{ has density at most }\vep^{\gamma_{\ref{lem:gdlow}}}
\hbox{ in }[-L-wT,L+wT]^d|\xi_0^\vep)
\ge 1 - \vep^{.05}.
\end{equation*}
\end{lem}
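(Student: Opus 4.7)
The plan is to lift Assumption \ref{a2} to the particle system by adapting the proof of Theorem \ref{conv} to the regime $T=A_{\ref{lem:gdlow}}\log(1/\vep)$ and to a deterministic initial condition with only a coarse density bound, then use Lemma \ref{lem:meanD}(b) to pass from expectations to a uniform-in-$x$ almost-sure bound. The choices $c_bA_{\ref{lem:gdlow}}\le 1/(100d)\wedge r_0/4$ and $\gamma_{\ref{lem:gdlow}}=(c_2/c_b\wedge 1)(1/(120d)\wedge r_0/5)$ are made precisely so that the exponential blow-up $e^{c_bT}=\vep^{-c_bA_{\ref{lem:gdlow}}}$ accompanying the Section \ref{sec:construction} error estimates is beaten by the exponential decay coming from Assumption \ref{a2}.

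\emph{Mean bound.} Fix $z\in[-L-wT,L+wT+a_\vep]^d\cap\vep\Z^d$. Duality \eqref{dualityeq} gives $E(\xi_T^\vep(z))=P(\zeta_T(0)=1)$. I would run the coupling chain $X\to\tilde X\to\hat X^\vep\to\hat X^0$ of Subsection \ref{ssec:convproof}, with errors controlled by Lemmas \ref{lem:tildeG} and \ref{poscoupl}(d) (the latter yielding $O(\vep^{r_0/2})$ via \eqref{grate2}), and invoke the BBM identification Lemma \ref{lem:ident} to obtain
\[
E(\xi_T^\vep(z))\le u(T,z;\psi_\vep)+Ce^{c_bT}\bigl(\vep^{1/40\wedge\beta/3}+\vep^{r_0/2}\bigr),
\]
where $\psi_\vep(y):=P^\vep_{t_\vep}\xi_0^\vep(y)$ plays the role of PDE initial data; this identification hinges on the exact identity $E(\prod_i\xi_0^\vep(B^{\vep,w_i}_{t_\vep}))=\prod_i\psi_\vep(w_i)$ for independent walks, which replaces Lemma \ref{lem:rev3} in the deterministic-initial-data setting. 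Since $\xi_0^\vep$ has density at most $u_2$ on $[-L,L]^d$ and the averaging scale $\sqrt{t_\vep}=\vep^{\beta/2}$ strictly exceeds the block scale $a_\vep=\vep^r$, I would choose a continuous envelope $v^+:\R^d\to[0,1]$ with $v^+\ge\psi_\vep$, $v^+\le(u_1+u_2)/2<u_1$ on $[-L+\delta,L-\delta]^d$ for some $\delta=o(wT)$, and $v^+\le 1$ outside. The PDE comparison principle then gives $u(T,z;\psi_\vep)\le u(T,z;v^+)$.

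\emph{PDE bound and concentration.} Assumption \ref{a2}, applied on the shrunken box $[-L+\delta,L-\delta]^d$ where $v^+<u_1$, yields $u(T,z;v^+)\le C_2e^{-c_2T}=C_2\vep^{c_2A_{\ref{lem:gdlow}}}$ for $|z|\le L-\delta+2wT$, which covers our range once $\delta<wT$. Combining with the error bound and using that both $c_2A_{\ref{lem:gdlow}}$ and $(1/40\wedge\beta/3\wedge r_0/2)-c_bA_{\ref{lem:gdlow}}$ exceed $\gamma_{\ref{lem:gdlow}}$, we get $E(\xi_T^\vep(z))\le\tfrac12\vep^{\gamma_{\ref{lem:gdlow}}}$ for small $\vep$. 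Averaging over $y\in Q_\vep$ preserves this bound on $E(D(x,\xi_T^\vep))$ uniformly in $x\in[-L-wT,L+wT]^d\cap a_\vep\Z^d$. Lemma \ref{lem:meanD}(b) with $\eta=\tfrac12\vep^{\gamma_{\ref{lem:gdlow}}}$ and $L'=L+wT$ then bounds the exceedance probability by $C\vep^{1/16-.001-c_bA_{\ref{lem:gdlow}}-2\gamma_{\ref{lem:gdlow}}}\le\vep^{.05}$, tight when $d=3$.

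\emph{Main obstacle.} The principal difficulty is adapting the coupling chain of Section \ref{sec:construction} to both (i) a time horizon $T=A\log(1/\vep)$, which multiplies every error by $e^{c_bT}$ and forces $A$ small, and (ii) a deterministic initial condition with no continuous local density, so that the customary replacement of $\xi_0^\vep$ by $v$ (Lemma \ref{lem:rev3}) must instead be performed through the non-continuous $\psi_\vep$. Because voter perturbations are not monotone, one cannot dominate $\xi^\vep$ by a process started from the continuous upper envelope $v^+$; the comparison between $\psi_\vep$ and $v^+$ must therefore be carried out at the PDE level via Lemma \ref{lem:ident} and the parabolic maximum principle, rather than directly at the particle-system level.
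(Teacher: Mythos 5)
Your proposal follows essentially the same route as the paper's proof: dualize, run the coupling chain $X\to\tilde X\to\hat X^\vep\to\hat X^0$, identify the BBM computation with the PDE solution started from $\psi_\vep=P^\vep_{t_\vep}\xi^\vep_0$, invoke Assumption~\ref{a2} on a slightly shrunken box, and finish with the concentration bound of Lemma~\ref{lem:meanD}(b); the choice $c_bA_{\ref{lem:gdlow}}\le\frac{1}{100d}\wedge\frac{r_0}{4}$ and the slack between $\gamma_{\ref{lem:gdlow}}$ and the various error exponents are chosen for exactly the reasons you give. The only cosmetic difference is that you interpose a continuous envelope $v^+$ and a PDE comparison step, whereas the paper extends $\psi_\vep$ directly to a piecewise-linear function satisfying both the pointwise cap $\psi_\vep\le u_1$ on $[-L+2,L-2]^d$ (needed for Assumption~\ref{a2}) and the modulus-of-continuity bound from Lemma~\ref{lem:GLCLT} (needed to close the $\hat X^\vep\to\hat X^0$ coupling); the latter regularity requirement, which your sketch does not make explicit, is automatic from heat-kernel smoothing and is the reason $\psi_\vep$ rather than $\xi^\vep_0$ must serve as the PDE data.
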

\noindent
Note that \eqref{veplemgdlow} allows us to iterate this result and obtain the conclusion on successively larger spatial regions at multiples of $T$.

The proof of the above Lemma will require some preliminary lemmas.

\begin{lem}\label{LCLTI} If $p^\vep_t(y)=\vep^{-d}P(B^\vep_t=y)$, $y\in\vep\Z^d$, then for $0<\vep\le 1$, 
\[|p^\vep_t(x)-p^\vep_t(x+y)|\le c_{\ref{LCLTI}}|y|t^{-(d+1)/2}\hbox{ for all }x,y\in\vep\Z^d\hbox{ and }t>0.\]
\end{lem}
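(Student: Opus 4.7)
The plan is to reduce the claim to a gradient estimate for the unscaled continuous-time random walk by Brownian-style scaling, and then prove that estimate via Fourier inversion.

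First, I would unwind the scaling. Writing $B^\vep_t = \vep B^1_{\vep^{-2}t}$ where $B^1$ is the continuous-time rate-$1$ walk on $\Zd$ with step law $p$, we have $p^\vep_t(y) = \vep^{-d}q_{\vep^{-2}t}(y/\vep)$ for $y \in \vep\Zd$, where $q_s(z) = P(B^1_s = z)$, $z\in\Zd$. Setting $s = \vep^{-2}t$ and $w = y/\vep \in \Zd$, the stated inequality reduces to
\[
|q_s(z) - q_s(z+w)| \le c\,|w|\, s^{-(d+1)/2} \qquad \text{for all } z,w \in \Zd,\ s > 0.
\]
Once this is established, multiplying by $\vep^{-d}$ and using $|w| = |y|/\vep$, $s^{-(d+1)/2} = \vep^{d+1} t^{-(d+1)/2}$ recovers the claim with a constant independent of $\vep \in (0,1]$.

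Next, for the unscaled bound I would use Fourier inversion on the torus. With $\hat p(\theta) = \sum_{x} p(x) e^{i x\cdot\theta}$, the semigroup of $B^1$ has characteristic function $e^{s(\hat p(\theta)-1)}$, so
\[
q_s(z+w) - q_s(z) = \frac{1}{(2\pi)^d}\int_{[-\pi,\pi]^d} e^{-iz\cdot\theta}\bigl(e^{-iw\cdot\theta}-1\bigr)\,e^{s(\hat p(\theta)-1)}\,d\theta.
\]
Bounding $|e^{-iw\cdot\theta}-1|\le |w||\theta|$ and $|e^{s(\hat p-1)}|=e^{s(\Re\hat p(\theta)-1)}$, the estimate follows from a Gaussian-type bound on the characteristic function.

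The key analytic step---and the main thing to check carefully---is the uniform quadratic upper bound
\[
\Re\bigl(\hat p(\theta)-1\bigr) = \sum_x p(x)\bigl(\cos(x\cdot\theta)-1\bigr) \le -c_0 |\theta|^2, \qquad \theta \in [-\pi,\pi]^d.
\]
Near $\theta=0$ this follows from the expansion $\hat p(\theta) = 1 - \tfrac{\sigma^2}{2}|\theta|^2 + O(|\theta|^3)$ using the second moment assumption $\sigma^2 I > 0$ and the smoothness of $\hat p$ guaranteed by the exponential tails \eqref{expbd1}. Away from $\theta = 0$, irreducibility and symmetry of $p$ force $\Re\hat p(\theta) < 1$: equality in $\cos(x\cdot\theta)=1$ for all $x$ with $p(x) > 0$ would require $\theta \in 2\pi\Zd$, hence $\theta = 0$ in the box. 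A compactness/continuity argument then combines the two regimes into the uniform quadratic bound. Given this,
\[
|q_s(z) - q_s(z+w)| \le c\,|w| \int_{\R^d} |\theta|\, e^{-c_0 s|\theta|^2}\, d\theta \le c'\,|w|\, s^{-(d+1)/2}
\]
by the substitution $\theta \mapsto \theta/\sqrt{s}$, completing the proof. The routine verifications are the quadratic Taylor expansion of $\hat p$ and the Gaussian integral; the only mild subtlety is ensuring the constant $c_0$ is uniform on $[-\pi,\pi]^d$, which the argument above supplies.
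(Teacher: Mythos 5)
Your proof is correct. The paper gives no proof of its own, simply citing Lemma 2.1 of \cite{CMP} and noting that the argument extends to higher dimensions; a Fourier inversion argument of exactly this type (characteristic function $e^{s(\hat p(\theta)-1)}$, the bound $|e^{-iw\cdot\theta}-1|\le|w||\theta|$, the uniform quadratic bound $1-\hat p(\theta)\ge c_0|\theta|^2$ on $[-\pi,\pi]^d$ via the Taylor expansion near the origin plus irreducibility and compactness away from it, and the Gaussian scaling $\theta\mapsto\theta/\sqrt s$) is the standard way to obtain this gradient estimate and is presumably what the cited lemma does. The reduction to the unscaled walk via $p^\vep_t(y)=\vep^{-d}q_{\vep^{-2}t}(y/\vep)$ is carried out correctly and is precisely why the constant can be taken independent of $\vep\in(0,1]$. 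One cosmetic remark: by the symmetry $p(x)=p(-x)$, $\hat p$ is real, so the $\Re$ in your estimate is harmless but unnecessary, and the Taylor remainder is actually $O(|\theta|^4)$ rather than $O(|\theta|^3)$; neither affects the argument.
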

\begin{proof} This is a standard local central limit theorem; for $d=2$ this is Lemma 2.1 of \cite{CMP} and  the same proof applies in higher dimensions. 
\end{proof}

Recall (from \eqref{rwsgroup}) that $P^\vep_t$ is the semigroup associated with $B^\vep$. 

\begin{lem}\label{lem:GLCLT} There is a $c_{\ref{lem:GLCLT}}$ such that if $1>\alpha>\beta/2$, then for $0<\vep\le 1$,
\begin{align*}|P^\vep_{t_\vep}\xi(x)-P^\vep_{t_\vep}\xi(x')|\le c_{\ref{lem:GLCLT}}\vep^{(2\alpha-\beta)/(2+d)}
\end{align*}
for all $x,x'\in\vep\Z^d$ such that $|x-x'|\le 2\vep^\alpha$ and all $\xi\in\{0,1\}^{\vep\Z^d}$.
\end{lem}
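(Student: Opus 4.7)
The plan is to write the difference as a sum against the (scaled) transition density, namely
\[ P^\vep_{t_\vep}\xi(x)-P^\vep_{t_\vep}\xi(x')=\vep^d\sum_{z\in\vep\Z^d}\xi(z)\bigl[p^\vep_{t_\vep}(z-x)-p^\vep_{t_\vep}(z-x')\bigr], \]
and to split the sum into a \emph{near} regime $|z-x|\le R$ and a \emph{far} regime $|z-x|>R$, where $R=R(\vep)$ is a parameter to be optimized. This lets us trade off the smoothness of the heat kernel (good on bounded regions) against its tail decay at spatial infinity.

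On the near regime, since $|x-x'|\le 2\vep^\alpha\le R$, Lemma~\ref{LCLTI} gives
\[ |p^\vep_{t_\vep}(z-x)-p^\vep_{t_\vep}(z-x')|\le 2c_{\ref{LCLTI}}\vep^\alpha\,t_\vep^{-(d+1)/2}=2c_{\ref{LCLTI}}\vep^{\alpha-\beta(d+1)/2}. \]
The number of lattice points $z\in\vep\Z^d$ with $|z-x|\le R$ is $O((R/\vep)^d)$, so using $|\xi(z)|\le 1$ and the prefactor $\vep^d$ this portion contributes at most $C\,R^d\vep^{\alpha-\beta(d+1)/2}$.

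On the far regime, write the contribution as $\vep^d\sum_{|z-x|>R}\xi(z)[p^\vep_{t_\vep}(z-x)-p^\vep_{t_\vep}(z-x')]$, bound $|\xi|\le 1$ and note that the two separate sums are bounded by $P(|B^\vep_{t_\vep}|>R)$ and $P(|B^\vep_{t_\vep}|>R-\vep^\alpha)$. A Chebyshev estimate using $\mathrm{Var}(B^\vep_{t_\vep})=\sigma^2 t_\vep$ gives $P(|B^\vep_{t_\vep}|>R-\vep^\alpha)\le C\vep^\beta/R^2$ as soon as $R\ge 2\vep^\alpha$. Combining both pieces yields
\[ |P^\vep_{t_\vep}\xi(x)-P^\vep_{t_\vep}\xi(x')|\le C\bigl(R^d\,\vep^{\alpha-\beta(d+1)/2}+\vep^\beta R^{-2}\bigr). \]

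Balancing the two terms by choosing $R^{d+2}=\vep^{\beta(d+3)/2-\alpha}$, i.e., $R=\vep^{(\beta(d+3)/2-\alpha)/(d+2)}$, makes both sides equal to $C\vep^{(2\alpha-\beta)/(d+2)}$, which is the claimed bound. One checks the required constraint $R\ge 2\vep^\alpha$ is equivalent to $(d+3)(\beta-2\alpha)\le 0$, which follows from the hypothesis $\alpha>\beta/2$ (for $\vep$ small). There is no real obstacle; the only ``trick'' is the optimal choice of the cutoff $R$, and the quoted exponent $(2\alpha-\beta)/(d+2)$ reflects precisely this balance between the Hölder-type smoothness bound of Lemma~\ref{LCLTI} (costing $\vep^\alpha t_\vep^{-(d+1)/2}$ per lattice site in a ball of volume $R^d/\vep^d$) and the Gaussian tail of $B^\vep_{t_\vep}$ at range $R$.
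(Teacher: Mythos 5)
Your proof is correct and is essentially the same as the paper's: both decompose the sum against the transition kernel into a near regime (bounded via the local CLT bound of Lemma~\ref{LCLTI}) and a far regime (bounded via Chebyshev), then optimize the cutoff, and the resulting exponent $(2\alpha-\beta)/(2+d)$ is identical. The paper parameterizes the cutoff as $3\vep^\delta$ and optimizes over $\delta$, arriving at $\delta=\tfrac{\beta}{2}-\tfrac{\alpha-\beta/2}{2+d}$, which is exactly your $R=\vep^{(\beta(d+3)/2-\alpha)/(d+2)}$; this is a notational rather than a substantive difference. Two trivial points to tidy: since $|x-x'|\le 2\vep^\alpha$ the far-regime tail should read $P(|B^\vep_{t_\vep}|>R-2\vep^\alpha)$, so your sufficient constraint should be, say, $R\ge 4\vep^\alpha$ (this does not change the exponent check), and the ``for $\vep$ small'' caveat for that constraint is harmless because for $\vep$ bounded away from $0$ the left-hand side is $\le 1$ while $\vep^{(2\alpha-\beta)/(2+d)}$ is bounded below, so the finitely many remaining $\vep$ are absorbed into $c_{\ref{lem:GLCLT}}$.
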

\begin{proof} Let $-\infty<\delta\le \alpha$, $\Delta=x-x'$ and assume $|\Delta|\le 2\vep^\alpha$.  Apply Lemma~\ref{LCLTI} to see that
\begin{align*}
|&P^\vep_{t_\vep}\xi(x)-P^\vep_{t_\vep}\xi(x')|\\
&\le \sum_{z\in\vep\Z^d}|P(B^\vep_{t_\vep}=z)-P(B^\vep_{t_\vep}=z+\Delta)|\\
&\le \sum_{|z|\le 3\vep^\delta}c_{\ref{LCLTI}}\vep^d|\Delta|t_\vep^{-(d+1)/2}+P(|B^\vep_{t_\vep}|>3\vep^\delta)+P(|B^\vep_{t_\vep}|\ge 3\vep^\delta-\Delta)\\
&\le c\vep^{(\delta-1)d}\vep^{d+\alpha}\vep^{-\beta(d+1)/2}+2P(|B^\vep_{t_\vep}|>\vep^\delta).
\end{align*}
If we use Chebychev to bound the last summand by $ct_\vep
\vep^{-2\delta}=c\vep^{\beta-2\delta}$ and optimize over
$\delta$ (setting
$\delta=\frac{\beta}{2}-\frac{\alpha-(\beta/2)}{2+d}<\frac{\beta}{2}<\alpha$),
we obtain the required upper bound.
\end{proof}
\begin{lem}\label{lem:ICUD}  For any $\eta>0$ there is an
  $\vep_{\tref{lem:ICUD}}(\eta)>0$ so that if $0<\vep\le
  \vep_{\tref{lem:ICUD}}$, $u\in[0,1], \ L>1 $, and
$\xi\in\{0,1\}^{\vep \Zd}$ has density at most $u$ in
$[-L,L)^d$, then
\begin{equation}
P^\vep_{t_\vep}\xi(x)\le u+\eta \text{ for all } x\in [-L+1,L-1]^d\cap\vep\Z^d \,.
\end{equation}
\end{lem}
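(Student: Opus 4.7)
The random walk $B^\vep_{t_\vep}$ has typical displacement $\sqrt{t_\vep}=\vep^{\beta/2}=\vep^{0.95r}$, only slightly smaller than the scale $a_\vep\approx\vep^r$ defining $D(\cdot,\xi)$. The plan is to introduce an intermediate scale $\tilde a_\vep=\lceil\vep^{\alpha-1}\rceil\vep$ with $\beta/2<\alpha<r$ (e.g.\ $\alpha=(r+\beta/2)/2=0.975r$) at which Lemma~\ref{lem:GLCLT} applies (since $\alpha>\beta/2$) while the density bound still transfers from $Q_\vep$-boxes to $\tilde Q_\vep$-boxes with negligible loss (since $\alpha<r$). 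Define $\tilde Q_\vep=[0,\tilde a_\vep)^d\cap\vep\Zd$ and the averaging operator
\[ \tilde S f(x)=\frac{1}{|\tilde Q_\vep|}\sum_{y\in\tilde Q_\vep}f(x+y); \]
note $a_\vep/\tilde a_\vep=O(\vep^{r-\alpha})\to 0$ and $\tilde a_\vep\le 2\vep^\alpha$ for small $\vep$. A direct change of summation order shows that $P^\vep_{t_\vep}$ and $\tilde S$ commute on functions on $\vep\Zd$.

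By Lemma~\ref{lem:GLCLT} applied with this $\alpha$ to each $y\in\tilde Q_\vep$ (so $|y|\le\tilde a_\vep\le 2\vep^\alpha$),
\[ |P^\vep_{t_\vep}\xi(x+y)-P^\vep_{t_\vep}\xi(x)|\le c_{\ref{lem:GLCLT}}\vep^{(2\alpha-\beta)/(2+d)}. \]
Averaging over $y$ and using commutativity,
\[ |P^\vep_{t_\vep}\xi(x)-P^\vep_{t_\vep}(\tilde S\xi)(x)| \le c\,\vep^{(2\alpha-\beta)/(2+d)} \le \eta/2 \]
for $\vep$ sufficiently small. Next I bound $\tilde S\xi$ pointwise: the cube $w+\tilde Q_\vep$ is covered by at most $(\tilde a_\vep/a_\vep+1)^d$ aligned boxes $z+Q_\vep$ with $z\in a_\vep\Zd$, each contributing at most $u|Q_\vep|$ to $\sum_{y\in\tilde Q_\vep}\xi(w+y)$ provided $z\in[-L,L]^d$. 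This holds for every such $z$ whenever $w$ lies in $G:=[-L+a_\vep,\,L-\tilde a_\vep-a_\vep]^d\cap\vep\Zd$, giving
\[ \tilde S\xi(w)\le u\Bigl(\frac{a_\vep}{\tilde a_\vep}\Bigr)^d\Bigl(\frac{\tilde a_\vep}{a_\vep}+1\Bigr)^d = u\Bigl(1+\frac{a_\vep}{\tilde a_\vep}\Bigr)^d \le u+\eta/4 \]
for $\vep$ small.

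Finally, for $x\in[-L+1,L-1]^d\cap\vep\Zd$, write $P^\vep_{t_\vep}(\tilde S\xi)(x)=E[\tilde S\xi(x+B^\vep_{t_\vep})]$ and split according to whether $x+B^\vep_{t_\vep}\in G$. For $\vep$ small, $G\supset[-L+\tfrac12,L-\tfrac12]^d$, and Chebyshev together with the bound $E[|B^\vep_{t_\vep}|_\infty^2]\le c\,t_\vep$ yields $P(|B^\vep_{t_\vep}|_\infty>\tfrac12)\le c\,t_\vep=O(\vep^\beta)\le \eta/4$. Hence $P^\vep_{t_\vep}(\tilde S\xi)(x)\le u+\eta/2$, and combining with the smoothing estimate gives $P^\vep_{t_\vep}\xi(x)\le u+\eta$, as desired. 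The main technical point is the necessity of the intermediate scale: a direct expansion at the $a_\vep$-scale loses a multiplicative factor of $2^d$ because a box $w+Q_\vep$ with $w\in\vep\Zd\setminus a_\vep\Zd$ can straddle up to $2^d$ aligned $Q_\vep$-boxes. The non-empty interval $(\beta/2,r)$ in which $\alpha$ is chosen, and thus the entire scheme, is available precisely because \eqref{Tdefn} sets $\beta=1.9r<2r$.
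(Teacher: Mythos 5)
Your argument is correct, and it takes a genuinely different route from the paper's. The paper's own proof first uses translation invariance to reduce to the case $L=1$ and $x\in[-a_\vep,a_\vep]^d$, and then invokes the \emph{ratio} form of the local CLT estimate \eqref{densebound}, derived from \eqref{ratio}: for each $z\in a_\vep\Zd$ near $x$, every point probability $P(B^{\vep,x}_{t_\vep}=z+e)$, $e\in Q_\vep$, is within a factor $1+\eta/2$ of the uniform cell value $P(B^{\vep,x}_{t_\vep}\in z+Q_\vep)/|Q_\vep|$; pairing against $\xi$ and summing then yields at most $(1+\eta/2)u$ plus a Chebyshev boundary term. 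You instead invoke the \emph{Lipschitz} form of the local CLT (Lemma~\ref{lem:GLCLT}), which in the paper is proved for the later argument in Lemma~\ref{lem:gdlow}, and interpose an intermediate spatial scale $\tilde a_\vep\approx\vep^\alpha$ with $\beta/2<\alpha<r$: smooth $P^\vep_{t_\vep}\xi$ at scale $\tilde a_\vep$ at cost $O(\vep^{(2\alpha-\beta)/(2+d)})$, commute the averaging operator with the semigroup, bound the $\tilde a_\vep$-average of $\xi$ by $u(1+O(a_\vep/\tilde a_\vep))^d$ through an $a_\vep$-box cover, and finish with the same Chebyshev boundary estimate. The paper's route is shorter because it reuses \eqref{densebound}, already established in the proof of Lemma~\ref{lem:rev3}; your route makes explicit where the parameter choice $\beta<2r$ (i.e.\ $\sqrt{t_\vep}\ll a_\vep$) enters, namely in the non-emptiness of the window $(\beta/2,r)$. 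Two small remarks: the number of covering $a_\vep$-boxes per coordinate is at most $\tilde a_\vep/a_\vep+2$ rather than $+1$, which changes nothing asymptotically; and the closing claim that a direct expansion at scale $a_\vep$ necessarily loses a $2^d$ factor applies only to an overlap-counting argument like yours — the paper's ratio estimate avoids it entirely by comparing point masses to cell masses rather than counting overlapping cells.
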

\begin{proof} By translation invariance it suffices to prove that for small enough $\vep>0$ 
and all $x\in[-a_\vep,a_\vep]^d\cap\vep Z^d$, if $\xi$ has density at most $u$
in $[-1,1)^d$ then $P^\vep_{t_\vep}\xi(x)\le u+\eta$.
(This addresses the uniformity in $L$.) 
Argue as in the upper bound in \eqref{densebound} to see that for $\vep<\vep_0(\eta)$,
\[\frac{|Q_\vep|P(B_{t_\vep}=z+e)}{P(B_{t_\vep}^{\vep,x}\in z+Q_\vep)}\le 1+\frac{\eta}{2}\hbox{ for all }z\in a_\vep\Z^d,\ |z-x|\le 1\hbox{ and }e\in Q_\vep.\]
We therefore have
\begin{align*}
P_{t_\vep}^\vep\xi(x)&\le P(|B_{t_\vep}^{x,\vep}|\ge 1/2)\\
&+\sum_{z\in a_\vep\Z^d}1(|z-x|\le 3/4)\sum_{e\in Q_\vep} \xi(z+e) \frac{1+(\eta/2)}{|Q_\vep|}P(B_{t_\vep}^{\vep,x}\in z+Q_\vep)\\
&\le 4\sigma^2dt_\vep+\sum_{z\in a_\vep\Z^d}1(|z-x|\le 3/4)u(1+\frac{\eta}{2})P(B_{t_\vep}^{\vep,x}\in z+Q_\vep)\\
&\le 4\sigma^2dt_\vep+u+\frac{\eta}{2}\le u+\eta,
\end{align*}
for $\vep<\vep_1(\eta)$.
\end{proof}

We are ready for the Proof of Lemma~\ref{lem:gdlow}.

\begin{proof} By the conditioning we may fix a deterministic $\xi^\vep_0$ as in the statement of Lemma~\ref{lem:gdlow}. In light of Lemma~\ref{lem:meanD} our first and main goal is to bound $E(D(x,\xi^\vep_T))$ for a fixed $x\in a_\vep\Z^d\cup[-L-wT,L+wT]^d$.  Let $z\in x+Q_\vep$.  
Let $\tilde X^{z,T}$ be the modification of the dual $X^{z,T}$, starting with a
single particle at $z$, in which particles ignore branching
and coalescing events on $[S,T]$ by following their own
random walk and switching to independent random walk mechanisms when a
collision between particles occurs.  Hence 
$X^i_S=\tilde X^i_S$ for all $i\in J(S)$, and 
on $[S,T]$ the particles in $\tilde X^{z,T}$ follow independent copies of
$B^\vep$.  Let $\tilde\zeta^\vep$ be the
associated computation process, defined just as $\zeta^\vep$
is for $X^{z,T}$, with initial values $\tilde
\zeta^\vep_0(j)=\xi^\vep_0(\tilde X^j_T),j\in J(S)$ (for
$\tilde X^{z,T}$ the index set is constant on $[S,T]$).
On $\tilde G^\beta_T$, $T\notin\cup_{m=0}^{N_T-1}[R_m,R_m+2\vep^\beta]$, with $\beta<1/2$, and so
\beq\label{noST} [S,T]\cap(\cup_{m=0}^{N_T-1}[R_m,R_m+\sqrt\vep])=\emptyset.
\eeq
Therefore on $\tilde G^\beta_T$, $X^{z,T}$ has no branching or coalescing events on $[S,T]$, and so $\tilde X^{z,T}=X^{z,T}$ on $[0,T]$.  This also means (by \eqref{dualityeq}) that, given the common inputs $\zeta^\vep_0(j)=\tilde\zeta_0^\vep(j)=\xi_0^\vep(X^j_T)$, $j\in J(T)=J(S)$ we have 
\beq\label{tildezetaeq} \tilde\zeta_{T}^\vep(0)=\zeta^\vep_{T}(0)=\xi_T^\vep(z)\hbox{  on $\tilde G^\beta_T$}.
\eeq
Let $\psi_\vep(x)=P^\vep_{t_\vep}\xi^\vep_0(x)$. Conditional on $\F_S$, $\{\tilde X_T^j-\tilde X_S^j:j\in \hat J(S)\}$ are iid with law $P_0(B^\vep_{t_\vep}\in\cdot)$, and so, conditional on $\F_S$, 
$\{\tilde\zeta^\vep_{T-S}(j)=\xi^\vep_0(\tilde X^j_T):j\in J(S)\}$ 
are independent Bernoulli rv's with means 
$\{\psi_\vep(X_S^j):j\in J(S)\}$.  Recall $\{W_j\}$ is an iid sequence of uniform $[0,1]$ rv's independent of $\F_\infty$ (that is of our graphical construction).  Let $\{\tilde\zeta^{\vep,*}_t(j):j\in J(T-t), T-S\le t\le T\}$ be the computation process associated with $\tilde X^{z,T}$ but with initial inputs $\tilde\zeta_{T-S}^{\vep,*}(j)=1(W_j\le \psi_\vep(X_S^j):j\in J(S)\}$. Then $\{\tilde\zeta^\vep_t:T-S\le t\le T\}$ and $\{\tilde\zeta^{\vep,*}_t:T-S\le t\le T\}$ have the same law because the joint law of their Bernoulli inputs and the processes $\tilde X^{z,T}_t, t\le S$ and $((\mu_m,U_m)1(R_m\le t), t\le S)$ used to define them are the same. Therefore by \eqref{tildezetaeq}
\begin{align}\nonumber|P&(\xi_T^\vep(z)=1)-P(\tilde\zeta^{\vep,*}_T(0)=1)|\\
\label{tildestar}&=|P(\xi_T^\vep(z)=1)-P(\tilde\zeta^{\vep}_T(0)=1)|\le P((\tilde G_T^\beta)^c).
\end{align}

Consider now the branching random walk $\hat X$ starting with a single particle at $z$ and coupled with $X^{z,T}$ as in Section~\ref{ssec:XhatXcoupling}, together with its computation process $\{{\hat \zeta}^\vep:t\in[T-S,T]\}$ with initial inputs $\hat\zeta_{T-S}(j)=1(W_j\le \psi_\vep(\hat X_S^j)), j\in\hat J(S)$.  
Conditional on $\F_\infty$, these inputs are independent Bernoulli rv's with means $\{\psi(\hat X^j_S):j\in \hat J(S)\}$.  The computation processes $\tilde\zeta^{\vep,*}$ and $\zeta^\vep$ are identical on $[T-S,T]$ if given the same inputs at time $T-S$.  Therefore Lemma~\ref{lem:compeq} shows that on $\tilde G_T^\beta$ $\hat\zeta^\vep_T(0)$ and $\tilde \zeta^{\vep,*}_T(0)$ will coincide if given the same inputs at time $T-S$.  Therefore
\begin{align}
\nonumber|P&(\hat\zeta_T(0)=1)-P(\tilde\zeta^{\vep,*}_T(0)=1)|\\
\nonumber &\le P((\tilde G^\beta_T)^c)+E(P(\hat\zeta^\vep_{T-S}(j)\neq \tilde\zeta^{\vep,*}_{T-s}(j)\ \exists j\in\hat J(S)|\F_\infty)1(\tilde G_T^\beta))\\
\label{hatzetaeq}&\le P((\tilde G^\beta_T)^c)+E\Bigl(\sum_{j\in \hat J(S)}|\psi_\vep(\hat X^j_S)-\psi_\vep(X_S^j)|1(\sup_{j\in\hat J(S)}|X^j_S-\hat X^j_S|\le \vep^{1/6})\Bigr).
\end{align}
Use Lemma~\ref{lem:tildeG} to bound the first term above and Lemma~\ref{lem:GLCLT} with $\alpha=1/6>\beta/2$ to bound the second, and combine this with \eqref{tildestar} to conclude that (use $d\ge 3$)
\begin{align}
\nonumber|P(\xi^\vep_T(z)=1)-P(\hat\zeta^\vep_T(0)=1)|
&\le 2 c_{\ref{lem:tildeG}}e^{c_bT}\vep^{\beta/3}+E(|\hat J(S)|)c_{\ref{lem:GLCLT}}\vep^{(1/3-\beta)/(2+d)})\\
\nonumber &\le 2c_{\ref{lem:tildeG}}\vep^{1/(40d)}+e^{c_bS}c_{\ref{lem:GLCLT}}\vep^{(1/3-\beta)/(2+d)}\\
\label{ICcoupleI}&\le (2c_{\ref{lem:tildeG}}+c_{\ref{lem:GLCLT}})\vep^{1/(40d)}.
\end{align}

To prepare with the coupling with the branching Browian motion we must extend $\psi_\vep(x)=P^\vep_{t_\vep}\xi_0^\vep(x)$ from $\vep\Z^d$ to $\R^d$ in an appropriate manner.  
Since $\xi^\vep_0$ has density at most $u_2$ in $[-L,L]^d$, Lemma~\ref{lem:ICUD} shows that for $\vep\le \vep_{\ref{lem:ICUD}}(u_1-u_2)$ we may extend $\psi_\vep$ in a piecewise linear manner so that
\beq\label{psiep1}
\psi_\vep(x)\le u_11(|x|\le L-2)+1(|x|>L-2) \hbox{ for all }x\in\R^d.
\eeq
In addition, using Lemma~\ref{lem:GLCLT} with $\alpha=1/6$, we may assume the above extension also satisfies
\beq\label{psiep2}
|\psi_\vep(x)-\psi_\vep(x')|\le c_{\ref{lem:GLCLT}}\vep^{\frac{(1/3)-\beta}{2+d}}\hbox{ for }x,x'\in\R^d \hbox{ such that }|x-x'|\le \vep^{1/6}.
\eeq

Now consider the branching Brownian motion $\hat X^0$ starting with a single particle at $z$, coupled with $\hat X^\vep$ as in Section~\ref{sec:hydroproofs}.  Consider also its associated computation process $\hat\zeta^0$ on $[T-S,T]$ starting with conditionally independent Bernoulli inputs $\{\hat \zeta_{T-S}^0(j)=1(W_j\le \psi_\vep(\hat X^{0,j}_S)):j\in \hat J^0(S)\}$.  We may argue as in the derivation of \eqref{ICcoupleI}, but now using Lemma~\ref{poscoupl}, \eqref{grate2}, and \eqref{psiep2} in place of Lemmas~\ref{lem:tildeG} and \ref{lem:GLCLT}, to conclude after some arithmetic using $d\ge 3$,
\begin{equation}\label{ICcoupleII}|P(\hat \zeta_T^\vep(0)=1)-P(\hat \zeta^0_T(0)=1)|\le e^{c_bT}c_{\ref{poscoupl}}(\vep^{3/8}+c_{\ref{grate}}\vep^{r_0/2})+c_{\ref{lem:GLCLT}}\vep^{1/(40d)}.
\end{equation}
By Lemma~\ref{lem:ident} (we have shifted time by $T-S$), $P(\hat \zeta^0_T(0)=1)=u_\vep(S,z)$, where $u_\vep$ is the solution of the PDE \eqref{rdpde} with initial condition $u_\vep(0,\cdot)=\psi_\vep$.  Now combine this with \eqref{ICcoupleI} and \eqref{ICcoupleII} to see that for small enough $\vep$ as above
\beq\label{DEbound} 
E(\xi^\vep_T(z))\le c\vep^{(1/(40d))\wedge(r_0/4)}+u_\vep(S,z).
\eeq
Now use the bound on the initial condition \eqref{psiep1} and Assumption~\ref{a2} in the above to 
conclude that for $|z|\le L-2+2wS$, and small $\vep$
\beq\label{meanxibnd}
E(\xi^\vep_T(z))\le c\vep^{(1/(40d))\wedge(r_0/4)}+C_2e^{-c_2 S}\le \vep^{\gamma'},
\eeq
where $\gamma'=\Bigl(\frac{c_2}{c_b}\wedge 1\Bigr)\Bigl(\frac{1}{110d}\wedge\frac{r_0}{4}\Bigr)>\gamma_{\ref{lem:gdlow}}$ and we used the definition of $S$ and some arithmetic.
By taking $\vep$ smaller if necessary we may assume $2wS-3\ge wT$ and so the above holds for $|z|\le L+1+wT$.  This shows that
\beq\label{meanDbound} E(D(x,\xi^\vep_T))\le \vep^{\gamma'}\hbox{ for }x\in a_\vep\Z^d\cap[-L-wT,L+wT]^d.
\eeq

Finally apply the above and Lemma~\ref{lem:meanD} to conclude that for small enough $\vep$
\begin{align*}
P&\Bigl(\sup_{x\in [-L-wT,L+wT]^d\cap a_\vep\Z^d}D(x,\xi^\vep_T)\ge \vep^{\gamma_{\ref{lem:gdlow}}}\Bigr)\\
&\le P(\sup_{x\in [-L-wT,L+wT]^d\cap a_\vep\Z^d}|D(x,\xi^\vep_T)-E(D(x,\xi^\vep_T))|\ge \vep^{\gamma_{\ref{lem:gdlow}}}/2)\\
&\le C_{\ref{lem:meanD}}\vep^{1/16}(L+wT)^de^{c_bT}4\vep^{-2{\gamma_{\ref{lem:gdlow}}}}\\
&\le C\vep^{1/16}\vep^{-.001}\vep^{-1/100d}\vep^{-1/120d}\le \vep^{.05}.
\end{align*}
\end{proof}

Our next goal is to show that the dual process only expands linearly in time. The first ingredient is
a large deviations result. Recall the
dominating branching random walk $\{\bar
X^{\vep,j}(t):j\in\bar J^\vep(t)\}$ introduced at the beginning of
Section~\ref{ssec:pbad} which satisfies
\[\{X_s^{\vep,j}:j\in J^\vep(s)\}\subset\{\bar X_s^{\vep,j}:j\in \bar J^\vep(s)\}.\]
If $\Vert X^\vep_s\Vert_\infty=\sup\{|X^{\vep,j}(s)|:j\in J^\vep(s)\}$ and similarly for $\Vert \bar X^\vep_s\Vert_\infty$, then the above domination implies
\begin{equation}\label{normdom} \Vert \bar X^\vep_s\Vert_\infty\ge \Vert X^\vep_s\Vert_\infty\hbox{ for all }s,\vep.
\end{equation}
Recall $c^*$ is as in \eqref{c*def}.

\begin{lem} \label{lem:ldbd}
Assume $\bar X^\vep$ starts from one particle at $0$.  For each $R>0$ there is an $\vep_{\ref{lem:ldbd}}(c^*,R)>0$, nonincreasing in each variable, so that for $0<\vep\le \vep_{\ref{lem:ldbd}}$ and $t>0$, 
$$P( \| \bar X^\vep_s \|_\infty \ge 2\rho t \hbox{ for some $s\le t$}) 
\le (4d+1)\exp(- t(\gamma(\rho) - c_b))\quad\hbox{for all }0<\rho\le R,
$$
where $\gamma(\rho) = \min \{\rho/2, \rho^2/3\sigma^2\}$. 
Moreover, if $\rho \ge \max\{4c_b, 2\sigma^2 \}$, then the above bound is at most $(4d+1)\exp(-t\rho/4)$.\end{lem}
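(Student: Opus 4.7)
I would use a classical exponential martingale (Chernoff) argument for the branching random walk $\bar X^\vep$, exploiting the exponential tails \eqref{expbd1}--\eqref{expbd2}. First reduce to a one-coordinate, one-sided estimate via the union bound
\[
P(\|\bar X^\vep_s\|_\infty \ge 2\rho t\ \exists\, s\le t)\le \sum_{k=1}^d\sum_{\pm} P\Bigl(\sup_{s\le t}\max_{j\in\bar J^\vep(s)}\pm\bar X^{\vep,j,k}_s\ge 2\rho t\Bigr),
\]
accounting for $2d$ of the $(4d+1)$ terms (the slack absorbs constants from Doob's inequality and the initial particle). Fix one coordinate $k$ and sign, set $M=2\rho t$, $\lambda>0$, and define the exponential sum $\tilde F(\lambda,s)=\sum_{j\in\bar J^\vep(s)}e^{\lambda\bar X^{\vep,j,k}_s}$. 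A generator computation---each particle's random walk contributes rate $\vep^{-2}(Ee^{\lambda\vep Z^k}-1)$ per summand (where $Z\sim p$), and each branch at rate $c^*$ adds siblings contributing $\sum_{i=1}^{N_0}Ee^{\lambda\vep Y^{i,k}}$---shows that $\tilde F(\lambda,s)e^{-A(\lambda)s}$ is a nonnegative martingale of mean $1$, where
\[
A(\lambda)=\vep^{-2}(Ee^{\lambda\vep Z^k}-1)+c^*\sum_{i=1}^{N_0}Ee^{\lambda\vep Y^{i,k}}.
\]
On the event that some $\bar X^{\vep,j,k}_s$ reaches $M$ by time $t$, $\tilde F(\lambda,s)\ge e^{\lambda M}$ at that moment, so Doob's maximal inequality combined with $A(\lambda)\ge 0$ yields
\[
P\Bigl(\sup_{s\le t}\max_j \bar X^{\vep,j,k}_s \ge M\Bigr) \le e^{-\lambda M + A(\lambda)t}.
\]

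Next I would bound $A(\lambda)$ uniformly for small $\vep$. Since $p$ has mean zero and covariance $\sigma^2 I$, Taylor expansion yields $\vep^{-2}(Ee^{\lambda\vep Z^k}-1)\le \tfrac12\lambda^2\sigma^2+C|\lambda|^3\vep$ for $|\lambda\vep|<\kappa/2$, while \eqref{expbd2} gives $Ee^{\lambda\vep Y^{i,k}}=1+O(\lambda\vep)$ on the same range (with constants determined by $\kappa$). Therefore, given $R>0$, one can choose $\vep_{\ref{lem:ldbd}}(c^*,R)>0$ small enough that for $0<\vep\le\vep_{\ref{lem:ldbd}}$ and all $|\lambda|\le R/\sigma^2+1$,
\[
A(\lambda)\le \tfrac12\lambda^2\sigma^2(1+\delta)+c_b(1+\delta),\qquad \delta\le \tfrac{1}{6}.
\]

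It then remains to optimize $\lambda$. For $0<\rho\le\sigma^2$ take $\lambda=\rho/\sigma^2$: the dominant part of $-\lambda M+A(\lambda)t-c_b t$ is $-\tfrac{3\rho^2}{2\sigma^2}t$, and absorbing the error terms---possible because the bound is only nontrivial when $\gamma(\rho)>c_b$, which forces $\rho^2/\sigma^2\ge 3c_b$ and tames the $c_b\delta$ contribution---produces $\le -\rho^2 t/(3\sigma^2)=-\gamma(\rho)t$ throughout the Gaussian regime $\rho\le 3\sigma^2/2$. For $\rho\ge\sigma^2$ take $\lambda=1$: the dominant part is $-2\rho t+\sigma^2 t/2$, which is less than $-\rho^2 t/(3\sigma^2)$ on $[\sigma^2,3\sigma^2/2]$ and less than $-\rho t/2$ on $[3\sigma^2/2,\infty)$. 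Reinserting into the union bound gives the stated estimate with constant $4d+1$, and the moreover claim is immediate since $\rho\ge 2\sigma^2\ge 3\sigma^2/2$ forces $\gamma(\rho)=\rho/2$ and then $\rho\ge 4c_b$ yields $\gamma(\rho)-c_b\ge \rho/4$. The main obstacle is purely the uniform control of error terms so that a single $\vep_{\ref{lem:ldbd}}(c^*,R)$ works for the whole range $0<\rho\le R$; this is a book-keeping nuisance rather than a conceptual one, since the Chernoff--Doob argument for branching random walks is classical, and the only new ingredient beyond the branching Brownian motion case---exponential integrability of the offspring displacements $\vep Y^{i,k}$---is furnished by \eqref{expbd2}.
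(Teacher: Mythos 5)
Your argument is correct, but it takes a genuinely different route from the paper. You attack the branching random walk directly with an additive exponential martingale $\tilde F(\lambda,s)e^{-A(\lambda)s}$ (whose drift $A(\lambda)$ automatically packages the growth rate $c_b$ together with the walk and displacement moment generating functions), apply Doob's maximal inequality, and optimize $\lambda$. The paper instead first reduces to a single ``spine'' random walk via the first-moment estimate $E|\bar J(t)|=e^{c_bt}$ (this is where the factor $e^{c_bt}$ comes from), then decomposes the spine as $B^\vep$ (walk steps) plus $J^\vep$ (cumulative $\|\cdot\|_\infty$-bound $\vep Y^*$ on the branching displacements at rate $c^*$), and treats the two pieces by a reflection-plus-Chernoff bound and a Chernoff bound respectively; the union over $d$ coordinates, two signs, and the $J^\vep$ event is what produces $4d+1$. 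Your approach is arguably cleaner (it avoids making the many-to-one reduction precise, and gives the constant $2d\le 4d+1$ with no separate treatment of the displacement jumps), at the cost of a more delicate bookkeeping when optimizing $\lambda$---correctly handled by your observation that the bound is vacuous unless $\gamma(\rho)>c_b$, which lets you absorb the $c_b\delta$ error. The paper's split has the virtue of exactly matching the two branches of $\gamma(\rho)=\min\{\rho/2,\rho^2/3\sigma^2\}$: the quadratic term comes from the Gaussian tail of $B^\vep$ and the linear term from the exponential tail of $J^\vep$, whereas in your version this dichotomy emerges from choosing $\lambda=\rho/\sigma^2$ versus $\lambda=1$. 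One small loose end in your write-up is worth flagging: you should note that $\tilde F(\lambda,\cdot)e^{-A(\lambda)\cdot}$ is a nonnegative local martingale, hence a supermartingale, so Doob's inequality applies even without checking full integrability; and the requirement that $\vep_{\ref{lem:ldbd}}$ be nonincreasing in $c^*$ and $R$ is immediate by taking an infimum.
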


\begin{proof} The last assertion is trivial.  Let $S^\vep_t$ be a random walk that starts at 0, jumps according to $p_\vep$ at rate $\vep^{-2}$,
and according to $q_\vep$ at rate $c^*$. Since $E|\bar X^\vep_t| = \exp(c_b t)$ by summing over the branches of the tree, it suffices to show
\beq
P( \| S^\vep_s \|_\infty \ge \rho t \hbox{ for some $s\le t$}) 
\le (4d+1)\exp\left( - \gamma(\rho)t \right).
\label{ldrw}
\eeq 
As usual, $B^\vep_t$ is the random walk that jumps according to $p_\vep$ at rate $\vep^{-2}$.
By the reflection principle
$$
P\left( \sup_{s\le t} B^{\vep,i}_s \ge \rho t \right)
\le 2P\left( B^{\vep,i}_t \ge \rho t \right)
\le 2 e^{-\theta\rho t} E\left(\exp\left( \theta  B^{\vep,i}_t \right)\right)
$$
for any $\theta>0$. If $\phi_\vep(\theta) = \sum_x e^{\theta x^i} p_\vep(x)$ then a standard Poisson calculation gives
$$
E\left(\exp\left(\theta  B^{\vep,i}_t \right)\right) = \exp( t\vep^{-2} (\phi_\vep(\theta)-1)).
$$
By scaling $\phi_\vep(\theta) = \phi_1(\vep\theta)$. Our assumptions imply $\phi_1'(0)=0$
and $\phi_1''(0) = \sigma^2$ so
$$
\vep^{-2} (\phi_1(\vep\theta) - 1 ) \to \sigma^2\theta^2/2\hbox{ as }\vep\to 0.
$$
If $0<\rho\le R$ and $\theta = \rho/\sigma^2$ in the above, it follows that for $\vep<\vep_0(R)$, 
$$
e^{-(\rho^2/\sigma^2) t} E\left(\exp\left( (\rho/\sigma)  B^{\vep,i}_t \right)\right)
\le \exp( -\rho^2t/3\sigma^2 ),
$$
and so,
\beq\label{brbnd1} P(\sup_{s\le t}|B^{\vep,i}_s|\ge \rho t)\le 4\exp\left(-\rho^2t/3\sigma^2\right).
\eeq

Let $J^\vep_t$ be the one dimensional random walk that jumps according to the law of $\vep Y^*=\vep\max_{i\le N_0}|Y^i|$ at rate $c^*$,
and notice that this will bound the $L^\infty$ norm of the sum of the absolute values of the 
jumps according to $q_\vep$ in $S_\vep$ up to time $t$. If we let $\phi_J(\theta) = E(\exp(\theta Y^*))$, then, arguing as above, we obtain
$$
P( J^\vep_t \ge \rho t ) \le \exp( - \rho \theta t + c^*t (\phi_J(\vep\theta) - 1)).
$$
The exponential tail of $Y^*$ (from \eqref{expbd2}) shows that $(\phi_J(\vep\theta) -1 )/\vep\theta \to EY^*$ as $\vep \to 0$, and so, if we set $\theta=1$,
then for small $\vep$, $c^* (\phi_J(\vep) - 1) \le \rho/2$.   (The choice of $\vep$ here works for all $\rho$ because we may assume without loss of generality that $\rho\ge \rho_0>0$ as the Lemma is trivial for small $\rho$.) Therefore 
\beq\label{brbnd2} P(J^\vep_t\ge \rho t)\le \exp(-\rho t/2).\eeq
 To derive \eqref{ldrw}, write
 $$P(\sup_{s\le t}\Vert S_s^\vep\Vert_\infty\ge 2\rho t)\le \sum_{i=1}^dP(\sup_{s\le t}|B^{\vep, i}_s|\ge \rho t)+P(J^\vep_t\ge \rho t),$$
 and use \eqref{brbnd1} and \eqref{brbnd2}.
\end{proof}  

Our next result uses the large deviation bound in Lemma \ref{lem:ldbd} to control the movement
of all the duals that start in a region. Recall that $X^{x,U}$ is the dual for $\xi^\vep$ starting with
one particle at $x$ from time $U$. For $x\in\R^d$ and $r>0$
let $Q(x,r)=[x-r,x+r]^d$ and $Q^\vep(x,r) = 
Q(x,r)\cap\vep\Zd$. Write $Q(r)$ for $Q(0,r)$ and $Q^\vep(r)$
for $Q^\vep(0,r)$. 

\mn
\begin{lem} \label{lem:contain2} For $c>0$,
  $b\ge 4c_d\vee 2\sigma^2$, $L\ge 1$ and
  $U\ge T'=c\log (1/\vep)$, let
\begin{align*}\bar p_\vep(b,c,L,U)=P(&X_t^{x,u}\hbox{ is not contained in }Q(L+2bT')\\
&\hbox{ for some }u\in[U-T',U], t\le T'\hbox{ and some }x\in Q^\vep(L)).
\end{align*}
Let $c'_d=12(4d+1)3^d$. There exists
$\vep_{\ref{lem:ldbd}}(c^*,b)>0$ such that if  $0<\vep\le \vep_{\ref{lem:ldbd}}$
\[
\bar p_\vep(b,c,L,U)\le
c'_dL^d(c\log(1/\vep)+1)\vep^{q-d}
\]
where $q=(\frac{bc}{4} -2 )\wedge \vep^{-2}$.
\end{lem}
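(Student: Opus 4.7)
The approach is to reduce the continuous union over starting times $u\in[U-T',U]$ to a discrete (random) union over Poisson event times at site $x$, and then union-bound over such events using Lemma~\ref{lem:ldbd}. Fix $x\in Q^\vep(L)$. If there is no voter-arrow tail nor $*$-arrow tail at $x$ in the real-time interval $[u-T',u]$, then the single initial particle of $X^{x,u}$ sits at $x$ for every dual time $s\le T'$, and no exit from $Q(L+2bT')$ is possible. Otherwise, let $\tau=\tau_x(u)\in[u-T',u]$ be the latest such event time. The time-translation invariance of the graphical construction in Section~\ref{ssec:Xdual} yields
\[
\bigcup_{s\le T'}X^{x,u}_s \;\subset\; \{x\}\cup\bigcup_{s'\le T'}X^{x,\tau}_{s'},
\]
since $X^{x,u}$ waits at $x$ during dual time $[0,u-\tau]$ and thereafter reads exactly the same graphical events as $X^{x,\tau}$ does (time-shifted by $u-\tau$); crucially, events in the real-time interval $(\tau,u)$ are either not at $x$, in which case they are unseen by the lone particle of $X^{x,u}$, or --- by definition of $\tau$ --- absent. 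Since $\tau\in[U-2T',U]$, the bad event is contained in
\[
\bigcup_{x\in Q^\vep(L)}\bigcup_{\tau}\{X^{x,\tau}_s\not\subset Q(L+2bT')\text{ for some }s\le T'\},
\]
where the inner union is over Poisson event times $\tau$ at $x$ in $[U-2T',U]$.

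For a single pair $(x,\tau)$, since $x\in Q(L)$, exit from $Q(L+2bT')$ forces some particle of $X^{x,\tau}$ to travel $L^\infty$-distance at least $2bT'$ from $x$ within dual time $T'$. Dominating by $\bar X^{x,\tau}$ from Section~\ref{ssec:pbad} and applying Lemma~\ref{lem:ldbd} with $\rho=b$ --- the hypothesis $b\ge 4c_b\vee 2\sigma^2$ places us in the ``moreover'' regime with exponent $\rho/4$ --- bounds the exit probability by $(4d+1)\exp(-T'b/4)=(4d+1)\vep^{bc/4}$. Campbell's formula applied to the Poisson process of events at each $x$ (rate $\vep^{-2}+c^*$) then gives
\[
\bar p_\vep(b,c,L,U) \le |Q^\vep(L)|\cdot 2T'(\vep^{-2}+c^*)\cdot(4d+1)\vep^{bc/4} \le c'_d\,L^d\,(c\log(1/\vep)+1)\,\vep^{q-d},
\]
with $q=bc/4-2$, using $|Q^\vep(L)|\le(3L/\vep)^d$, $T'\le c\log(1/\vep)+1$, and $\vep^{-2}+c^*\le 2\vep^{-2}$ for $\vep<\vep_{\ref{lem:ldbd}}(c^*,b)$.

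The technically delicate point is the reduction in the first paragraph: verifying that the ``wait then evolve'' description of the dual really does couple $X^{x,u}$ to a time-translate of $X^{x,\tau_x(u)}$ reduces to unwinding the stopping-time construction in Section~\ref{ssec:Xdual}, once one observes that the two duals read identical graphical events in the relevant interval. The cap $q\wedge\vep^{-2}$ in the statement is cosmetic and matters only when $bc/4-2>\vep^{-2}$; in that regime the claimed decay follows from a cruder estimate --- e.g., a standard Poisson tail bound on the total number of reaction events encountered in dual time $T'$ --- without appealing to the finer exponential estimate of Lemma~\ref{lem:ldbd}.
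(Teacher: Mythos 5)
Your reduction strategy (discretize the continuum of starting times $u$ to Poisson event times, then union-bound via Lemma~\ref{lem:ldbd}) matches the paper's, and replacing the paper's Poisson-tail bound on $N_x$ by a Mecke/Campbell first-moment estimate is a reasonable variant. But your choice of $\tau$ as the \emph{latest} arrow-tail time at $x$ before $u$ couples in the wrong direction and breaks the claimed containment
\[
\bigcup_{s\le T'}X^{x,u}_s\;\subset\;\{x\}\cup\bigcup_{s'\le T'}X^{x,\tau}_{s'}.
\]
With the dual convention of Section~\ref{ssec:Xdual} (jumps: if $j\in J(s-)$ and $x=X^j_{s-}$ has $T-s=T^x_n$, then $X^j_s=x+Z_{x,n}$; filtration $\cF_t$ generated by events on $[T-t,T)$), the dual $X^{x,u}$ at dual time $u-\tau$ has \emph{already processed} the arrow at real time $\tau$, so $X^{x,u}_{u-\tau}$ is $\{x+Z_{x,n}\}$ (voter case) or a branched family, not $\{x\}$. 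By contrast $X^{x,\tau}_0=\{x\}$ does \emph{not} read the arrow at its own start time $\tau$; its first jump uses the previous arrow at $x$, at some $\tau'<\tau$. The two duals thus process different first arrows, and if both are voter arrows with independent displacements $Z$ and $Z'$, then $x+Z$ belongs to $\cup_s X^{x,u}_s$ but generically not to $\{x\}\cup\cup_{s'}X^{x,\tau}_{s'}$. Your ``waits at $x$ during $[0,u-\tau]$'' is off by exactly one jump, and this cannot be fixed by tinkering with the closed/half-open interval.

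The paper couples in the opposite direction: for $u\in(T'-T_{i+1}(x),T'-T_i(x)]$ it uses $X^{x,u}(v)=X^{x,T'-T_i(x)}(v+T'-T_i(x)-u)$, where $T'-T_i(x)$ is the next arrow time at $x$ at or \emph{after} $u$ (with $T'-T_0(x)=T'$ covering the case $u$ beyond the latest arrow). Here neither dual sees the arrow at $T'-T_i(x)$ itself, and both sit at $x$ until they first read the common arrow at $T'-T_{i+1}(x)$, so the time-translation is exact. If you replace your $\tau$ by this forward event time (including the endpoint $U$ itself), the rest of your argument --- the Campbell bound over Poisson points of rate $\vep^{-2}+c^*$, the exit estimate from Lemma~\ref{lem:ldbd} with $\rho=b$, and the final arithmetic, together with your correct observation that the $q\wedge\vep^{-2}$ cap is absorbed automatically --- goes through and gives essentially the paper's bound.
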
 
\begin{proof} By translation invariance it suffices to take
  $U=T'$. 
  For $x\in\vep\Z^d$ let $\{T_i(x):i\ge 0\}$ be the
  successive jump times of the reversed Poisson process,
  starting at time $T'$, determined by the
  $T^x_n,T^{*,x}_n$.  Also let $N_x$ be the number of such
  jumps up to time $T'$, so that $N_x$ is Poisson with mean
  $(c^*+\vep^{-2})T'$.  The process $\xi_t^\vep(x)$ is constant for
  $t\in(T'-T_{i+1}(x),T'-T_{i}(x)]$ and for such $t$ the dual
  $X^{x,t}(v)$ is $$X^{x,T-T_i(x)}(v+(T'-T_i(x)-t)),$$ that
  is, one is a simple translation of the other.  This means
  for $t$ as above
\begin{equation}\label{Xinter} \cup _{v\le
t}X^{x,t}(v)\subset \cup_{v\le T'-T_i(x)}X^{x,T'-T_i(x)}(v),
\end{equation} (in fact equality clearly holds).  As a
result, in $\bar p_{\vep}(b,c,L,T') $ we only need consider
$t$ to be one of 
the times $T'-T_i(x)$ for $0\le i\le N_x$ and we may bound
$1-\bar p_{\vep}(b,c,L,T')$ by
\begin{align*} &P(\exists x\in Q^\vep(L) \hbox{ s.t. }N_x\ge
  3T'(\vep^{-2}+c^*))\\ &\qquad+P(\exists x\in
  Q^\vep(L),\ 0\le T_i(x)\le 3T'(\vep^{-2}+c^*)\hbox{ s.t. }\\
  &\qquad\qquad\sup_{v\le T'-T_i(x)}\Vert
  X^{x,T'-T_i(x)}(v)\Vert_\infty>2bT')\\ &\le
  (2L\vep^{-1}+1)^d\exp\{-3T'(\vep^{-2}+c^*)\}E(e^{N_x})\\
  &\qquad+(2L\vep^{-1}+1)^d(3T'(\vep^{-2}+c^*)+1)(4d+1)
  \exp(-T'b/4).
\end{align*} 

\noindent
Here we are using Lemma~\ref{lem:ldbd} and
the strong Markov property at $T_i(x)$ for the filtration
generated by the reversed Poisson processes $\cF_t$. Some arithmetic
shows the above is at most
\begin{align*} & 3^d(L\vee \vep)^d\vep^{-d}\Bigl[
\exp(-3T'(\vep^{-2}+c^*))\exp((\vep^{-2}+c^*)T'(e-1))\\
&\phantom{ 3^d(L\vee \vep)^d\vep^{-d}\Bigl[}
+ (4d+1)(3T'(\vep^{-2}+c^*)+1)\vep^{bc/4}\Bigr]\\   &\le
3^d(L\vee \vep)^d\vep^{-d}\Bigl[ 
\exp(-T'(\vep^{-2}+c^*)) 
+ 6(4d+1)(c\log(1/\vep) \vep^{-2}+1)
\vep^{bc/4}\Bigr]\\   
&\le 3^d(L\vee \vep)^d\vep^{-d}\Bigl[ \vep^{(c\vep^{-2})} +
6(4d+1)(c\log(1/\vep)+1)\vep^{bc/4-2}\Bigr]\\
&\le c'_d (L\vee \vep)^d(c\log(1/\vep)+1)
 \vep^{-d} \vep^{(bc/4-2)\wedge \vep^{-2}}
\end{align*}
\end{proof} 

\clearpage 

\section{Percolation results}
\label{sec:perc}

To prove Theorems~\ref{thm:exist} and (especially) \ref{thm:nonexist} we will use block arguments that
involve comparison with oriented percolation.  Let $D=d+1$, where for now we allow $d\ge1$,
and let $\A$ be any $D \times D$ matrix so that (i) if $x$
has $x_1+ \cdots + x_D = 1$ then $(\A x)_D=1$, and (ii) if $x$
and $y$ are orthogonal then so are $\A x$ and
$\A y$. Geometrically, we first rotate space to take $(1/D,
\ldots 1/D)$ to $(0, \ldots, 0,1/\sqrt{D})$ and then scale $x
\to x\sqrt{D}$. Let $\LD = \{ \A x : x \in \ZD \}$. The reason
for this choice of lattice is that if we let $\cQ = \{ \A x :
x\in [-1/2,1/2]^D \}$, then the collection $\{z + \cQ, z \in
\LD\}$ is a tiling of space by rotated cubes. When $d=1$,
${\cal L}_2 = \{ (m,n) : m+n \hbox{ is even}\}$ is the usual
lattice for block constructions (see Chapter~4 of
\cite{Dur95}).

Let ${\cal H}_k = \{ z \in {\cal L}_D : z_D
= k \}=\{ \A x:x\in\Z^D,\sum_i x_i=k\}$ be the points on ``level'' $k$. We will often
write the elements of $\cH_k$ in the form $(z,k)$ where
$z\in\Rd$.  Let $\cH'_k=\{z\in\Rd: (z,k)\in\cH_k\}$. When $d=2$, the points in
${\cH'}_0$ are the vertices of a triangulation of the
plane using equilateral triangles, and the points in ${\cH'}_1$ 
are obtained by translation. One choice of $\A$ leads to
Figure~\ref{fig:L3}, where $\cH'_1$ and $\cH'_2$ are obtained by translating $\cH'_0$ upward by $\sqrt 2$ and $2\sqrt 2$, respectively, and $\cH'_3=\cH'_0$.  
\begin{figure}[h]
\begin{center}
\begin{picture}(220,140)
\put(17,68){$\bullet$}
\put(47,118){$\bullet$}
\put(47,18){$\bullet$}
\put(77,68){$\bullet$}
\put(107,118){$\bullet$}
\put(107,18){$\bullet$}
\put(137,68){$\bullet$}
\put(167,118){$\bullet$}
\put(167,18){$\bullet$}
\put(197,68){$\bullet$}
\put(20,70){\line(1,0){180}}
\put(50,20){\line(1,0){120}}
\put(50,120){\line(1,0){120}}
\put(50,20){\line(3,5){60}}
\put(50,120){\line(3,-5){60}}
\put(20,70){\line(3,5){30}}
\put(20,70){\line(3,-5){30}}
\put(170,20){\line(-3,5){60}}
\put(170,120){\line(-3,-5){60}}
\put(200,70){\line(-3,-5){30}}
\put(200,70){\line(-3,5){30}}
\put(47,52){$\circ$}
\put(107,52){$\circ$}
\put(77,102){$\circ$}
\put(167,52){$\circ$}
\put(137,102){$\circ$}
\put(50,55){\line(3,5){30}}
\put(50,54){\line(1,0){120}}
\put(108,54){\line(3,5){30}}
\put(140,104){\line(3,-5){30}}
\put(80,104){\line(1,0){60}}
\put(80,104){\line(3,-5){30}}
\end{picture}
\caption{${\cH'}_0$ (black dots) and ${\cH'}_1$ (white dots)
  in ${\cal L}_3$} 
\label{fig:L3}
\end{center}
\end{figure}

In $d\ge 3$ dimensions (the case we will need for our applications in this work) the lattice is hard to visualize so
we will rely on arithmetic. Let $\{e_1,\dots,e_D\}$ be the
standard basis in $\R^D$, and put $v_i=\A e_i$, $i=1,\dots,D$.
By the geometric description of $\A$ given above,
$v_i\in{\cal H}_1$ has length $\sqrt{D}$, and writing
$v_i=(v'_i,1)$, $v'_i\in\Rd$ has length $\sqrt{D-1}$.  For $i\ne j$, $\|v'_i-
v'_j\|_2=\|v_i- v_j\|_2 =\sqrt{2D}$, the last by orthogonality of $v_i$ and $v_j$.  The definitions easily imply that 
$\cH'_{k+1}=v'_i+\cH'_k\equiv\{v'_i+x:x\in\cH'_k\}$ for each $i$ and $k$. Note that $Dv'_i\in\cH'_0$ because $Dv_i-(0,\dots,0,D)\in\cH_0$. This implies that $\cH'_{k+D}=Dv'_i+\cH'_k=\cH'_k$. 

\begin{figure}[h]
\begin{center}
\begin{picture}(220,140)
\put(17,68){$\bullet$}
\put(47,118){$\bullet$}
\put(47,18){$\bullet$}
\put(77,68){$\bullet$}
\put(107,118){$\bullet$}
\put(107,18){$\bullet$}
\put(137,68){$\bullet$}
\put(167,118){$\bullet$}
\put(167,18){$\bullet$}
\put(197,68){$\bullet$}
\put(20,70){\line(1,0){180}}
\put(50,20){\line(1,0){120}}
\put(50,120){\line(1,0){120}}
\put(50,20){\line(3,5){60}}
\put(50,120){\line(3,-5){60}}
\put(20,70){\line(3,5){30}}
\put(20,70){\line(3,-5){30}}
\put(170,20){\line(-3,5){60}}
\put(170,120){\line(-3,-5){60}}
\put(200,70){\line(-3,-5){30}}
\put(200,70){\line(-3,5){30}}
\put(47,84){$\circ$}
\put(47,52){$\circ$}
\put(107,84){$\circ$}
\put(107,52){$\circ$}
\put(77,102){$\circ$}
\put(77,34){$\circ$}
\put(167,84){$\circ$}
\put(167,52){$\circ$}
\put(137,102){$\circ$}
\put(137,34){$\circ$}
\put(50,86){\line(0,-1){32}}
\put(110,86){\line(0,-1){32}}
\put(50,86){\line(5,3){30}}
\put(110,86){\line(-5,3){30}}
\put(50,54){\line(5,-3){30}}
\put(110,54){\line(-5,-3){30}}
\end{picture}
\caption{${\cH'}_0$ (black dots) and  Voronoi region about $0$ (inside white dots)
  in ${\cal L}_3$} 
\label{fig:L4}
\end{center}
\end{figure}
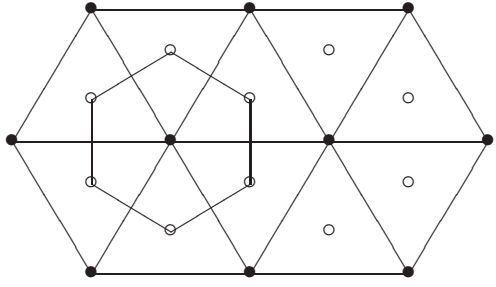
For $x\in\cH'_k$ let
$\cV_x\subset \Rd$ be the Voronoi region for $x$ associated
with the points in $\cH'_k$, i.e., the closed set of points in
$\Rd$ that are closer to $x$ in Euclidean norm  than to all the other points of
${\cH}'_k$ (including ties). 
If $\cV=\cV_0$ (in $d=2$, $\cV_0$ is the hexagon in
Figure \ref{fig:L4} inside the connected six white dots), then the translation invariance of $\cH'_0$ and fact that $\cH'_k=kv'_i+\cH'_0$ show that $\cV_x=x+\cV$ for all $x\in\cup\cH'_k$.  It is
immediate from the definition of Voronoi region that for
each $k$,
\begin{equation}\label{vLd}
\cup_{x\in{\cH}'_k} \cV_x = \Rd.
\end{equation}
Furthermore, $\cV_x$ is contained in the closed ball of
radius $D$ centered at $x$. (To see this we may set $x=k=0$ and transfer the problem to $\Z^D$ via $\A^{-1}$.  It then amounts to noting that if $x\in\R^D$ satisfies $\sum x_i=0$ and $\Vert x\Vert_2>\sqrt D$, then there are $i\neq j$ s.t. $x_i>1,x_j<0$ or $x_i<-1,x_j>0$, and so $\Vert x\pm(e_i-e_j)\Vert_2<\Vert x\Vert_2$.)  From this inclusion we see
that for any $L>0$,
\begin{align} \label{cLd}\text{if }
c_{L} = &L/(2D)\hbox{ then } c_L\cV_x\subset c_Lx+[-L,L]^d,\\
\nonumber&\hbox{ and so }
\cup_{x\in{\cH'_k}} c_{L} x + [-L,L]^d = \Rd .
\end{align} 
The above also holds with $2c_L$ in place of $c_L$ but the above ensures a certain overlap in the union which makes it more robust.
Finally, one can check that for some positive $c_{\tref{vLd1}}(D)$,
\begin{equation}\label{vLd1}
\text{if }a\in\cV_x, b\notin \cV_x\text{ and } |a-b|<
c_{\tref{vLd1}} \text{ then  } b\in \cup_{i\ne j}
\cV_{x+v'_i-v'_j} .
\end{equation}
For this, note that $x+v'_i-v'_j$, $1\le i\neq j\le D$ are the $D(D-1)$ ``neighboring points to $x$" in $\cH'_k$, corresponding to the $6$ black vertices of the hexagonal around $0$ in Figure~\ref{fig:L4} for $x=0$ and $D=3$.  The above states that the $D(D-1)$ corresponding Voronoi regions provide a solid annulus about $\V_x$, as is obvious from Figure~\ref{fig:L4} for $D=3$.

Our oriented percolation process will be constructed from a
family of random variables $\{\eta(z),z\in\LD\}$ taking
values 0 or 1, where 0 means closed and 1 means open.
In the block construction, one
usually assumes that the collection of $\eta(z)$ is ``$M$
dependent with density at least $1-\theta$'' which means that for any $k$,
\begin{align} \label{Mdepdef}P(&
\eta(z_i) = 1|\eta(z_j),j\neq i) \ge (1-\theta),\\
\nonumber&\hbox{ whenever }z_i\in\LD, 1\le
i\le k\hbox{ satisfy $|z_i-z_j|> M$ for all $i\ne j$.}
\end{align}
Our process will satisfy the modified condition
\begin{align}\label{modMdep}
P(&\eta(z_k) = 1|\eta(z_j),j< k) \ge (1-\theta)\hbox{ whenever }z_j=(z'_j,n_j)\in\cL_D,\ 1\le j\le k\\
\nonumber&\hbox{ satisfy }n_j<n_k\hbox{ or }(n_j=n_k\hbox{ and }|z'_j-z'_k|> M)\hbox{ for all }j<k.
\end{align}

It is typically not difficult to prove results for
$M$-dependent percolation processes with $\theta$ small (see
Chapter 4 of \cite{Dur95}), but in Section~\ref{death} we will simplify things
by applying Theorem~1.3 of \cite{LSS} to reduce to the case of independent percolation.  
By that result, under \eqref{Mdepdef}, there is a constant
$\Delta$ depending on $D$ and $M$ such that if

\[
1-\theta' = \Bigr(1- \frac{\theta^{1/\Delta}}{(\Delta-1)^{(\Delta-1)/\Delta}}\Bigl) \bigl(1-(\theta(\Delta-1))^{1/\Delta}\bigr),
\]
we may couple $\{\eta(z),z\in\LD\}$ with a family
$\{\zeta(z),z\in\LD\}$ of iid Bernoulli random variables
with $P(\zeta(z)=1)=1-\theta'$ such that $\zeta(z)\le \eta(z)$ for
all $z\in\LD$. An examination of the proofs of Proposition~1.2 and Theorem 1.3 of \cite{LSS}, shows that the above result remains valid under our condition \eqref{modMdep}.  [In their proof of Theorem~1.3 we can order the vertices of a finite set in $\cL_D$ so that the levels $n$ of the vertices are non-decreasing, and then in the inductive proof of Proposition~1.2 we will only be conditioning $\{\eta(z_0)=1\}$ on vertices whose level is at most that of $z_0$.]

In view of the comparison, and the fact that $\theta'\to 0$ as $\theta\to 0$, we can for the rest of the
section suppose:
\beq
\eta(z) \hbox{ are i.i.d.~with $P( \eta(z)=1) = 1-\theta$}.
\label{OP2}
\eeq
We now define the edge set ${\cal E}_{\uparrow}$ for $\LD$
to be the set of all oriented edges from $z$ to
$z+v_i$, $z\in \LD$, $1\le i\le D$. A sequence of points
$z_0,\dots, z_{n-1},z_n$ in $\LD$ is called an open path
from $z_0$ to $z_n$, and we write $z_0\to z_n$, if there is
an edge in $\mathcal{E}_\uparrow$ from $z_i$ to $z_{i+1}$
and $z_i$ is open for $i=0,\dots,n-1$.  Note that $z_n$ does
not have to be open if $n\ge1$ but $z_0$ does. In Sections \ref{sec:exist} and \ref{death}
we will employ a block construction and determine suitable
parameters so that $(x,n)\in\cH_n$ being open
will correspond to a certain ``good event'' occuring for our
Poisson processes in the space-time block $(c_{L}x +
[-K_1T,K_1T]^d)\times [nJ_1T,(n+1)J_1T]$ for appropriate $L$, $K_1$ and $J_1$.

Given an initial set of ``wet'' sites $W_0 \subset {\cal H}_0$,
we say $z \in {\cal H}_n$ is wet if $z_0\to z$ for some
initial wet site $z_0$.  Let $\bar W_n$ be the set of wet
sites in ${\cal H}_n$ when all the sites in ${\cal H}_0$ are
wet, and let $W^0_n$ be the set of wet sites in ${\cal H}_n$
when only $0\in{\cal H}_0$ is wet. Let 
$\Omega^0_\infty = \{ W^0_n\ne\emptyset \hbox{ for all $n\ge 0$}\}$.

\begin{lem} \label{ballth} 
(i) $\inf_{x\in {\cal H}_n} P( x \in \bar W_n ) \ge P(\Omega^0_\infty)\to 1$ as $\theta\to 0$. 

\noindent
(ii) Let ${\cal H}^r_n = \{ (z,n) \in \LD : z \in [-r,r]^d\}$. Then 
there are $\theta_{\tref{ballth}}>0$ and $r_{\tref{ballth}}>0$ such that 
if $\theta<\theta_{\tref{ballth}}$ and $r\le r_{\tref{ballth}}$ then  as $N\to\infty$.
\begin{equation}
\label{eq:ballthm}
P(\Omega^0_\infty \text{ and } W^0_n\cap \mathcal{H}^{rn}_n \ne  \bar W_n\cap \mathcal{H}^{rn}_n 
\hbox{ for some $n\ge N$}) \to 0.
\end{equation}
\end{lem}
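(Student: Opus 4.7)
For (i), the tool is the reversal identity for Bernoulli oriented percolation. For $x \in \cH_n$, the reverse cluster $\hat C(x) = \{z \in \LD : z \to x\}$ is obtained by reading each open forward path $y = z_0,\ldots, z_n = x$ from tail to head; since $\eta$ is an i.i.d.\ Bernoulli field, translation invariance shows that $\hat C(x)\cap \cH_{n-k}$ has the same distribution as $x-W^0_k$ for each $0\le k\le n$. Taking $k=n$ gives
\[
P(x\in\bar W_n) \;=\; P(\hat C(x)\cap\cH_0\ne\emptyset) \;=\; P(W^0_n\ne\emptyset) \;\ge\; P(\Omega^0_\infty),
\]
which is (i). The convergence $P(\Omega^0_\infty)\to 1$ as $\theta\to 0$ is standard for supercritical oriented percolation: stochastic comparison with a highly supercritical branching random walk, or the contour argument in Chapter~4 of \cite{Dur95}, bounds $P(W^0_n=\emptyset)$ uniformly in $n$ by a quantity tending to $0$ as $\theta\to 0$.

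For (ii), let $\hat C_k(x)\subset\cH_{n-k}$ denote the set of $z$ connected to $x$ by an open forward path of length $k$, so that by the reversal identity $\hat C_k(x)\stackrel{d}{=} x-W^0_k$. Take $k=\lfloor n/2\rfloor$; then $W^0_{n/2}$ and $\hat C_{n/2}(x)$ are measurable with respect to $\eta$ restricted to the disjoint slabs of levels $[0,\lfloor n/2\rfloor-1]$ and $[\lceil n/2\rceil, n-1]$ respectively, and are therefore independent. The key identity is
\[
\{0\to x\} \;=\; \{W^0_{n/2}\cap \hat C_{n/2}(x)\ne\emptyset\},
\]
from which, together with $\{x\in\bar W_n\}=\{\hat C_n(x)\ne\emptyset\}\subset\{\hat C_{n/2}(x)\ne\emptyset\}$, one obtains
\[
\Omega^0_\infty \cap \{x\in\bar W_n\setminus W^0_n\} \;\subset\; E_{n,x} \;:=\; \{W^0_{n/2}\ne\emptyset,\ \hat C_{n/2}(x)\ne\emptyset,\ W^0_{n/2}\cap\hat C_{n/2}(x)=\emptyset\}.
\]
The plan is to prove that there exist $\theta_{\tref{ballth}},r_{\tref{ballth}},c,C>0$ such that for $\theta\le\theta_{\tref{ballth}}$, $r\le r_{\tref{ballth}}$, and $x\in\cH^{rn}_n$,
\begin{equation}\label{eq:planexpbd}
P(E_{n,x}) \;\le\; Ce^{-cn}, \quad n\ge 1.
\end{equation}
Summing this over $x\in\cH^{rn}_n$ (of cardinality $O(n^d)$) and over $n\ge N$ then yields
\[
P\Bigl(\Omega^0_\infty \cap \bigcup_{n\ge N}\{W^0_n\cap\cH^{rn}_n\ne\bar W_n\cap\cH^{rn}_n\}\Bigr) \;\le\; \sum_{n\ge N} Cn^d e^{-cn} \;\xrightarrow{N\to\infty}\; 0,
\]
which is \eqref{eq:ballthm}.

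\textbf{The main obstacle} is the exponential bound \eqref{eq:planexpbd}, which is a quantitative form of the shape theorem / linear-growth property of the supercritical cluster. For $\theta$ small, the block/renormalization construction in Chapter~4 of \cite{Dur95} yields a speed $\alpha=\alpha(\theta)>0$ and a density $\rho=\rho(\theta)\to 1$ as $\theta\to 0$ such that, conditional on $\{W^0_{n/2}\ne\emptyset\}$, the set $W^0_{n/2}$ covers $\cH^{\alpha n/2}_{n/2}$ with density at least $\rho$ except on an event of probability $\le Ce^{-cn}$. By the reversal identity $\hat C_{n/2}(x)\stackrel{d}{=} x-\tilde W^0_{n/2}$ for an independent copy $\tilde W^0$, the analogous statement holds for $\hat C_{n/2}(x)$ about the centre $x$. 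Choosing $r_{\tref{ballth}}\le \alpha/4$ ensures that for $x\in\cH^{rn}_n$ the two balls about $0$ and $x$ at level $n/2$ overlap in a set of cardinality $\ge cn^d$; given density $\ge\rho$ (close to $1$) for both sets on this overlap and their independence, a standard second-moment / BK-style argument gives probability of disjoint occupation $\le Ce^{-cn^d}$, yielding \eqref{eq:planexpbd}. The delicate point is thus elevating the qualitative shape theorem to a \emph{uniform} exponential-in-$n$ density bound throughout the growth cone; this is precisely the content of the renormalization procedure in \cite{Dur95}, which reduces the problem to independent Bernoulli percolation on a coarser lattice where the concentration is immediate.
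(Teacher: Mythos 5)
Your proof of (i) via the reversal/self-duality identity for i.i.d.\ site oriented percolation, giving $P(x\in\bar W_n)=P(W^0_n\ne\emptyset)\ge P(\Omega^0_\infty)$ by translation invariance, is essentially the standard argument the paper is pointing at (Theorem 4.1 of Durrett 1984). Be slightly careful with the off-by-one in the site-percolation reversal (in $\hat C_k(x)$ the site $z$ itself must be open, whereas in $W^0_k$ the endpoint $w$ need not be); this works in your favour in the concatenation $0\to z\to x$, but the identity $\hat C_k(x)\stackrel{d}{=} x-W^0_k$ requires a compensating shift in which level carries the openness requirement.

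For (ii) you take a genuinely different route from the paper. The paper gestures at the Durrett--Griffeath contact-process technology: restrict to two-dimensional subspaces $A(me_i+ne_j)$, use self-duality there, and piece together the planar shape theorems. Your argument instead reduces cleanly to the midpoint disjointness event
\[
E_{n,x}=\{W^0_{\lfloor n/2\rfloor}\ne\emptyset,\ \hat C_{\lfloor n/2\rfloor}(x)\ne\emptyset,\ W^0_{\lfloor n/2\rfloor}\cap\hat C_{\lfloor n/2\rfloor}(x)=\emptyset\},
\]
exploiting the independence of the two half-slabs and the reversal identity; this set-inclusion $\Omega^0_\infty\cap\{x\in\bar W_n\setminus W^0_n\}\subset E_{n,x}$ is correct and is a clean observation that the paper does not make. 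What both arguments then need is a quantitative, exponential-in-$n$ version of the shape theorem on $\LD$: conditional on $\{W^0_{n/2}\ne\emptyset\}$, the cluster fills a cone $\cH_{n/2}^{\alpha n/2}$ with density close to $1$ except on an event of probability $\le Ce^{-cn}$, and similarly for the reversed cluster near $x$; taking $r\le\alpha/4$ then forces the two dense regions to overlap deterministically, so $P(E_{n,x})\le Ce^{-cn}$ and the $\sum_n n^d e^{-cn}$ tail is summable. You correctly flag this density bound as the delicate point; the paper is equally incomplete here (``the technology exists... but unfortunately no one has written down the details''), so your proposal is at the same level of rigor but with a different and arguably more transparent reduction (no 2D-slice topology). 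One small addition worth making explicit: in bounding $P(E_{n,x})$ you should split off the exponentially rare events $\{W^0_{n/2}\ne\emptyset\ \text{but}\ W^0_{n/2}\ \text{not dense}\}$ and its reversed analogue before invoking the deterministic overlap; as written the ``conditional on $W^0_{n/2}\ne\emptyset$'' phrasing slides past the configurations where the cluster survives to level $n/2$ but is small and about to die.
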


\begin{proof}
  The first result follows from well-known $d=1$ results,
  e.g., see Theorem~4.1 of \cite{Dur84}. The second result
  is weaker than a ``shape theorem'' for $W^0_n$, which
  would say the following, using the notation $A'=\{x':
  (x',n)\in A\}$ for $A\subset\cH_n$.  For $\theta<\theta_c$
  there is a convex set ${\cal D}\subset\Rd$, containing the origin in its interior, so that on
  $\Omega^0_\infty$,
$$
{(W^{0}_n)}'\ \approx n{\cal D} \cap (\bar W_n)'
$$
for all large $n$. More precisely with probability $1$, if $\delta>0$ there is a
random $n_\delta$ such that $({W}^0_n)' \subset
n(1+\delta){\cal D}$ and $({W}^0_n)' \supset
(1-\delta)n{\cal D} \cap (\bar W_n)'$ for all $n \ge
n_\delta$.  The technology exists to prove such a result for
oriented percolation on ${\cal L}_D$, but unfortunately no
one has written down the details.  
The argument is routine but messy, so we content ourselves to remark that 
(ii) can be established by the methods used in Durrett
and Griffeath \cite{DG82} to prove the shape theorem for the
$d$-dimensional contact process with large birth rates: one
uses percolation in two dimensional subspaces
$A(me_i+ne_j)$, $1\le i < j \le n$ and self-duality.  
\end{proof} 

Call sites in $\bar V_n = {\cal H}_n \setminus \bar W_n$ dry. In
Section~\ref{death}, when we are trying to show that
that $\xi^\vep_t$ dies out, the block construction will
imply for appropriate $L$ and $J_1$, 
\beq \hbox{if $(z,n) \in W_n^0$, then $\Bigl(c_{L}z +
  [-L,L]^d\Bigr) \times [(n-1)J_1T,nJ_1T]$ is $\vep$-empty},
\label{blockcon}
\eeq where a region is $\vep$-empty if $\xi^\vep_t(x)=0$ for all
$(x,t)$ in the region.  This will not be good enough for our
purposes because the space-time regions associated with
points in $V_n^0 = \cH_n \setminus W^0_n$ might be occupied by particles. To
identify the locations where there might be 1's in $\xi_t$
we will work backwards in time. However in our coarser grid $\LD$, 1's may
spread sideways through several dry regions and so we need to
introduce an additional set of edges for $\LD$. Let ${\cal
  E}_{\downarrow}$ consist of the set of oriented edges from
$z$ to $z-v_i$ for $1 \le i \le D$, and from $z$ to
$z+v_i-v_j$ for $1\le i\ne j \le D$, $z \in \LD$.

We assume for the rest of this section that
\[d\ge 2,\]
since we will in fact applying these results only for $d\ge 3$.
Our next goal is to prove an exponential bound on the size of clusters of
dry sites.  
Up to this point the
definitions are almost the same as the ones in Durrett \cite{Dur92}.
However, we must now change the details of the contour
argument there, so that it is done on the correct graph. Let
$y\in\LD$ with $y_D=n\ge 0$ (write $y\in\LD^+$). In addition to $P$ as in \eqref{OP2}, for $M>0$ we also work with a probability $\bar P=\bar P_{n,M}$ under which $\eta(z)=1$ for $z=(z',m)\in\LD^+$ satisfying $m\le n$ and $|z'|\ge M$, and the remaining $\eta(z)$'s are as in \eqref{OP2}.  Therefore under $\bar P$ the sets of wet sites $\{\bar W_n\}$ will be larger, although we will use the same notation since their definition is the same under either probability law.   If $y$ is wet put $D_y=\emptyset$, and otherwise
let $D_y$ be the connected component in
$({\cal L}_D, {\cal E}_{\downarrow})$ of dry sites containing
$y$.  That is, $z\in D_y$ iff there are $z_1=y, z_2,\dots, z_K=z$ all in $\LD$ so that the edge from $z_i$ to $z_{i+1}$ is in $\E_{\downarrow}$ and each $z_i$ is dry.  Since all sites in $\cH_0$ are wet, $D_y\subset \{z\in\LD:n\ge z_D>0\}$,  and under $\bar P_{n,M}$, $D_y\subset \{z\in\LD:n\ge z_D>0, |(z_1,\dots,z_{D-1})|<M\}$.  We assume that $\omega$ satisfies
\begin{equation}\label{com}D_y(\omega) \hbox{ is finite.}
\end{equation}
The fact that \eqref{com} holds a.s. under $\bar P_{n,M}$ is the reason this law was introduced.
 To make $D_y$ into a solid object we consider the compact solid
$$
R_y = \cup_{z \in D_y} (z + {\cal Q})\subset \R^d\times\R_+ \,.
$$

 If $R_y^c$ is the complement of $R_y$ in $\R^d\times\R_+$, we claim that both $R_y$ and
$R^c_y$ are path-connected. For $R_y$, suppose for
concreteness that $D=3$ and note that for the diagonally
adjacent points $y(0) = {\cal A}(0,0,0)$ and $y(1) = {\cal
  A}(1,-1,0)$, $D_{y(0)} \cap D_{y(1)}$ contains the edge
$\mathcal{A}(\{1/2\} \times \{-1/2\} \times [-1/2,1/2])$. For $R^c_y$,
if $x\in R^c_y$ then there exists $[x]\in \LD^+\setminus D_y$
such that $x\in [x]+\cQ$ and the line segment from $x$ to
$[x]$ is contained in $R^c_y$. We first assume $[x]\in\cH_k$
for some $k\in\{1,2\dots,n\}$. If $[x]$ is wet then there must
be a path in $R^c_y$ connecting $[x]$ to $\cH_0$. Suppose $[x]$
is dry, and let $z_0,z_1,\dots,z_K$ be a path in
$\mathcal{E}_{\downarrow}$ connecting $z_0=y$ to
$z_K=[x]$. At least one site on this path must be wet (else $[x]\in D_y$), so let
$z_j$ be the first wet site encountered starting at
$z_K$. Then for each $i>j$, $z_i$ is dry and $z_i\notin
D_y$  (or else $[x]$ would in $D_y$). Thus $\cup_{i=j}^K(z_i+\cQ)$ is path-connected, 
contained in $R_y^c$, and $z_j$ is connected to $\cH_0$  by
a path in $R_y^c$. Note that $\cH_0\subset (\R^d\times\{0\})\cap R_y^c\equiv\tilde\cH_0$  which is path-connected because the rotated cubes making up $R_y$ can only intersect $\R^d\times\{0\}$ in a discrete set of points (since $D_y\subset\{z_D>0\}$).  It is here that we use $d\ge 2$. Now suppose $[x]\in \cH_k$ for some $k>n$.  $\tilde\cH_0$ is also connected
to $\cH_{n+1}$ by a path in $R_y^c$ (assuming $\theta<1$). This allows us to connect $[x]$ to $\tilde\cH_0$ and so conclude that $R^c_y$ is path-connected.

Let $\Gamma_y$ be the boundary of $R^c_y$. 
To study $\Gamma_y$ we need some notation.  We define the plus faces of $[-1/2,1/2]^D$ to
be $[-1/2,1/2]^m \times \{1/2\} \times [-1/2,1/2]^{D-m-1}$,
and define the minus faces to be $[-1/2,1/2]^m \times \{-1/2\}
\times [-1/2,1/2]^{D-m-1}$, $m=1,\dots,D$. The images of the
plus and minus faces of $[-1/2,1/2]^d$ under $A$ constitute
the plus and minus faces of $Q=A([-1/2,1/2]^d)$, which are
used to defined the plus and minus faces of $\Gamma_y$ in the
obvious way. Note that the plus faces of $\Gamma_y$ will have outward normal
$v_i$ for some $i$ while the minus faces will have outward normal $-v_i$ for some $i$.

\begin{lem} If \eqref{com} holds, then $\Gamma_y$ is connected and bounded.
\label{gyconn}
\end{lem}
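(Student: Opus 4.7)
The plan is first to dispatch boundedness, which is immediate: \eqref{com} says $D_y$ is finite, so $R_y = \cup_{z\in D_y}(z+\mathcal{Q})$ is a finite union of compact unit cubes and hence compact; since $\Gamma_y\subset R_y$, it is bounded.

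For connectedness my plan is to build on the two facts already assembled in the paragraph preceding the lemma: $R_y$ is path-connected (the $\mathcal{E}_\downarrow$-connectedness of $D_y$ shows that $\mathcal{E}_\downarrow$-adjacent cubes share at least a common vertex, so the union is path-connected) and $R_y^c=(\R^d\times\R_+)\setminus R_y$ is path-connected. Because $D_y\subset\{z_D\ge 1\}$, the set $R_y$ is strictly above the level $0$ hyperplane, so the latter fact extends to path-connectedness of $\R^D\setminus R_y$ in the full ambient space. Consequently
\[
\R^D\setminus\Gamma_y \;=\; \mathrm{int}(R_y)\,\sqcup\,(\R^D\setminus R_y)
\]
is a disjoint union of two open, path-connected sets, hence has exactly two connected components.

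To conclude, I would argue by contradiction: if $\Gamma_y$ decomposed as $\Gamma^{(1)}\sqcup\cdots\sqcup\Gamma^{(k)}$ with $k\ge 2$ nonempty compact pieces, then since $\Gamma_y$ is a compact $(D-1)$-dimensional polyhedral surface bounding the $D$-dimensional polyhedron $R_y$, each component $\Gamma^{(i)}$ would itself enclose a nonempty open region of $\R^D$ by (a polyhedral version of) Jordan--Brouwer, forcing $\R^D\setminus\Gamma_y$ to have at least $k+1\ge 3$ components and contradicting what was just shown. Hence $\Gamma_y$ is connected.

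The main technical obstacle is the Jordan--Brouwer step: $\Gamma_y$ need not be a topological manifold, because two cubes of $D_y$ can meet only along a codimension-$2$ face via the edges $z\to z+v_i-v_j$ in $\mathcal{E}_\downarrow$, so some points of $\Gamma_y$ may be genuine singularities. The cleanest fix I can see is a smoothing/thickening argument: shrink each cube of $R_y$ by a small amount $\delta$ (or replace $R_y$ by a small open regular neighborhood that retracts onto it), producing a compact $D$-manifold-with-boundary $\widetilde R_y^\delta$ whose boundary is a disjoint union of smooth closed $(D-1)$-manifolds to which standard Jordan--Brouwer applies; one checks that the components of $\partial\widetilde R_y^\delta$ correspond bijectively to those of $\Gamma_y$ for $\delta$ small enough (since $D_y$ is finite, there is a uniform lower bound on how close disjoint components can be), and that the component counts of $\R^D\setminus\widetilde R_y^\delta$ and $\R^D\setminus\Gamma_y$ agree in the limit. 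The topological counting contradiction above then rules out $k\ge 2$.
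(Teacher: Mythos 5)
Your boundedness argument is correct and identical to the paper's.

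The connectedness argument has a genuine gap. You write $\R^D\setminus\Gamma_y = \mathrm{int}(R_y)\sqcup(\R^D\setminus R_y)$ and assert that both pieces are open and path-connected, but path-connectedness of $R_y$ does \emph{not} give path-connectedness of $\mathrm{int}(R_y)$. Indeed, $\mathrm{int}(R_y)$ can have many components: if two cubes $z+\cQ$ and $z+v_i-v_j+\cQ$ of $D_y$ are adjacent only via a diagonal edge $z\to z+v_i-v_j$ of $\cE_\downarrow$, they meet along a $(D-2)$-dimensional face (as you yourself observe), so the interiors of these two cubes lie in different components of $\mathrm{int}(R_y)$ unless they happen to be joined by some other chain of full-face adjacencies, which $\cE_\downarrow$-connectedness does not guarantee. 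In such a configuration $\R^D\setminus\Gamma_y$ has three or more components even though $\Gamma_y$ is connected, so the ``exactly two components'' premise of your contradiction argument is false and the proof does not go through. Separately, even with that premise, the Jordan--Brouwer plus smoothing/thickening step you sketch would require substantial work precisely because $\Gamma_y$ fails to be a manifold at these codimension-$2$ contacts; this is not a small technicality to be waved away.

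The paper avoids both problems at once. It applies the Mayer--Vietoris count $\kappa(U\cap V)=\kappa(U)+\kappa(V)-1$ for open $U,V$ with $U\cup V=\R^D$, taking $U=R^\vep_y$ (an open $\vep$-neighborhood of $R_y$) and $V=R^c_y$. The key point is that $R^\vep_y$ is connected because it is an \emph{open neighborhood of the connected set} $R_y$ --- this sidesteps exactly the failure of $\mathrm{int}(R_y)$ to be connected --- and $R^c_y$ is connected by the paragraph preceding the lemma. Mayer--Vietoris then gives $\kappa(R^\vep_y\cap R^c_y)=1$, and the $\vep$-shell $R^\vep_y\cap R^c_y$ is homotopy equivalent to $\Gamma_y$. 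If you want to salvage a complement-counting argument you must likewise replace $\mathrm{int}(R_y)$ by an open thickening of $R_y$, at which point you have essentially rederived the paper's proof.
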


\begin{proof} For $\vep>0$ let
  $R^\vep_y=\{x\in\mathbb{R}^d:|x-w|_\infty<\vep \text{ for
    some }w\in R_y\}$. Since $R_y$ is connected, so is
  $R^\vep_y$. If $\kappa(U)$ denotes the number of path-connected
  components of a set $U$, it is a consequence of  the Mayer-Vietoris exact sequence with $n=0$ that for open sets $U,V\subset \R^D$ with $U\cup V=\R^D$,
\[
\kappa(U\cap V) = \kappa(U) + \kappa(V) -1 .
\]
See page 149 of \cite{Hat02} and also Proposition 2.7 of that reference. 
Applying this to the open connected (hence path-connected) sets $R^\vep_y$ and $R^c_y$ whose
union is $\R^D$, we find that $R^\vep_y\cap R^c_y$ is
path-connected. 

Finally, $R^\vep_y\cap R^c_y$ is homotopic to $\Gamma_y$,
and therefore $\Gamma_y$ is also path-connected.  Boundedness is immediate from \eqref{com}.
\end{proof}

\bigskip
For the next result we follow the proof of Lemma 6 from \cite{Dur92}. 
A {\it contour} will be a finite union of faces in $\LD$ which is connected.  

\begin{lem}\label{kesten} There are constants
  $C_{\tref{kesten}}$ and $\mu_{\tref{kesten}}$ which only depend on the
  dimension $D$ so that  
  the number of possible contours with $N$ faces,  containing a fixed face, is at most
  $ C_{\tref{kesten}} (\mu_{\tref{kesten}})^N$.
\end{lem}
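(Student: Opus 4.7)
The plan is to adapt the classical Peierls contour-counting argument to our rotated cubic lattice $\cL_D$. I will set up a suitable adjacency graph on the set of $(D-1)$-dimensional faces arising as translates of faces of $\cQ = \A([-1/2,1/2]^D)$, bound the degree in this graph by a constant depending only on $D$, and then count walks.

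First I would declare two faces $F,F'$ in $\cL_D$ to be \emph{adjacent} if $F\ne F'$ and $F\cap F'$ contains a $(D-2)$-dimensional set (equivalently, they share a $(D-2)$-face of some cube $z+\cQ$). Since any fixed face $F$ is a face of exactly the two cubes $z+\cQ$ and $z'+\cQ$ on either side of it, and each cube has $2D$ faces and finitely many $(D-2)$-subfaces, a direct count shows that the number of faces adjacent to $F$ is bounded by some constant $k=k(D)$ depending only on $D$. This is the only geometric input needed.

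Next I would argue that any contour $\Gamma$ (a connected finite union of $N$ faces in our lattice) is also connected in the face-adjacency graph just defined. Here ``connected'' for $\Gamma$ means connected as a subset of $\R^D$, and the point is that if two faces meet in a set of dimension less than $D-2$ (a single point, or a $(D-3)$-set, etc.), one can perturb along $\Gamma$ through neighboring faces sharing a true $(D-2)$-subface, because the cubic structure forces any $\R^D$-connected arrangement of faces to also be $(D-2)$-connected. (This is the standard lemma underlying Peierls' argument in $d\ge 2$; it is where $d\ge 2$, and hence $D\ge 3$, enters.)

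Finally I would count: given the fixed face $F_0$, consider the subgraph of the face-adjacency graph induced by $\Gamma$. It is connected with $N$ vertices, so it contains a spanning tree on $N$ vertices. Performing a depth-first traversal of this tree starting at $F_0$ yields a walk of length $2(N-1)$ in the adjacency graph which visits every face of $\Gamma$ and returns to $F_0$; conversely, $\Gamma$ is determined by the set of faces visited. The number of walks of length $2(N-1)$ starting at $F_0$ in a graph of maximum degree $k$ is at most $k^{2(N-1)}$. Therefore the number of contours of size $N$ containing $F_0$ is at most $k^{2(N-1)}$, which gives the claim with $\mu_{\ref{kesten}} = k(D)^2$ and $C_{\ref{kesten}} = k(D)^{-2}$.

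The only slightly delicate step is the second one, verifying that $\R^D$-connectedness of $\Gamma$ implies connectedness in the $(D-2)$-face adjacency graph; this is not a computation but a combinatorial-topological fact about the cubical complex, and is precisely the role played by the analogous lemma in \cite{Dur92}. Everything else is a uniform degree bound plus the spanning-tree walk trick, both independent of $\Gamma$ and depending only on $D$.
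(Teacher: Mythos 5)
Your counting step (connected subgraph of bounded‑degree graph $\Rightarrow$ spanning tree $\Rightarrow$ depth‑first closed walk of length $2(N-1)$, hence at most $k^{2(N-1)}$ such subgraphs through a fixed vertex) is a perfectly sound alternative to the percolation trick the paper uses, and gives the required exponential bound. The problem is upstream, in your choice of adjacency.

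You declare two faces adjacent only when they share a $(D-2)$-dimensional subface, and then assert as a ``combinatorial-topological fact about the cubical complex'' that any topologically connected finite union of $(D-1)$-faces is also connected in this stricter graph. That assertion is false. Already in $D=3$ take the two unit squares $F_1 = \{1\}\times[0,1]\times[0,1]$ and $F_2 = \{1\}\times[1,2]\times[1,2]$; they meet in the single point $(1,1,1)$, so $F_1\cup F_2$ is connected as a subset of $\R^3$ (and hence is a contour in the sense used in the paper) but $F_1$ and $F_2$ share no edge and are not adjacent in your graph. More generally, the boundaries $\Gamma_y$ that the lemma is actually applied to can have such pinch points (e.g.\ when two cubes of $D_y$ touch only at a vertex), so one cannot rescue the argument by restricting attention to boundaries either. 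The paper sidesteps this entirely by declaring two faces adjacent whenever they share \emph{any} point: with that definition, a topologically connected finite union of closed faces is automatically a connected subgraph, there is nothing to prove in your step 2, and the degree is still bounded by a constant depending only on $D$. If you replace your adjacency by the weaker one, your spanning-tree count goes through unchanged and the lemma is proved.
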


\begin{proof} Make the set of faces of $\LD$ into a graph by
  connecting two if they share a point in common.  Note that by the above
  definition a contour corresponds to a finite connected subset of this graph. Each point in the graph has a constant
  degree $\nu=\nu(D)$.  An induction argument shows that any
  connected set of $N$ vertices has at most $N(\nu -2) + 2$
  boundary points. (Adding a new point removes 1 boundary
  point and adds at most $\nu-1$ new ones.)  Consider
  percolation on this graph in which sites are open with
  probability $a$ and closed with probability $1-a$. Let 0
  be a fixed point of the graph corresponding to our fixed face, and ${\cal C}_0$ be the
  component containing 0. If $B_N$ is the number of
  components of size $N$ containing 0, then
$$
1 \ge P( |{\cal C}_0| = N ) \ge B_N a^N (1-a)^{N(\nu-2)+2}.
$$
Rearranging, we get $B_N \le C \mu^N$ with $C=(1-a)^{-2}$ and $\mu = a^{-1} (1-a)^{-(\nu-2)}$.
Taking the derivative of $-\log a - (\nu-2) \log(1-a)$ and setting it equal to 0,
we see that $a=1/(\nu-1)$ optimizes the bound, and gives constants that only depend on
the degree $\nu$.
\end{proof}  

\begin{lem}\label{expcontbd} If  
  $\theta_{\tref{expcontbd}}=(2\mu_{\tref{kesten}})^{-2D}$, then
 $\theta\le\theta_{\tref{expcontbd}}$ implies 
  that for all $y=(y',n)\in\LD^+$ and all $M>|y'|$, 
$\bar P_{n,M} ( |\Gamma_y| \ge N ) \le 2C_{\tref{kesten}}2^{-N} $ for all $N\in\NN$. 
\end{lem}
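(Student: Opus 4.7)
The plan is to run a Peierls contour argument. First observe that if $y$ is wet then $D_y = \emptyset$, so $|\Gamma_y|=0$ and the bound is trivial; assume henceforth $y$ is dry, so $y\in D_y$ and $y+\mathcal{Q}\subset R_y$. The goal is a bound of the form $\bar P_{n,M}(\Gamma_y=\gamma)\le \theta^{|\gamma|/(2D)}$ for each admissible contour $\gamma$ of size $N$, combined with a counting estimate via Lemma~\ref{kesten}.

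The core of the argument is the cost-per-face estimate, and here the $\mathcal{E}_\downarrow/\mathcal{E}_\uparrow$ asymmetry matters. A \emph{minus} face of $\Gamma_y$ with outward normal $-v_i$ separates interior cube $z_{\mathrm{in}}+\mathcal{Q}$ (with $z_{\mathrm{in}}\in D_y$) from exterior cube $z_{\mathrm{out}}+\mathcal{Q}$ where $z_{\mathrm{out}}=z_{\mathrm{in}}-v_i$. Since $(z_{\mathrm{in}},z_{\mathrm{out}})\in\mathcal{E}_\downarrow$, a dry $z_{\mathrm{out}}$ would lie in $D_y$, so $z_{\mathrm{out}}$ must be wet; but $(z_{\mathrm{out}},z_{\mathrm{in}})\in\mathcal{E}_\uparrow$, so $\eta(z_{\mathrm{in}})=1$ would force $z_{\mathrm{in}}$ to be wet, a contradiction. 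Hence every minus face forces $\eta(z_{\mathrm{in}})=0$. (Plus faces give no such constraint, since the analogous $\mathcal{E}_\downarrow$-edge is absent.) A flux argument---the boundary of the bounded set $R_y$ has zero net flux in each direction $v_i$---shows that $\Gamma_y$ has equally many plus and minus faces, so $N/2$ minus faces among $N$ total. Because each site carries at most $D$ minus faces (one per coordinate), the $N/2$ minus faces touch at least $N/(2D)$ distinct sites $z_{\mathrm{in}}$, each forced to have $\eta(z_{\mathrm{in}})=0$. These events have probability $\theta$ and are independent under $\bar P_{n,M}$ (and if any forced site lies in the deterministic-open region $\{|z'|\ge M\}$ or at level $0$, then $\bar P_{n,M}(\Gamma_y=\gamma)=0$ and the contour is irrelevant). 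This yields $\bar P_{n,M}(\Gamma_y=\gamma)\le \theta^{N/(2D)}$.

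For the counting step, since $R_y$ is bounded, the ray $\{y+tv_1:t>0\}$ exits $R_y$, and its first crossing with $\Gamma_y$ lies at a face of the form $f_k=y+(k-\tfrac12)v_1+A(\{\tfrac12\}\times[-\tfrac12,\tfrac12]^{D-1})$ for some $k\ge 1$. If $|\Gamma_y|=N$, connectedness of $\Gamma_y$ (Lemma~\ref{gyconn}) together with $y+\mathcal{Q}\subset R_y$ forces $k\le c_D N$ for a dimensional constant $c_D$. By Lemma~\ref{kesten}, the number of contours of size $N$ containing a fixed face is at most $C_{\ref{kesten}}\mu_{\ref{kesten}}^N$. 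Summing over the at most $c_D N$ possible anchor positions, the number of admissible contours of size $N$ is at most $c_D N\,C_{\ref{kesten}}\mu_{\ref{kesten}}^N$. With $\theta\le(2\mu_{\ref{kesten}})^{-2D}$ one has $\mu_{\ref{kesten}}\theta^{1/(2D)}\le 1/2$, giving
\[
\bar P_{n,M}(|\Gamma_y|=N)\le c_D C_{\ref{kesten}}\,N\bigl(\mu_{\ref{kesten}}\theta^{1/(2D)}\bigr)^N\le c_D C_{\ref{kesten}}\,N\,2^{-N},
\]
and summing the geometric-polynomial tail (adjusting constants to absorb the $N$-factor) delivers $\bar P_{n,M}(|\Gamma_y|\ge N)\le 2C_{\ref{kesten}}2^{-N}$.

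The hard part is the first paragraph: correctly identifying which faces of $\Gamma_y$ actually cost an independent factor of $\theta$. Unlike the symmetric Ising contour argument, here only minus faces give $\eta=0$ constraints, and one must exploit the directionality of $\mathcal{E}_\downarrow$ and $\mathcal{E}_\uparrow$ together with the flux-balance of a bounded region to recover the full $|\gamma|/(2D)$ cost. The contour-counting step is then a routine application of Lemma~\ref{kesten}, with the only subtlety being the bound on the number of possible anchor faces in terms of $N$.
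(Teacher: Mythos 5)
Your overall plan is the right one — a Peierls-style contour argument with a plus/minus face balance, a cost-per-face bound, and Lemma~\ref{kesten} for counting — and this matches the paper's approach. But there is a genuine error in your cost-per-face step, plus a secondary weakness in the anchoring that costs you the stated constant.

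The error: from ``$z_{\mathrm{out}}$ is wet and $(z_{\mathrm{out}},z_{\mathrm{in}})\in\cE_\uparrow$'' you conclude that $\eta(z_{\mathrm{in}})=1$ would make $z_{\mathrm{in}}$ wet, hence $\eta(z_{\mathrm{in}})=0$. That implication is false under the paper's conventions: whether $z_{\mathrm{in}}$ is wet depends only on the existence of an open path \emph{terminating} at $z_{\mathrm{in}}$, and ``Note that $z_n$ does not have to be open'' explicitly says the terminal vertex's own state is irrelevant. A dry site can perfectly well be open. What \emph{is} forced is $\eta(z_{\mathrm{out}})=0$: since $z_{\mathrm{out}}$ is wet, if $z_{\mathrm{out}}$ were also open then the $\cE_\uparrow$-edge $z_{\mathrm{out}}\to z_{\mathrm{in}}$ would extend the open path to $z_{\mathrm{in}}$, contradicting that $z_{\mathrm{in}}$ is dry. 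So the forced-closed site associated to each minus face is the \emph{exterior wet} site just below the face, not the interior dry one. The counting that follows (at most $D$ minus faces per site, hence at least $N/(2D)$ distinct forced-closed sites, independence under $\bar P_{n,M}$) then goes through essentially as you wrote it, just for the $z_{\mathrm{out}}$'s.

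Secondary issue: your anchor. You send a ray from $y$ in direction $v_1$ and observe it crosses $\Gamma_y$ at some face $f_k$ with $k\le c_D N$; summing over the $O(N)$ anchor positions introduces an extra factor of $N$, so your bound on $\bar P(|\Gamma_y|=N)$ carries an $N$ in front, and summing the tail then produces $\bar P(|\Gamma_y|\ge N)\le C'N 2^{-N}$ — not the constant $2C_{\ref{kesten}}$ and not at the stated threshold $\theta_{\ref{expcontbd}}$. The paper avoids this entirely: since $y$ is dry at level $n$, each $y+v_i$ is at level $n+1$ and hence not in $D_y\subset\{z_D\le n\}$, so the $D$ plus faces of $y+Q$ all lie in $\Gamma_y$. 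Anchoring at one of these fixed faces gives only a bounded number of anchors and yields $\bar P(|\Gamma_y|\ge N)\le C_{\ref{kesten}}\sum_{k\ge N}(\mu_{\ref{kesten}}\theta^{1/(2D)})^k\le 2C_{\ref{kesten}}2^{-N}$ cleanly.
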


\begin{proof} By Lemma~\ref{gyconn} if $D_y\neq\emptyset$ we see that under $\bar P_{n,M}$, $\Gamma_y$ is a contour which by definition contains the plus faces of $y+Q$. 
Given a plus face in $\Gamma_y$ if
we travel the line perpendicular to $\R^d\times \{0\}$ and through the center of the face,
then we enter and leave the set an equal number of times, so
the number of plus faces of $\Gamma_y$ is equal to the number
of minus faces. Thus, if the contour $\Gamma_y$ has size $N$ there are
$N/2$ minus faces. It is easy to see that a point of $\bar
W_j$ adjacent to a minus face associated with a point in
$\bar V_{j+1}$ must be closed for otherwise it
would wet the point in $\bar V_{j+1}$ (recall the outward
normal of a minus face is $-v_i$ for some $i$). The point of
$\bar W_j$ 
that we have identified might be associated with as many as
$D$ minus faces, but in any case for a contour of size $N$  there must be at least $N/2D$ associated closed sites. Taking $\theta\le (2\mu_{\ref{kesten}})^{-2D}$, using Lemma~\ref{kesten} to bound the number of possible contours containing a fixed plus face of $y+Q$, and summing the resulting geometric series
now gives the result.
\end{proof}

It follows from the above and an elementary isoperimetric inequality that there are finite positive
constants $C,c$ such that for all $y=(y',n)\in\LD^+$ and $M>|y'|$,
\begin{equation}\label{expdrybd}
\text{if }\theta\le\theta_{\tref{expcontbd}}\text{ then }
\bar P_{n,M}(|D_y|\ge N ) \le C\exp(-c N^{(D-1)/D})\hbox{ for all }N\in\NN.
\end{equation}
Now fix $r>0$ and let ${\cal B}_n$ be the dry sites in ${\cal H}^{rn/4}_n$
connected to the complement of $\cup_{m=n/2}^n {\cal
  H}^{rm/2}_m$ by a path of dry sites on the graph with edges ${\cal E}_\downarrow$, 
where as for open sites the last site in such a path need not be dry. 

\begin{lem}\label{deadcore}
If $\theta\le\theta_{\tref{expcontbd}}$ then
$$
P({\cal B}_n \neq \emptyset
\hbox{ infinitely often})= 0.
$$
\end{lem}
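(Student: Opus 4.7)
I will apply Borel–Cantelli, so it suffices to establish $\sum_n P(\mathcal{B}_n\ne\emptyset)<\infty$. I treat a fixed $y=(y',n)\in\mathcal{H}^{rn/4}_n$ and later take a union bound over the $O(n^{D-1})$ such sites.

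\emph{Step 1: Any escape path has length $\Omega(n)$.} Suppose $y\in\mathcal{B}_n$ and let $z_0=y,z_1,\dots,z_K$ be an $\mathcal{E}_\downarrow$–path with $z_0,\dots,z_{K-1}$ dry and $z_K\notin\bigcup_{m=n/2}^n\mathcal{H}^{rm/2}_m$. Each $\mathcal{E}_\downarrow$–step either drops the level by one (with lateral displacement $\sqrt{D-1}$) or preserves the level (with lateral displacement $\sqrt{2D}$). Because $z_K$ lies outside the cone, either its level is below $n/2$—in which case $K\ge n/2+1$—or its level $m\in[n/2,n]$ satisfies $|z_K'|>rm/2$. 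In the second case the path needs at least $n-m$ down steps \emph{and} creates total lateral displacement exceeding $rm/2-rn/4=r(2m-n)/4$, which requires at least $r(2m-n)/(4\sqrt{2D})$ steps. Optimizing the resulting lower bound over $m\in[n/2,n]$ yields $K\ge c_0 n$ for an explicit $c_0=c_0(r,D)>0$.

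\emph{Step 2: Dryness on a slab is preserved under $\bar P_{n,M_n}$.} Set $A_0=r/4+c_0\sqrt{2D}$, so that the first $\lceil c_0 n\rceil$ vertices $z_0,\dots,z_{\lceil c_0 n\rceil-1}$ of the path lie in the slab $\{|z'|\le A_0 n\}$. Choose $M_n=A_0 n+n\sqrt{D-1}+1$. I claim that for every site $z=(z',m)$ with $|z'|\le A_0 n$ and $m\le n$, the dryness of $z$ under $P$ coincides with its dryness under $\bar P_{n,M_n}$. Indeed, any upward path $w_0\to w_1\to\dots\to w_m=z$ in $\mathcal{E}_\uparrow$ from $\mathcal{H}_0$ to $z$ satisfies, using $|v'_i|=\sqrt{D-1}$,
\[
|w_k'|\le |z'|+(m-k)\sqrt{D-1}\le A_0 n+n\sqrt{D-1}<M_n, \qquad 0\le k\le m.
\]
Hence every candidate wet path to $z$ is contained in $\{|w'|<M_n\}$, the region where $P$ and $\bar P_{n,M_n}$ assign identical $\eta$–values, so the wetness (and therefore dryness) of $z$ is the same under the two laws.

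\emph{Step 3: Contour bound and summation.} By Step 2, each of the $\lceil c_0 n\rceil$ prefix vertices is dry under $\bar P_{n,M_n}$ and they form an $\mathcal{E}_\downarrow$–chain, so $|D_y|\ge c_0 n$ under $\bar P_{n,M_n}$. Since $|y'|\le rn/4<M_n$, the estimate \eqref{expdrybd} applies and gives
\[
P(y\in\mathcal{B}_n)=\bar P_{n,M_n}(y\in\mathcal{B}_n)\le \bar P_{n,M_n}(|D_y|\ge c_0 n)\le C\exp\!\bigl(-c(c_0 n)^{(D-1)/D}\bigr),
\]
where the first equality is another application of Step 2 (the event $\{y\in\mathcal{B}_n\}$ is determined by the dryness of sites in the slab). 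A union bound over $\mathcal{H}^{rn/4}_n$ produces a bound of order $n^{D-1}\exp(-c' n^{(D-1)/D})$ for $P(\mathcal{B}_n\ne\emptyset)$, which is summable, and Borel–Cantelli finishes the proof.

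\emph{Main obstacle.} The delicate point is Step 2. The measure $\bar P_{n,M_n}$ makes every site with $|z'|\ge M_n$ open, and it is only under this measure that the contour estimate \eqref{expdrybd} is available. Since $\bar P$ therefore has \emph{fewer} dry sites than $P$, one cannot naively conclude that a dry configuration under $P$ is also dry under $\bar P$. The geometric observation that upward $\mathcal{E}_\uparrow$–paths have lateral speed at most $\sqrt{D-1}$ per level is what forces the two notions of dryness to agree on the relevant slab and bridges the gap between the large-cluster bound under $\bar P$ and the event $\{\mathcal{B}_n\ne\emptyset\}$ under $P$.
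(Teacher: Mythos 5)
Your proof takes essentially the same route as the paper: introduce the modified law $\bar P_{n,M}$ so that the dry cluster $D_y$ is a.s.\ finite and the contour estimate \eqref{expdrybd} applies, transfer the conclusion to $P$ via a coupling on a slab on which wetness for $\eta$ and $\bar\eta$ agree, and finish with a union bound and Borel--Cantelli. Step~1 (any escape path has length $\Omega(n)$) and Step~2 (wetness determined locally in a slab) are exactly the paper's two geometric inputs.

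There is, however, a small but genuine slip in Step~3. You assert $P(y\in\cB_n)=\bar P_{n,M_n}(y\in\cB_n)$ ``because the event $\{y\in\cB_n\}$ is determined by the dryness of sites in the slab''. With your choice $A_0=r/4+c_0\sqrt{2D}$ this is false: the optimization in Step~1 forces $c_0<r/(4\sqrt{2D})$, hence $A_0<r/2$, so the slab $\{|z'|\le A_0 n\}$ does \emph{not} contain the cone $\cup_{m=n/2}^n\cH^{rm/2}_m$ (whose lateral extent reaches $rn/2$), and the escape path witnessing $\{y\in\cB_n\}$ may contain dry sites outside the slab. The paper avoids this by taking $M>n(r+\sqrt{2D})$, large enough that the agreement region contains the whole cone, so the equality is legitimate. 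Fortunately your argument does not actually need the equality: the coupling plus Steps~1 and 2 directly give the one-sided implication $\{y\in\cB_n\hbox{ for }\eta\}\subset\{|D_y(\bar\eta)|\ge c_0 n\}$ (the first $\lceil c_0 n\rceil$ vertices of the escape path lie in the slab, are dry for $\eta$, hence dry for $\bar\eta$, and form an $\cE_\downarrow$-chain through $y$), which yields $P(y\in\cB_n)\le \bar P_{n,M_n}(|D_y|\ge c_0 n)$ — all that the summation requires. So either delete the unjustified equality and keep your $M_n$, or enlarge $M_n$ to $>n(r+\sqrt{2D})$ as in the paper and the equality becomes correct.
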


\begin{proof} Let $M>n(r+\sqrt{2D})$.  We couple the iid Bernoulli random variables $\{\eta(z):z\in\LD\}$ (under $P$) with the corresponding random field $\bar \eta$ (under $\bar P=\bar P_{n,M}$)  so that 
\[\eta(z)=\bar\eta(z)\quad\forall z=(z',m)\ \hbox{ where }|z'|<M\hbox{ or }m>n.\]
We claim that $z\in\cup_{m=n/2}^n\cH^{rn}_n\equiv\hat\cH_n$ is wet for $\eta$ iff it is wet for $\bar \eta$.  
It clearly suffices to fix $z=(z',m)\in\hat\cH_n$ which is wet for $\bar\eta$ and show it is wet for $\eta$.
A path of sites $z_i=(z_i',i),\ i=0,\dots,m$ with edges in $\cE_\uparrow$ from $\cH_0$ to $z$ satisfies
$\max_{i\le m}|z'_i|\le rn+\sqrt{2D}n<M$. This is because
 the edges in
$\cE_{\uparrow}$ have length at most $\sqrt{2D}$.
Therefore if the sites in the path are open in $\bar\eta$, then they will also be open in $\eta$. This proves the claim.

Next note that  if $y\in\cH_n^{rn/4}$, then $y\in\cB_n$ for $\eta$ iff $y\in\cB_n$ for $\bar \eta$. This is because the path of dry sites connecting $y$ to the complement of $\cup_{m=n/2}^n {\cal
  H}^{rm/2}_m$ can be taken to be inside $\hat\cH_n$ and so we may apply the claim in the last paragraph.  It now follows from the above bound on the length of the edges in $\cE_\downarrow$ that 
  \[P(y\in\cB_n)=\bar P_{n,M}(y\in\cB_n)\le \bar P_{n,M}\Bigr(|D_y|\ge\frac{c(r)n}{\sqrt{2D}}\Bigl).\]
   The number of sites in ${\cal H}^{rn/4}_n$ is at most $C
  n^d$, and the bound in \eqref{expdrybd} shows that 
$P(\cB_n\ne\emptyset)\le \sum_{y\in\cH_n^{rn/4}}P(y\in\cB_n)$ is summable over $n$. \end{proof}

\begin{rem} \label{rem:deadzone} We will prove in
  Section~\ref{death} that if wet sites have the property in
  \eqref{blockcon}, and the kernels $p(\cdot)$ and
  $q(\cdot)$ are finite range, then for an appropriate
  $r>0$, ${\cal B}_n = \emptyset$ will imply that on
  $\Omega^0_\infty$ all sites in $[-c_{L,d}rn,c_{L,d}rn]^d$
  will be vacant at times $t\in[(n-1)J_1T,nJ_1T]$.  This
  linearly growing dead zone will guarantee 
  extinction of the 1's.
\end{rem}  
\note{finrange}\note{Long range bond stuff at end of file}
\clearpage

\section{Existence of stationary distributions}
\label{sec:exist}
 
With the convergence of the particle system to the PDE established and the percolation result introduced, we can infer the existence of stationary distributions by using a ``block construction". 
Recall that our voter model perturbations take values in $\{0,1\}^{\vep\Z^d}$ and so our stationary distributions will be probabilities on this space of rescaled configurations.
We begin with a simple result showing that for stationary distributions, having some $1$'s a.s. or infinitely many $1$'s a.s. are equivalent. Let $|\xi|=\sum_x\xi(x)$.

\begin{lem}\label{nuequiv} If $\nu$ is a stationary distribution for a voter perturbation, then
\[|\xi|=\infty\ \ \nu-a.s.\ \ \hbox{ iff }\ \ |\xi|>0\ \ \nu-a.s.\]
\end{lem}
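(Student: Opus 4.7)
The $(\Rightarrow)$ direction is immediate. For $(\Leftarrow)$, I would assume $\nu(|\xi|>0)=1$ and argue by contradiction that $\nu(A)>0$, where $A=\{\xi:0<|\xi|<\infty\}$. The key observation is that, by translation invariance and the representation \eqref{hgrepn}, the quantity $a_\vep:=g_1^\vep(0,\dots,0)\ge 0$ equals $h_1^\vep(x,\underline 0)$ for every $x$, and the proof splits on whether $a_\vep>0$ or $a_\vep=0$.

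If $a_\vep>0$, then at any site $y$ whose $Y$-neighborhood sees only zeros, the reactive $0\to 1$ flip rate is at least $a_\vep>0$ (the voter contribution $\vep^{-2}f_1$ vanishes there). Starting from an arbitrary $\xi_0$ with $|\xi_0|<\infty$, I would pick an infinite sparse sequence of sites $y_1,y_2,\dots$ mutually separated, and separated from $\supp(\xi_0)$, by more than the diameter of a single-site influence set up to time $t$ (which is finite with high probability by the exponential tail assumptions \eqref{expbd1}, \eqref{expbd2}). The graphical construction of Section~\ref{sec:construction}, applied on disjoint space-time boxes around each $y_i$, then produces independent events $\{\xi_t(y_i)=1\}$, each with probability bounded below by a constant depending only on $t,\vep,a_\vep$; Borel--Cantelli forces $|\xi_t|=\infty$ a.s. Hence $P^{\xi_0}(\xi_t\in A)=0$ for every $\xi_0$, and by stationarity $\nu(A)=P_\nu(\xi_t\in A)=0$, a contradiction.

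If $a_\vep=0$, then $h_1^\vep(\cdot,\underline 0)=0$, so $\underline 0$ is absorbing and, moreover, the $0\to 1$ flip rate is zero at any site whose $Y$-neighborhood is entirely in state~$0$. Consequently, from any $\xi\in A$ the set of $1$'s remains finite for all $t$ almost surely. For $\xi\in A$ with support in a finite set $F$, I would then show $P^\xi(\xi_1=\underline 0)>0$ by exhibiting a positive-probability configuration of the graphical events in an enlargement $\tilde F\supset F$ (taken large enough that no voter arrow or reaction from $\tilde F^c$ can reach $F$ on $[0,1]$, using \eqref{expbd1}, \eqref{expbd2}) in which (i) no reaction times occur in $\tilde F$ on $[0,1]$, and (ii) the voter arrows in $\tilde F$ realize a prescribed sequence of copying events that drives each $1$ in $F$ to $0$ by having it copy a $0$-neighbor (always available by irreducibility of $p$, and by induction on the number of remaining $1$'s). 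Fubini and stationarity would then give
\[0=\nu(\underline 0)=P_\nu(\xi_1=\underline 0)\ge\int_A P^\xi(\xi_1=\underline 0)\,d\nu(\xi)>0,\]
the desired contradiction. The main obstacle is the concrete sequencing step in the second case: arranging a positive-probability realization of voter arrows in $\tilde F$ that kills every $1$ before any of them has a chance to replicate. The first case is a routine consequence of independence of the Poisson marks at well-separated sites.
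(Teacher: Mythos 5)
Your case split on $a_\vep = g_1^\vep(0,\dots,0)$ is exactly the split in the paper's proof, and in the case $a_\vep=0$ (so $\underline 0$ is a trap) your argument --- exhibit a positive-probability graphical realization on a large box $\tilde F$ that kills all $1$'s by time $1$, then integrate against $\nu$ on $A$ --- is the paper's argument with a bit more detail on the localization. That part is fine.

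In the case $a_\vep>0$ you take a genuinely different route, and it has a gap. The paper argues directly at the level of the generator: it plugs $\psi_{r,R}(\xi)=\1(\xi\equiv 0\text{ on }A(r,R))$ into the stationarity relation $\int\Omega^\vep\psi_\vep\,d\nu=0$, uses $a_\vep>0$ to produce a negative term proportional to $|A(r+\lambda,R-\lambda)|\,\nu(\xi\equiv 0\text{ on }A(\vep r,\vep R))$, and rearranges and sends $R\to\infty$ to get $\nu(\xi\equiv 0\text{ on }A(\vep r,\infty))=0$ for every $r$, hence $\nu(|\xi|<\infty)=0$. You instead argue via the semigroup: show $P^{\xi_0}(\xi_t\in A)=0$ and then invoke $\nu(A)=P_\nu(\xi_t\in A)=\int P^\xi(\xi_t\in A)\,d\nu(\xi)$. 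The problem is that your graphical/Borel--Cantelli argument (pick well-separated $y_i$ away from $\supp\xi_0$ with all-$0$ $Y$-neighborhoods) is only valid for $|\xi_0|<\infty$, yet your conclusion asserts ``for every $\xi_0$.'' The stationarity identity integrates over \emph{all} of $\nu$'s support, which may well charge $\{|\xi|=\infty\}$, and for such $\xi$ there may be no isolated all-$0$ $Y$-neighborhoods at all, so your spontaneous-creation argument does not apply. Without a separate argument for infinite initial states (e.g., a persistence argument: pick infinitely many well-separated $1$'s of $\xi_0$ and note that with independent positive probabilities none of their Poisson clocks ring on $[0,t]$, so by the second Borel--Cantelli lemma infinitely many survive), the step $\nu(A)=\int P^\xi(\xi_t\in A)\,d\nu(\xi)=0$ does not follow from what precedes it. The paper's generator argument bypasses this entirely, since it never needs to control the transition probabilities from particular initial states; that is precisely what makes it the cleaner route here.
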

\begin{proof} It suffices to prove
\begin{equation}\label{red1}
\nu(|\xi|<\infty)>0\hbox{ implies }\nu(|\xi|=0)>0.
\end{equation}

Assume first that the $0$ configuration is a trap.  Then if $|\xi_0|=K<\infty$, \eqref{xirates} shows the sum of the flip rates is finite and so it is easy to prescribe a sequence of $K$ flips which occur with positive probability and concludes with the $0$ state.  By stationarity we get the implication in \eqref{red1}.

Assume next that $0$ is not a trap, which means
$g^\vep_1(0,\dots,0)>0$. We claim that
$\nu(|\xi|<\infty)=0$, which implies the required
result. Intuitively this is true because configurations with
finitely many $1$'s have an infinite rate of production of
$1$'s.  One way to prove this formally is through
generators. Let $\Omega^\vep$ be the generator of our voter
perturbation, $\Omega_v$ be the generator of the voter model
in \eqref{voterrates} and for $i=0,1$
\[
\Omega_i\psi(\xi)=\sum_{x\in\Z^d}1(\xi(x)=1-i)E(g_i^\vep(\xi(x+Y^1),\dots,\xi(x+Y^{N_0})))(\psi(\xi^x)-\psi(\xi)).
\]
Here $\psi$ will be a bounded function on $\{0,1\}^{\Z^d}$ depending on finitely many coordinates, and we recall that $\xi^x$ is $\xi$ with the coordinate at $x$ flipped to $1-\xi(x)$.  Recall that $\xi_\vep(\vep x)=\xi(x)$ for $\xi\in\{0,1\}^{\Z^d}, x\in\Z^d$. For $\psi$ as above define $\psi_\vep$ on $\{0,1\}^{\vep\Z^d}$ by $\psi_\vep(\xi_\vep)=\psi(\xi)$.  Then by \eqref{xirates} and \eqref{gendesc},  
\begin{equation}\label{geneq}
\Omega^\vep\psi_\vep(\xi_\vep)=(\vep^{-2}-\vep_1^{-2})\Omega_v\psi(\xi)+\Omega_0\psi(\xi)+\Omega_1\psi(\xi).
\end{equation}
For $0<r<R$, let $A(r,R)=\{x\in\Z^d:r\le |x|\le R\}$ and 
$$
\psi_{r,R}(\xi)=1(\xi|_{A(r,R)}\equiv0), \ \ \xi\in\{0,1\}^{\Z^d}.
$$
Considering two cases $x \in A(r,R)$ and $x \not\in A(r,R)$ we have
\begin{equation}
\label{psimon}
\hbox{ if $\xi(x)=0$ then } \psi_{r,R}(\xi^x)-\psi_{r,R}(\xi)\le 0.
\end{equation}
Since $\psi_{r,R}(\xi^x)-\psi_{r,R}(\xi)=1$ only if $x$ is the only site in $A(r,R)$ where $\xi(x)=1$, we have
\begin{equation}\label{elemid}
\Omega_v\psi_{r,R}(\xi)\le 1,\quad \Omega_0\psi_{r,R}(\xi)\le \Vert g^\vep_0\Vert_\infty.
\end{equation}
Choose $\lambda$ so that $P(Y^*\le \lambda)\ge 1/2$, where $Y^*$ is as in \eqref{expbd2}. 
Flipping a site from 0 to 1 cannot increase $\psi_{r,R}$, and
$\psi_{r,R}(\xi)=1$ implies $\xi(x)=0$ for all $x\in A(r,R)$, so we have 
\begin{align}\label{omega1bnd}
\nonumber\Omega_1\psi_{r,R}(\xi)\le -\sum_{x\in A(r,R)} &
(1-\xi(x))g_1^\vep(0,\dots,0)P(\xi(x+Y^i)=0\hbox{ for }1 \le i \le N_0)\psi_{r,R}(\xi)\\
&\le -\frac{g_1^\vep(0,\dots,0)}{2}\psi_{r,R}(\xi)|A(r+\lambda,R-\lambda)|.
\end{align}
The stationarity of $\nu$ implies, see Theorem B.7 of Liggett \cite{Lig99}, that if $\psi=\psi_{r,R}$ then 
$\int \Omega^\vep\psi_\vep d\nu=0$. Using  \eqref{geneq}, \eqref{elemid} and \eqref{omega1bnd}, and noting that $$\int\psi_\vep\,d\nu=\nu(\xi\equiv 0 \text{ on }A(\vep r, \vep R)),$$ 
we have
$$
0 \le (\ep^{-2} - \ep_1^{-2}) + \| g^\ep_0\|_\infty
-\frac{g_1^\vep(0,\dots,0)}{2}|A(r+\lambda,R-\lambda)| \nu(\xi\equiv 0\text{ on }A(\vep r,\vep R)).
$$
Rearranging this inequality we get
\[
 \nu(\xi\equiv 0\text{ on }A(\vep r,\vep R))\le \frac{2((\vep^{-2}-\vep^{-2}_1)+\Vert g^\vep_0\Vert_\infty)}
{g_1^\vep(0,\dots,0)|A(r+\lambda,R-\lambda)|}
\]
(recall $g_1^\vep(0,\dots,0)>0$).  Letting $R\to\infty$ we conclude that
$\nu(\xi\equiv 0\text{ on }A(\vep r,\infty)) = 0$. In words, for
$\nu$-a.a. configurations there is a 1 outside the ball of
radius $\vep r$. As this holds for all $r < \infty$, there are infinitely many ones with probability 1 under $\nu$. 
\end{proof}

Assumption~\ref{a1} and \eqref{grate} are in force throughout the rest of this section and we drop dependence on the parameters $w$, $v_i$, $L_0$, $L_1$, $r_0$, etc. arising in those hypotheses in our notation. We continue to work with the particle densities $D(x,\xi)$ using the choice of $r$ in \eqref{Tdefn}.  We start with a version of Lemma~\ref{lem:gdlow} which is adapted for proving coexistence.  We let 
\[
L_2=3+L_0\vee L_1.
\]

\begin{lem}\label{lem:gdmed} There is a $C_{\ref{lem:gdmed}}>0$ and for every $\eta>0$, there are $T_\eta\ge 1$ and $\vep_{\ref{lem:gdmed}}(\eta)>0$ so that for $t\in [T_\eta,C_{\ref{lem:gdmed}}\log(1/\vep)]$ and $0<\vep<\vep_{\ref{lem:gdmed}}$, if 
\[\xi_0^\vep\hbox{ has density in }[v_0+\eta,v_1-\eta]\hbox{ on }[-L_2,L_2]^d,\]
then
\[P(\xi_t^\vep\hbox{ has density in }[u_*-\eta,u^*+\eta]\hbox{ on }[-wt,wt]^d|\xi_0^\vep)\ge 1-\vep^{.05}.\]
\end{lem}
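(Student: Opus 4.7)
The plan is to mimic the structure of the proof of Lemma \ref{lem:gdlow}, replacing the exponential-decay input from Assumption \ref{a2} with the two-sided asymptotic bound provided by Assumption \ref{a1}. Throughout, let $\xi_0^\vep$ be deterministic with density in $[v_0+\eta, v_1-\eta]$ on $[-L_2,L_2]^d$ (where $L_2 = 3 + L_0 \vee L_1$). As in Lemma \ref{lem:gdlow}, we take $T$ of the form $T = c\log(1/\vep)$ for a suitable constant $c$, set $S = T - t_\vep$ with $t_\vep=\vep^\beta$ as in \eqref{Tdefn}, and work with a generic $z\in[-wT,wT]^d\cap a_\vep\Z^d$.

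First I would control the mean $E(\xi_T^\vep(z))$ by the three successive couplings carried out in the proof of Lemma \ref{lem:gdlow}. That is, comparing $\xi_T^\vep(z)$ to a modified dual $\tilde X^{z,T}$ with no branching/coalescing on $[S,T]$, then to a branching random walk $\hat X$ coupled as in Section \ref{ssec:XhatXcoupling}, then to a branching Brownian motion $\hat X^0$ coupled as in Section \ref{sec:hydroproofs}, gives
\[
\bigl|E(\xi_T^\vep(z)) - u_\vep(S,z)\bigr| \le C\vep^{\gamma_1}
\]
for some $\gamma_1>0$ (depending only on $r_0, d$ via \eqref{grate}), where $u_\vep$ is the solution of \eqref{rdpde} with initial data $\psi_\vep := P^\vep_{t_\vep}\xi_0^\vep$, thanks to Lemma \ref{lem:ident}, Lemmas \ref{lem:tildeG} and \ref{poscoupl} for the coupling errors, and Lemma \ref{lem:GLCLT} for the regularity of $\psi_\vep$. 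The only change relative to Lemma \ref{lem:gdlow} is that here there is no need for an a priori upper bound of $u_1$ on $\psi_\vep$.

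Next I would extend $\psi_\vep$ from $\vep\Z^d$ to $\R^d$ so that it satisfies the hypotheses of both parts of Assumption \ref{a1} simultaneously. Applying Lemma \ref{lem:ICUD} (and its obvious dual obtained by replacing $\xi$ with $1-\xi$), for $\vep$ small enough we have
\[
v_0 + \eta/2 \le \psi_\vep(x) \le v_1 - \eta/2 \qquad \text{for all } x \in [-L_2+1, L_2-1]^d \cap \vep\Z^d.
\]
Since $L_2 \ge L_0 \vee L_1 + 3$, a piecewise-linear extension of $\psi_\vep$ to $\R^d$ satisfies $\psi_\vep(x) \ge v_0$ on $|x|\le L_0$ and $\psi_\vep(x) \le v_1$ on $|x|\le L_1$. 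Now Assumption \ref{a1}(i)(ii) applied to $u_\vep$ furnishes a $T_\eta \ge 1$ (independent of $\vep$) such that for $t \ge T_\eta$,
\[
u_* - \eta/2 \le u_\vep(t, z) \le u^* + \eta/2 \qquad \text{for } |z| \le wt.
\]
Combining with the mean estimate above, and choosing $C_{\ref{lem:gdmed}} = \bigl(\tfrac{1}{100d}\wedge \tfrac{r_0}{4}\bigr) c_b^{-1}$ so that $\vep^{\gamma_1}$ swallows into $\eta/2$, we obtain
\[
E(\xi_t^\vep(z)) \in [u_*-\eta, u^*+\eta]\quad \text{for all } z \in [-wt,wt]^d\cap a_\vep\Z^d,\ t\in[T_\eta, C_{\ref{lem:gdmed}}\log(1/\vep)],
\]
provided $\vep < \vep_{\ref{lem:gdmed}}(\eta)$ is small. (For $t < T = C_{\ref{lem:gdmed}}\log(1/\vep)$, we apply the same argument with $T$ replaced by $t$; the inequality $t \ge T_\eta$ is what makes Assumption \ref{a1} effective.)

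Finally, to upgrade the mean estimate to a high-probability statement about the densities $D(x,\xi_t^\vep)$ over the cube $[-wt,wt]^d$, I would invoke Lemma \ref{lem:meanD}(b) with $L = wt \le wC_{\ref{lem:gdmed}}\log(1/\vep)$, and apply Chebyshev as in the conclusion of the proof of Lemma \ref{lem:gdlow}. This yields
\[
P\bigl(\sup_{x\in [-wt,wt]^d\cap a_\vep\Z^d} |D(x,\xi_t^\vep) - E(D(x,\xi_t^\vep))| \ge \eta/2 \bigr) \le C_{\ref{lem:meanD}}\vep^{1/16}(wt)^d e^{c_b t}(\eta/2)^{-2} \le \vep^{.05}
\]
for $\vep$ small, which combined with the mean bound completes the proof. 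The main obstacle — which is conceptually the same as for Lemma \ref{lem:gdlow} but more delicate — is keeping track of the two-sided PDE bound through the truncation/extension of $\psi_\vep$, but this is handled cleanly by the symmetric application of Lemma \ref{lem:ICUD}. No new particle-system ideas beyond those of Section \ref{lockill} are required.
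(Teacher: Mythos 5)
Your proof is essentially correct and matches the paper's intended route: the authors themselves state "The proof is derived by making minor modifications to that of Lemma~\ref{lem:gdlow} and so is omitted," and you have carried out precisely those modifications — keeping the three-stage dual coupling to control $E(\xi_t^\vep(z))$ via the PDE solution, replacing the one-sided exponential decay input of Assumption~\ref{a2} with the two-sided large-time bound of Assumption~\ref{a1}, using Lemma~\ref{lem:ICUD} and its mirror image to get two-sided control on $\psi_\vep=P^\vep_{t_\vep}\xi_0^\vep$, and finishing with Lemma~\ref{lem:meanD}(b) plus Chebyshev as in the last display of the Lemma~\ref{lem:gdlow} proof.

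Two points are worth being explicit about. First, when you assert that Assumption~\ref{a1}(i)(ii) ``applied to $u_\vep$'' yields a $T_\eta$ independent of $\vep$, you are quietly invoking the parabolic comparison principle (Proposition~2.1 of \cite{AW78}, used repeatedly in the paper). Assumption~\ref{a1} is a $\liminf/\limsup$ statement for an individual solution, and $\psi_\vep$ varies with $\vep$; to get a uniform $T_\eta$ you should sandwich $u_\vep$ between the two fixed extremal solutions $\underline u,\bar u$ started from $v_0\1(|x|\le L_0)$ and $v_1\1(|x|\le L_1)+\1(|x|>L_1)$ respectively — your extension of $\psi_\vep$ makes this sandwiching possible — and then apply Assumption~\ref{a1} to $\underline u$ and $\bar u$ alone. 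Second, the PDE bound is at the shifted time $S=t-t_\vep$ and on $|z|\le wS$, while the conclusion asks for $|z|\le wt$; as in the proof of Lemma~\ref{lem:gdlow}, where the $2w$ in Assumption~\ref{a2} supplies the slack, here one either uses that Assumption~\ref{a1} is always verified via Propositions~\ref{prop:pde1}--\ref{prop:pde3} which come with speed $2w$, or simply proves the lemma with $w$ replaced by $w/2$; either way the downstream block construction in Section~\ref{sec:exist} is unaffected. Both are routine adjustments, but they are the ones the word ``minor'' in the paper is papering over.
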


The proof is derived by making minor modifications to that of Lemma~\ref{lem:gdlow} and so is omitted.
We will always assume $\eta>0$ is small enough so that 
\[0<v_0+\eta\le u_*-\eta<u^*+\eta\le v_1-\eta<1.\]  The
one-sided versions of the above Lemma also hold (recall
Lemma~\ref{lem:gdlow} on which the proof is based is a
one-sided result), that is, with only one-sided bounds on
the densities in the hypothesis and conclusion.  

\mn
{\bf Theorem \ref{thm:exist}.} {\it Suppose Assumption \ref{a1} and \eqref{grate}, and let $\eta>0$.  If $\vep>0$ is small enough, depending on $\eta$, then 
coexistence holds for the voter model perturbation, the nontrivial stationary distribution $\nu$ may be taken to be translation invariant,
and any stationary distribution such that  
\begin{equation}\label{nucond2}\nu\Bigl( \sum_{x\in\vep\Z^d}\xi(x)=0\hbox{ or }\sum_{x\in\vep\Z^d}(1-\xi(x))=0\Bigr)=0
\end{equation}
satisfies $\nu( \xi(x)=1) \in (u_*-\eta, u^*+\eta)$ for all $x\in\vep\Z^d$.}

\mn{\it Proof.} 
We use the block construction in the form of Theorem~4.3 of
\cite{Dur95}. This result is formulated for $D=2$ but it is easy
to extend the proof to $D \ge 3$, and we use this extension
without further comment.  
Recall $Q(r) = [-r,r]^d$ and $Q^\ep(r) = Q(r) \cap \ep\Z^d$. Let $U=(C_{\ref{lem:gdmed}}/2)\log(1/\vep)$, 
$L=wU/(\alpha_0D+1)$, where $\alpha_0>0$ is a parameter to be chosen below, and $I^*_\eta = [u_* - \eta/4, u^* + \eta/4]$. Next we define the sets $H$ and $G_\xi$ which appear in the above Theorem.
Let 
\[
H=\{\xi\in\{0,1\}^{\vep\Z^d}:\xi\hbox{ has density in }I^*_\eta\hbox{ on } Q(L) \},
\]
that is, if $Q_\ep = [0,a_\ep)^d \cap \vep\Z^d$ then the fraction of occupied sites in $x + Q_\ep$ is in $I^*_\eta = [u^* - \eta/4, u^* + \eta/4]$ whenever $x \in a_\ep \Z^d \cap [-L,L]^d$.  
If $L'=L+1$, then $\{\xi\in H\}$ depends on $\xi|_{[-L',L']^d}$. Here we need to add $1$ as the cubes of side $a_\vep$ with ``lower left-hand corner" at $x\in[-L,L]^d$ will be contained in $[-L',L']^d$. This verifies the measurability condition in Theorem~4.3 of \cite{Dur95} with $L'=L+1$ in place of $L$ which will affect nothing in the proof of Theorem~4.3.

Let $G_\xi$ be the event on which (a) if $\xi_0^\vep=\xi$, then $\xi^\vep_U$ has density in $I^*_\eta$ on $Q(wU)$ and (b) for all $z\in Q^\vep(wU+1)$ and all $t\le U$, $X_t^{z,U}\subset Q((w+b_0)U+1)$, where $b_0=16(3+d)/C_{\ref{lem:gdmed}}$.
Note that
\begin{align}\label{Gxidef}
G_\xi\in&\sigma\Bigl(\Lambda_r^y|_{[0,U]\times\vep\Z^{dN_0}\times[0,1]},\Lambda^y_w|_{[0,U]\times\vep\Z^d}:y\in Q^\vep((w+b_0)U)+1\Bigr)\\
\nonumber\equiv&\cG(Q((b_0+w)U+1))\times[0,U])
\end{align}
Informally, $\cG(R)$ is the $\sigma$-field of generated by the points in the graphical representation that lie in $R$. The above measurability is easy to verify using the duality relation \eqref{dualityeq}.

Consider now the Comparison Assumptions prior to Theorem~4.3 of \cite{Dur95}. In our context we need to show 

\begin{lem} \label{lem:CA}
For $0 < \ep < \ep_{\ref{lem:CA}}(\eta)$:

\mn
(i) if $\xi_0^\vep\in H$, then on $G_{\xi_0^\vep}$, $\xi^\vep_U$ has density in $I^*_\eta$ 
on $\alpha_0Lv_i'+[-L,L]^d$, $1\le i \le D$,

\mn
(ii) if $\xi\in H$, then $P(G_\xi)\ge 1-\vep^{0.04}$.
\end{lem}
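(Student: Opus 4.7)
The plan is to verify (i) as a direct geometric inclusion and (ii) by combining the hydrodynamic estimate of Lemma~\ref{lem:gdmed} with the dual containment bound of Lemma~\ref{lem:contain2}.

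For (i), the choice $L=wU/(\alpha_0 D+1)$ together with $|v_i'|_2=\sqrt{D-1}$ gives $|\alpha_0 L v_i'+y|_\infty\le\alpha_0 L\sqrt{D-1}+L\le L(\alpha_0 D+1)=wU$ for every $y\in[-L,L]^d$. Thus $\alpha_0 L v_i'+[-L,L]^d\subset Q(wU)$ for each $1\le i\le D$, and on $G_{\xi_0^\vep}$, which by definition asserts density in $I^*_\eta$ on all of $Q(wU)$, the same density control immediately holds on every shifted box. No probabilistic input is needed.

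For (ii)(a) I would apply Lemma~\ref{lem:gdmed} with $\eta/4$ in place of $\eta$, which is legitimate once $\eta/2\le(u_*-v_0)\wedge(v_1-u^*)$ so that $I^*_\eta\subset[v_0+\eta/4,v_1-\eta/4]$. Since $L\ge L_2$ for $\vep$ small, $\xi\in H$ has density in $I^*_\eta$ on $[-L_2,L_2]^d\subset Q(L)$, verifying the hypothesis. With $U=(C_{\ref{lem:gdmed}}/2)\log(1/\vep)$, the range condition $T_{\eta/4}\le U\le C_{\ref{lem:gdmed}}\log(1/\vep)$ holds for all sufficiently small $\vep$, and Lemma~\ref{lem:gdmed} yields density in $[u_*-\eta/4,u^*+\eta/4]=I^*_\eta$ on $Q(wU)$ with probability at least $1-\vep^{0.05}$.

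For (ii)(b) I would apply Lemma~\ref{lem:contain2} with $c=C_{\ref{lem:gdmed}}/2$ (so $T'=U$), initial region $Q^\vep(wU+1)$ in place of $Q^\vep(L)$, starting time $u=U$, and $b=b_0/2=8(3+d)/C_{\ref{lem:gdmed}}$. Then $bc/4-2=d+1$, so $q=d+1$ is admissible for $\vep$ small, and $wU+1+2bU=(w+b_0)U+1$ matches precisely the containment required in (b). The failure bound becomes $c_d'(wU+1)^d(c\log(1/\vep)+1)\vep\le C(\log(1/\vep))^{d+1}\vep\le\vep^{0.05}$ for small $\vep$. Union bounding with (a) gives $P(G_\xi)\ge 1-2\vep^{0.05}\ge 1-\vep^{0.04}$ once $\vep<\vep_{\ref{lem:CA}}(\eta)$. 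The measurability assertion $G_\xi\in\cG(Q((b_0+w)U+1)\times[0,U])$ from \eqref{Gxidef} is forced by (b): on that event the relevant duals never exit the box, so by the duality identity \eqref{dualityeq} the values $\{\xi_U^\vep(z):z\in Q(wU+1)\}$ entering (a) are measurable with respect to the graphical variables in the restricted space-time region, and the intersection $G_\xi$ lies in the stated $\sigma$-field. The only real obstacle is bookkeeping: lining up $\eta$ small relative to $(u_*-v_0)\wedge(v_1-u^*)$, $\vep$ small relative to $\vep_{\ref{lem:gdmed}}(\eta/4)$ and the dual-containment threshold, and checking that the linear inclusion in (i) holds with room to spare—no further analytic input beyond the two cited lemmas is required.
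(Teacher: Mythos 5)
Your proof is essentially the paper's own argument: (i) is the geometric inclusion \eqref{percin} combined with part (a) of the definition of $G_{\xi_0^\vep}$, and (ii) is the union bound over the density estimate of Lemma~\ref{lem:gdmed} and the dual-containment estimate of Lemma~\ref{lem:contain2} with the same choice of parameters $L=wU+1$, $c=C_{\ref{lem:gdmed}}/2$, $2b=b_0$, $T'=U$.

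The one thing you omit is the verification that the chosen $b=b_0/2$ actually satisfies the lower bound $b\ge 4c_b\vee 2\sigma^2$ required as a hypothesis of Lemma~\ref{lem:contain2} (equivalently, the condition under which Lemma~\ref{lem:ldbd} applies). Your remark that ``$q=d+1$ is admissible for $\vep$ small'' only addresses the truncation $\wedge\,\vep^{-2}$ in the definition of $q$, not this separate constraint on $b$. Since $b_0=16(3+d)/C_{\ref{lem:gdmed}}$, the paper closes this gap by noting that $C_{\ref{lem:gdmed}}$ may be decreased if necessary, which increases $b_0$ without affecting the validity of Lemma~\ref{lem:gdmed}; you should add this (or some equivalent observation) to make the appeal to Lemma~\ref{lem:contain2} legitimate.
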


\begin{proof}
By assuming $\ep<\ep_1(\eta)$ we have $U\ge T_{\eta/4}$ and $L\ge L_2$. Using the definition of $L$ and the fact that $|v_i'|\le \Vert v_i'\Vert_2=\sqrt{D-1}$ one easily checks that 
\begin{equation}\label{percin}
\alpha_0 Lv'_i+[-L,L]^d\subset[-wU,wU]^d\hbox{ for }i=1,\dots,D.
\end{equation}
Part (a) of the definition of $G_\xi$ now gives (i). 
By Lemma~\ref{lem:contain2} with parameters $L=wU+1$, $2b=b_0$, $c=C_{\ref{lem:gdmed}}/2$ and $T'=U$, and Lemma~\ref{lem:gdmed}, for $\xi\in H$ we have for $\vep<\vep_{\ref{lem:gdmed}}(\eta)$,
\begin{align*}
P(G_\xi^c)&\le \vep^{.05}+c'_d(wU+1)^d (U+1)\vep^{((b_0 C_{\ref{lem:gdmed}})/16)-2-d}\\
&\le \vep^{.05}+c(\log(1/\vep))^{d+1}\vep\le \vep^{.04},
\end{align*}
where the last two inequalities hold for small $\ep$. We may reduce $C_{\ref{lem:gdmed}}$ to ensure that $b=b_0/2$ satisfies the lower bound in Lemma~\ref{lem:contain2}.  This proves (ii).
\end{proof}

Continue now with the proof of Theorem~\ref{thm:exist}.  Let $\vep<\vep_{\ref{lem:CA}}$ and define
$$V_n=\{(x,n)\in\cH_n:\xi^\vep_{nU}\hbox{ has density in }I^*_\eta\hbox{ on }\alpha_0Lx+[-L,L]^d\}.$$
(To be completely precise in the above we should shift $\alpha_0Lx$ and $\alpha_0Lv_i'$ to the point in $\vep\Z^d$ ``below and to the left of it" but the adjustments become both cumbersome and trivial so we suppress such adjustments in what follows.) If we let
 \[
R_{y,n}=(y\alpha_0L,nU)+Q((b_0+w)U+1)\times[0,U],\hbox{ for }(y,n)\in\cL_D
\]
and 
\[
M=\left\lceil\frac{2(b_0+w)(\alpha_0 D+1)}{\alpha_0w}\right\rceil,
\]
then  $R_{y_1,m}\cap R_{y_2,n}=\emptyset$ if $|(y_1,m)-(y_2,n)|>M$.
Since $\cG(R_i)$, $1 \le i \le k$ are independent for disjoint $R_i$'s, Lemma~\ref{lem:CA} allows us to apply the proof of Theorem~4.3 of \cite{Dur95}.  This shows there is an $M$-dependent (in the sense of \eqref{modMdep}) oriented percolation process $\{W_n\}$ on $\cL_D$ with density at least $1-\vep^{.04}$ such that $W_0=V_0$ and $W_n\subset V_n$ for all $n\ge 0$. We note that although a weaker definition of $M$-dependence is used in \cite{Dur95} (see (4.1) of that reference), the proof produces $\{W_n\}$ as in \eqref{modMdep}.  By Lemma~\ref{ballth} with $r=r_{\ref{ballth}}$ and $\theta=\vep^{.04}$, if $\vep<\vep_1(\eta)$, then
\begin{align}\label{percprop}
\lim_{n\to\infty}\inf_{(x,n)\in\cH_n^{rn}} &P(\xi^\vep_{nU}\hbox{ has density in $I^*_\eta$ on }\alpha_0Lx+[-L,L]^d)\\
\nonumber&\ge \Bigl(1-\frac{\eta}{4}\Bigr)P(0\in V_0).
\end{align}

We will choose different values of $\alpha_0$ to first prove the existence of a stationary law, and then to establish the density bound for any stationary distribution.  For the first part, set $\alpha_0=3$ and take $\{\xi_0^\vep(x):x\in\vep\Z^d\}$ to be iid Bernoulli variables with mean $u=(u_*+u^*)/2$. 
 The weak law of large numbers implies that if $\vep$ is small enough
\begin{equation}\label{vee0}
P(\xi^\vep_0\hbox{ has density in }I^*_\eta\hbox{ on }[-L,L]^d) \ge \frac{1}{2}.
\end{equation}
Since $\alpha_0=3$, $L\ge 3$ and $|x-y|\ge\Vert x-y\Vert_2/\sqrt D\ge 1$ for all $x\neq y\in\cH_n'$,
$\{\alpha_0Lx+[-L',L']^d:x\in\cH_n'\}$ is a collection of
disjoint subsets of $\R^d$ for each $n$. This and the measurability property of  $\{\xi\in H\}$ noted above shows that  
if $0 < \ep < \ep_{0}(\eta)$ then $\{V_n\}$ is bounded below by an $M$-dependent (as in \eqref{modMdep}) oriented 
percolation process, $\{W_n^{1/2}\}$, with density $\ge 1 - \vep^{.04}$ starting with an 
iid Bernoulli ($1/2$) field.
Having established that our process dominates oriented percolation, it is now routine to show the existence of a nontrivial stationary distribution. We will spell out the details for completeness. 

\begin{lem}\label{lem:lotsa} Assume $\alpha_0=3$ and $\{\xi_0^\vep(x):x\in\vep\Z^d\}$ are as above.  There is an $\vep_{\ref{lem:lotsa}}(\eta)>0$ so that 
for any $\vep\in(0,\vep_{\ref{lem:lotsa}}(\eta))$  and any $k\in\NN$ there are $t_1(k,\vep)$, $M_1(k,\vep)>0$ so that for $t\ge t_1$, 
$$P\Bigl(\sum_{|x|\le M_1}\xi_t^\vep(x)\ge k\hbox{ and }\sum_{|x|\le M_1} 1-\xi^\vep_t(x)\ge k\Bigr)\ge 1-\frac{2}{k}.$$ 
\end{lem}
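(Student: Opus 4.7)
The key observation is that a single ``good'' block $(y, n) \in V_n$ already carries at least $k$ ones and $k$ zeros in its spatial footprint $\alpha_0 L y + [-L, L]^d$: this footprint contains of order $\vep^{-d}$ sites of $\vep\Z^d$, and the density condition $D(\cdot, \xi^\vep_{nU}) \in I^*_\eta \subset (0, 1)$ forces each count to exceed $k$ once $\vep$ is small enough (depending on $k$, $L$, $\eta$). The lemma therefore reduces to exhibiting, with probability at least $1 - 2/k$, a good block whose footprint lies inside $[-M_1, M_1]^d$ at some discrete time $t = nU \ge t_1$.

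First, the probability $P(0 \in V_0)$ can be made arbitrarily close to $1$ by shrinking $\vep$: each of the $\asymp \vep^{-rd}$ sub-cubes of side $a_\vep$ in $[-L, L]^d$ averages of order $\vep^{-d(1-r)}$ i.i.d.~Bernoulli$(u)$ variables with mean $u$ in the interior of $I^*_\eta$, so the LLN and a union bound give $P(0 \in V_0) \to 1$. Combined with \eqref{percprop}, this yields, for $\vep < \vep_{\ref{lem:lotsa}}(\eta)$ and $n$ large, a uniform single-site lower bound $P((x, n) \in V_n) \ge \rho(\eta) > 0$ on $\cH_n^{rn}$.

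To turn this into a high-probability existence statement, set $\cB = \{x \in \cH'_n : \alpha_0 L x + [-L, L]^d \subset [-M_1, M_1]^d\}$ and $Z = \sum_{x \in \cB} 1((x, n) \in V_n)$, so that $E Z \ge \rho |\cB|$ with $|\cB|$ of order $(M_1/(\alpha_0 L))^d$ uniformly in $n$. A second moment estimate should yield $\mathrm{Var}(Z) \le C(\vep, n) |\cB|$, where the pairwise covariances $\mathrm{Cov}(1((x, n) \in V_n), 1((x', n) \in V_n))$ are controlled by the probability that the dual branching processes witnessing the densities on the two footprints interact (which is small once the separation $|x - x'|$ exceeds the typical dual spread), in the spirit of Proposition~\ref{prop:xiind}. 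Chebyshev then gives $P(Z = 0) \le 4 C(\vep, n)/(\rho^2 |\cB|) \le 1/k$ for $|\cB|$ chosen large enough, which determines $M_1 = M_1(k, \vep)$; picking $n_0$ so that $\cB \subset \cH_{n_0}^{r n_0}$ and setting $t_1 = n_0 U$ handles discrete times $t = nU$ with $n \ge n_0$.

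The main technical obstacle I anticipate is obtaining the variance bound uniformly in $n$, since the dual spread grows with $n$ and the direct application of Proposition~\ref{prop:xiind} degrades through its $e^{c_b T}$ factor. I would circumvent this by first establishing the conclusion at a fixed large level $n_1 = n_1(k, \vep)$ where the decorrelation is still effective, and then propagating it to all later discrete times by iterating the block construction from the time-$n_1 U$ distribution as initial condition, using the already-established density-in-$I^*_\eta$ event as a replacement for $P(0 \in V_0) \approx 1$ in \eqref{percprop}. The extension from $t \in U\mathbb{N}$ to general $t \ge t_1$ follows by the same shifting argument, with the time origin moved by each $s \in [0, U)$ and $M_1$ enlarged by a constant factor to absorb the shift.
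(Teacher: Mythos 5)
Your opening observation is not correct as stated, and this propagates into the rest of the argument. You claim a single good block carries $\ge k$ ones and $\ge k$ zeros ``once $\vep$ is small enough (depending on $k$, $L$, $\eta$).'' But the lemma requires the conclusion to hold for every $k \in \NN$ with $\vep_{\ref{lem:lotsa}}$ depending only on $\eta$ (and hence $L$, which is determined by $\vep$, is also fixed once $\vep$ is). For fixed $\vep$ a good block in $\alpha_0 Lx + [-L,L]^d$ contains a number of ones that is bounded above by $(2L/\vep + 1)^d$; once $k$ exceeds this, one block cannot supply $k$ ones, and you genuinely need $\Omega(k)$ disjoint good blocks. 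The paper avoids this by producing $4k$ well-separated candidate locations $z_1,\dots,z_{4k}$ and showing $\ge k$ of the blocks near them are good, each contributing at least one ``1'' and one ``0''.

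The second, and in a sense deeper, gap is the one you flag yourself: the decorrelation estimate in Proposition~\ref{prop:xiind} carries a factor $e^{c_b T}$ with $T = nU$, so the second-moment bound on $Z = \sum_{x\in\cB} 1((x,n)\in V_n)$ is only useful for $n$ up to some fixed level $n_*(\vep)$. Your proposed fix—restart the block construction from the time-$n_1 U$ configuration—does not resolve this. After restarting, the new ``initial condition'' is random (Proposition~\ref{prop:xiind} is stated for deterministic $\xi_0^\vep$), the good event at level $n_1$ only controls density near the origin and not on a new iid Bernoulli grid, and the same decorrelation decay recurs over the next $n_*$ levels, so the probability of success cannot be pushed uniformly above $1-1/k$ for all $n$. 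Crucially, $M_1$ must be fixed once $k,\vep$ are, so you cannot compensate by enlarging the search region as $n$ grows. The paper's route is structurally different: it does not decorrelate $V_n$ directly but only decorrelates over a single level (the $M$-dependence of $\{\eta(z)\}$), and then invokes Theorem~A.3 of \cite{Dur95} for the comparison oriented percolation $W^{1/2}_n$ started from the saturated Bernoulli$(1/2)$ field. That theorem supplies, for each $k$, constants $n_0,\ell_0,M_0$ and $4k$ separated targets $z_j\in Q(M_0)$ such that $W^{1/2}_n$ visits a neighborhood of each $z_j$ with probability $\ge 1-1/k$ uniformly in $n\ge n_0$. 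This uniformity in $n$ is exactly what the direct second-moment computation cannot produce, and it also delivers the $\Omega(k)$ good blocks needed for the counting in the first point. Intermediate times $t\in(nU,(n+1)U)$ are then handled by conditioning on the good density at time $nU$, applying Lemma~\ref{lem:gdmed} and the finite-speed estimate Lemma~\ref{lem:contain2} over one more time step, and a conditional binomial bound across the $4k$ disjoint space-time regions—not by the ``shift the time origin'' argument you sketch.
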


\begin{proof} As in Theorem A.3 of \cite{Dur95} for $k\in \NN$ there are $n_0,\ell_0,M_0\in\NN$ and $z_1,\dots,z_{4k}\in Q(M_0)$ satisfying $|z_i-z_j|>3M+2\ell_0+1$ for $i\neq j$, such that for $n\ge n_0$ with probability at least $1-k^{-1}$
\begin{equation}\label{Wlotsa}W_n^{1/2}\cap Q(z_j,\ell_0)\neq\emptyset\hbox{ for }j=1,\dots,4k.
\end{equation}
The above implies there are
$\sigma(\xi^\vep_{nU})$-measurable $y_j\in
Q^\vep(z_j,\ell_0)$ such that 
\begin{equation}\label{discdens}
\xi_{nU}^\vep\hbox{ has density in }I^*_\eta\hbox{ on }3Ly_j+[-L,L]^d,\ \ j=1,\dots,4k.
\end{equation}
This proves the result for $t=nU$. Intermediate times can be easily handled using Lemma~\ref{lem:gdmed} and the finite speed of the dual (Lemma~\ref{lem:contain2}).  Those results show that for a fixed $\vep<\vep_{\ref{lem:gdmed}}$ and $t\ge (n_0+1)U$, if we choose $n\ge n_0$ so that $t\in[(n+1)U,(n+2)U]$ (use $T_{\eta/4}\le 2U= C_{\ref{lem:gdmed}}\log(1/\vep)$ in applying Lemma~\ref{lem:gdmed}), then on the event in \eqref{discdens} we have
\begin{align*}
P(&\xi_t^\vep\hbox{ has density in }I^*_\eta \hbox{ on }3Ly_j+[-L,L]^d,\\
&\hbox{and }X_s^{x,t}\in Q(3Ly_j,L'+b_0U)\hbox{ for all }x\in 3Ly_j+[-L',L']^d \hbox{ and }s\in [0,t]|\xi_{nU}^\vep)\\
&\ge 1-\vep^{.05}-c_1(\log (1/\vep))^{d+1}\vep\ge \frac{1}{2}.
\end{align*}
where in the last we may have needed to make $\vep$ smaller.

Our separation condition on the $\{z_j\}$ and $L\ge 3$ implies that $Q(3Ly_j,L'+b_0U)$, $j=1,\dots,4k$ are disjoint and so the events on the left-hand side are conditionally independent as $j$ varies.      
Therefore a simple binomial calculation shows that
\begin{align*}P&(|\{j\le 4k:\xi^\vep_t\hbox{ has density in }I^*_\eta\hbox{ on }3Ly_j+[-L,L]^d\}|
\ge k)\\
&\ge\Bigl(1-\frac{1}{k}\Bigr)\Bigl(1-\frac{1}{k}\Bigr)\ge1-\frac{2}{k}.
\end{align*}
Here the first $1-\frac{1}{k}$ comes from establishing \eqref{discdens} and the second $1-\frac{1}{k}$ comes from the binomial error in getting fewer than $k$ points with appropriate density at time $t$. Since the above event implies the required event with $M_1=3L(M_0+\ell_0)+L$ we are done.
\end{proof}

Fix $\vep <\vep_{\ref{lem:lotsa}}$.  By Theorem I.1.8 of \cite{Lig} there is a sequence $t_n\to\infty$ s.t. $t_n^{-1}\int_0^{t_n}1(\xi_s\in\cdot)\,ds\rightarrow \nu$ in law where $\nu$ is a translation invariant stationary distribution for our voter perturbation.  Lemma \ref{lem:lotsa} easily shows that there are infinitely many $0$'s and $1$'s $\nu$-a.s., proving the first part of Theorem~\ref{thm:exist}. 

Turning to the second assertion, by Lemma~\ref{nuequiv} and symmetry it suffices to show that for $\vep<\vep_2(\eta)$ and any given stationary $\nu$ with infinitely many $0$'s and $1$'s a.s. then
$$\sup_x\mu(\xi(x)=1)\le u^*+\eta.$$
Start the system with law $\nu$.  We claim that 

\begin{lem} \label{lem:hole} There is a $\sigma(\xi_0^\vep)$-measurable r.v.~$x_0\in\vep\Z^d$ such that $\xi^\vep_0\equiv 0$ on $Q^\vep(x_0,L)$ a.s. More generally w.p.~1 there is an infinite sequence $\{x_i:i\in\Z_+\}$ of such random variables satisfying $|x_i-x_j|\ge 2L+3$ for all $i\neq j$.
\end{lem}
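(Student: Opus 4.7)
The plan is to extend the stationary process backward in time and use the graphical representation on a short fixed interval $[-t_0,0]$ to manufacture empty $L$-cubes out of individual zero sites of $\xi^\vep_{-t_0}$. Once an infinite well-separated family of empty cubes at time $0$ is produced, both assertions follow by measurable selection.

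\emph{Stationary extension and zero sites at time $-t_0$.} Kolmogorov's theorem yields a process $(\xi^\vep_t)_{t\le 0}$ with $\xi^\vep_t\sim\nu$ for every $t\le 0$, built from independent Poisson processes $\{\Lambda^x_w,\Lambda^x_r:x\in\vep\Z^d\}$ on $\R$ as in Section~\ref{ssec:proc}; by independent increments, $\{\Lambda^x_w,\Lambda^x_r\}|_{[-t_0,0]}$ is independent of $\xi^\vep_{-t_0}$. By hypothesis \eqref{nucond2} and Lemma~\ref{nuequiv}, $\xi^\vep_{-t_0}$ has infinitely many zeros a.s.; using a deterministic enumeration of $\vep\Z^d$, a greedy selection produces an infinite $\sigma(\xi^\vep_{-t_0})$-measurable sequence $y_1,y_2,\dots$ of distinct zero sites of $\xi^\vep_{-t_0}$ with $|y_i-y_j|\ge 2L+3$ for all $i\ne j$ (the infiniteness uses that no bounded ball of $\vep\Z^d$ contains infinitely many sites).

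\emph{Local empty-cube event.} For each $y\in\vep\Z^d$ I will construct an event $E(y)$ measurable with respect to the Poisson data at the sites of $Q^\vep(y,L)$ restricted to $[-t_0,0]$, such that $E(y)\cap\{\xi^\vep_{-t_0}(y)=0\}\subset\{\xi^\vep_0\equiv 0 \text{ on } Q^\vep(y,L)\}$. The key point is that a voter or reaction event at $(x,T)$ only updates the tail site $x$, so $\xi^\vep_t|_{Q^\vep(y,L)}$ for $t\in[-t_0,0]$ depends only on $\xi^\vep_{-t_0}|_{Q^\vep(y,L)}$ and on the Poisson data at the finitely many sites inside $Q^\vep(y,L)$. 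Using irreducibility of $p$, I fix a spanning tree of $Q^\vep(y,L)$ rooted at $y$ with every edge $(\hat z,z)$ lying in $Q^\vep(y,L)$ and satisfying $p_\vep(\hat z-z)>0$; enumerate its non-root vertices $z_1,\dots,z_K$ in topological order and pick disjoint time windows $I_1<\dots<I_K\subset[-t_0,0]$. Take $E(y)$ to be the intersection of: (i) $\Lambda^x_r$ has no point in $[-t_0,0]$ for every $x\in Q^\vep(y,L)$; (ii) $\Lambda^y_w$ has no point in $[-t_0,0]$; and (iii) for each $k$, $\Lambda^{z_k}_w$ has exactly one point in $[-t_0,0]$, this point lies in $I_k$, and its $Z$-mark equals $\hat z_k-z_k$. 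Each clause has strictly positive Poisson probability, and translation invariance gives $p_0:=P(E(y))>0$ independent of $y$. On $E(y)\cap\{\xi^\vep_{-t_0}(y)=0\}$ the chronological order in clause~(iii) propagates the $0$ along the tree while clauses (i)--(ii) block every other flip in the cube, so $\xi^\vep_0\equiv 0$ on $Q^\vep(y,L)$.

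\emph{Independence and conclusion.} The separation $|y_i-y_j|\ge 2L+3>2L$ makes the cubes $Q^\vep(y_i,L)$ disjoint, so the $\{E(y_i)\}$ depend on disjoint Poisson families; they are also independent of $\xi^\vep_{-t_0}$ by the remark above. Conditional on $\{y_i\}$ they are therefore i.i.d.\ Bernoulli$(p_0)$, and the second Borel--Cantelli lemma gives a.s.\ an infinite subsequence on which $\xi^\vep_0\equiv 0$ on $Q^\vep(y_i,L)$. Relabelling produces the desired infinite $(2L+3)$-separated family $\{x_i\}$ at time $0$, proving the second assertion; the first follows by letting $x_0$ be the lexicographically first site of $\vep\Z^d$ whose $L$-cube is entirely $0$ in $\xi^\vep_0$, which exists a.s.\ and is automatically $\sigma(\xi^\vep_0)$-measurable.

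\emph{Main obstacle.} The only real subtlety is that $p$ and $q$ have merely exponentially bounded, not finite, support, so one might fear that long-range arrows could spoil the coupling. This is handled costlessly by the observation above that arrows rooted outside $Q^\vep(y,L)$ modify only sites outside the cube and so never affect $\xi^\vep|_{Q^\vep(y,L)}$; hence $E(y)$ genuinely lives on Poisson data at the finitely many sites of $Q^\vep(y,L)$, and disjoint cubes deliver exact independence without appealing to the tails of $p$ or $q$.
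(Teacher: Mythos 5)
Your approach is sound and essentially parallel to the paper's. Both proofs manufacture empty $L$-cubes out of single zero sites of a $\nu$-distributed configuration by exhibiting positive-probability graphical-representation events that are determined by disjoint Poisson families and hence (conditionally) i.i.d.\ for well-separated centres, then invoke Borel--Cantelli; the paper pushes forward from time $0$ to time $1$ and finishes with a stationarity step, while you extend backward to $-t_0$ and push forward to $0$, which is an inessential variant that trades the stationarity step for a two-sided construction of the stationary process.

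There is, however, a genuine gap in the construction of $E(y)$. You assert that irreducibility of $p$ lets you ``fix a spanning tree of $Q^\vep(y,L)$ rooted at $y$ with every edge $(\hat z,z)$ lying in $Q^\vep(y,L)$ and satisfying $p_\vep(\hat z-z)>0$.'' Irreducibility only says that the group generated by $\{x:p(x)>0\}$ is all of $\Zd$; it does \emph{not} imply that the induced graph on the finite box $Q^\vep(y,L)\cap\vep\Zd$ is connected. For instance, in $d=1$ the kernel $p$ supported on $\{\pm3,\pm5\}$ is irreducible, but every jump from a site of $[-2,2]\cap\Z$ exits the interval, so $\{-2,\dots,2\}$ has no spanning tree of the sort you want. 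Without the tree, your clause~(iii) has nothing to propagate along and the argument does not run. The gap is fixable in a routine way: for each $z\in Q^\vep(y,L)$ fix a finite path of positive-$p$ steps from $z$ to $y$, let $F(y)$ be the (finite) union of these paths, choose $M$ with $F(y)\subset Q^\vep(y,M)$, take a spanning tree of $F(y)$ rooted at $y$, and require the initial separation of the $y_i$ to be $\ge 2M+3$. Since $M\ge L$, the final family still satisfies $|x_i-x_j|\ge 2L+3$. But as written the proof is incomplete, and the appeal to irreducibility needs to be corrected.

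A smaller point on exposition: the sentence ``$\xi^\vep_t|_{Q^\vep(y,L)}$ for $t\in[-t_0,0]$ depends only on $\xi^\vep_{-t_0}|_{Q^\vep(y,L)}$ and on the Poisson data at the finitely many sites inside $Q^\vep(y,L)$'' is false as a general statement --- a voter or reaction event at a site $x$ \emph{inside} the cube copies or reads values at $x+Z$ or $x+Y^i$ which may lie \emph{outside} the cube. What is true, and what your argument actually uses, is that on the constrained event $E(y)$ (no reactions in the cube, all voter marks pointing inside the cube) the cube's evolution is self-contained. The ``Main obstacle'' paragraph only addresses arrows rooted outside, not the in-to-out reads from inside, so it does not fully close this loophole either; but since $E(y)$ deals with it, the overall conclusion is unaffected once you repair the tree construction.
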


\begin{proof}
To see this condition on $\xi_0^\vep$, choose $x_0$ so that $\xi_0^\vep(x_0)=0$ and note that if $R^x_1$ is the first reaction time of the dual $X^{x,\vep}$, the event ``$\xi^\vep_0\equiv 0$ on $Q^\vep(x_0,L)$"
occurs if for all $x\in x_0+[-L,L]^d$, $R^x_1>1$, $X^{x,\vep}_1=x_0$,
and $\sup_{s\le 1} |X^{x,\vep}_s-x| \le 1$. Call the last event $A(x_0)$. The last condition has been 
imposed so that if $|x_0-x_1| \ge 2L+3$ then the events $A(x_0)$ and $A(x_1)$ are (conditionally) independent. Clearly they have positive probability.  
Given our initial configuration with $|\{ y : \xi^\vep_0(y) = 1 \}| = \infty$ a.s., we can pick an infinite sequence
$x_i$, $i\in\NN$, with $\xi_0^\vep(x_i)=0$ and $|x_j-x_i| \ge 2L+3$ when $j>i$, so the strong law of large 
numbers implies that at time 1 there will be infinitely many $x_i$ with $\xi_1^\vep(x) = 0$ 
for all $x \in Q^\vep(x_i,L)$.  By stationarity this also holds at time $0$. 
\end{proof}

Now condition on $\xi_0^\vep$, shift our percolation construction in space by $x_0$, set $\alpha_0=(2D)^{-1}$ and only require the density to be at most $u^*+\eta/4$ in our definition of $V_n$ which now becomes
\[V_n=\{(x,n)\in\cH_n:\xi^\vep_{nU}\hbox{ has density at most }u^*+\eta/4\hbox{ on } x_0+c_Lx+[-L,L]^d\},\]
where we recall from \eqref{cLd} that $c_L=L/(2D)$.  (Here we are using the one-sided version of Lemma~\ref{lem:gdmed} mentioned above, after its statement.) Then $0\in V_0$ and the one-sided analogue of \eqref{percprop} shows that if $\vep<\vep_3(\eta)$, then
$$
\lim_{n\to\infty}\inf_{(x,n)\in\cH_n^{r n}} P( x \in V_n )\ge 1-\frac{\eta}{4}.
$$
Recall from \eqref{cLd} that $\cup_{x\in\cH_n'}x_0+c_Lx+[-L,L]^d=\R^d$, so this implies for any $x\in\R^d$ and $n$ large enough, 
\[P(\xi_{nU}^\vep\hbox{ has density at most }u^*+\frac{\eta}{4}\hbox{ on }x+[-L,L]^d)\ge 1-\frac{\eta}{3},\]
and so by stationarity
\[\nu(\xi^\vep\hbox{ has density at most }u^*+\frac{\eta}{4}\hbox{ on }x+[-L,L]^d)\ge 1-\frac{\eta}{3}\hbox{ for all }x\in\R^d.\]
To complete the proof, run the dual for time $t_\vep$ ($t_\vep$ as in \eqref{Tdefn}) and apply Lemma~\ref{lem:ICUD} with $u=u^*+\frac{\eta}{4}$ to see that for $x\in\vep\Z^d$ and 
$\vep<\vep_{3}(\eta)\wedge\vep_{\ref{lem:ICUD}}(\eta/3)$,
\begin{align*}
\nu(\xi(x)=1)&=P(\xi^\vep_{t_\vep}(x)=1)\\
&\le P(R_1\le t_\vep)+E(P(R_1>t_\vep,\xi_{t_\vep}^\vep(x)=1|\xi_0^\vep))\\
&\le (1-e^{-c^*t_\vep})+E(P(\xi_0^\vep(B^{\vep,x}_{t_\vep})=1|\xi_0^\vep))\\
&\le c^*t_\vep+\frac{\eta}{3}+u^*+\frac{\eta}{4}+\frac{\eta}{3}\le u^*+\eta,
\end{align*}
where $\vep$ is further reduced, if necessary, for the last inequality.
\qed
\clearpage

\section{Extinction of the process}
\label{death}

\subsection{Dying out}

Our goal in this section is to show that if $f'(0)<0$ and
$|\xi^\vep_0|$ is $o(\vep^{-d})$, then with high probability
$\xi^\vep_t$ will be extinct by time $O(\log (1/\vep))$.
Throughout this Section we assume that $0<\vep\le \vep_0$ and 
that \eqref{staydead} holds, i.e.,
$g^\vep_1(0,\dots,0)=0\hbox{ for }0<\vep\le \vep_0$

Recall from \eqref{dvepdef} the drift at $\vep x$ in the rescaled state $\xi_\vep\in\{0,1\}^{\vep\Z^d}$ (recall the notation prior to \eqref{locdens}) is
\begin{equation*}
d_\vep(\vep x,\xi_\vep) = (1-\xi(x))h_1^\vep(x,\xi)  - \xi(x)h_0^\vep(x,\xi),
\end{equation*}
and define the total drift for $|\xi_\vep|<\infty$ by
\begin{equation}
\psi_\vep(\xi_\vep) = \sum_x d_\vep(\vep x,\xi_\vep).
\label{psiform} 
\end{equation}
Recall from \eqref{hgrepn} and \eqref{vep1convention} that 
\beq\label{hivepdefn}
h_i^\vep(x,\xi) = E_Y( g^\vep_i(\xi(x+Y^1), \ldots \xi(x+Y^{N_0}) )),
\eeq
where $E_Y$ denotes the expected value over the distribution of $(Y^1,\ldots Y^{N_0})$, and
also that 
\beq\label{cdefns}c^*=c^*(g)=\sup_{0<\vep\le \vep_0/2}\Vert g^\vep_1\Vert_\infty+\Vert g^\vep_0\Vert_\infty +1,\quad c_b=c^*N_0. \eeq
It will be convenient to write
$$
\xi_\vep(\vep x+\vep \bar Y) = (\xi(x+\vep Y^1), \ldots \xi(x+\vep Y^{N_0})).
$$
 
If  $\cH_t$ is the right-continuous filtration generated by the graphical representation, then
\beq
\label{totmass}
|\xi^\vep_t|=|\xi^\vep_0|+M^\vep_t+\int_0^t\psi_\vep(\xi^\vep_s)\,ds,
\eeq
where $M^\vep$ is a zero mean $L^2$-martingale. This is easily seen by writing $\xi^\vep_t(x)$ as a solution of a stochastic differential equation driven by the Poisson point processes in the graphical representation and summing over $x$. The integrability required to show $M^\vep$ is a square integrable martingale is readily obtained by dominating $|\xi^\vep|$ by a pure birth process (the rates $c_\vep$ are uniformly bounded for each $\vep$) and a square function calculation.

\begin{lem}\label{lem:mombd}
For any finite stopping time $S$ 
$$
e^{-c_{b}t}|\xi^\vep_{S}| 
\le E(|\xi^\vep_{S+t}| |{\cH}_S )\le e^{c_{b}t}|\xi^\vep_S|.
$$
\end{lem}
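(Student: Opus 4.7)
The plan is to take conditional expectations in the martingale decomposition \eqref{totmass} and apply a Gronwall-type argument. Writing $m(t) = E(|\xi^\vep_{S+t}|\mid\cH_S)$, the Markov and optional stopping properties (the latter justified by dominating $|\xi^\vep|$ by a pure-birth process with bounded per-particle rate, which gives all needed $L^1$ bounds) yield
\begin{equation*}
m(t) = |\xi^\vep_S| + \int_0^t E(\psi_\vep(\xi^\vep_{S+s})\mid\cH_S)\,ds,
\end{equation*}
so if $|\psi_\vep(\xi)|\le c_b |\xi|$ for all $\xi$, then $|m'(t)|\le c_b m(t)$ and Gronwall gives both bounds.

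The main task is therefore to establish the pointwise inequality $|\psi_\vep(\xi)|\le c_b|\xi|$ for every $\xi$ with $|\xi|<\infty$ (the result being trivial otherwise). Recall from \eqref{hivepdefn} that $h_i^\vep\ge 0$ since we have set $\vep_1=\infty$. For the lower bound on $\psi_\vep$, I will simply use $\psi_\vep(\xi) \ge -\sum_x \xi(x)h_0^\vep(x,\xi) \ge -\Vert g_0^\vep\Vert_\infty |\xi| \ge -c_b |\xi|$, since $c_b = c^*N_0 \ge \Vert g_0^\vep\Vert_\infty$.

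For the upper bound on $\psi_\vep$, the key input is the hypothesis \eqref{staydead}, which gives $g_1^\vep(0,\dots,0)=0$ and hence $g_1^\vep(\eta)\le \Vert g_1^\vep\Vert_\infty \cdot 1(\eta\neq 0) \le \Vert g_1^\vep\Vert_\infty \sum_{i=1}^{N_0}\eta_i$. Averaging over $Y$ and summing over $x$ then gives
\begin{equation*}
\psi_\vep(\xi) \le \sum_x h_1^\vep(x,\xi) \le c^* \sum_{i=1}^{N_0} E_Y\!\Bigl(\sum_x \xi(x+Y^i)\Bigr) = c^* N_0 |\xi| = c_b |\xi|,
\end{equation*}
where Fubini is used to swap the sum over $x$ with $E_Y$. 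This closes the estimate and the Gronwall step finishes the proof. The only mild subtlety is integrability for the stopped martingale $M^\vep$, which is handled by the pure-birth domination of $|\xi^\vep|$ (the flip rate per site is at most $\vep^{-2} + c^*$); given that, there is no real obstacle.
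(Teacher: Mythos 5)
Your proof is correct and takes essentially the same approach as the paper: bound $\psi_\vep$ pointwise above and below by $\pm c_b|\xi|$ (using $h_i^\vep\ge 0$, $h_0^\vep\le\|g_0^\vep\|_\infty$, and the consequence $g_1^\vep(\eta)\le\|g_1^\vep\|_\infty\sum_i\eta_i$ of \eqref{staydead}), then reduce to $S=0$ by the strong Markov property and apply Gronwall to \eqref{totmass}. The only cosmetic difference is that the paper expresses the upper bound on $d_\vep$ via the transition probabilities $P(Y^i=y-x)$ rather than via the inequality on $g_1^\vep$, but the computation is the same.
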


\begin{proof} By the strong Markov property it suffices to prove the result when $S=0$. 
The fact that $d_\vep(\vep x,\xi_\vep)\ge -\Vert g_0^\vep\Vert_\infty$ implies $\psi_\vep(\xi^\vep_s) \ge - \|g^\vep_0\|_\infty|\xi^\vep_s|$.  It follows from \eqref{staydead} and \eqref{hivepdefn} that 
$$
d_\vep(\vep x,\xi_\vep)\le \|g^\vep_1\|_\infty \sum_{y: \xi(y)=1} \sum_{i=1}^{N_0} P( Y^i = y-x ).
$$
Summing over $x$ and then $y$, we get $\psi_\vep(\xi^\vep_s) \le N_0 \|g^\vep_1\|_\infty |\xi^\vep_s|$
and (recalling \eqref{cdefns}) the desired result follows by taking means in \eqref{totmass} and using Gronwall's Lemma.
\end{proof}
 
Let $\xi^{\ep,0}$ be the voter model constructed from the same graphical representation as $\xi^\vep$ by only considering the voter flips. We always assume $\xi_0^{\vep,0}=\xi_0^\vep$.

\begin{lem}\label{lem:voterapprox}  
If $c_{\tref{lem:voterapprox}} = 4(2N_0+1)c^*$ then
\[
E(|\psi_\vep(\xi^\vep_s)-\psi_\vep(\xi^{\vep,0}_s)|)
\le c_{\tref{lem:voterapprox}}[e^{c^*(N_0+1)s}-1]|\xi_0^\vep| \,.
\]
\end{lem}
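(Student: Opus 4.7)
The plan is to compare $\psi_\vep$ at the two coupled configurations via the symmetric difference $D_t=\{x\in\Z^d:\xi^\vep_t(x)\neq\xi^{\vep,0}_t(x)\}$. I would first establish a deterministic Lipschitz-type bound
\begin{equation}\label{lipsum}
|\psi_\vep(\xi)-\psi_\vep(\xi')|\le c^*(N_0+2)|D|
\end{equation}
for any finite configurations $\xi,\xi'$ with symmetric difference $D$, and then control $E|D_s|$. For \eqref{lipsum}, split $\sum_x|d_\vep(\vep x,\xi)-d_\vep(\vep x,\xi')|$ according as $x\in D$ or $x\notin D$: in the first case use $|d_\vep(\vep x,\cdot)|\le c^*$ directly, while in the second case $\xi(x)=\xi'(x)$ reduces the contribution to $|h_1^\vep(x,\xi)-h_1^\vep(x,\xi')|+|h_0^\vep(x,\xi)-h_0^\vep(x,\xi')|$. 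By \eqref{hivepdefn} and the bounds $g_i^\vep\ge 0$, $\|g_i^\vep\|_\infty\le c^*$, each $|h_i^\vep(x,\xi)-h_i^\vep(x,\xi')|$ is at most $\|g_i^\vep\|_\infty\sum_j P(x+Y^j\in D)$, and Fubini on $\sum_x\sum_j P(x+Y^j\in D)=N_0|D|$ yields \eqref{lipsum}.

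To control $E|D_t|$, I would study its evolution under the coupled graphical representation. Both processes share all voter flips: at a voter event at $x$ with source $y=x+Z_{x,n}$, each is updated by $\xi(x)\to\xi(y)$, so $Z_t(x):=1(x\in D_t)$ is replaced by the pre-event value $Z_t(y)$. The pointwise drift of $|D_t|$ from voter events is
$$\vep^{-2}\sum_x\sum_y p_\vep(y-x)(Z_t(y)-Z_t(x)),$$
which vanishes by Fubini whenever $|D_t|<\infty$, since $p_\vep$ is a probability kernel. Only reaction flips in $\xi^\vep$ remain, and each changes $|D_t|$ by at most $1$. The summed reaction-flip rate is $\sum_x[(1-\xi^\vep(x))h_1^\vep(x,\xi^\vep)+\xi^\vep(x)h_0^\vep(x,\xi^\vep)]$; by \eqref{staydead} one has $h_1^\vep(x,\xi)\le\|g_1^\vep\|_\infty\sum_j P(\xi(x+Y^j)=1)$, which sums to $\|g_1^\vep\|_\infty N_0|\xi|$, while the $h_0^\vep$ piece is trivially bounded by $\|g_0^\vep\|_\infty|\xi|$. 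The total rate is then at most $c^*N_0|\xi|=c_b|\xi|$.

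Combining with Lemma~\ref{lem:mombd} yields $\frac{d}{dt}E|D_t|\le c_b e^{c_b t}|\xi^\vep_0|$, hence $E|D_s|\le(e^{c_b s}-1)|\xi^\vep_0|$; multiplying by $c^*(N_0+2)$ from \eqref{lipsum} gives the lemma (with comfortable margin relative to $c_{\ref{lem:voterapprox}}(e^{c^*(N_0+1)s}-1)|\xi^\vep_0|$). The main obstacle will be making the voter-flip cancellation fully rigorous: the total voter-flip rate summed over all sites is infinite, so the ``derivative of $|D_t|$'' must be realized as a stochastic integral against the Poisson point processes $\Lambda^x_w$ of the graphical representation, with the drift term naturally restricted to the finitely many pairs $(x,y)$ for which $Z_t(x)\neq Z_t(y)$. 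The a priori finiteness $|D_t|<\infty$ a.s.~needed for this restriction follows from $|\xi^\vep_t|<\infty$ (via Lemma~\ref{lem:mombd}) and from the voter-model identity $E|\xi^{\vep,0}_t|=|\xi^\vep_0|<\infty$.
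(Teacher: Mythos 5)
Your proof is correct, but it takes a genuinely different route from the paper's. The paper works backwards through the dual: after bounding $E|\psi_\vep(\xi^\vep_s)-\psi_\vep(\xi^{\vep,0}_s)|$ by $2\|D_\vep\|_\infty$ times a sum over $x$ of expectations of site products, it observes via the dual $X^{z,s}$ started from the relevant sites that the two computations can disagree only on the event $\{R_1\le s\}$ that a branching time occurs, and then estimates using the dominating branching random walk $\bar X$, giving $E(|\bar J(s)|\,1\{\bar R_1\le s\})\le (2N_0+1)(e^{c^*(N_0+1)s}-1)$. Your argument is entirely forward: a deterministic Lipschitz bound $|\psi_\vep(\xi)-\psi_\vep(\xi')|\le c^*(N_0+2)|D|$ in the symmetric difference (via \eqref{hgrepn} and Fubini), combined with the observation that the shared voter events give zero net drift to $E|D_t|$ while each $\xi^\vep$-reaction flip changes $|D_t|$ by at most one, with total rate at most $c_b|\xi^\vep_t|$. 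Both proofs use \eqref{staydead} crucially, but in different ways: the paper uses it so that $D_\vep(0,\dots,0)=0$ and the dual ``outputs $0$ from all-zero inputs''; you use it to get the linear bound $g_1^\vep(\eta)\le\|g_1^\vep\|_\infty\sum_j\eta_j$, which is what lets the reaction rate be controlled by $|\xi^\vep_t|$ rather than by the (infinite) number of sites. Your route avoids the dual machinery altogether and yields a somewhat sharper constant $c^*(N_0+2)(e^{c_bs}-1)$; the paper's route reuses infrastructure that is already set up and needed elsewhere in the section. One small remark: you do not actually need Gronwall to close the $E|D_t|$ estimate, since your bound on the reaction drift rate, $c_b E|\xi^\vep_t|\le c_b e^{c_bt}|\xi^\vep_0|$ from Lemma~\ref{lem:mombd}, does not involve $E|D_t|$ itself; you can simply integrate, once the integrability of $|D_t|$ and the martingale property of the voter-part compensation are established via $D_t\subset\{\xi^\vep_t=1\}\cup\{\xi^{\vep,0}_t=1\}$ together with the exponential moment bound and the voter-model mass martingale, exactly as you indicate.
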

\begin{proof} 
Let $\xi^\vep_s(\vep x+\vep \tilde Y)=(\xi^\vep_s(\vep x+\vep Y^0),\dots,\xi^\vep_s(\vep x+\vep Y^{N_0}))$,
where $Y^0=0$, $\tilde Y$ is independent of $\xi^\vep$, and note that in contrast to $\bar Y$, $\tilde Y$ contains $0$. Let
$$
D_\vep(\eta_0,\eta_1, \ldots \eta_{N_0}) = - \eta_0 g^\vep_0(\eta_1, \ldots, \eta_{N_0})
+ (1- \eta_0) g^\vep_1(\eta_1, \ldots, \eta_{N_0}),
$$
and note that 
\begin{align}
\nonumber E(|\psi_\vep(\xi^\vep_s)  -\psi_\vep(\xi^{\vep,0}_s)|)
&\le E\Bigl(\sum_x| D_\vep(\xi^\vep_s(\vep x+\vep \tilde Y) - D_\vep(\xi^{\vep,0}_s(\vep x+\vep \tilde Y))|\Bigr)\\
&\le 2\Vert D_\vep \Vert_\infty
E\Bigl(\sum_x[\max_{0\le i\le  N_0}\xi^\vep_s(\vep x+\vep Y_i)\vee\xi^{\vep,0}_s(\vep x+\vep Y_i)]
\label{thbnd}\\ 
\nonumber&\phantom{\le 2\Vert\hat D\Vert_\infty\sum_x}
\times 1\{\xi_s^\vep(\vep x+\vep\tilde Y)\neq \xi^{\vep,0}_s(\vep x+\vep\tilde Y)\}\Bigr), 
\end{align}
because for fixed $x$ if the latter summand is zero, so is the former, and if the
latter summand is 1, the former is at most $ 2\|D_\vep\|_\infty$.
 
Let $X_t=X^{z,s}_t$, $t\in [0,s]$ be the dual of $\xi^\vep$ starting at 
$(z_0,\dots,z_{N_0})=\vep x+\vep\tilde Y$ at time $s$ and let $R_m$, $m \ge 1$ be the 
associated branching times.  We claim that 
\begin{align}\label{sumlvbound}
E\Bigl(\Bigl[\max_{0\le i\le N_0}\xi_s^\vep(\vep x+\vep Y^i)\Bigr] & 
1\{\xi^\vep_s(\vep x+\vep\tilde Y)\neq\xi_s^{\vep,0}(\vep x+\vep\tilde Y)\}\Bigr)\\ 
\nonumber&\le E\Bigl(\sum_{\ell\in J(s)}\xi_0^\vep(X^\ell_s)1\{R_1\le s\}\Bigr).
\end{align}
To see this, note that: 

\mn
(i) if $R_1>s$, then there
are no branching events and so $(X_t,t\le s)$ is precisely the
coalescing dual used to compute the the rescaled voter model
values $\xi^{\vep,0}_s(\vep x+\vep\tilde Y)$. 

\mn
(ii) In the case $R_1 \le s$, if $\xi_0^\vep(X^\ell_s)=0$ for all
$\ell\in J(s)$ then $\xi^\vep_s(\vep x+\vep Y^i)=0$ for $0 \le i \le N_0$ 
because working backwards from time 0 to time $s$, we see that no
site can flip due to a reaction, and again we have 
$\xi^\vep(\vep x+\vep\tilde Y)=\xi^{\vep,0}(\vep x+\vep\tilde Y)$.  

Similar reasoning and the fact that the dual $(X^{0,j}_t,j\in
J^0(t))$ of the voter model $\xi^{\vep,0}$ with the same initial condition $z$
satisfies $J^0(t)\subset J(t)$ for all $t\le s$ a.s., shows that 
\begin{align}\label{sumvoterbound} 
E\Bigl(\Bigl[\max_{0\le i\le N_0}\xi^{\vep,0}_s(\vep x+\vep Y^i)\Bigr] 
& 1\{\xi_s^\vep(\vep x+\vep\tilde Y)\neq \xi_s^{\vep,0}(\vep x+\vep\tilde Y)\}\Bigr)\\ 
\nonumber & \le  E\Bigl(\sum_{\ell\in J(s)}\xi_0^\vep(X^\ell_s)1\{R_1\le s\}\Bigr).
\end{align}

If $E_0$ denotes expectation with respect to the law of $X^{z,s}_t$
when $x=0$ then, using \eqref{sumlvbound} and
\eqref{sumvoterbound}, we may bound \eqref{thbnd} by 
$$
4c^*E_0\Bigl(\sum_{\ell\in J(s)}\sum_{x}\xi^\vep_0(\vep x+X^\ell_s)1\{R_1\le s\}\Bigr)
$$
Bounding by the dominating branching random walk $\bar X$, and using 
$|J(\bar R_1)|=2N_0+1$ and $P(\bar R_1 \le s) = 1-e^{-c^*(N_0+1)s}$, we see the expected value 
in the last formula is at most
\begin{align*} 
& |\xi^\vep_0| E(|\bar J(s)|1\{\bar R_1\le s\}) \le |\xi^\vep_0| e^{c^*N_0s}E(|\bar J(\bar R_1)|1\{\bar R_1\le s\})\\ 
&\le (2N_0+1)|\xi^\vep_0|e^{c^*N_0s}(1-e^{-c^*(N_0+1)s})
\le (2N_0+1)|\xi^\vep_0|(e^{c^*(N_0+1)s}-1),
\end{align*}
which proves the desired result.
\end{proof}

For the next step in the proof we recall the notation from
Section~\ref{ssec:CPcomp}.  We assume $Y$ is independent
from the coalescing random walk system $\{\hat
B^x:x\in\Z^d\}$ used to define $\tau(A)$ and $\tau(A,B)$.
Recall from \eqref{f'rep} and \eqref{betadeltadef} that
under \eqref{staydead}
$$\theta\equiv f'(0)= \sum_{S\in\hat\cP_{N_0}}\hat\beta(S) P(\tau(Y^S)<\infty ,\tau(Y^S,\{0\})=\infty)
-\hat\delta(S)P(\tau(Y^S\cup\{0\}) < \infty).$$
For $M>0$ define
$$
\theta^\vep_M=\sum_{S\in\hat\cP_{N_0}}\hat\beta_\vep(S)P(\tau(Y^S)\le M <\tau(Y^S,\{0\}))
-\hat\delta_\vep(S)P(\tau(Y^S\cup\{0\})\le M) \ .
$$
It follows from \eqref{grepinv} that (with or without the $\vep$'s) 
\beq\label{hatbetabnd} \sum_{S\in \hat\cP_{N_0}}|\hat\beta_\vep(S)|+|\hat\delta_\vep(S)|\le 2^{2N_0}(\Vert g_1^\vep\Vert_\infty+\Vert g_0^\vep\Vert_\infty)\le 2^{2N_0}c^*(g)
\eeq
(recall here that $\tilde g^\vep_i=g^\vep_i$ by our $\vep_1=\infty$ convention).
It is clear that $\lim_{M\to\infty, \vep\to0}\theta^\vep_M=\theta$, 
but we need information about the rate.

\begin{lem}\label{lem:thetarate} There is a $\vep_{\tref{lem:thetarate}}(M)\downarrow0$ (independent of the $g^\vep_i$) so that
$$
|\theta^\vep_M-\theta|\le 2^{2N_0}\Bigl[\Vert g^\vep_1-g_1\Vert_\infty+\Vert g^\vep_0-g_0\Vert_\infty+c^*(g)\vep_{\tref{lem:thetarate}}(M)\Bigr].
$$
\end{lem}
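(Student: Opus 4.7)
The plan is to split the discrepancy as $\theta^\vep_M-\theta=(\theta^\vep_M-\theta_M)+(\theta_M-\theta)$, where $\theta_M$ is defined exactly like $\theta^\vep_M$ but with the limiting coefficients $\hat\beta(S),\hat\delta(S)$ in place of the $\vep$-dependent ones. This cleanly separates the error coming from coefficient convergence from the error coming from truncating the coalescing-time events at $M$.

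For the first piece, I would appeal directly to \eqref{hatbetarate}, which gives
\[
|\hat\beta_\vep(S)-\hat\beta(S)|+|\hat\delta_\vep(S)-\hat\delta(S)|\le 2^{N_0}\bigl(\|g^\vep_1-g_1\|_\infty+\|g^\vep_0-g_0\|_\infty\bigr),
\]
and then bound the probabilities in the definition of $\theta_M^\vep-\theta_M$ trivially by $1$. Since $|\hat\cP_{N_0}|=2^{N_0}$, summing over $S$ yields the bound
\[
|\theta_M^\vep-\theta_M|\le 2^{2N_0}\bigl(\|g^\vep_1-g_1\|_\infty+\|g^\vep_0-g_0\|_\infty\bigr),
\]
which is the first term on the right-hand side of the target inequality.

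For the second piece, note that a.s.\ as $M\uparrow\infty$,
\[
1\{\tau(Y^S)\le M<\tau(Y^S,\{0\})\}\to 1\{\tau(Y^S)<\infty,\,\tau(Y^S,\{0\})=\infty\},
\]
verified by checking the three cases (a) $\tau(Y^S)<\infty=\tau(Y^S,\{0\})$, (b) $\tau(Y^S)<\infty,\tau(Y^S,\{0\})<\infty$, and (c) $\tau(Y^S)=\infty$; similarly $1\{\tau(Y^S\cup\{0\})\le M\}\uparrow 1\{\tau(Y^S\cup\{0\})<\infty\}$. Bounded convergence, together with the finiteness of $\hat\cP_{N_0}$, then lets me define
\[
\vep_{\ref{lem:thetarate}}(M):=\sup_{M'\ge M}\max_{S\in\hat\cP_{N_0}}\bigl(|P(\tau(Y^S)\le M'<\tau(Y^S,\{0\}))-P(\tau(Y^S)<\infty,\tau(Y^S,\{0\})=\infty)|
\]
\[
+\,|P(\tau(Y^S\cup\{0\})\le M')-P(\tau(Y^S\cup\{0\})<\infty)|\bigr),
\]
which is manifestly nonincreasing in $M$, decreases to $0$, and depends only on the law $q$ of $Y$ and the kernel $p$, hence is independent of the $g_i^\vep$. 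Invoking the total-variation bound \eqref{hatbetabnd}, i.e.\ $\sum_S(|\hat\beta(S)|+|\hat\delta(S)|)\le 2^{2N_0}c^*(g)$, produces $|\theta_M-\theta|\le 2^{2N_0}c^*(g)\vep_{\ref{lem:thetarate}}(M)$, and the triangle inequality finishes the proof.

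There is no genuine obstacle here: the whole argument is bookkeeping plus one application of bounded convergence. The only subtlety worth stating explicitly is the three-case check of the a.s.\ limit above, since the indicator is a product of an indicator of an increasing event and the indicator of a decreasing event in $M$, so the convergence is not monotone and must be justified pointwise before integrating.
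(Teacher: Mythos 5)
Your proof is correct and follows essentially the same route as the paper's: the same decomposition into a coefficient-error term controlled by \eqref{hatbetarate} and a truncation term controlled by \eqref{hatbetabnd}. The only difference is cosmetic — the paper defines $\vep_{\ref{lem:thetarate}}(M)$ directly as $\tfrac12$ times the maximum tail probability $P(M<\tau<\infty)$ over the relevant coalescing times (which is automatically nonincreasing in $M$), whereas you take a $\sup_{M'\ge M}$ of the probability differences and justify the limit via bounded convergence; both variants give what is needed.
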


\begin{proof}
 Define 
$$\vep_{\tref{lem:thetarate}}(M) =\frac{1}{2}\sup_{S\in\hat\cP_{N_0}} \{ P(M< \tau <\infty) : \tau = \tau(Y^S), \tau(Y^S\cup\{0\}), \tau(Y^S,\{0\})\},$$
and note that $\vep_{\tref{lem:thetarate}}(M)\downarrow 0$ as $M\uparrow\infty$. Using \eqref{hatbetarate} and \eqref{hatbetabnd} (the latter without the $\vep$'s) we have
\begin{align*}
|\theta^\vep_M-\theta|\le& \sum_{S\in\hat\cP_{N_0}}|\hat\beta_\vep(S)-\hat\beta(S)|+|\hat\delta_\vep(S)-\hat\delta (S)|\\
&\ +\sum_{S\in\hat\cP_{N_0}}\Bigl[|\hat\beta(S)||P(\tau(Y^S)\le M<\tau(Y^S,\{0\}))-P(\tau(Y^S)<\infty=\tau(Y^S,\{0\}))|\\
&\ \phantom{+\sum_{S\in\hat\cP_{N_0}}\Bigl[}+|\hat\delta(S)|P(M<\tau(Y^S\cup\{0\})<\infty)\\
\le& 2^{2N_0}[\Vert g^\vep_1-g_1\Vert_\infty+\Vert g^\vep_0-g_0\Vert_\infty]+2^{2N_0}c^*(g)\vep_{\tref{lem:thetarate}}(M).
\end{align*}
The result follows.
\end{proof}

To exploit the inequality in Lemma~\ref{lem:voterapprox} we need
a good estimate of $E(\psi_\vep(\xi^{\vep,0}_s))$ for small $s$.
\begin{lem}\label{voterest} 
There is a constant $c_{\tref{voterest}}$ (independent of $g_i^\vep$) such that for $\vep,\delta>0$, 
\begin{equation}\label{voterasym}
  E(\psi_\vep(\xi_\delta^{\vep,0}))=\theta^\vep_{\delta{\vep}^{-2}}|
\xi_0^\vep|+\eta_{\ref{voterest}}(\vep,\delta),
\end{equation}
where $|\eta_{\ref{voterest}}(\vep,\delta)|\le
c_{\tref{voterest}}c^*(g)\delta^{-d/2}|\xi^\vep_0|^2\vep^d$. 
\end{lem}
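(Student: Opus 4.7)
The plan is to combine the expansion of $h_i^\vep$ from Section~\ref{ssec:CPcomp} with voter duality, and to read off both the main term and the error by cataloguing the coalescence patterns of the dual walks. Starting from \eqref{psiform}, \eqref{hhatrepn} and \eqref{hrepn},
\[
\psi_\vep(\xi_\vep) = \sum_{S\in\hat\cP_{N_0}} E_Y\Bigl[\hat\beta_\vep(S)\sum_x(1-\xi(x))\chi(Y^S,x,\xi) - \hat\delta_\vep(S)\sum_x\xi(x)\chi(Y^S,x,\xi)\Bigr],
\]
and \eqref{staydead} kills the $S=\emptyset$ term in the $\hat\beta_\vep$ sum. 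Set $M=\delta\vep^{-2}$ and let $\{\hat B^y\}$ be unscaled rate-one coalescing walks independent of $Y$. Using the scaling identity $\hat B^{\vep,\vep(x+y)}_\delta=\vep x+\vep\hat B^y_M$ together with voter duality applied to $\xi=\xi^{\vep,0}_\delta$, each summand $E[(1-\xi^{\vep,0}_\delta(\vep x))\chi(Y^S,x,\xi^{\vep,0}_\delta)\mid\xi_0^\vep]$ becomes $E[(1-\xi_0^\vep(\vep x+\vep\hat B^0_M))\prod_{y\in Y^S}\xi_0^\vep(\vep x+\vep\hat B^y_M)]$, and similarly for the $\hat\delta_\vep$ piece.

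I then decompose each $x$-sum by the coalescence status of $A:=Y^S\cup\{0\}$ at time $M$. For the $\hat\delta_\vep$ summand, on $\{\tau(A)\le M\}$ all walks occupy a common site $\vep x+\vep W$ and $\sum_x\xi_0^\vep(\vep x+\vep W)=|\xi_0^\vep|$, contributing $|\xi_0^\vep|P(\tau(A)\le M)$. For the $\hat\beta_\vep$ summand, on $\{\tau(A)\le M\}$ the integrand vanishes because $(1-\xi_0^\vep(z))\xi_0^\vep(z)\equiv 0$; on $\{\tau(Y^S)\le M<\tau(Y^S,\{0\})\}$ the $Y^S$-walks sit at a common site $\vep x+\vep W$ distinct from the $0$-walk's position $\vep x+\vep V$, so $\sum_x(1-\xi_0^\vep(\vep x+\vep V))\xi_0^\vep(\vep x+\vep W)=|\xi_0^\vep|-\sum_x\xi_0^\vep(\vep x+\vep V)\xi_0^\vep(\vep x+\vep W)$, whose leading piece gives $|\xi_0^\vep|P(\tau(Y^S)\le M<\tau(Y^S,\{0\}))$. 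Summed over $S$, these leading pieces reassemble exactly into $\theta^\vep_M|\xi_0^\vep|$ by the definition of $\theta^\vep_M$.

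All remaining contributions---the overlap subtraction above, the $\{\tau(Y^S)>M\}$ part of the $\hat\beta_\vep$ term, and the $\{\tau(A)>M\}$ part of the $\hat\delta_\vep$ term---correspond to configurations in which the time-$M$ walk positions contain at least two distinct sites. For any such configuration I bound the integrand by the product of $\xi_0^\vep$ at two non-coalesced positions $\vep x+\vep\hat B^{y_1}_M$, $\vep x+\vep\hat B^{y_2}_M$, and estimate
\[
E\Bigl[\mathbf{1}\{\hat B^{y_1}_M\neq\hat B^{y_2}_M\}\sum_x\xi_0^\vep(\vep x+\vep\hat B^{y_1}_M)\xi_0^\vep(\vep x+\vep\hat B^{y_2}_M)\Bigr]\le |\xi_0^\vep|^2\sup_{u\neq 0}P(\hat B^{y_2}_M-\hat B^{y_1}_M=u).
\]
Before coalescence, $\hat B^{y_2}-\hat B^{y_1}$ moves like a rate-two random walk starting at $y_2-y_1$, so for $u\neq 0$ its sub-density at $u$ is bounded by the free walk's density at $u$, which the uniform local CLT (cf.~\eqref{uniflclt} applied with $r_0=1$) bounds by $cM^{-d/2}=c\vep^d\delta^{-d/2}$. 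Multiplying by $|\hat\beta_\vep(S)|+|\hat\delta_\vep(S)|$, summing over $S$, and invoking \eqref{hatbetabnd} produces the claimed error of order $c^*(g)\delta^{-d/2}|\xi_0^\vep|^2\vep^d$. The main obstacle will be the coalescence-pattern bookkeeping: ensuring that the fully-coalesced contributions from both terms reassemble into $\theta^\vep_M|\xi_0^\vep|$ with the correct signs, and that every remaining pattern really does leave a pair of distinct walk positions at time $M$ so that the uniform local-CLT estimate applies across all $S$.
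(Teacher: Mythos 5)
Your proof is correct and follows essentially the same route as the paper: rewrite $\psi_\vep$ via the $\hat\beta_\vep,\hat\delta_\vep$ expansion from \eqref{hhatrepn}--\eqref{hrepn}, apply voter duality to $\xi^{\vep,0}_\delta$, extract the main term from the fully-coalesced configurations, and bound the remainder using a uniform local-CLT bound on the return probability of the rate-two difference walk together with \eqref{hatbetabnd}. Your decomposition by the coalescence status of $A=Y^S\cup\{0\}$ is a minor repackaging of the paper's $\Sigma_1-\Sigma_2$ split in \eqref{dualeq}--\eqref{coal3}; the resulting main terms and error estimates are the same.
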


\begin{proof}  As usual we assume $Y$ is independent of $\xi^{\vep,0}$. Summability issues in what follows are handled by Lemma~\ref{lem:mombd} (and its proof) with $g^\vep_i\equiv0$.  The representation \eqref{hhatrepn} and \eqref{psiform} imply that
\begin{align}\nonumber
E(\psi_\vep(\xi^{\vep,0}_\delta))&=\sum_{S}\hat \beta_\vep(S)E^\vep_{0}(S) - \hat\delta_\vep(S) E^\vep_{1}(S) \\
\label{psiexp}
E^\vep_0(S) & =\sum_{x\in\Z^d} E\Bigl( (1-\xi_\delta^{\vep,0}(\vep x))\prod_{i\in S}\xi_\delta^{\vep,0}(\vep x+\vep Y^i)\Bigr)
\nonumber \\ 
E^\vep_1(S) & =\sum_{x\in\Z^d} E\Bigl(\xi_\delta^{\vep,0}(\vep x)\prod_{i\in S}\xi_\delta^{\vep,0}(\vep x+\vep Y^i)\Bigr). 
\end{align}
We will use duality between $\xi^{\vep,0}$ and $\{\hat B^x\}$ (see (V.1.7) of \cite{Lig}) to argue that
\begin{align*}
E^\vep_0(S) & \approx |\xi^\vep_0|P(\tau(Y^S)\le\delta\vep^{-2}<\tau(Y^S,\{0\})),\ \emptyset \neq S\subset\{1,\dots,N_0\}\\
E^\vep_1(S) & \approx |\xi^\vep_0|P(\tau(Y^S,\{0\})\le\delta\vep^{-2})\hbox{ all }S\subset\{1,\dots,N_0\}.
\end{align*}
Beginning with the first of these, note that duality implies (recall $Y^0\equiv 0$)
\begin{align}
E^\vep_0(S) &=\sum_{x\in \Z^d}E\Bigl((1-\xi_0^\vep(\vep\hat B^{x}_{\delta{\vep}^{-2}}))
\prod_{i\in S}\xi^\vep_0(\vep\hat B^{x+Y^i}_{\delta{\vep}^{-2}})
1\{\tau(x+Y^S,\{x\})>\delta{\vep}^{-2}\}\Bigr)
\nonumber \\ 
&=\sum_{x\in \Z^d}E\Bigl(\prod_{i\in S} \xi_0^\vep(\vep\hat B^{x+Y^i}_{\delta{\vep}^{-2}})
1\{\tau(x+Y^S,\{x\})>\delta{\vep}^{-2}\}\Bigr)
\nonumber \\ 
\label{dualeq}&\quad-\sum_{x\in\Z^d} E\Bigl(\prod_{i\in S\cup\{0\}} \xi_0^\vep(\vep\hat B^{x+Y^i}_{\delta{\vep}^{-2}})
1\{\hat\tau(x+Y^S,\{x\})>\delta{\vep}^{-2}\}\Bigr)\\
\nonumber&\equiv \Sigma_1-\Sigma_2. 
\end{align}

If $\tau(x+Y^S)>\delta\vep^{-2}$ there are  $i \neq  j\in S$ so that
$\tau(\{x+Y^i\},\{x+Y^j\})>\delta\vep^{-2}$. If we condition on the values of the $Y^i, Y^j$ in the next to last line below, 
\begin{align}
  \nonumber &\sum_{x\in\Z^d}E\Bigl(\prod_{i\in S}
\xi_0^\vep(\vep\hat B^{x+Y^i}_{\delta{\vep}^{-2}})1\{\tau(x+Y^S)>\delta{\vep}^{-2}\}\Bigr)\\ 
  \nonumber &\le\sum_{x\in\Z^d}\sum_{1\le i<j\le N_0}
E(\xi_0^\vep(\vep\hat B_{\delta\vep^{-2}}^{x+Y^i})\xi_0^\vep(\vep\hat B_{\delta\vep^{-2}}^{x+Y^j})
1\{ \tau(\{x+Y^i\},\{x+Y^j\}) > \delta\vep^{-2}\}) \\ 
  \nonumber &\le \sum_{w\in\Z^d}\sum_{z\in\Z^d}\xi_0^\vep(\vep w)\xi_0^\vep(\vep z)
\sum_{1\le i<j\le N_0}\sum_{x\in\Z^d} P(\hat B^{x+Y^i}_{\delta\vep^{-2}}=w,\,\hat B^{x+Y^j}_{\delta\vep^{-2}}=z,\\
\nonumber &\phantom{le \sum_{w\in\Z^d}\sum_{z\in\Z^d}\xi_0^\vep(\vep w)\xi_0^\vep(\vep z)
\sum_{1\le i<j\le N_0}\sum_{x\in\Z^d} P(} \tau(\{x+Y^i\},\{x+Y^j\}) > \delta\vep^{-2}\})\\  
  \nonumber &\le \sum_{w\in\Z^d}\sum_{z\in\Z^d}\xi_0^\vep(\vep w)\xi_0^\vep(\vep z)
\sum_{1\le i<j\le N_0} P(\hat B^0_{2\delta\vep^{-2}}=w-z-Y_i+Y_j)\\
\label{coal1}&\le N_0(N_0-1)|\xi_0^\vep|^2c(1+2\delta\vep^{-2})^{-d/2},
\end{align}
where the local central limit theorem (e.g. (A.7) in
\cite{CDP}) is used in the last line.  A similar calculation
shows that 
\beq\label{coal2}
\Sigma_2\le |\xi_0^\vep|^2c(1+2\delta\vep^{-2})^{-d/2}.
\eeq
To see this, note that $\tau(x+Y^S,\{0\})>\delta\vep^{-2}$ implies that for $i_0\in S$ (this is where we require $S$ non-empty)
$\tau(\{x+Y^{i_0}\},\{x\})>\delta\vep^{-2})$ and we
may repeat the above with $i=i_0$ and $j=0$.   
Returning to the study of $\Sigma_1$, taking any $i_0\in S$ we have
\begin{align}
\nonumber &\sum_{x\in\Z^d}E\Bigl(\prod_{i\in S}\xi_0^\vep(\vep B_{\delta\vep^{-2}}^{x+Y^i})
1\{\tau(x+Y^S)\le\delta\vep^{-2}<\tau(x+Y^S,\{x\})\}\Bigr)\\ 
\nonumber&=\sum_{x\in\Z^d}E\Bigl(\xi_0^\vep(\vep x+\vep B_{\delta\vep^{-2}}^{Y^{i_0}})
1\{\tau(Y^S)\le \delta\vep^{-2}<\tau(Y^S,\{0\})\}\Bigr)\\ 
\label{coal3}&=|\xi_0^\vep|P(\tau(Y^S)\le \delta\vep^{-2}<\tau(Y^S,\{0\})).
\end{align}

Together \eqref{coal1} and \eqref{coal3} bound $\Sigma_1$. Using this with \eqref{coal2} in \eqref{dualeq}, we conclude that  
\beq \label{betaeq}
E^\vep_0(S)=|\xi_0^\vep|P(\tau(Y^S)\le \delta\vep^{-2}<\tau(Y^S,\{0\}))+\eta_1(\vep,\delta,S),
\eeq
where $|\eta_1(\vep,\delta,S)|\le cN_0^2|\xi_0^\vep|^2\delta^{-d/2}\vep^d$.
A similar, and simpler, argument shows that for $S\subset\{1,\ldots,N_0\}$,
\beq\label{deltaeq}
E^\vep_1(S) =|\xi_0^\vep|P(\tau(Y^S\cup\{0\}) \le \delta\vep^{-2})+\eta_2(\vep,\delta,S),
\eeq
where $|\eta_2(\vep,\delta,S)|\le c N_0(N_0+1)|\xi^\vep_0|^2\delta^{-d/2}\vep^d$.

Now use \eqref{betaeq}, \eqref{deltaeq} and the fact that
$\hat\beta_\vep(\emptyset)=0$ (by \eqref{staydead}), to
obtain \eqref{voterasym} with
\begin{equation*}|\eta_{\ref{voterest}}(\vep,\delta)|\le
  \sum_{S}(|\hat\beta_\vep(S)|+|\hat\delta_\vep(S)|)cN_0(N_0+1)\delta^{-d/2}|\xi_0^\vep|^2\vep^d. 
\end{equation*}
Finally use \eqref{hatbetabnd} to complete the proof.
\end{proof}

\medskip
For $0<\eta_1<1$, let
$T(\eta_1)=T_\vep(\eta_1)=\inf\{t\ge \vep^{\eta_1}:|\xi^\vep_{t-\vep^{\eta_1}}|\ge (\vep^{-1+{\eta_1\over 2}})^d\vep^{\eta_1}\}$
and note that $T(\eta_1) - \vep^{\eta_1}$ is an $(\cH_t)$-stopping time.

\begin{lem}\label{lem:submartstuff}
There is a $c_{\tref{lem:submartstuff}}$ so that if $\eta_1\in(0,1)$, then for all $s\ge \vep^{\eta_1}$
$$
E(\psi(\xi^\vep_s)|\cH_{s-\vep^{\eta_1}})\le
[\theta^\vep_{\vep^{\eta_1}\vep^{-2}}+c_{\tref{lem:submartstuff}}\vep^{\eta_1}]
\,|\xi^\vep_{s-\vep^{\eta_1}}| \quad\hbox{a.s. on $\{T(\eta_1)>s\}$.}
$$
\end{lem}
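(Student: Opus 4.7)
The plan is to apply the strong Markov property at time $s-\vep^{\eta_1}$ and then use Lemmas~\ref{lem:voterapprox} and \ref{voterest} with $\delta=\vep^{\eta_1}$ to compute the drift starting from the state $\xi^\vep_{s-\vep^{\eta_1}}$. On the event $\{T(\eta_1)>s\}$ (which is $\cH_{s-\vep^{\eta_1}}$-measurable by the definition of $T(\eta_1)$), we have the deterministic bound
\[
|\xi^\vep_{s-\vep^{\eta_1}}|\le (\vep^{-1+\eta_1/2})^d\vep^{\eta_1}=\vep^{-d+d\eta_1/2+\eta_1}.
\]
This is the crucial input that will let us absorb the quadratic error term from Lemma~\ref{voterest} into a linear bound.

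Concretely, write $\eta=\xi^\vep_{s-\vep^{\eta_1}}$ and let $P_\eta$ denote the law of $\xi^\vep$ started from $\eta$. By the Markov property at time $s-\vep^{\eta_1}$,
\[
E(\psi_\vep(\xi^\vep_s)\mid\cH_{s-\vep^{\eta_1}})=E_\eta(\psi_\vep(\xi^\vep_{\vep^{\eta_1}})).
\]
Split this via the voter comparison:
\[
E_\eta(\psi_\vep(\xi^\vep_{\vep^{\eta_1}}))=E_\eta(\psi_\vep(\xi^{\vep,0}_{\vep^{\eta_1}}))+E_\eta\bigl(\psi_\vep(\xi^\vep_{\vep^{\eta_1}})-\psi_\vep(\xi^{\vep,0}_{\vep^{\eta_1}})\bigr).
\]
Lemma~\ref{voterest} with $\delta=\vep^{\eta_1}$ bounds the first term by
$\theta^\vep_{\vep^{\eta_1-2}}|\eta|+c_{\ref{voterest}}c^*(g)\vep^{-d\eta_1/2}|\eta|^2\vep^d$, while Lemma~\ref{lem:voterapprox} bounds the absolute value of the second term by $c_{\ref{lem:voterapprox}}[e^{c^*(N_0+1)\vep^{\eta_1}}-1]|\eta|$. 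For $\vep$ small, the exponential factor is at most $c\vep^{\eta_1}$, so the voter-correction contribution is $O(\vep^{\eta_1}|\eta|)$.

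The quadratic error is handled by the restriction $\{T(\eta_1)>s\}$: it gives
\[
c\,\vep^{-d\eta_1/2}|\eta|^2\vep^d\le c\,\vep^{-d\eta_1/2}\cdot\vep^{-d+d\eta_1/2+\eta_1}\cdot|\eta|\cdot\vep^d=c\,\vep^{\eta_1}|\eta|,
\]
exactly the desired linear bound. Combining the three contributions yields
\[
E(\psi_\vep(\xi^\vep_s)\mid\cH_{s-\vep^{\eta_1}})\le\bigl[\theta^\vep_{\vep^{\eta_1-2}}+c_{\tref{lem:submartstuff}}\vep^{\eta_1}\bigr]|\xi^\vep_{s-\vep^{\eta_1}}|
\]
on $\{T(\eta_1)>s\}$, with $c_{\tref{lem:submartstuff}}=c_{\ref{voterest}}c^*(g)+c_{\ref{lem:voterapprox}}c^*(N_0+1)e^{c^*(N_0+1)}+1$ (or any sufficient constant).

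The only genuine obstacle is the book-keeping to confirm that the exponents match so that the $|\eta|^2$ error from Lemma~\ref{voterest} becomes $O(\vep^{\eta_1}|\eta|)$; this is precisely why the threshold $(\vep^{-1+\eta_1/2})^d\vep^{\eta_1}$ defining $T(\eta_1)$ is chosen as it is. Everything else is a direct application of the Markov property together with the two preceding lemmas.
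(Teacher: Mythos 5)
Your proof is correct and follows the same route as the paper: condition on $\cH_{s-\vep^{\eta_1}}$, apply the Markov property, invoke Lemma~\ref{lem:voterapprox} and Lemma~\ref{voterest} with $\delta=\vep^{\eta_1}$, and use the bound $|\xi^\vep_{s-\vep^{\eta_1}}|\le(\vep^{-1+\eta_1/2})^d\vep^{\eta_1}$ (guaranteed on $\{T(\eta_1)>s\}\in\cH_{s-\vep^{\eta_1}}$) to turn the quadratic error $\delta^{-d/2}|\eta|^2\vep^d$ into $\vep^{\eta_1}|\eta|$. Your exponent bookkeeping $-\eta_1 d/2+(-d+d\eta_1/2+\eta_1)+d=\eta_1$ is exactly the computation the paper leaves implicit, and the treatment of the exponential factor $e^{c^*(N_0+1)\vep^{\eta_1}}-1\le c^*(N_0+1)e^{c^*(N_0+1)}\vep^{\eta_1}$ matches as well.
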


\begin{proof} Let $\delta=\vep^{\eta_1}$. If $|\xi^\vep_0|\le (\vep^{-1+{\eta_1\over 2}})^d\vep^{\eta_1}$, then Lemmas~\ref{lem:voterapprox} and
\ref{voterest} imply $E(\psi(\xi_\delta^\vep))=|\xi_0^\vep|\theta^\vep_{\delta\vep^{-2}}+\eta'(\vep)$ with
\begin{align*}
|\eta'(\vep)|&\le
c_{\tref{lem:voterapprox}}[e^{(N_0+1)c^*\vep^{\eta_1}}-1]|\xi^\vep_0|
+c_{\tref{voterest}}\vep^{-\eta_1d/2+d}|\xi^\vep_0|^2\\ 
&\le
c_{\tref{lem:voterapprox}}c^*(N_0+1) e^{(N_0+1)c^*}\vep^{\eta_1}|\xi^\vep_0|
+c_{\tref{voterest}}|\xi_0^\vep|\vep^{\eta_1}.
\end{align*}
For the second term we used the bound on $|\xi^\vep_0|$.
The result now follows from the above by the Markov property
and the definition of $T(\eta_1)$. 
\end{proof}

\begin{lem}\label{survbnd}  
Let $\beta,\eta_2\in(0,1]$.  There is an $\vep_{\tref{survbnd}}(\beta, \eta_2)\in(0,1)$,
so that if $0<\vep\le \vep_{\tref{survbnd}}$ and $\theta=f'(0)\le -\eta_2$, then 
$|\xi^\vep_0|\le \vep^{-d+\beta}$ implies 
$$
P(|\xi^\vep_t|>0)\le 6 e^{2c_b}\vep^{\beta/2}
\quad\hbox{for all $t\ge \frac{2d}{\eta_2}\log(1/\vep)$}. 
$$
\end{lem}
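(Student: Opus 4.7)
The plan is to combine the conditional drift estimate of Lemma~\ref{lem:submartstuff} with a stopped exponential moment computation on $|\xi^\vep|$, then to split $P(|\xi^\vep_t|>0)\le P(|\xi^\vep_t|>0,\tau>t)+P(\tau\le t)$ with $\tau:=T(\eta_1)$.  Set $\eta_1:=\beta/(d+2)$, $\delta:=\vep^{\eta_1}$, and $c:=(\eta_2/2)e^{-c_b\delta}$.  With this choice the threshold $(\vep^{-1+\eta_1/2})^d\vep^{\eta_1}=\vep^{-d+\beta/2}$ appearing in the definition of $T(\eta_1)$ sits strictly above the initial-mass bound $\vep^{-d+\beta}$.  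The hypothesis $\theta\le-\eta_2$, Lemma~\ref{lem:thetarate}, and \eqref{grate} let me fix $\vep_{\ref{survbnd}}(\beta,\eta_2)$ small enough that $\theta^\vep_{\vep^{\eta_1-2}}+c_{\ref{lem:submartstuff}}\vep^{\eta_1}\le -\eta_2/2$, so Lemma~\ref{lem:submartstuff} gives
\begin{equation}\label{plandrift}
E(\psi_\vep(\xi^\vep_s)\,|\,\cH_{s-\delta})\le -\tfrac{\eta_2}{2}\,|\xi^\vep_{s-\delta}|\quad\text{a.s.\ on }\{\tau>s\},\ s\ge\delta,
\end{equation}
with the crucial measurability $\{\tau>s\}\in\cH_{s-\delta}$ built into the definition of $T(\eta_1)$.

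I then apply the semimartingale decomposition~\eqref{totmass} to $|\xi^\vep_s|e^{cs}$ stopped at $\tau$ to get
\[
E\bigl(|\xi^\vep_{s\wedge\tau}|\,e^{c(s\wedge\tau)}\bigr)=|\xi^\vep_0|+\int_0^s e^{cu}\,E\bigl([c|\xi^\vep_u|+\psi_\vep(\xi^\vep_u)]\,1\{u<\tau\}\bigr)\,du,
\]
and split the integral at $\delta$.  On $[0,\delta]$ use the crude bound $|\psi_\vep(\xi)|\le c_b|\xi|$ (visible in the proof of Lemma~\ref{lem:mombd}) together with $E|\xi^\vep_u|\le e^{c_bu}|\xi^\vep_0|$ for an $O(\delta)|\xi^\vep_0|$ contribution; on $[\delta,s]$ condition on $\cH_{u-\delta}$, note that $\{u<\tau\}\in\cH_{u-\delta}$, and combine \eqref{plandrift} with $E(|\xi^\vep_u|\,|\,\cH_{u-\delta})\le e^{c_b\delta}|\xi^\vep_{u-\delta}|$ to bound the integrand by $(ce^{c_b\delta}-\eta_2/2)\,E(|\xi^\vep_{u-\delta}|;u<\tau)=0$ by the choice of $c$.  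Shrinking $\vep$ once more,
\begin{equation}\label{planexpbnd}
E\bigl(|\xi^\vep_{s\wedge\tau}|\,e^{c(s\wedge\tau)}\bigr)\le 2|\xi^\vep_0|\qquad\text{for every }s\ge 0.
\end{equation}

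Since $|\xi^\vep_t|$ is $\Z_+$-valued and $|\xi^\vep_t|\,1\{\tau>t\}\,e^{ct}\le |\xi^\vep_{t\wedge\tau}|e^{c(t\wedge\tau)}$, \eqref{planexpbnd} yields $P(|\xi^\vep_t|>0,\tau>t)\le E(|\xi^\vep_t|;\tau>t)\le 2|\xi^\vep_0|\,e^{-ct}$.  For $t\ge(2d/\eta_2)\log(1/\vep)$ one has $ct\ge de^{-c_b\delta}\log(1/\vep)$, and reducing $\vep$ so that $\delta\log(1/\vep)\le(\log 2)/(dc_b)$ produces $e^{-ct}\le 2\vep^d$, so this piece is at most $4\vep^\beta$.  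For the complementary event, $T^\star:=\tau-\delta=\inf\{s\ge 0:|\xi^\vep_s|\ge\vep^{-d+\beta/2}\}$ is an $(\cH_s)$-stopping time with $|\xi^\vep_{T^\star}|\ge\vep^{-d+\beta/2}$ on $\{T^\star<\infty\}$; the strong Markov property at $T^\star$ combined with Lemma~\ref{lem:mombd} gives $E(|\xi^\vep_\tau|\,|\,\cH_{T^\star})\ge e^{-c_b\delta}|\xi^\vep_{T^\star}|$, whence by \eqref{planexpbnd}
\[
e^{-c_b\delta}\vep^{-d+\beta/2}\,P(\tau\le t)\le E(|\xi^\vep_\tau|;\tau\le t)\le E(|\xi^\vep_{t\wedge\tau}|)\le 2|\xi^\vep_0|.
\]
Rearranging gives $P(\tau\le t)\le 2e^{c_b}\vep^{\beta-(d/2+1)\eta_1}=2e^{c_b}\vep^{\beta/2}$, and adding the two bounds yields $P(|\xi^\vep_t|>0)\le 4\vep^\beta+2e^{c_b}\vep^{\beta/2}\le 6e^{2c_b}\vep^{\beta/2}$.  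The main technical care is the conditioning bookkeeping: I work with $\tau=T(\eta_1)$ rather than the raw hitting time $T^\star$ precisely so that $\{\tau>s\}\in\cH_{s-\delta}$ and Lemma~\ref{lem:submartstuff} can be integrated against $1\{u<\tau\}$, and recover the tail bound on $\tau$ from the strong Markov property at $T^\star$; the choice $\eta_1=\beta/(d+2)$ balances the two error rates $\vep^\beta$ and $\vep^{\beta-(d/2+1)\eta_1}$ at the target $\vep^{\beta/2}$.
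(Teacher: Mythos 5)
Your proof is correct and follows essentially the same strategy as the paper's: the stopping time $T(\eta_1)$ with its built-in $\delta$-lag, the drift estimate from Lemma~\ref{lem:submartstuff} via Lemma~\ref{lem:thetarate}, the exponential moment bound for the stopped total mass, and the split $P(|\xi^\vep_t|>0)\le P(|\xi^\vep_t|>0,\tau>t)+P(\tau\le t)$ with the same choice $\eta_1=\beta/(d+2)$. The only differences are cosmetic bookkeeping — you tune $c=(\eta_2/2)e^{-c_b\delta}$ so the drift term vanishes exactly rather than being merely $\le-\eta_2/4$, you start the integral at $0$ and split at $\delta$ instead of starting at $\delta$, and you invoke the strong Markov property at $T^\star=\tau-\delta$ directly rather than via the auxiliary stopping time $S=(T-\delta)\wedge(t-\delta)$ — none of which changes the substance.
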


\begin{proof}Let $\lambda\le \eta_2/2$, $\eta_1=\beta(2+d)^{-1}$, $T=T_\vep(\eta_1)$ and
  $\delta=\vep^{\eta_1}$.  An integration by parts using
  \eqref{totmass} shows that for $t\ge \delta$, 
$$
e^{\lambda(t\wedge T)}|\xi_{t\wedge T}^\vep|=e^{\lambda\delta}|\xi^\vep_\delta|
+\int_\delta^t1\{r<T\}[\lambda e^{\lambda r}|\xi_r^\vep|+e^{\lambda r}\psi_\vep(\xi_r^\vep)]dr +N^\vep_t, 
$$
where $N^\vep$ is a mean $0$ martingale. Since $\{r<T\}\in\cH_{r-\delta}$,
we have for $\delta\le s\le t$
\begin{align*}
\nonumber E&(e^{\lambda (t\wedge T)}|\xi^\vep_{t\wedge
  T}|-e^{\lambda(s\wedge T)}|\xi^\vep_{s\wedge
  T}||\cH_{s-\delta})\\ 
\nonumber
&=\int_s^t E(1\{r<T\}e^{\lambda r}E(\lambda|\xi_r^\vep|+\psi_\vep(\xi^\vep_r)|\cH_{r-\delta})|\cH_{s-\delta})dr.
\end{align*}
Using Lemmas \ref{lem:mombd} and \ref{lem:submartstuff} the above is at most
\beq\label{condexp}
E\Bigl(\int_s^t 1\{r<T\}e^{\lambda r}\gamma(\vep)|\xi^\vep_{r-\delta}|dr|\cH_{s-\delta}\Bigr), 
\eeq
where $\gamma(\vep)=\lambda e^{c_b\delta}+
\theta^\vep_{\delta \vep^{-2}}+c_{\tref{lem:submartstuff}}\delta$.
Recall $\delta=\vep^{\eta_1}$ and $\theta = f'(0)\le -\eta_2$. By Lemma~\ref{lem:thetarate} and the uniform convergence of the $g^\vep_i$ to $g_i$ there is a
$\vep_1(\beta,\eta_2)>0$ so that if $0<\vep\le\vep_1$, then  
\begin{align*}
\gamma(\vep) &\le{\eta_2\over 2} e^{c_b\vep^{\eta_1}}
- \eta_2+2^{2N_0}\Bigl[\sum_{i=0}^1\Vert g_i^\vep-g_i\Vert_\infty\Bigr]+2^{2N_0}c^*\vep_{\tref{lem:thetarate}}(\vep^{\eta_1}\vep^{-2})
+c_{\tref{lem:submartstuff}}\vep^{\eta_1}\\
& \le {-\eta_2/4}<0.
\end{align*}
We assume $0<\vep\le\vep_1$ in what follows.
Since the bound in \eqref{condexp} is therefore non-positive and our assumption on $|\xi^\vep_0|$ implies $T>\delta$, we may use Lemma~\ref{lem:mombd} and the fact that $\delta\le 1$ to see that
for $t\ge \delta$,
\beq\label{expbnd1}
E(|\xi_{t\wedge T}^\vep|e^{\lambda(t\wedge T)}) \le e^{\lambda\delta}E(|\xi_\delta^\vep|)
\le e^{(\lambda+c_b)\delta}|\xi_0^\vep|
\le e^{\eta_2+c_b}\vep^{\beta-d}.
\eeq

Now $|\xi^\vep_t|\ge 1$ if it is positive so
\beq
P(|\xi^\vep_t|>0)\le E(|\xi^\vep_{t\wedge T}|e^{\lambda(T\wedge t)}1\{T\ge t\})e^{-\lambda t}+P(T< t).
\label{surv0}
\eeq
Let
$t\ge (2d/\eta_2)\log(\vep^{-1})$ and use \eqref{expbnd1} with $\lambda=\eta_2/2$ to 
see that the first term is at most
\beq
e^{\eta_2+c_b}\vep^{\beta-d}\vep^d
=e^{\eta_2+c_b }\vep^\beta.
\label{surv1}
\eeq
To bound $P(T<t)$,  we note that  
$|\xi^\vep_{T-\delta}|\ge (\vep^{-1+{\eta_1\over 2}})^d\vep^{\eta_1}$ if $T<\infty$, so
$$
P( T < t ) \le 
E(|\xi^\vep_{T-\delta}|1\{T< t\}) (\vep^{-1+{\eta_1\over 2}})^{-d}\vep^{-\eta_1}.
$$
By making $\vep_1$ smaller, depending on $\beta$, we can assume that $(2d/\eta_2)\log(\vep^{-1}) \ge \vep^{\eta_1}=\delta$.
Let $S=(T-\delta)\wedge(t-\delta)$, note $\{T< t\}\in\cH_{S}$, and use the lower bound in
Lemma \ref{lem:mombd} with $\lambda=0$ to conclude the first inequality in
$$
E(|\xi^\vep_{T-\delta}|1\{T< t\}) \le e^{c_{b} \delta}E(|\xi^\vep_{T\wedge t}|)
\le e^{2c_{b}+\eta_2}\vep^{\beta-d}.
$$
The second inequality comes from \eqref{expbnd1} with $\lambda=0$ (recall that $t\ge2d/\eta_2)\log(\vep^{-1}) \ge \vep^{\eta_1}=\delta$) and $\delta\le 1$.
Using the last two equations with \eqref{surv1} in \eqref{surv0}, we conclude that
\begin{align*}
P(|\xi^\vep_t|>0) & \le e^{\eta_2+c_{b} }\vep^{\beta}
+ e^{2c_{b}+\eta_2}\vep^{\beta-d}(\vep^{-1+{\eta_1\over 2}})^{-d}\vep^{-\eta_1}
\\
&\le e^{\eta_2+2c_{b}}[\vep^\beta+\vep^{\beta-\eta_1(1+{d\over 2})}]
\le 2e^{1+2c_b}\vep^{\beta/2},
\end{align*}
where the definition of $\eta_1$ is used in the last line.  The result follows.
\end{proof}

\subsection{The Dead Zone}

For the remainder of this Section we suppose 
\eqref{grate}, \eqref{staydead} and Assumption~2 are in 
force and $-f'(0)\ge \eta_2\in(0,1]$. We also assume that $p(\cdot)$ and $q(\cdot)$ have 
finite supports. More specifically, $R_0\in\NN$ satisfies
\note{finrange}
\beq\label{finsupp1} \{x\in \Z^d:p(x)>0\}\subset[-R_0,R_0]^d\hbox{ and }\{x\in \Z^{dN_0}:q(x)>0\}\subset[-R_0,R_0]^{dN_0}.\eeq
In order to connect with the percolation results
from Section~\ref{sec:perc} we need certain space-time regions
suitable for applying Lemma~\ref{lem:gdlow} to
decrease particle density, Lemma~\ref{lem:contain2} to
control the spread of duals, and Lemma~\ref{survbnd} 
to actually kill off particles. Recall that $Q^\vep(r)=[-r,r]^d\cap(\vep\Z^d)$.   For
$J_0<J_1\in\NN,0<w<1,A,K>1$, and $T>0$ define 
regions
$\mathcal{D}(J_0,J_1,w,A,K)=\mathcal{D}_0\cup\mathcal{D}_1$,
where
\begin{align*}
\mathcal{D}_0 &=\cup_{j=1}^{J_0}(Q^\vep((K-jA)T)\times[(j-1)T,jT]), 
\\
\mathcal{D}_1&=\cup_{j=J_0}^{J_1-1}(Q^\vep((K+jw-(w+A)J_0)T)\times[jT,(j+1)T]). 
\end{align*}
For help with the definition consult the following picture:

\begin{center}
\begin{picture}(300,220)
\put(30,40){\line(1,0){240}}
\put(50,35){\line(0,1){25}}
\put(250,35){\line(0,1){25}}
\put(225,25){$[K-A]T$}
\put(50,60){\line(1,0){20}}
\put(250,60){\line(-1,0){20}}
\put(70,60){\line(0,1){20}}
\put(230,60){\line(0,1){20}}
\put(70,80){\line(1,0){20}}
\put(230,80){\line(-1,0){20}}
\put(90,80){\line(0,1){40}}
\put(210,80){\line(0,1){40}}
\put(50,100){\line(1,0){160}}
\put(10,95){$J_0=3$}
\put(215,95){$\leftarrow [K-J_0A]T$}
\put(90,120){\line(-1,0){15}}
\put(210,120){\line(1,0){15}}
\put(75,120){\line(0,1){20}}
\put(225,120){\line(0,1){20}}
\put(75,140){\line(-1,0){15}}
\put(225,140){\line(1,0){15}}
\put(60,140){\line(0,1){20}}
\put(240,140){\line(0,1){20}}
\put(60,160){\line(-1,0){15}}
\put(240,160){\line(1,0){15}}
\put(45,160){\line(0,1){25}}
\put(255,160){\line(0,1){25}}
\put(160,190){$[K+(J_1-1-J_0)w - J_0A]T$}
\put(45,180){\line(1,0){210}}
\put(10,175){$J_1=7$}
\put(140,60){${\cal D}_0$}
\put(140,140){${\cal D}_1$}
\end{picture}
\end{center}

The speed $w>0$ is as in Assumption~2 (and may be assumed to
be $<1$), and $T=A_{\tref{lem:gdlow}}\log(1/\vep)$ is the
same as in \eqref{Tdefn}.  For the regions $\cD_0,\cD_1,\cE$
we take 
\begin{multline}\label{constants}
J_0= \left\lceil \dfrac{2d}{\eta_2 A_{\tref{lem:gdlow}}}
\right\rceil+1, \quad
A=\dfrac{8(2d+3))}{A_{4.2}}\vee (2c_b \vee 2\sigma^2) \\
K = 2 + AJ_0, \text{and }
J_1= J_0 + 1+ \left\lceil\dfrac{K+AJ_0}{w}
\right\rceil .
\end{multline}

The choice $K=2+AJ_0$ implies that
$Q^\vep(2T)\times\{J_0T\}$ is the ``top'' of $\cD_0$ and the
``bottom'' of $\cD_1$. The choice of $J_1$ implies
\begin{align}\label{Dbig}&\hbox{the top of $\cD$ contains
$Q^\vep(2KT)\times \{J_1T\}$ and is contained in}\\
\nonumber&\hbox{$Q^\vep((2K+1)T)\times \{J_1T\}$, and $\cD$ contains the region
$Q^\vep(2T)\times[0,J_1T]$.}
\end{align} 

Recall from Section~\ref{sec:perc} that a region $\mathcal{C}$ in $ \R^d\times\R_+$ is $\vep$-empty iff $\xi^\vep_t(x)=0$ for all $(t,x)\in \mathcal{C}$, where
$\xi^\vep$ is our voter model perturbation as usual.  If $A\subset\R^d$ let $\xi_0^\vep(A)=\sum_{x\in A\cap\vep\Z^d}\xi_0^\vep(x)$.

\begin{lem}\label{lem:percest} There exist
  $\vep_{\tref{lem:percest}}, c_{\tref{lem:percest}}>0$ depending on 
  $u_1,u_2,w,c_2,C_2$ (from Assumption~\ref{a2}) and
$r_0,\gamma_{\tref{lem:gdlow}},\eta_2$ such that 
such that if $0<\vep\le \vep_{\tref{lem:percest}}$
and  
\begin{equation}\label{0massic}
\xi_0^\vep(Q(KT))=0,
\end{equation}
then 
\begin{equation}\label{percprob}
P\Bigl(\mathcal{D}(J_0,J_1,w,A,K) \hbox{ is
  $\vep$-empty}\Bigr) 
\ge 1-c_{\tref{lem:percest}}\vep^{{.05}\wedge{\gamma_{\tref{lem:gdlow}}\over
    4}}, 
\end{equation}
and with probability at least
$1-c_{\tref{lem:percest}}\vep^d$,
\begin{align}\label{lightcone}
&\text{for all }j=1,\dots,J_1, 
(x,u)\in (Q^\vep((K+J_1w+A(J_1-j))T)\times [(j-1)T,jT])
\nonumber \\
&\text{and } t\in[0,u-(j-1)T], X^{x,u}_t \subset Q^\vep((K+J_1w+A(J_1-j+1))T).
\end{align}
\end{lem}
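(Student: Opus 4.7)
The proof has three parts: the containment bound \eqref{lightcone}, the $\vep$-emptiness of $\mathcal{D}_0$, and the $\vep$-emptiness of $\mathcal{D}_1$. For \eqref{lightcone}, apply Lemma~\ref{lem:contain2} on each of the $J_1$ time slabs $[(j-1)T, jT]$ with $T'=T$, $c=A_{\ref{lem:gdlow}}$, $b=A/2$ and $L=(K+J_1w+A(J_1-j))T$; the constraint $A\ge 8(2d+3)/A_{\ref{lem:gdlow}}$ from \eqref{constants} ensures $q-d\ge d$ in Lemma~\ref{lem:contain2}, so a union bound over the finitely many slabs gives \eqref{lightcone} with probability $1-O(\vep^d)$. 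For $\mathcal{D}_0$, apply the same lemma with $L=(K-jA)T$, $T'=U=jT$, $c=jA_{\ref{lem:gdlow}}$, $b=A/2$, so that $L+2bT'=KT$: with probability $1-O(\vep^d)$ the dual $X^{x,u}_t$ of any $(x,u)$ in the $j$-th slab of $\mathcal{D}_0$ stays in $Q(KT)$ for every $t\le u$. Since $\xi_0^\vep\equiv 0$ on $Q(KT)$ by \eqref{0massic}, the computation process $\zeta$ starts from all-zero inputs; the trap condition \eqref{staydead} together with the flip rule \eqref{mu-flip} (flipping the all-zero state requires $U_m\le g_1^\vep(0,\ldots,0)/c^*=0$) freezes $\zeta\equiv 0$, and duality \eqref{dualityeq} then yields $\xi_u^\vep(x)=0$.

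For $\mathcal{D}_1$, start from the Markov property at time $J_0T$, where the dual argument above has already produced $\xi_{J_0T}^\vep\equiv 0$ on $Q(2T)$. Iterating Lemma~\ref{lem:gdlow} from this base---which applies at each step because $\vep^{\gamma_{\ref{lem:gdlow}}}\le u_2$ by \eqref{veplemgdlow}---produces, for each $k=0,1,\ldots,J_1-J_0$, density at most $\vep^{\gamma_{\ref{lem:gdlow}}}$ on $Q((2+kw+O(1))T)$ at time $(J_0+k)T$ with high probability. To convert this low density into actual extinction on the narrower cubes $Q(r_jT)$ prescribed by $\mathcal{D}_1$, couple $\xi^\vep$ through the graphical representation with a truncated process $\hat\xi^\vep$ obtained by zeroing $\xi_{jT}^\vep$ outside a suitable cube; the density bound gives $\hat\xi^\vep$ at most $O(\vep^{-d+\gamma_{\ref{lem:gdlow}}/2})$ ones, so Lemma~\ref{survbnd} (with $\beta=\gamma_{\ref{lem:gdlow}}/2$) drives $\hat\xi^\vep$ extinct within time $(2d/\eta_2)\log(1/\vep)$. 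The finite-range hypothesis \eqref{finsupp1} together with another application of Lemma~\ref{lem:contain2} then shows $\xi^\vep=\hat\xi^\vep$ on the interior throughout each target slab, giving the $\vep$-emptiness of $\mathcal{D}_1$.

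The main obstacle is reconciling three competing speeds: the dual spread $A$, the propagation speed $w$ from Lemma~\ref{lem:gdlow}, and the extinction time $(2d/\eta_2)\log(1/\vep)$ from Lemma~\ref{survbnd}. The constants in \eqref{constants}, in particular the $J_0\asymp 1/(\eta_2 A_{\ref{lem:gdlow}})$ preparatory slabs of $\mathcal{D}_0$, are calibrated so that these three match: they simultaneously absorb the inward voter invasion at speed $A$ and afford Lemma~\ref{survbnd} enough logarithmic time to kill any introduced ones under the negative drift $f'(0)\le-\eta_2$. The most delicate bookkeeping is in the localization coupling $\xi^\vep\leftrightarrow\hat\xi^\vep$, whose truncation cube must be large enough for Lemma~\ref{lem:contain2} to confine the dual to its interior throughout each slab $[jT,(j+1)T]$, yet small enough that the total-mass hypothesis of Lemma~\ref{survbnd} remains satisfied.
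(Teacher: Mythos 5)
Your arguments for \eqref{lightcone} and for the $\vep$-emptiness of $\mathcal{D}_0$ are sound. For $\mathcal{D}_0$ you take a slightly different route than the paper (a single application of Lemma~\ref{lem:contain2} with $T'=U=jT$, running each dual all the way back to time $0$ and into $Q(KT)$, rather than the paper's slab-by-slab nesting $\Gamma_T(j,K-jA,K-(j-1)A)$), and either works.

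However, your treatment of $\mathcal{D}_1$ has a genuine gap. You propose to ``start from the Markov property at time $J_0T$, where $\xi^\vep_{J_0T}\equiv 0$ on $Q(2T)$,'' and iterate Lemma~\ref{lem:gdlow} to get density $\le\vep^{\gamma_{\tref{lem:gdlow}}}$ on $Q((2+kw+O(1))T)$ at time $(J_0+k)T$. But the size of this low-density region is exactly the size of the corresponding slab of $\mathcal{D}_1$, and the subsequent truncation/extinction step \emph{shrinks} the region: to drive the truncated process $\hat\xi^\vep$ extinct you must run Lemma~\ref{survbnd} for $\approx(2d/\eta_2)\log(1/\vep)\approx(J_0-1)T$ units of time, and during that interval the Lemma~\ref{lem:contain2} coupling only guarantees $\xi^\vep=\hat\xi^\vep$ on a cube shrunk by $A(J_0-1)T$. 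Subtracting $A(J_0-1)T$ from $(2+kw)T$ gives a negative radius (recall $K=2+AJ_0$, so $AJ_0 > 2+kw$ for the relevant $k$), so the coupling window is empty and you conclude nothing. Relatedly, extinction is achieved $J_0-1$ slabs \emph{after} the time at which you have the density control, not in the same slab. The paper avoids this by running the Lemma~\ref{lem:gdlow} iteration from time $0$ using the full hypothesis $\xi^\vep_0\equiv 0$ on $Q(KT)$, so that at time $jT$ the low-density region has radius $(K+jw)T = (2+AJ_0+jw)T$; this carries the $AJ_0T$ buffer needed to absorb the containment shrinkage, and the resulting $\vep$-empty region $Q((K+jw-AJ_0)T)\times[(j+J_0-1)T,(j+J_0)T] = Q((2+jw)T)\times[(j+J_0-1)T,(j+J_0)T]$ covers the corresponding slab of $\mathcal{D}_1$. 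Throwing away the $Q(KT)$ zero-region in favor of the much smaller $Q(2T)$ at time $J_0T$ is what breaks the bookkeeping. (Your appeal to the finite-range hypothesis \eqref{finsupp1} in this step is also unnecessary: the coupling uses only Lemma~\ref{lem:contain2}, which works for exponential tails; finite range enters only in the percolation/dead-zone argument of Section~\ref{sec:perc}.)
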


\begin{proof} We begin with some notation for describing
  events in which the dual process is confined to certain
  space-time regions. For $j\ge 1$ and $0<r<s$ let
  $\Gamma_T(j,r,s)$ be the event
\[
X^{x,u}_t \subset Q^\vep(sT) \ \forall \
x\in Q^\vep(rT) , \
u\in[(j-1)T,jT], \text{ and }
t\in [0,u-(j-1)T] .
\]
On $\Gamma_T(j,r,s)$, duality and \eqref{staydead} imply
\begin{equation}\label{gambnd2}
\xi^\vep_{(j-1)T}(Q^\vep(sT))=0 \text{ implies } 
Q^\vep(rT)\times[(j-1)T,jT]
\text{ is $\vep$-empty}. 
\end{equation}

\bigskip
\emph{Step~1.} We first check that $\cD_0$ is empty with
high probability. For $j\in\{1,\dots J_0\}$ we bound the
probability of $\Gamma_T(j,K-jA,K-(j-1)A)$ by using 
Lemma~\ref{lem:contain2}. If we set $c=A_{\tref{lem:gdlow}}$, 
$U=jT$, $L=(K-jA)T$ and $2b=A$, then evaluating $q$ in the
lemma we obtain
\[
q = (\frac{AA_{\tref{lem:gdlow}}}{8}-2)\wedge \vep^{-2} \ge
2d+1
\]
if $\vep^{-2}\ge (2d+1)$. Hence the bound on $\bar p_\vep$ in
Lemma~\ref{lem:contain2} gives us
\begin{equation}\label{gambnd3}
P(\Gamma_T(j,K-jA,K-(j-1)A) ) \ge 1 - c'_d(K-jA)^d
\vep^{d}
\end{equation}
for $\vep<\vep_{\tref{lem:ldbd}} (A/2)$ such that 
$\vep^{-2}\ge 2d+1$ and $\vep
(A_{\tref{lem:gdlow}}\log(1/\vep)+1)^d\le 1$.

By \eqref{gambnd2}, on the intersection
\[
\cap_{j=1}^{J_0} \Gamma_T(j,K-jA,K-(j-1)A),
\]
for each
$j\in\{1,\dots J_0\}$,
if $\xi^\vep_{(j-1)T}(Q^\vep((K-(j-1)A)T))=0$ then 
$Q^\vep((K-jA)T)\times[(j-1)T,jT]$ is $\vep$-empty. Iterating this, \eqref{0massic} and \eqref{gambnd3}
imply that for some positive $\vep_0$,
\begin{equation}\label{deadstep1}
P\left(
  \cD_0\text{ is $\vep$-empty}
\right) \ge 1-c'_dJ_0K^d\vep^{d} 
\text{ if } \vep<\vep_0.
\end{equation}
Here, and throughout the proof, $\vep_0$ will denote a positive constant
depending only on our fixed parameters including $r_0$. 

\emph{Step~2.} 
By taking $\vep$ small enough we may assume that (recall $L_0$ is as in Assumption~\ref{a2})
\begin{equation}\label{vep1}
2+L_0\le KT\le (K+wJ_1)T\le \vep^{-.001/d}.
\end{equation}
For $j\in\{1,\dots,J_1-J_0\}$, on account of
\eqref{0massic}, we may apply Lemma~\ref{lem:gdlow} and the Markov property
$J_1-J_0$ times and conclude that for
$\vep<\vep_{\tref{lem:gdlow}}$, 
\begin{align*}
P( &\xi_{jT}^\vep \text{ has
density at most }\vep^{\gamma_{\tref{lem:gdlow}}}
\text{ in }
Q^\vep((K+wj)T)\hbox{ for }j=1,\dots,J_1-J_0) )\\
& \ge 1-(J_1-J_0)\vep^{.05} .
\end{align*}
When the above event occurs, for any 
$j\in\{1,2,\dots,J_1-J_0\}$, (recall that $Q_\vep=[0,a_\vep)^d\cap(\vep\Z^d)$)
\begin{align} 
\nonumber \xi^\vep_{jT}&(Q^\vep((K+wj)T))\\
&\le \sum_{x\in
  a_\vep\Z^d\cap Q^\vep((K+wj)T)}\xi^\vep_{jT}(x+Q_\vep)+\sum_{x\in Q^\vep((K+wj)T)-Q((K+wjT-a_\vep)}\xi^\vep_{jT}(x)\\ 
\nonumber&\le
|Q_\vep|\vep^{\gamma_{\tref{lem:gdlow}}}\hbox{card}(a_\vep\Z^d
\cap Q^\vep((K+wj)T)+c_d\vep^{-d}[(K+wj)T]^{d-1}a_\vep\\
\nonumber&\le
c_d|Q_\vep|\vep^{\gamma_{\tref{lem:gdlow}}}a_\vep^{-d}((K+wj)T)^d+c_d\vep^{-d}[(K+wj)T]^{d-1}a_\vep\\ 
\nonumber&\le
c_d(K+wJ_1)^dA_{\tref{lem:gdlow}}^d(\log(\vep^{-1}))^d[\vep^{\gamma_{\tref{lem:gdlow}}-d}+\vep^{(1/16d)-d}]\\ 
\label{tomass}&\le \vep^{\gamma_{\tref{lem:gdlow}}/2-d},
\end{align}
for small enough $\vep$, where we have used $\gamma_{\tref{lem:gdlow}}\le (16d)^{-1}$ in the last line.  We have shown that for all $\vep$
smaller than some 
positive $\vep_0$,
\begin{equation}\label{deadstep2}
P(\xi^\vep_{jT}(Q^\vep((K+wj)T))\le 
\vep^{\gamma_{\tref{lem:gdlow}}/2-d}\hbox{ for }j=1,\dots,J_1-J_0) \ge 1-(J_1-J_0)\vep^{.05}.
\end{equation}

\emph{Step~3.} 
Fix $j\in\{1,\dots,J_1-J_0\}$, and define
$(\hat\xi^{j,\vep}_t,t\ge jT)$ by setting 
\begin{equation*}\hat
  \xi^{j,\vep}_{jT}(x)=\begin{cases}\xi^\vep_{jT}(x)& \text{ if
    }x\in Q^\vep((K+wj)T),\\ 
0&\text{otherwise,}\end{cases}
\end{equation*}
and then using our Poisson processes $\{\Lambda^x_r,\Lambda^x_w:x\in\vep\Z^d\}$ to continue
constructing $\hat \xi^{j,\vep}_t$ in the same way as $\xi^\vep_t$ is
constructed. 
By Lemma~\ref{survbnd}, if
$\vep<\vep_{\tref{survbnd}}
(\gamma_{\tref{lem:gdlow}}/2,\eta_2)$,
\begin{multline}\label{hatdead}
\xi^\vep_{jT}(Q^\vep((K+jw)T)\le
\vep^{(\gamma_{\tref{lem:gdlow}}/2)-d}
\text{ implies }\\
P(|\hat\xi^{j,\vep}_t|>0|\xi^\vep_{jT}) \le 6e^{2c_b} \vep^{\gamma_{\tref{lem:gdlow}}/4} \text{ for
  all }t\ge (j+J_0-1)T.
\end{multline}
Using $\hat\xi^{j,\vep}_t$, we will show that with high probability,
\begin{multline}\label{diecouple}
\xi^\vep_{jT}(Q^\vep((K+jw)T)\le
\vep^{(\gamma_{\tref{lem:gdlow}}/2)-d} \text{ implies}\\
Q^\vep( (K+jw-J_0A)T )\times
[(j-1+J_0)T,(j+J_0)T]\text{ is $\vep$-empty.}
\end{multline}

To do this, define the event 
\[
\Gamma_T(j) = \cap_{i=1}^{J_0} \Gamma_T(j+i,
K+wj-iA, K+wj-(i-1)A).
\] 
Using Lemma~\ref{lem:contain2} as in Step~1
we have for small enough $\vep$
\[
P(\Gamma_T(j+i, K+wj-iA, K+wj-(i-1)A) ) \ge 1
-c'_d(K+wj-iA)^d\vep^{d} 
\]
and thus 

\begin{equation}\label{deadstep3}
P(\Gamma_T(j)) \ge 1-c'_d(3K)^dJ_0\vep^{d}
\end{equation}
for small enough $\vep$.

Observe that on the event $\Gamma_T(j)$ we have 
\begin{multline}\label{contain3}
X^{x,u}_t\subset Q^\vep( (K+jw)T) \ \forall \
x\in Q^\vep( (K+jw - J_0A)T ), 
\\ u\in[(j-1+J_0)T,(j+J_0)T],  \text{ and }
t\in[0,u-jT].
\end{multline}
Therefore, by duality, on 
$\Gamma_T(j)$,
\begin{multline*}
\xi^\vep_t(x) = \hat\xi^{j,\vep}_t(x) \text{ for all }\\
(x,t)\in Q^\vep((K+jw -J_0A)T)\times[ (j-1+J_0)T, (j+J_0)T].
\end{multline*}
Combining this observation with \eqref{hatdead} and
\eqref{deadstep3} we see that
the event in \eqref{diecouple} has probability at least
\[
1-6e^{2c_b}\vep^{\gamma_{\tref{lem:gdlow}}/4} - c'_d(3K)^dJ_0\vep^d
\]
for $\vep$ smaller than some $\vep_0$.

\emph{Step~4} We
can now sum the last estimate over $j=1,\dots,J_1-J_0$ and use \eqref{deadstep2} to
obtain
\begin{equation}\label{deadstep4}
P(\cD_1 \text{ is $\vep$-empty}) \ge 1 - J_1(\vep^{.05}
+ 6e^{2c_b}\vep^{\gamma_{\tref{lem:gdlow}}/4} + c'_d(3K)^dJ_0\vep^d )
\end{equation}
for small enough $\vep$. (Actually we get a slightly larger set than $\cD_1$.)
Combine \eqref{deadstep1} and \eqref{deadstep4} to obtain 
\eqref{percprob}.

\emph{Step 5} Finally, using the notation from Step~1, the
event in \eqref{lightcone} is just
\[
\cap_{j=1}^{J_1}\Gamma_T(j,K+wJ_1+(J_1-j)A,K+wJ_1+(J_1-j+1)A) .
\]
As in Step~1, we can use Lemma~\ref{lem:contain2} to bound
the probability of this intersection by
$1-c_d'J_1(K+J_1(w+A))^d\vep^{d}$ for small enough $\vep$, so we
are done.
\end{proof}
Let $K_1=K+J_1(w+A)$.  For $\xi\in\{0,1\}^{\vep\Z^d}$, let $G^\vep_\xi$ be the
event, depending on our graphical representation, on which $\cD=\cD(J_0,J_1,w,A,K)$ is $\vep$-empty if $\xi_0^\vep=\xi$, and on which \eqref{lightcone} holds. Note that \eqref{lightcone} implies all the duals starting at $(x,u)\in\cD$ and run up until time $u$ remain in $Q(K_1T)$.  Hence duality implies that $G^\vep_\xi$ is $\cG(Q(K_1T)\times[0,J_1T])$-measurable, where we recall from \eqref{Gxidef} that $\cG(R)$ is the $\sigma$-field generated by the Poisson points in the graphical representation in the region $R$.  By the inclusion \eqref{Dbig} we have
\begin{equation}\label{g0good}
\hbox{on }G^\vep_{\xi_0^\vep},\ Q^\vep(2T)\times[0,J_1T] \text{ is }\vep-\hbox{ empty, and }\xi^\vep_{J_1T}(Q^\vep(2KT))=0,
\end{equation}
providing that
$\xi_0^\vep\in H=\{\xi\in\{0,1\}^{\vep\Z^d}:\xi(Q^\vep(KT))=0\}$.
Adding the bounds in Lemma~\ref{lem:percest} we see that
\begin{equation}\label{g0goodprob}
\hbox{if }\xi\in H,\hbox{ then }P(G^\vep_\xi)\ge 1 - 2 c_{\tref{lem:percest}}\vep^{.05\wedge
{\gamma_{\tref{lem:gdlow}}\over
    2}}\  \text{ if } \vep<\vep_{\tref{lem:percest}} .
\end{equation}

\subsection{Proof of Theorem \ref{thm:nonexist}}\label{ssec:thmextinct}

\begin{proof}[Proof of Theorem~\ref{thm:nonexist}.]
We continue to take $T=A_{\tref{lem:gdlow}}\log(1/\vep)$, and
with $K,J_1$ from \eqref{constants} we define 
\[
L=T, \quad T'=J_1T, 
\]
and set $c_L=L/(2D)$ as before. We set $\bar\xi^\vep(y)=1(|y|>L)$, $y\in\vep\Z^d$, and $\sigma_z$, $z\in\vep\Z^d$ denote the translation
operators on $\{0,1\}^{\vep\Z^d}$.
For $(x,n)\in\LD$ let 
$$\xi^{x,n}=\begin{cases} \sigma_{-c_Lx}(\xi^\vep_{nT'} )&\text{ if }\sigma_{-c_Lx}(\xi^\vep_{nT'})\in H\\
\bar\xi^\vep&\text{ otherwise,}
\end{cases}$$
and define the percolation
variables
\begin{align}
\nonumber\eta(x,n) = 1(&G^\vep_{\xi^{x,n}}\hbox{ occurs in the graphical representation in which the Poisson}\\
\label{percvars}&\hbox{processes are translated by $-c_Lx$ in space and $-nT'$ in time}).
\end{align}
In the percolation argument which follows it is the first
part of the definition of $\xi^{x,n}$ that will matter; the
$\bar\xi^\vep$ is really only a place-holder which allows us
to define $\eta$ when the translated configuration is not in
$H$. As in the proof of Theorem~\ref{thm:exist} in
Section~\ref{sec:exist}, we are actually translating in
space by the ``lower left hand corner" in $\vep\Z^d$
associated with $-c_Lx$ and as before suppress this in our
notation.  In Section~\ref{sec:exist} we used Theorem~4.3 of
\cite{Dur95}; here we copy the key definition in its proof.
Using the measurability of $G_\xi^\vep$, the independence of
$\cG(R)$ for disjoint regions $R$, and \eqref{g0goodprob}
one can check that for any any $M>4DK_1$, the family
$\{\eta(z),z\in\LD\}$ satisfies the modified $M$-dependent
condition \eqref{modMdep} with $\theta=2
c_{\tref{lem:percest}}\vep^{.05\wedge
  {\gamma_{\tref{lem:gdlow}}\over 4}}$. To see this argue
exactly as in the proof of Theorem~A.4 of \cite{Dur95}.
  
Using the
percolation results from Section~\ref{sec:perc}, we will show 

\begin{lem} \label{deadgoal}
There exists $\bar r>0$ such that for $\vep$ small
enough $\xi^\vep_0(Q^\vep(KT))=0$ implies
$$
P(\Lambda)\equiv P( \xi^\vep_t(Q^\vep(\bar rt)) =0 \text{ for all large $t$} )>1/2.
$$
\end{lem}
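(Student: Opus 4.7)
The plan is to carry out a block construction with the percolation variables $\{\eta(z,n)\}_{(z,n)\in\LD}$ from \eqref{percvars}, then combine Lemma~\ref{ballth}(ii) and Lemma~\ref{deadcore} to conclude that with probability $>1/2$ the region $Q^\vep(\bar r t)$ is $\vep$-empty for all large $t$, where $\bar r>0$ is a small constant chosen below.

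First I verify that $\{\eta(z,n)\}$ satisfies the modified $M$-dependence condition \eqref{modMdep} with $M=4DK_1$ and failure probability $\theta\le 2c_{\tref{lem:percest}}\vep^{(.05)\wedge(\gamma_{\tref{lem:gdlow}}/4)}$. By \eqref{lightcone} and the finite range hypothesis \eqref{finsupp1}, the event $G^\vep_{\xi^{z,n}}$ is, conditional on $\xi^\vep_{nT'}$, measurable with respect to the graphical randomness in the block $(c_Lz+Q(K_1T))\times[nT',(n+1)T']$; for $z$'s at the same level separated by more than $M$ these blocks are disjoint, and for $z$'s at strictly earlier levels the conditional bound \eqref{g0goodprob} applies. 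The extension of Theorem~1.3 of \cite{LSS} recorded after \eqref{modMdep} then lets me dominate $\eta$ below by an iid Bernoulli$(1-\theta')$ field $\zeta\le\eta$ with $\theta'\downarrow 0$ as $\vep\downarrow 0$.

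Next I show that wetness in the $(0,0)$-cluster encodes honest $\vep$-emptiness. The hypothesis $\xi^\vep_0(Q^\vep(KT))=0$ gives $\xi^{0,0}=\xi_0^\vep\in H$, and since $c_L|v_i'|\le T\sqrt{D}/(2D)\le KT$ (using $K\ge 2$) we have $c_Lv_i'+Q(KT)\subset Q(2KT)$. A straightforward induction along $\cE_\uparrow$-open paths in $\eta$, using \eqref{g0good} at each step, proves that for every $(z,n)\in W^0_n$ (wet in $\zeta$, hence open in $\eta$) we have $\sigma_{-c_Lz}(\xi^\vep_{nT'})\in H$ and the block $(c_Lz+Q^\vep(2T))\times[(n-1)T',nT']$ is $\vep$-empty.

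Fix $\vep$ small enough that $\theta'<\theta_{\tref{ballth}}\wedge\theta_{\tref{expcontbd}}$ and $P(\Omega^0_\infty)>3/4$ (possible by Lemma~\ref{ballth}(i)). Set $r=r_{\tref{ballth}}$ and $\bar r=c_Lr/(8DJ_1)$. By Lemma~\ref{ballth}(ii) and Lemma~\ref{deadcore}, shrinking $\vep$ further if needed, there is a random $N<\infty$ and an event $\Lambda^*\subset\Omega^0_\infty$ with $P(\Lambda^*)>1/2$ on which $W^0_n\cap\cH_n^{rn}=\bar W_n\cap\cH_n^{rn}$ and $\cB_n=\emptyset$ for all $n\ge N$. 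For $t$ large and $x\in Q^\vep(\bar r t)$, pick $n$ with $t\in((n-1)T',nT']$ and $z\in\cH_n'$ with $x\in c_Lz+c_L\cV$; the choice of $\bar r$ then gives $|z|\le \bar r t/c_L+D\le rn/4$, so $(z,n)\in\cH_n^{rn/4}$. If $(z,n)\in W^0_n$, the preceding step gives $\xi^\vep_t(x)=0$. Otherwise $(z,n)$ is dry, and $\cB_n=\emptyset$ forces its $\cE_\downarrow$-connected dry cluster to lie inside $\bigcup_{m=n/2}^n\cH_m^{rm/2}\subset\cH_n^{rn}$, where $W^0=\bar W$ agree. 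Using \eqref{lightcone} together with the Voronoi geometry \eqref{vLd1} (with $c_L$ chosen so that the one-slab reach of the dual is covered by the $\cE_\downarrow$-neighbors of the current block), I argue by iteration that the dual $X^{x,t}$ is supported in the $c_L\cV$-translates over this dry cluster and its wet boundary; the boundary blocks are $\vep$-empty by the induction step, so duality yields $\xi^\vep_t(x)=0$, proving $\Lambda\supset\Lambda^*$ and hence $P(\Lambda)>1/2$.

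The main obstacle is the geometric bookkeeping in the last step: one must match the one-time-slab reach $c_Lz+Q(K_1T)$ of the dual to the union of $c_L\cV$-regions over $z$ and its $\cE_\downarrow$-neighbors---the down-edges $z-v_i$ at the earlier level together with the sideways edges $z+v_i'-v_j'$ at the current level, which are exactly what is needed to absorb the spread of the dual within one slab. Once this geometric matching is verified, iteration through the bounded dry cluster reaches the wet boundary in finitely many steps, and the probability bound follows by combining $P(\Omega^0_\infty)>3/4$ with the exponentially small error probabilities from Lemmas~\ref{ballth}(ii) and \ref{deadcore}.
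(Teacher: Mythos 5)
Your high-level structure matches the paper's: set up the block percolation via \eqref{percvars}, establish that wet sites in $W^0_n$ give $\vep$-emptiness of the corresponding space-time blocks (the paper's \eqref{wetfact1}), then invoke Lemma~\ref{ballth} and Lemma~\ref{deadcore} to get the linearly growing dead zone. Up through the choice of $\bar r$ and the identification of $(z,n)\in\cH_n^{rn/4}$ your reasoning tracks the paper closely.

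However, the final geometric step has a genuine gap. You claim the dual $X^{x,t}$ is ``supported in the $c_L\cV$-translates over this dry cluster and its wet boundary,'' with the parenthetical that $c_L$ is ``chosen so that the one-slab reach of the dual is covered by the $\cE_\downarrow$-neighbors of the current block.'' This cannot work for two reasons. First, $c_L=L/(2D)$ with $L=T$ is already fixed, and the one-slab reach controlled by \eqref{lightcone} is of order $K_1T=(K+J_1(w+A))T$, which is vastly larger than the diameter $O(L)$ of the region spanned by the $\cE_\downarrow$-neighbors $\{z+v_i'-v_j',\,z-v_i'\}$; the two scales do not match and no choice of the fixed parameters makes them match. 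Second, and more fundamentally, the support of the dual has nothing to do with wet/dry status---the dual is a branching coalescing walk that spreads freely regardless of what $\xi^\vep$ is doing, so one cannot conclude it stays inside a dry cluster. The paper's argument resolves this differently: it does not try to confine the dual, but instead traces a \emph{backward chain of 1's} for the process $\xi^\vep$ itself. If $\xi^\vep_t(x)=1$, then by \eqref{staydead} and the finite-range hypothesis \eqref{finsupp1} there is an $x'$ with $|x-x'|\le R_0\vep$ and $\xi^\vep_{t'}(x')=1$ at the time just before the last Poisson event. Since each step of this chain moves only $O(\vep)$, and one assumes $R_0\vep/c_L\le c_{\tref{vLd1}}$, \eqref{vLd1} forces successive indices $y$ to be $\cE_\downarrow$-adjacent, and \eqref{wetfact1} forces each of them dry. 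Iterating within and across slabs down to time $(n/2-1)T'$ builds an $\cE_\downarrow$-connected dry path of length $\gtrsim n$, i.e. $\cB_n\neq\emptyset$, contradicting Lemma~\ref{deadcore} on $\Omega^0_\infty\setminus\Lambda$. You need to replace the ``confine the dual via \eqref{lightcone}'' step with this $O(\vep)$-increment backward trace; the ingredient \eqref{lightcone} is used elsewhere (to get measurability of $G_\xi^\vep$ in a bounded block for $M$-dependence) but is the wrong tool for the final contradiction.

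One smaller slip: you write ``$(z,n)\in W^0_n$ (wet in $\zeta$, hence open in $\eta$).'' Wetness of $(z,n)$ means there is an open path ending at $(z,n)$; the terminal site $(z,n)$ itself need not be open. The paper's \eqref{wetfact1} is accordingly phrased via the openness of the predecessor $(y',n-1)$, yielding emptiness of the smaller cube $c_Ly+Q^\vep(L)$ rather than $c_Ly+Q^\vep(2L)$ for wet $(y,n)$.
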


Intuitively this is an immediate consequence of Lemmas \ref{ballth} and \ref{deadcore}. The first result implies that
on $\Omega^0_\infty$ then for large $n$, the wet sites satisfy $W^0_n \cap {\cal H}^{rn}_n = \bar W_n \cap {\cal H}^{rn}_n$.
The second result shows that if ${\cal B}_n$ is the collection of dry sites in ${\cal H}_n^{rn/4}$ connected to the complement of $\cup_{m=n/2}^n {\cal H}_m^{rm/2}$ by a path of dry sites on the graph with edge set ${\cal E}_\downarrow$ then ${\cal B}_n = \emptyset$ eventually. Wet sites in ${\cal H}^{rn}_n$ will correspond to space-time blocks that are empty of $1$'s while dry sites (i.e. not wet sites) in ${\cal H}^{rn}_n$ correspond to space-time blocks which {\it may} contain a $1$. If a dry site  in ${\cal H}_n^{rn/4}$ corresponds to a block containing a $1$ there must be a dual path of $1$'s leading from this $1$ to a site outside of $\cup_{m=n/2}^n {\cal H}_m^{rm/2}$. This corresponds to a path of dry sites in ${\cal E}_\downarrow$ and so cannot happen for large $n$ since ${\cal B}_n = \emptyset$ for large $n$. Thinking of the corresponding space time regions are being filled with concrete, and the dry sites as air spaces, we see that there cannot be a 1 in ${\cal H}_n^{rn/4}$ unless some air space reaches outside of $\cup_{m=n/2}^n {\cal H}_m^{rm/2}$. We now give a formal proof.   

\begin{proof}
Fix $\xi_0^\vep$ as above and recall $W^0_n, \bar W_n, \cB_n$ and $\Omega_\infty^0$ from Section~\ref{sec:perc}. In particular $W^0,\bar W$ are constructed from an iid Bernoulli field which is bounded above by $\eta(z,n),\ z\in\cH_n,n\ge 1$.  By \eqref{g0goodprob} and our condition on $\xi_0^\vep$ (which implies $\xi^{0,0}=\xi_0^\vep$ in the definition of $\eta(0,0)=1$) we see that
\[P(\eta(0,0)=1)\ge 1-2c_{\ref{lem:percest}}\vep^{.05\wedge(\gamma_{\ref{lem:gdlow}}/4)}\ge 3/4,\]
for $\vep$ small enough.  By working with $P(\cdot|\eta(0,0)=1)$ in place of $P$ we may assume $\eta(0,0)\equiv1$ at a cost of proving (under our new $P$) that
\begin{equation}\label{dead34}P(\Lambda)>3/4.
\end{equation}

Assume $n\ge 1$ and
$(y,n)\in W^0_n$.  Then for some $i$, letting $y'=y-v'_i$, 
$(y',n-1)\in W^0_{n-1}$
with $\eta(y',n-1)=1$ (if $n=1$ we use $\eta(0,0)\equiv 1$ here). Continue to trace back the set of open sites $y'=y'_{n-1}, \dots, y'_0=0$.  
Proceeding through the $y'_i$ values, using the second part of \eqref{g0good} and $c_Lv'_i+[-L,L]^d\subset [-2L,2L]^d$ for $i=1,\dots,D$, we see that $\xi^{y'_i,i}=\sigma_{-c_Ly'_i}(\xi^\vep_{iT'})$ in the definition of $\eta(y'_i,i)=1$.  Therefore \eqref{g0good} and translation invariance, show that $\eta(y',n-1)=1$ implies
\[\xi^\vep_t(c_Ly'+Q^\vep(2L))=0\hbox{ for all }t\in[(n-1)T',nT'].\]
Since $c_{L}y+Q^\vep(L)\subset c_Ly' +
Q^\vep(2L)$ we obtain
\begin{equation}\label{wetfact1}
(y,n)\in W^0_n\text{ implies } \xi^\vep_t( c_{L}y + Q^\vep(L))
 = 0 \text{ for all }t\in[(n-1)T',nT'].
\end{equation}
This confirms \eqref{blockcon} in Section~\ref{sec:perc}.

Next by Lemma~\ref{ballth} we may assume $\vep$ is small enough (independent of the choice of $\xi_0^\vep$) so that
$P(\Omega^0_\infty)>3/4$ and
$\theta<\theta_{\tref{ballth}}\wedge\theta_{\tref{expcontbd}}$.
Let $r=r_{\tref{ballth}}$ and assume $\omega\in\Omega_\infty^0$.  By
Lemma~\ref{ballth} there is an $n_0\in\NN$ so that
\begin{equation}\label{coupled}
W^0_k\cap \cH^{rk}_k = 
\bar W_k\cap \cH^{rk}_k \ \forall \ k\ge n_0 .
\end{equation}
Let $\bar r=\frac{r}{16DJ_1}$ and assume $\omega\notin\Lambda$. The latter implies
that for infinitely many $n>2n_0$ there are $t\in[(n-1)T',nT']$ and $x\in Q^\vep(\bar r t)$
with $\xi_t(x)=1$. We claim that this implies
\begin{equation}\label{deadgoal2}
\cB_n\ne\emptyset \hbox{ for $n$ as above}.
\end{equation}
Lemma~\ref{deadcore} implies the above is a null set, so it follows that $P(\Omega_\infty^0\setminus\Lambda)=0$ and so \eqref{dead34} would be proved (recall $P(\Omega_\infty^0)>3/4$).

To prove \eqref{deadgoal2} fix such an $n$ and trace backward in time a path
of 1's that leads to $\xi^\vep_t(x)=1$.  Here we are using \eqref{staydead} to show if all the inputs are $0$ the dual process will produce a $0$ at a given site.  By \eqref{cLd} there must exist some $(y,n)\in \cH_n$
such that $x\in c_L\cV_y\subset c_Ly+Q^\vep(L)$ and a bit of arithmetic using the definition of $\bar r$
gives 
\[|y|\le \frac{|x|}{c_L}+\frac{L}{c_L}\le \frac{\bar rnT'}{c_L}+2D\le \frac{rn}{4},\]
and we have taken $n_0$ big enough for the last inequality. 
Hence $(y,n)\in\cH_n^{rn/4}$ and so \eqref{wetfact1} and \eqref{coupled} imply
 $(y,n)\notin \bar
W_n$, i.e., $(y,n)$ is dry.  By duality and the finite range
assumption (recall \eqref{finsupp1}), there must exist $x'\in\vep\Z^d$ and
$t'\in[(n-1)T',t)$ such that $|x-x'|\le R_0\vep$ and
$\xi^\vep_{t'}(x')=1$. That is, $t'\ge (n-1)T'$ is the first time below $t$ 
that the dual jumps or $t'=(n-1)T'$ if there is no such time in which case $x'=x$.   We may assume $\vep$ is small enough
so that $R_0\vep/c_L\le c_{\tref{vLd1}}$, in which case by \eqref{vLd1} $x'\in
c_L\V_{y'}$ for some $y'$ of the form $y+v_i'-v'_j$ ($y=y'$
is included).  If $(y',n)\in \cH_n^{rn/2}\subset
\cH^{rn}_n$, it follows from \eqref{wetfact1} that $(y',n)$
must be dry, and thus $(y',n)\in D_{(y,n)}$.

Continue the above construction until either we reach a point $(y'',n)\in (\cH_n^{rn/2})^c$ with all earlier points in our path from $(y,n)$ being dry, or we obtain $x'',y''$ such that
$\xi^\vep_{(n-1)T'}(x'')=1$, $(y'',n)\in D_{(y,n)}\cap
\cH_n^{rn/2}$ and $x''\in c_L\cV_{y''}$.  In the former case $\cB_n\neq\emptyset$ (recall the precise definition prior to Lemma~\ref{deadcore}).  In the latter case if
$(y''-v'_i,n-1)\notin\cH_{n-1}^{r(n-1)/2}$ for some $i$,
then \eqref{deadgoal2} holds.  If not, then as one easily
checks $|c_L(y''-v'_i)-x''|<L$, and so arguing 
as above, we see that $(y''-v'_i,n-1)$ is dry.  Therefore the iteration can be continued until it stops as above or
continues down to time $(\frac n2-1)T'$, again forcing
\eqref{deadgoal2} in either case. 
\end{proof}

Having established Lemma \ref{deadgoal} the rest of the proof of Theorem \ref{thm:nonexist} is
routine. The proof of Lemma~\ref{lem:hole} shows that if we
start from an initial configuration with infinitely many 0's
then at time 1 there will be infinitely many cubes of the
form $c_{L}x+Q^\vep(L)$ with $x \in {\cal H}_0$ that are
$\vep$-empty. By the Markov property this will hold at all times $N\in\NN$ a.s. The above shows that
if $x_0$ is chosen so that $\xi^\vep_1(x_0+Q(L))=1$, then w. p. at least $1/2$, $\xi^\vep_{1+t}\equiv 0$ on $c_Lx_0+Q(\bar r t)$ for all large $t$.  If this fails at some time we can try again at a later time $N$ by the above and after a geometric $(1/2)$ number of trials we will succeed and produce a linearly growing set of $0$'s starting at some space-time location.  Therefore  the 0's
take over.
\end{proof}

\clearpage

\clearpage
\begin{center}
\address{Mathematics Department, Syracuse University,}
\address{215 Carnegie Hall, Syracuse, NY 13244-1150, USA}
\email{jtcox@syr.edu}
\mbox{}\\
\address{Mathematics Department, Duke University,}
\address{Box 90320, Durham, NC 27708-0320, USA}
\email{rtd@math.duke.edu}
\mbox{}\\
\address{ Department of Mathematics, The University of British Columbia,}
\address{ 1984 Mathematics Road, Vancouver, B.C., Canada V6T 1Z2}
\email{perkins@math.ubc.ca}
\end{center}

\end{document}